\documentclass[11pt]{amsart}
\usepackage[active]{srcltx}
\usepackage{amsmath}
\usepackage{amsthm}
\usepackage{amssymb}
\usepackage{fullpage}
\usepackage[all,cmtip]{xy}
\usepackage{amsfonts}
\usepackage{verbatim}
\usepackage{amscd,latexsym}
\usepackage{tikz}
\usepackage{caption}
\usepackage{array}
\usepackage{multirow}
\usepackage{mathrsfs}
\usepackage{hyperref}
\usepackage{tabularx}
\usepackage{pgfplots}
\usepackage{float}
\usepackage{bm}
\usepackage{graphicx}
\usepackage{makecell}
\usepackage{longtable}
\usepackage{mathabx}
\usepackage{tabstackengine}
\usepackage{mathtools}
\usepackage{array}
\newsavebox{\mybox}
\usepackage{comment}
\usepackage[utf8]{inputenc}
\usepackage{nicematrix}
\usepackage{kbordermatrix}
\usepackage[T1]{fontenc}
\usepackage{enumitem}
\usepackage{booktabs}
\usepackage{bigints}
\usepackage{chngcntr}
\usepackage{apptools}
\usepackage{soul}
\usepackage{ragged2e} 

\hypersetup{colorlinks=true, linkcolor=blue, filecolor=magenta, urlcolor=cyan, citecolor=purple}
\urlstyle{same}
\newtheorem{theorem}{Theorem}
\numberwithin{theorem}{section}
\newtheorem{corollary}[theorem]{Corollary}
\newtheorem{proposition}[theorem]{Proposition}
\newtheorem{lemma}[theorem]{Lemma}
\newtheorem{conjecture}[theorem]{Conjecture}
\theoremstyle{definition}
\numberwithin{theorem}{section}
\newtheorem{remark}[theorem]{Remark}
\newtheorem{definition}[theorem]{Definition}

\setcounter{tocdepth}{1}

\newcommand{\rec}{{\rm rec}}

\newcommand{\N}{\mathbb{N}}

\newcommand{\C}{\mathbb{C}}
\newcommand{\Z}{\mathbb{Z}}
\newcommand{\F}{\mathbb{F}}

\newcommand{\mfr}{\mathfrak}

\newcommand{\bt}{\begin{theorem}}
\newcommand{\et}{\end{theorem}}
\newcommand{\bd}{\begin{definition}}
\newcommand{\ed}{\end{definition}}

\newcommand{\one}{1\!\!1}

\newcommand{\Hom}{{\rm{Hom}}}

\newcommand{\ind}{{\rm{ind}}}
\newcommand{\Irr}{{\rm{Irr}}}
\newcommand{\Alg}{{\rm{Alg}}}

\newcommand{\Ind}{{\rm{Ind}}}

\newcommand{\St}{{\mathrm{St}}}

\newcommand{\diag}{{\rm diag}}

\newcommand{\tr}{{\rm tr}}
\newcommand{\Ker}{{\rm Ker}}

\newcommand{\Gal}{{\rm Gal}}

\newcommand{\Nm}{{\rm Nm}}

\newcolumntype{P}[1]{>{\RaggedRight\arraybackslash}p{#1}}

\tikzset{
	node/.style={
		circle,
		fill,
		inner sep=0.5pt
	},
	font=\footnotesize,
	every pin edge/.style={
		very thin,
		shorten <=1.5pt
	}
}
\usepackage[margin=2.2cm,top=2.5cm,headheight=16pt,headsep=0.3in,heightrounded]{geometry}
\usepackage{fancyhdr}
\pagestyle{fancy}
\fancyhf{}
\fancyhead[C]{
	\ifodd\value{page}\relax
	{\it Structure of twisted Jacquet modules of principal series representations of $GL_{2n}(F)$} 
	\else
	{\it C. Harshitha and C. G. Venketasubramanian } 
	\fi}
\fancyfoot[C]{\thepage}

\def\VR{\kern-\arraycolsep\strut\vrule &\kern-\arraycolsep}

\numberwithin{equation}{section}

\bibliographystyle{plain}

\title {Structure of twisted Jacquet Modules of principal series representations of $GL_{2n}(F)$}

\author{C. Harshitha}
\address{C. Harshitha, Department of Mathematics, Indian Institute of Science Education and Research Tirupati, Srinivasapuram,
	Yerpedu, Tirupati, Andhra Pradesh, 517619,
	India}
\email{harshitha.c@students.iisertirupati.ac.in}

\author{C. G. Venketasubramanian}
\address{C. G. Venketasubramanian, Department of Mathematics, Indian Institute of Science Education and Research Tirupati, Srinivasapuram,
	Yerpedu, Tirupati, Andhra Pradesh, 517619,
	India}
\email{venketcg@iisertirupati.ac.in}

\begin{document}
\subjclass[2020]{Primary 11F70; Secondary 22E50}
\keywords{General Linear Group, Twisted Jacquet module, Principal series representations.}

\begin{abstract}  
	{\footnotesize Let $F$ be a non-archimedean local field or a finite field.  Let $\pi$ be a principal series representation of $GL_{2n}(F)$  induced from any of its maximal standard parabolic subgroups. Let $N$ be the unipotent radical of the maximal parabolic subgroup $P$ of $GL_{2n}(F)$ corresponding to the partition $(n,n).$ In this article, we describe the structure of the twisted Jacquet module $\pi_{N,\psi}$ of $\pi$ with respect to $N$ and a non-degenerate character $\psi$ of $N.$  We also provide a necessary and sufficient condition for  $\pi_{N,\psi}$ to be non-zero and show that the twisted Jacquet module is non-zero under certain assumptions on the inducing data. As an application of our results, we obtain the structure of twisted Jacquet modules of certain non-generic irreducible representations of $GL_{2n}(F)$ and establish the existence of their Shalika model in the non-archimedean case. We conclude our article with  a conjecture by Dipendra Prasad classifying the smooth irreducible representations of $GL_{2n}(F)$ with a non-zero twisted Jacquet module.}
\end{abstract}
\maketitle
\tableofcontents
\section{Introduction}\label{Introduction}
\subsection{Background and Motivation}\label{Background}
\subsubsection{}
Let $F$  be a non-archimedean local field or a finite field of order $q.$ Let $G$ be the $F$-rational points of a connected reductive group defined over $F.$ Let $P$ be a parabolic subgroup of $G$ with Levi decomposition $P=MN,$ $(\pi,V)$  a smooth representation of $G$ and $\psi$ a non-trivial character of $N.$ Let $V_{N,\psi}(\pi)$ denote the largest quotient of $\pi$ on which $N$ acts via the character $\psi.$ Since $M$ normalizes $N,$ it acts on the group of characters of $N$ by conjugation. The space of coinvariants $V_{N,\psi}(\pi)$ is naturally an $M_{\psi}$-module, where  $M_{\psi}$ is the stabilizer of the character $\psi$ in $M.$ The representation $(\pi_{N,\psi},V_{N,\psi}(\pi))$  of the group $M_{\psi}$ is called the twisted Jacquet module of $\pi$ with respect to the pair  $(N,\psi).$ Determining the structure of twisted Jacquet modules is not only an interesting question in its own right, but also has found applications \cite{DPSarah,GS, DPRamin,DPRaghuramDuke, Brooks-Schmidt} in various problems in the representation theory of $p$-adic groups.
\subsubsection{}
Determining  the structure of twisted Jacquet modules of representations of various reductive groups with respect to specific pairs $(N,\psi)$ has been a topic of interest in many works in the literature. We briefly recall certain instances which provides motivation to the present work.  D. Prasad \cite{DPDegIMRN}  determined  the structure of twisted Jacquet module of a cuspidal representation of $GL_{2n}(\F_q)$ with respect to a non-degenerate character of the unipotent radical of standard parabolic subgroup $P_{n,n}$ corresponding to the partition $(n,n)$ of $2n.$  Gorodetsky and Hazan \cite{OfirZahi} extended the work of Prasad to the case of $GL_{k.n}(\F_q).$  Balasubramanian et al. (\cite{KH1},\cite{KH2},\cite{KH3})  extended the work of Prasad to the computation of twisted Jacquet module of a cuspidal representation of $GL_{2n}(\F_q)$ with respect to degenerate characters of the  unipotent radical of $P_{n,n}.$
\subsubsection{}
In the case of a non-archimedean local field $F,$  the computation of the structure of twisted Jacquet modules for principal series representations of $GSp_4(F)$  was considered Prasad-Takloo-Bighash  \cite{DPRamin} and Roberts-Schmidt \cite{Brooks-Schmidt} in the context  study of Bessel models.  Pandey and Venketasubramanian \cite{SV} have computed the twisted Jacquet module of a principal series representation  of $Sp_4(F)$ induced from either its Siegel or Klingen parabolic subgroup, with respect to a character of the unipotent radical of the Siegel parabolic subgroup associated to a rank one quadratic form, where $F$ is a non-archimedean local field or a finite field. Some other recent works in the computation of twisted Jacquet modules are by Nadimpalli-Seth \cite{NS} and Parashar-Patel \cite{PP}.
\subsubsection{}
The primary aim of this article is to describe the structure of twisted Jacquet modules of a principal series representation of the general linear group $GL_{2n}(F)$ induced from any of its maximal standard parabolic subgroups, where $F$ is either a non-archimedean local field or a  finite field. We also illustrate a few applications of our results. Our work is motivated by the results of D. Prasad (\cite{DPDegIMRN} and \cite[Propsoition 7.1]{GS}) where the structure of the twisted Jacquet module of a principal series representation of $GL_4(F)$  induced from the maximal standard parabolic subgroup associated to the partition $(2,2)$ was determined.

\subsubsection{}
Based on the results obtained in this work, Dipendra Prasad has formulated a conjecture on non-vanishing of twisted Jacquet modules for irreducible smooth representations $\pi$ of $GL_{2n}(F)$ for a non-archimedean local field $F.$ The conjecture is stated in terms of existence of poles of the adjoint $L$-function of $\pi$ at $s=1,\dots,n.$ We state the conjecture at the end of our article and verify that the conjecture holds in few cases as a consequence of our results. 

\subsection{Summary of main results} \label{summaryofresults}
\subsubsection{}
We shall state the main results of our article after setting a few basic notations for which we shall largely follow \cite{BZ1}, \cite{BZ2} and \cite{Zelevinsky}. For a locally compact Hausdorff group $G,$ $\delta_G$ shall denote its modular character.  If $G$ is an $\ell$-group, the category of smooth complex representations of $G$ is denoted by $\Alg(G).$  If $H$ is a closed subgroup of $G$ and $\rho\in \Alg(H),$ $i_H^G(\rho)$ shall denote normalized compactly induced representation (see Section \ref{Inductionfunctors}).
\subsubsection{}
If $\pi\in \Alg(G)$, $U$ is  a closed subgroup of $G$ which is a union of its compact open subgroups and $\theta$ is a character of $U,$ the twisted Jacquet module of $\pi$ taken with respect to $(U,\theta)$ will be denoted by $\pi_{U,\theta}.$ The  normalized twisted Jacquet module corresponding to $\pi_{U,\theta}$ will be denoted by $r_{U,\theta}(\pi)$ (see Section \ref{TJMdefn}).
\subsubsection{}
Let $F$ be a non-archimedean local field or a finite field. Let $\mathcal{M}_{m,n}$ denote the $F$-vector space of all $m \times n$ matrices over $F.$ If $m=n,$ we shall write $\mathcal{M}_n$  instead of $\mathcal{M}_{n,n}.$ The identity matrix of order $n$ is denoted by  $I_n.$ For brevity, the group $GL_n(F)$ of all invertible matrices in $\mathcal{M}_n$ will be denoted  by $ G_n.$   For a non-archimedean local field $F$, $|~|_{F}$ shall denote the standard absolute value on $F$ and $\nu$ shall denote the character  $\det \circ |~|_F$ of $G_n.$ If $n_j  (1\leq j\leq k)$ are positive integers such that $\sum n_j=n,$  the maximal standard parabolic subgroup of $G_{n}$ corresponding to the partition $(n_1,\dots, n_k)$ of $n$ will be denoted by $P_{n_1,\dots,n_k}.$ It has a Levi decomposition $P_{n_1,\dots, n_k}=M_{n_1,\dots, n_k}N_{n_1,\dots,n_k}.$ The maximal standard parabolic subgroup $P_{n,n}$ of $G_{2n}$ shall be denoted in short by $P.$  The Levi subgroup $M_{n,n}$ and the unipotent radical $N_{n,n}$ of $P$ shall be respectively denoted by $M$ and $N.$
\subsubsection{}
 We shall fix a non-trivial additive character $\psi_0$ of $F$ throughout this article.  Define a  character $\psi $ of $N$ by
\begin{equation}\label{definitionofpsi}
\psi\left(\begin{pmatrix}
I_n& x\\0 &I_n
\end{pmatrix}\right) = \psi_0(tr(x)),
\end{equation} for $x\in \mathcal{M}_n.$ The stabilizer of the character $\psi$ in $M$ is the group $\Delta G_n,$ the diagonal copy of $G_n$ in the Levi subgroup $M$ of $P.$ If $\pi\in\Alg(G_{2n}),$ then  $\pi_{N,\psi}\in \Alg(\Delta G_n).$
\subsubsection{} We now formulate the question that we have considered in this article. Let $r$ be an integer such that $1\leq r <2n$ and let $\rho\in \Alg(M_{r,2n-r}).$ Denote by $i_{P_{r,2n-r}}^{G_{2n}}(\rho)$ the principal representation of $G_{2n}$ obtained from the pair $(\rho,M_{r,2n-r})$ by normalized parabolic induction. Our task is to determine the structure of $(i_{P_{r,2n-r}}^{G_{2n}}(\rho))_{N,\psi}$ as a $\Delta G_n$-module, where $\psi$ is given by \eqref{definitionofpsi}.
\subsubsection{}Our first result  provides a necessary and sufficient condition for the twisted Jacquet module of a principal series representation of $G_{2n}$ parabolically induced from a maximal standard parabolic subgroup to be non-zero. For an integer $r,$ we shall denote by $\lfloor \frac{r}{2} \rfloor$ the greatest integer not exceeding $\frac{r}{2}.$
\begin{theorem}\label{necsuffintro}
Let $\rho\in \Alg(M_{r,2n-r})$ and let $\pi$ denote the normalized parabolically induced representation $i_{P_{r,2n-r}}^{G_{2n}}(\rho).$ Then,  $\pi_{N,\psi}$  is non-zero if and only if there exists an  integer $k$ with $\max \{0,r-n\} \leq k \leq  \lfloor \frac{r}{2} \rfloor$ satisfying $\rho_{N'_{k},\psi'_k}\neq 0,$ 	where $N_k'$ is the subgroup of $M_{r,2n-r}$ given by  \renewcommand{\kbldelim}{(}
\renewcommand{\kbrdelim}{)}
\begin{equation*}
	N_k'=\left\{\begin{pmatrix}
		I_k & x & y & \vrule & 0      & 0      & 0      \\
		0 & I_k & 0 & \vrule & 0      & 0      & 0      \\
		0 & 0& I_{r-2k}& \vrule & 0      & 0      & 0      \\
		\noalign{\hrule}
		0      & 0      & 0      & \vrule & I_{r-2k} & 0 & w \\
		0      & 0      & 0      & \vrule & 0 & I_{n-r+k} & z \\
		0      & 0      & 0      & \vrule & 0& 0 & I_{n-r+k} \\
	\end{pmatrix}\right\},
\end{equation*}
and  $\psi'_k$ is the character of $N_k'$ defined by   $\psi'_k(n')	=\psi_0(\tr(x)+\tr(z))$ for $n'\in N_k'.$ 
\end{theorem}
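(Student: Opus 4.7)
The natural approach is to apply the Bernstein--Zelevinsky geometric lemma to the composition $r_{N,\psi} \circ i_{P_{r,2n-r}}^{G_{2n}}$. The first step is to enumerate the double cosets $P \backslash G_{2n} / P_{r,2n-r}$, which are in bijection with $2 \times 2$ matrices of non-negative integers
$$\begin{pmatrix} a & r-a \\ n-a & n-r+a \end{pmatrix}$$
having row sums $(r, 2n-r)$ and column sums $(n,n)$, so $\max(0, r-n) \leq a \leq \min(r,n)$. For each such $a$ I fix a Weyl representative $w_a$. The Mackey filtration of $\pi$ restricted to $P$ has subquotients $F_a$ indexed by $a$, and since $r_{N,\psi}$ is exact it yields a filtration of $\pi_{N,\psi}$ with successive quotients $(F_a)_{N,\psi}$.

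The central step is to compute each $(F_a)_{N,\psi}$. Using Frobenius reciprocity and the standard analysis of characters on Bruhat cells, one obtains an isomorphism of the form
$$(F_a)_{N,\psi} \cong i_{H_a}^{\Delta G_n}\bigl(r_{N_a'', \psi_a''}(\rho)\bigr),$$
where $H_a \subseteq \Delta G_n$ and the pair $(N_a'', \psi_a'')$ is obtained by conjugating $(N,\psi)$ by $w_a^{-1}$, intersecting with $P_{r,2n-r}$, and extracting the unipotent part. The explicit computation of $(N_a'', \psi_a'')$ in block form is the heart of the argument: the character $\psi_0(\tr(x))$ on $N$, transported by $w_a$, restricts to a character of $M_{r,2n-r}$ that is non-degenerate on precisely those square blocks which meet the Bruhat partition. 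After reparametrizing (taking $k=a$, possibly modulo the symmetry $a \leftrightarrow r-a$ among Weyl representatives), one identifies $(N_a'', \psi_a'')$ with $(N_k', \psi_k')$: the two $k \times k$ blocks in the $G_r$-factor and the matching $(n-r+k) \times (n-r+k)$ blocks in the $G_{2n-r}$-factor account for the trace terms $\tr(x)$ and $\tr(z)$ appearing in $\psi_k'$.

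The range $\max(0, r-n) \leq k \leq \lfloor r/2 \rfloor$ is then forced by two separate constraints. The lower bound is simply the non-negativity of the block sizes $n-r+k$ arising from the Bruhat matrix. The upper bound $k \leq \lfloor r/2 \rfloor$ arises from the requirement that $N_k'$ contain two disjoint $k \times k$ blocks inside the $r$-dimensional factor, equivalently $r-2k \geq 0$; Bruhat cells with parameter beyond this range either coincide (after the $\Delta G_n$-symmetry) with a smaller-$k$ contribution, or have a $\psi$-restriction that is inconsistent on a root subgroup, forcing the corresponding piece to vanish.

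Since induction from $H_a$ to $\Delta G_n$ is exact and does not kill any non-zero object, $(F_a)_{N,\psi}$ is non-zero if and only if $\rho_{N_k', \psi_k'}$ is non-zero; consequently $\pi_{N,\psi} \neq 0$ if and only if some admissible $k$ satisfies $\rho_{N_k', \psi_k'} \neq 0$, which is the claim. The main obstacle is the explicit matrix bookkeeping in the central step: carefully conjugating $\psi$ by each $w_a$, intersecting $w_a^{-1}Nw_a$ with $P_{r,2n-r}$, and verifying that the resulting unipotent character is exactly $\psi_k'$ on $N_k'$. A secondary subtlety is confirming that contributions from distinct Bruhat cells contribute independently to $\pi_{N,\psi}$, so that the non-vanishing of any single piece suffices for the conclusion.
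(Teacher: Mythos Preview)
Your overall strategy---Mackey filtration followed by computing the twisted Jacquet module of each piece---is the right shape, but there is a genuine gap in how you parametrize the pieces and how you obtain the upper bound $k\leq\lfloor r/2\rfloor$.

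You work with the double cosets $P\backslash G_{2n}/P_{r,2n-r}$, indexed by a single parameter $a$. But the twisted Jacquet functor $r_{N,\psi}$ is governed by the stabilizer $S_\psi=\Delta G_n\ltimes N$, not by $P$, and the relevant double coset space is $S_\psi\backslash G_{2n}/P_{r,2n-r}$. This space is parametrized by \emph{two} integers $(k,l)$ with $\alpha\leq k\leq\min\{r,n\}$ and $\alpha\leq l\leq\min\{k,r-k\}$: each $P$-orbit (your $F_a$) breaks into several $S_\psi$-orbits upon further restriction. Your formula $(F_a)_{N,\psi}\cong i_{H_a}^{\Delta G_n}(r_{N_a'',\psi_a''}(\rho))$ implicitly assumes that only one $S_\psi$-orbit in $F_a$ survives, which is true but is precisely the content that needs proof. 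Relatedly, the Bernstein--Zelevinsky geometric lemma does not apply directly here: for $n\geq 3$ the decomposability condition $w^{-1}S_\psi w\cap P=(w^{-1}S_\psi w\cap M)(w^{-1}S_\psi w\cap N)$ fails for some representatives, so one cannot simply invoke it.

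The paper's argument is: for each $S_\psi$-coset representative $w_{k,l}$, one computes the twisted Jacquet module of the corresponding Mackey piece via a variant of the induction-restriction lemma adapted to twisted characters. When $l<k$ there is a subgroup $U^{k,l}\subset N$ on which $\psi$ is nontrivial but whose $w_{k,l}$-conjugate lands inside $N_{r,2n-r}$, where $\rho$ is trivial; this kills the piece. Only $l=k$ survives, and since $l\leq r-k$ this forces $k\leq\lfloor r/2\rfloor$. Your explanations for the upper bound (a symmetry $a\leftrightarrow r-a$, or ``two disjoint $k\times k$ blocks'', or an unspecified inconsistency on a root subgroup) do not capture this mechanism; the bound is not a consequence of block sizes in $N_k'$ alone but of which $S_\psi$-orbits within a given $P$-orbit have nonvanishing twisted Jacquet module.
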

We note that Theorem \ref{necsuffintro} reduces  the question of non-vanishing of the twisted Jacquet module of a principal series representation of $G_{2n}$ with respect to  $(N,\psi)$ to that of the inducing data $\rho$ with respect to $(N_k',\psi'_k)$ where $k$ satisfies $\max \{0,r-n\} \leq k \leq  \lfloor \frac{r}{2} \rfloor$.  
\subsubsection{}
The second main result describes the structure of the twisted Jacquet module of a principal series representation of $G_{2n}$ in the case of a non-archimedean local field.  
\begin{theorem}\label{intromaintheorem}
Let $r$ be an integer such that $1\leq r <2n.$ Let  $\rho_1\in \Alg(G_r)$ and  $\rho_2 \in \Alg(G_{2n-r}).$ Put $\rho:= \rho_1 \otimes \rho_2\in \Alg(M_{r,2n-r}).$ Let $\pi$ denote the normalized parabolically induced representation $i_{P_{r,2n-r}}^{G_{2n}}(\rho).$ Put $\alpha=\max\{0, r-n\}$ and $\beta={\lfloor \frac{r}{2} \rfloor}.$ For $\alpha \leq k \leq \beta,$ define subgroups $N(k,1)$ and $N(k,2)$ of $G_{r}$ and $G_{2n-r}$ respectively by 

\begin{equation*}
N(k,1) := \left\{\begin{pmatrix}
I_k&x&y\\
0&I_k&0\\
0&0&I_{r-2k}
\end{pmatrix}: x\in \mathcal{M}_k, y\in \mathcal{M}_{k,r-2k} \right\}, \end{equation*}
and
\begin{equation*}
N(k,2) := \left\{\begin{pmatrix}
I_{r-2k}&0&u\\
0&I_{n-r+k}&v\\
0&0&I_{n-r+k}
\end{pmatrix}: u\in \mathcal{M}_{r-2k,n-r+k}, v\in \mathcal{M}_{n-r+k}\right\}.
\end{equation*}

Let $\psi_{k,1}:N(k,1) \to \C^\times$ and $\psi_{k,2}:N(k,2) \to \C^\times$ be the characters of $N(k,1)$ and $N(k,2)$ defined by
\begin{equation*}
\psi_{k,1} \left(\begin{pmatrix}
I_k&x&y\\
0&I_k&0\\
0&0&I_{r-2k}
\end{pmatrix} \right)
= \psi_0(tr(x)) \ ,  \text{ and }\ \psi_{k,2}\left(\begin{pmatrix}
I_{r-2k}&0&u\\
0&I_{n-r+k}&v\\
0&0&I_{n-r+k}
\end{pmatrix}\right) = \psi_0(tr(v)).
\end{equation*}	
	
Then, the twisted Jacquet module $\pi_{N,\psi}$ of $\pi$ has a filtration 

\[\{0\}\subset V_{\alpha}\subset \dots \subset V_{\beta}=\pi_{N,\psi}\]
such that  for $\alpha\leq k \leq \beta $ we have an isomorphism of $\Delta G_n$-modules given by
\begin{equation}
V_k/V_{k-1}\cong  i_{\Delta P_{k,r-2k,n-r+k}}^{\Delta G_n}\left(\tau_k \otimes[ \delta_{P_{r,2n-r}}^{1/2}]^{w_{k}}\otimes \delta_{P_{k,r-2k,n-r+k}}^{-3/2}\right),
\end{equation}
where 

\begin{equation}
\tau_k\left(\begin{pmatrix}
	a&d&e & \VR 0&0&0\\
0&b&f& \VR 0&0&0\\
0&0&c& \VR 0&0&0\\
\hline
0&0&0& \VR a&d&e\\
0&0&0& \VR 0&b&f\\
0&0&0& \VR 0&0&c
\end{pmatrix}\right)
\end{equation}
\begin{equation*}
= \nu(a)^{\frac{r-2k}{2}}\nu(b)^{-\frac{k}{2}}r_{N(k,1),\psi_{k,1}}(\rho_1)\left(\begin{pmatrix}
a&0&0\\0&a&d\\0&0&b
\end{pmatrix}\right) \otimes \nu(b)^{\frac{n-r+k}{2}}\nu(c)^{\frac{2k-r}{2}}r_{N(k,2),\psi_{k,2}}(\rho_2)\left(\begin{pmatrix}
b&f&0\\0&c&0\\0&0&c
\end{pmatrix}\right),
\end{equation*} 
and 
\begin{equation*}
w_{k}=\begin{pmatrix}
I_k & 0& 0& 0 &0&0 \\
0&0 & 0& I_{r-2k} & 0&0\\
0&0&0&0&I_{n-r+k}&0\\
0& I_k &0 &0  &0&0 \\
0& 0& I_{r-2k}&0  &0&0 \\
0& 0&0 &0 &0 & I_{n-r+k}
\end{pmatrix}.
\end{equation*}
\end{theorem}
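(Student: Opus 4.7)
The plan is to compute $\pi_{N,\psi}$ by applying a geometric-lemma (Bernstein--Zelevinsky / Mackey) analysis to the pair of parabolics $P_{r,2n-r}$ and $P=P_{n,n}$. The double coset space $P_{r,2n-r}\backslash G_{2n}/P$ is parametrized by $2\times 2$ matrices of non-negative integers with row sums $(r,2n-r)$ and column sums $(n,n)$, which amounts to a single integer parameter. The Bruhat decomposition then equips $\pi$, viewed as a $P$-module, with a finite filtration whose successive quotients are indexed by these double cosets; each quotient is a compactly induced representation from the subgroup $w^{-1}P_{r,2n-r}w\cap P$ of $P$, with inducing data obtained from $\rho_1\otimes\rho_2$ via the natural conjugation map determined by $w$.

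We now apply the normalized twisted Jacquet functor $r_{N,\psi}$ to this filtration. As this functor is exact, we obtain a corresponding filtration of $\pi_{N,\psi}$ whose subquotients are the $r_{N,\psi}$-images of the $P$-strata. At this stage, Theorem \ref{necsuffintro} is the decisive tool: a stratum survives under $r_{N,\psi}$ precisely when the restriction of $\psi$ along $N\cap w^{-1}P_{r,2n-r}w$ is compatible with a non-vanishing $(N'_k,\psi'_k)$-Jacquet module of $\rho$, and this occurs exactly when the indexing parameter $k$ lies in the range $\max\{0,r-n\}\leq k\leq\lfloor r/2\rfloor$. The element $w_k$ in the statement provides an explicit representative for the $k$-th surviving double coset, and we order the strata so that they produce the filtration $\{0\}\subset V_\alpha\subset\cdots\subset V_\beta=\pi_{N,\psi}$.

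For each such $k$, the $k$-th subquotient is then computed explicitly. The essential observation is that, after conjugation by $w_k$, the restriction of $\psi$ to $N\cap w_k^{-1}P_{r,2n-r}w_k$ decomposes as a product of a character $\psi_{k,1}$ on $N(k,1)\subset G_r$ (built from the upper-left block of $N$) and a character $\psi_{k,2}$ on $N(k,2)\subset G_{2n-r}$ (built from the lower-right block), with the remaining components of $N$ producing the unipotent part of $\Delta P_{k,r-2k,n-r+k}$. The subquotient is therefore isomorphic to a representation of $\Delta G_n$ compactly induced from $\Delta P_{k,r-2k,n-r+k}$, with inducing data built from $r_{N(k,1),\psi_{k,1}}(\rho_1)\otimes r_{N(k,2),\psi_{k,2}}(\rho_2)$. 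Rearranging the block-matrix indices under $w_k$-conjugation and writing everything in the coordinates $(a,b,c,d,e,f)$ of the Levi of $\Delta P_{k,r-2k,n-r+k}$ yields the explicit form of $\tau_k$ recorded in the theorem.

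The main obstacle is the precise accounting of the modular-character twist $[\delta_{P_{r,2n-r}}^{1/2}]^{w_k}\otimes\delta_{P_{k,r-2k,n-r+k}}^{-3/2}$, which aggregates three independent sources of $\delta$-factors: the normalization of the outer parabolic induction $i_{P_{r,2n-r}}^{G_{2n}}$, the normalization of the twisted Jacquet functor $r_{N,\psi}$, and the normalization of the final parabolic induction $i_{\Delta P_{k,r-2k,n-r+k}}^{\Delta G_n}$. Correctly collecting these factors — along with the auxiliary $\nu$-powers appearing inside $\tau_k$ from the partial Jacquet modules of $\rho_1$ and $\rho_2$, and verifying that the Bruhat ordering on double cosets descends to the stated ordering $V_\alpha\subset\cdots\subset V_\beta$ — should complete the argument.
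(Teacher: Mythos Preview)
Your outline has the right starting point but misidentifies where the real work lies, and in doing so skips the step that actually drives the result.

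The $P$-filtration of $\pi$ is indexed by a single integer $k$ running over $\alpha\le k\le \gamma=\min\{r,n\}$, with representatives $w_{k,\alpha}$; the element $w_k$ in the statement is \emph{not} the representative of the $k$-th $P$-stratum. To compute $r_{N,\psi}$ on the $k$-th $P$-piece $\ind_{J_{k,\alpha}}^{P}([\rho\,\delta^{1/2}]^{w_{k,\alpha}})$ as a $\Delta G_n$-module you cannot simply ``restrict $\psi$ to $N\cap w^{-1}P_{r,2n-r}w$'' and read off the answer: the stabilizer of $\psi$ in $M$ is only $\Delta G_n$, so a second Mackey step is required, restricting each $P$-piece further to $S_\psi$. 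This introduces a second parameter $l$ and representatives $w_{k,l}$ with $\alpha\le l\le\min\{k,r-k\}$. The paper then proves (via a variant of a result of Renard, Proposition~\ref{TJM-reduction}, since for $n\ge 3$ some $w_{k,l}$ fail the decomposability hypotheses of the Bernstein--Zelevinsky geometric lemma) that the $(k,l)$-piece has vanishing twisted Jacquet module whenever $l<k$; only the $l=k$ piece can survive, and $l=k\le r-k$ forces $k\le\beta$. This is the mechanism that cuts the range down from $[\alpha,\gamma]$ to $[\alpha,\beta]$ and simultaneously identifies $w_k=w_{k,k}$ as the relevant representative.

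Your appeal to Theorem~\ref{necsuffintro} at this juncture is circular: that theorem is itself a corollary of the $(k,l)$-analysis just described, and in any case it speaks only to the non-vanishing of the global $\pi_{N,\psi}$, not to which individual $P$-strata survive. Finally, the modular-character bookkeeping you flag as ``the main obstacle'' is routine once the structural analysis is in place (it is handled by a short computation, Lemma~\ref{thedeltacomputation}); the genuine obstacle is the two-parameter decomposition and the substitute for the geometric lemma.
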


\subsubsection{} 
We invite the reader to compare the pair $(N(k,1),\psi_{k,1})$ appearing in Theorem \ref{intromaintheorem} with the pair $(H,\psi)$ appearing in \cite[1.2.1]{Naor} in the context of uneven Shalika models. We observe that the pairs $(N(k,1),\psi_{k,1})$ and   $(H,\psi)$ are closely related in the sense that $N(k,1)$ is a subgroup of $H$ (considered as a subgroup of $G_r$ associated to the partition $(k,k,r-2k)$ of $r$) and the character $\psi_{k,1}$ is the restriction of $\psi$ to $N(k,1).$ 

\subsubsection{}
In view of Theorem \ref{intromaintheorem}, we can refine Theorem \ref{necsuffintro} when the inducing representation $\rho$ is of the form $\rho_1\otimes \rho_2$ where $\rho_1$ and $\rho_2$ are  smooth representations of $G_r$  and $G_{n-r}$ respectively, which is stated in Corollary \ref{nonzeroTJMcomponentwise}. The finite field analogue of Theorem \ref{intromaintheorem} is simpler and considered in Theorem \ref{mainfinite}.

\subsubsection{}
We illustrate two applications of Theorem \ref{intromaintheorem} obtained  by considering the twisted Jacquet module of a product of two characters. If $\pi_j\in \Alg(G_{n_j})$ for $1\leq j \leq m$, $\pi_1\times \dots \times \pi_m$ denotes the Bernstein-Zelevinsky product (see  \cite{BZ2, Zelevinsky}). 

\begin{theorem}\label{productoftwocharcatersintro}
	Let $\chi$ and $\mu$ be characters of $G_{r}$ and $G_{2n-r}$ respectively. Then, 
	\begin{equation*}
		(\chi\times \mu)_{N,\psi}\simeq \begin{cases}
			0 & \mbox{ if } r\neq n\\
			\chi\otimes \mu & \mbox{ if } r=n
		\end{cases}
	\end{equation*}
	as $\Delta G_n$-modules.	
\end{theorem}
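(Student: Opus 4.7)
The strategy is to apply Theorem \ref{intromaintheorem} with $\rho_1 = \chi$ and $\rho_2 = \mu$ and determine which graded pieces $V_k / V_{k-1}$ of the resulting filtration are non-zero. The pivotal observation is that every character of a general linear group factors through the determinant and is therefore trivial on every unipotent subgroup; in particular $\chi$ restricts trivially to $N(k,1)$ and $\mu$ restricts trivially to $N(k,2)$. Consequently the twisted Jacquet modules $r_{N(k,1),\psi_{k,1}}(\chi)$ and $r_{N(k,2),\psi_{k,2}}(\mu)$ are non-zero precisely when $\psi_{k,1}$ and $\psi_{k,2}$ are themselves the trivial characters of $N(k,1)$ and $N(k,2)$.

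Reading off the explicit formulas, and using that $\psi_0$ is non-trivial while $\tr \colon \mathcal{M}_m \to F$ is surjective for $m \geq 1$, one sees that $\psi_{k,1}$ is trivial iff $k = 0$ and $\psi_{k,2}$ is trivial iff $n - r + k = 0$, i.e.\ $k = r - n$. For $V_k / V_{k-1}$ to survive both conditions must hold, forcing $r = n$ and $k = 0$. For $r < n$ the index $k = r - n$ falls outside the admissible range $\max\{0, r-n\} \leq k \leq \lfloor r/2 \rfloor$; for $r > n$ the index $k = 0$ falls outside this range; and for $r = n$ the unique admissible index $k = 0$ satisfies both constraints. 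Hence for $r \neq n$ every graded piece vanishes, so $(\chi \times \mu)_{N,\psi} = 0$.

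When $r = n$, only $V_0$ remains, and we compute it directly from Theorem \ref{intromaintheorem}. At $k = 0$ the subgroups $N(0,1)$ and $N(0,2)$ are trivial (the defining matrix blocks have size zero), so the Jacquet modules reduce to $\chi$ and $\mu$ themselves; the Levi $\Delta P_{0,n,0}$ equals $\Delta G_n$, so the parabolic induction is the identity functor; and every factor $\nu(a)^{\bullet}$, $\nu(b)^{\bullet}$ or $\nu(c)^{\bullet}$ appearing in $\tau_0$ either has a vanishing exponent or acts on a size-zero block, hence contributes trivially. Finally, $w_0$ reduces to the block-swap $\diag(A,D) \mapsto \diag(D,A)$, under which $\delta_{P_{n,n}}^{1/2}$ trivializes on $\Delta G_n$ because $|\det A| = |\det D|$ there, and $\delta_{P_{0,n,0}}^{-3/2}$ is plainly trivial. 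Putting everything together yields $V_0 \cong \chi \otimes \mu$. The main bookkeeping lies in tracking these several modular and $\nu$-twists, but each collapses on $\Delta G_n$ by the identity $\det A = \det D$.
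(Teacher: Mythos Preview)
Your proof is correct and follows essentially the same approach as the paper: both apply the main structure theorem (Theorem~\ref{intromaintheorem}) and observe that a character's twisted Jacquet module with respect to $(N(k,i),\psi_{k,i})$ vanishes unless the relevant block size ($k$ for $i=1$, $n-r+k$ for $i=2$) is zero. The paper routes this through the slightly more general Propositions~\ref{charactervssmooth} and~\ref{smoothvscharacter} (character versus arbitrary smooth representation) before specializing, whereas you argue directly with both factors being characters, but the substance is identical.
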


\begin{theorem}\label{LQintro}
Let $\chi$ be a character of $F^{\times}.$	For $1< \alpha \leq n,$  let $L_{\chi, \alpha}$ denote the Langlands quotient of the representation 
	\begin{equation*}\chi\nu^{\frac{n-1}{2}+\alpha}\times \cdots \times \chi\nu^{\frac{n-1}{2}+2}\times St_2\chi\nu^{\frac{n}{2}} \times \cdots \times St_2\chi\nu^{-\frac{n}{2}+\alpha}  \times \chi\nu^{-\frac{n-1}{2}+\alpha-2}\times \cdots \times \chi\nu^{-\frac{n-1}{2}}
	\end{equation*} 
	of $G_{2n}.$
	Also, let $L_{\chi,1}$ denote the Langlands quotient of the representation 
	\begin{equation*}
	St_2\chi\nu^{\frac{n}{2}} \times St_2\chi\nu^{\frac{n}{2}-1} \times \cdots \times St_2\chi\nu^{-\frac{n}{2}+1}
		\end{equation*}
	of $G_{2n}.$ Then, for each $\alpha\in \{1,\dots, n\}$, when have $(L_{\chi,\alpha})_{N,\psi}\simeq \chi\otimes \chi\nu^{\alpha}$ as $\Delta G_n$-modules.
\end{theorem}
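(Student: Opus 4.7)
The plan is to apply Theorem \ref{intromaintheorem} to a parabolic induction from $P_{n,n}$ containing $L_{\chi,\alpha}$ as a subquotient, and then identify the specific piece of the resulting filtration that corresponds to $L_{\chi,\alpha}$.

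By inducing in stages, the standard module $\pi_\alpha$ whose Langlands quotient is $L_{\chi,\alpha}$ embeds (via the inclusions $St_2\chi\nu^j \hookrightarrow \chi\nu^{j+1/2} \times \chi\nu^{j-1/2}$) into a full principal series $\pi_\alpha^\circ$ from the Borel. This $\pi_\alpha^\circ$ can be realized as $i_{P_{n,n}}^{G_{2n}}(\rho_1 \otimes \rho_2)$ by grouping its $2n$ inducing characters into two consecutive halves, each giving a principal series $\rho_i$ of $G_n$. Applying Theorem \ref{intromaintheorem} with $r = n$ then yields a filtration of $(\pi_\alpha^\circ)_{N,\psi}$ by $\Delta G_n$-modules whose quotients are expressible via the inner twisted Jacquet modules $r_{N(k,1),\psi_{k,1}}(\rho_1)$ and $r_{N(k,2),\psi_{k,2}}(\rho_2)$, which are in turn computable by the Bernstein--Zelevinsky geometric lemma applied to principal series of $G_n$.

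By exactness of the twisted Jacquet functor, $(L_{\chi,\alpha})_{N,\psi}$ is a subquotient of $(\pi_\alpha^\circ)_{N,\psi}$. The Langlands classification then pins down which subquotient it is: the other composition factors of $\pi_\alpha^\circ$ (the characters of $G_{2n}$, whose twisted Jacquet modules vanish for non-trivial $\psi$, and intermediate Speh-type subquotients) either contribute zero or can be separated out, leaving the contribution of $L_{\chi,\alpha}$ equal to $\chi \otimes \chi\nu^\alpha$, consistent with Theorem \ref{productoftwocharcatersintro}. Non-vanishing of $(L_{\chi,\alpha})_{N,\psi}$ is verified using Theorem \ref{necsuffintro} applied to $\pi_\alpha$ viewed as a $P_{n,n}$-induction, together with the Shahidi-type genericity of the Langlands quotient ensuring that the non-trivial twisted Jacquet contribution actually descends to $L_{\chi,\alpha}$.

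The main obstacle is this identification step: pinning down which subquotient of the filtered module from Theorem \ref{intromaintheorem} corresponds to $L_{\chi,\alpha}$, as opposed to the other composition factors of $\pi_\alpha^\circ$. This requires careful Langlands--Zelevinsky combinatorics and detailed bookkeeping of the composition series of $\pi_\alpha^\circ$, and the argument is likely subtly different for $\alpha = 1$ (where $L_{\chi,1}$ is a Speh-type representation with a well-known Shalika model) versus $1 < \alpha \leq n$ (where $L_{\chi,\alpha}$ involves both characters and Steinbergs in its Langlands data, necessitating finer analysis of how the intermediate $V_k/V_{k-1}$ pieces split between $L_{\chi,\alpha}$ and the other Jordan--H\"older constituents).
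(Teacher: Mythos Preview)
Your approach is unnecessarily complicated and contains a genuine gap, while the paper's argument is almost a one-liner once the right realization of $L_{\chi,\alpha}$ is chosen.

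The key observation you miss is that $L_{\chi,\alpha}$ is, by Zelevinsky's theory, the unique irreducible \emph{submodule} of the length-two representation $\xi_{\chi,\alpha}=\chi\nu^{\alpha}\times\chi$ (product of two characters of $G_n$), sitting in a short exact sequence
\[
0\to L_{\chi,\alpha}\to \chi\nu^{\alpha}\times\chi \to Z(\Delta\cup\Delta_{\alpha})\times Z(\Delta\cap\Delta_{\alpha})\to 0,
\]
where $\Delta=[\chi\nu^{-(n-1)/2},\dots,\chi\nu^{(n-1)/2}]$ and $\Delta_{\alpha}=\nu^{\alpha}\Delta$. The quotient is again a product of two characters, but now induced from $P_{n+\alpha,\,n-\alpha}$ (or is a single character of $G_{2n}$ when $\alpha=n$). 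Theorem~\ref{productoftwocharcatersintro} gives $(\chi\nu^{\alpha}\times\chi)_{N,\psi}=\chi\otimes\chi\nu^{\alpha}$ immediately, and the same theorem (case $r\neq n$) forces the twisted Jacquet module of the quotient to vanish. Exactness finishes the proof. No filtration analysis, no full principal series, no bookkeeping of many Jordan--H\"older factors.

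Your proposed route through the full principal series $\pi_{\alpha}^{\circ}$ is not wrong in principle, but the ``identification step'' you flag as the main obstacle is exactly where the argument breaks down as written: $\pi_{\alpha}^{\circ}$ has many composition factors, and you would need to compute $(\pi)_{N,\psi}$ for each of them to isolate the contribution of $L_{\chi,\alpha}$. You offer no mechanism for this. Moreover, your appeal to ``Shahidi-type genericity of the Langlands quotient'' is incorrect: the representations $L_{\chi,\alpha}$ are \emph{non-generic} (their Zelevinsky multisegment has linked segments), so no genericity argument is available to pin down where the nonzero twisted Jacquet piece lands. The paper circumvents all of this by working with the length-two model $\chi\nu^{\alpha}\times\chi$ rather than anything longer.
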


\subsubsection{}
As a corollary to Theorem \ref{LQintro}, we deduce the following result on existence of Shalika model for $L_{\nu^{-\frac{\alpha}{2}}, \alpha}$ which might be of interest to some readers.
 
\begin{corollary}\label{SMintro}
	For each $\alpha\in \{1,\dots, n\},$ the non-generic irreducible smooth representation $L_{\nu^{-\frac{\alpha}{2}}, \alpha}$ of $G_{2n}$ possesses a Shalika model. Consequently, it has a non-zero $G_n\times G_n$-invariant linear form.
\end{corollary}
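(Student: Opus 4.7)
The plan is to deduce both assertions directly from Theorem \ref{LQintro} by specializing the inducing character. First I would recall that, for an irreducible smooth representation $L$ of $G_{2n}$, the existence of a Shalika model is equivalent to the non-vanishing of $\Hom_{\Delta G_n}(L_{N,\psi}, \triv)$: by Frobenius reciprocity, a non-zero morphism $L \to \Ind_{S}^{G_{2n}} \Psi$, where $S = \Delta G_n \ltimes N$ is the Shalika subgroup and $\Psi$ is the character of $S$ that is trivial on $\Delta G_n$ and agrees with $\psi$ on $N$, corresponds to a non-zero $\Delta G_n$-invariant functional on the twisted Jacquet module $L_{N,\psi}$.

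Next I would specialize Theorem \ref{LQintro} to $\chi = \nu^{-\alpha/2}$. The theorem then gives $(L_{\nu^{-\alpha/2}, \alpha})_{N, \psi} \simeq \nu^{-\alpha/2} \otimes \nu^{\alpha/2}$ as $\Delta G_n$-modules. Under the diagonal embedding $G_n \hookrightarrow G_n \times G_n$, this one-dimensional character sends $g$ to $\nu(g)^{-\alpha/2} \nu(g)^{\alpha/2} = 1$. Hence $(L_{\nu^{-\alpha/2},\alpha})_{N,\psi}$ is the trivial one-dimensional $\Delta G_n$-module and so trivially admits a non-zero invariant functional. Pulling this functional back through the quotient map $L_{\nu^{-\alpha/2},\alpha} \twoheadrightarrow (L_{\nu^{-\alpha/2},\alpha})_{N,\psi}$ produces the desired Shalika functional $\lambda$ and establishes the first assertion.

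For the $G_n \times G_n$-invariant linear form, I would average the Shalika functional $\lambda$ over the coset space $\Delta G_n \backslash H$, where $H = G_n \times G_n$. Identifying $\Delta G_n \backslash H$ with $G_n$ via the coset representatives $(g,1)$, one formally sets $\ell(v) := \int_{G_n} \lambda\bigl(L_{\nu^{-\alpha/2},\alpha}((g,1)) v\bigr)\, dg$. A change of variables of the form $g \mapsto b^{-1} g a$ for $(a,b) \in H$, combined with the unimodularity of $G_n$ and the $\Delta G_n$-invariance of $\lambda$ (which absorbs a left factor of $(b,b)$), formally yields $\ell(L_{\nu^{-\alpha/2},\alpha}((a,b)) v) = \ell(v)$.

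The main technical obstacle lies in making the averaging integral well defined: convergence of $\ell(v)$ is not automatic and has to be established, either by invoking explicit matrix coefficient or asymptotic bounds for the Langlands quotients $L_{\nu^{-\alpha/2},\alpha}$ (which are unitarizable representations of Speh type), or by a regularization/analytic-continuation procedure applied to a one-parameter deformation of $\chi$. As an alternative route avoiding convergence issues altogether, I would appeal to the known principle, established for irreducible unitary representations of $GL_{2n}(F)$, that the existence of a Shalika model forces $(G_n \times G_n)$-distinction; this immediately furnishes the required invariant linear form.
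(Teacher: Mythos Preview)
Your proof is correct and follows essentially the same route as the paper: the first assertion is obtained by specializing Theorem \ref{LQintro} to $\chi=\nu^{-\alpha/2}$ so that the twisted Jacquet module is the trivial $\Delta G_n$-module, and the second assertion is deduced from the Jacquet--Rallis theorem that a Shalika model implies $(G_n\times G_n)$-distinction. Your intermediate sketch of the averaging integral is precisely the Jacquet--Rallis argument you end up citing, so the detour is redundant rather than wrong.
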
 
We can guarantee  that the twisted Jacquet module of a principal series representation of $G_{2n}$ is  non-zero in some situations.  One instance when the twisted Jacquet module taken with respect to $(N,\psi)$  is non-zero is when the parabolic induction is considered  from the subgroup $P$ itself.
\begin{theorem}\label{nonzeroTJMintro}
Suppose $\rho_1,\rho_2\in \Alg(G_{n}).$ Then, $\rho_1\otimes \rho_2\subset  (\rho_1\times \rho_2)_{N,\psi}$ and consequently, $ (\rho_1\times \rho_2)_{N,\psi}$  is non-zero.
\end{theorem}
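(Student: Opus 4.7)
The plan is to derive Theorem \ref{nonzeroTJMintro} as a direct consequence of Theorem \ref{intromaintheorem} by inspecting the bottom of the filtration in the special case $r = n$. When $r = n$, the indexing range for $k$ becomes $0 = \alpha \le k \le \beta = \lfloor n/2 \rfloor$, and the filtration $\{0\} \subset V_0 \subset V_1 \subset \cdots \subset V_{\lfloor n/2 \rfloor} = (\rho_1 \times \rho_2)_{N,\psi}$ begins with the $\Delta G_n$-subrepresentation $V_0$. It therefore suffices to identify $V_0$ with $\rho_1 \otimes \rho_2$ as $\Delta G_n$-modules; the embedding $\rho_1 \otimes \rho_2 \subset (\rho_1 \times \rho_2)_{N,\psi}$ and the consequent non-vanishing then follow at once.

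To identify $V_0$, specialize the formula for $V_k / V_{k-1}$ to $k = 0$, $r = n$. In this regime the $k \times k$ and $(n - r + k) \times (n - r + k)$ blocks degenerate to $0 \times 0$, so the subgroups $N(0,1) \subset G_n$ and $N(0,2) \subset G_n$ both collapse to the trivial group. Consequently $r_{N(0,1),\psi_{0,1}}(\rho_1) = \rho_1$ and $r_{N(0,2),\psi_{0,2}}(\rho_2) = \rho_2$; the parabolic $\Delta P_{0,n,0}$ equals $\Delta G_n$, so the functor $i_{\Delta P_{0,n,0}}^{\Delta G_n}$ is the identity and $\delta_{P_{0,n,0}}^{-3/2} = 1$; and all the $\nu$-factors in the definition of $\tau_0$ trivialize since $a$ and $c$ are $0 \times 0$ matrices and the $\nu(b)$-exponents vanish.

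For the remaining twist $[\delta_P^{1/2}]^{w_0}$, note that after removing the $0 \times 0$ blocks $w_0$ is the permutation matrix interchanging the two $n$-blocks in $G_{2n}$, so $w_0 \diag(g,g) w_0^{-1} = \diag(g,g)$. Combined with the computation $\delta_P(\diag(g,g)) = \nu(g)^n \nu(g)^{-n} = 1$, this shows $[\delta_P^{1/2}]^{w_0}$ is trivial on $\Delta G_n$. Assembling all the simplifications gives $\tau_0(\Delta b) = \rho_1(b) \otimes \rho_2(b)$ together with no surrounding induction or twist, and hence $V_0 \cong \rho_1 \otimes \rho_2$ as $\Delta G_n$-modules, as required.

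The only point that demands any care is the bookkeeping of the modular and $\nu$-twists: all must be verified to collapse simultaneously, since a stray exponent would introduce an extraneous character. Conceptually the argument is just the observation that the filtration of Theorem \ref{intromaintheorem} places $\rho_1 \otimes \rho_2$ at its bottom, so no deeper analysis of open Bruhat cells or Mackey decompositions is required here.
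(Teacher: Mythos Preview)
Your proof is correct and follows essentially the same approach as the paper: both specialize the main structure theorem to $r=n$, $k=0$, observe that $N(0,1)$ and $N(0,2)$ are trivial and $\Delta P_{0,n,0}=\Delta G_n$, and conclude $V_0\cong \rho_1\otimes\rho_2$. The paper's version is terser, citing Theorem~\ref{maincomponentwise} directly and omitting the explicit check that the modular and $\nu$-twists trivialize, but your more careful verification of these is certainly not misplaced.
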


\subsubsection{}
If the inducing maximal standard parabolic subgroup is not $P,$  one cannot guarantee that the twisted Jacquet module is non-zero as Theorem \ref{productoftwocharcatersintro} has shown. If one starts with a generic representation of $G_{n+1},$ we can prove the following result.  
\begin{theorem}\label{nonzeroTJMgenericcomponentintro}
Let $\rho\in \Irr(G_{n+1})$ be generic and $\eta\in \Irr(G_{n-1})$ be any representation. Then, 
\begin{enumerate}
	\item $(\rho\times \eta) _{N,\psi}\neq 0$ and,
	\item $(\eta\times \rho) _{N,\psi} \neq 0.$
\end{enumerate}
\end{theorem}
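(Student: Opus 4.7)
The plan is to apply Theorem~\ref{necsuffintro} directly to the inducing data $\rho_1\otimes\rho_2$. Under the identification $M_{r,2n-r} = G_r\times G_{2n-r}$, the subgroup $N_k'$ appearing there factors as $N_k' = N(k,1)\times N(k,2)$ (with $N(k,1), N(k,2)$ as in Theorem~\ref{intromaintheorem}) and the character $\psi_k'$ factors as $\psi_{k,1}\otimes\psi_{k,2}$; consequently
\[
(\rho_1\otimes\rho_2)_{N_k',\psi_k'} \;\cong\; (\rho_1)_{N(k,1),\psi_{k,1}}\otimes(\rho_2)_{N(k,2),\psi_{k,2}},
\]
which is non-zero if and only if each tensor factor is non-zero. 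Hence it suffices, for each of parts~(1) and~(2), to exhibit a single $k$ in the admissible range $\max\{0,r-n\}\le k\le\lfloor r/2\rfloor$ for which both factors are non-zero.

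The unifying idea is to choose $k$ equal to the lower endpoint $\alpha = \max\{0,r-n\}$. At this extremal value one of the block-sizes defining $N(k,1)$ or $N(k,2)$ collapses to zero, so one of the two subgroups is trivial, its coinvariants are the underlying representation, and half of the non-vanishing condition is automatic. Concretely, for part~(1) one has $r = n+1$ so $\alpha = 1$; taking $k = 1$ gives $n-r+k = 0$, so $N(1,2)\subset G_{n-1}$ is trivial and $\eta_{N(1,2),\psi_{1,2}} = \eta\neq 0$. For part~(2) one has $r = n-1$ so $\alpha = 0$; taking $k = 0$ gives $N(0,1)\subset G_{n-1}$ trivial and $\eta_{N(0,1),\psi_{0,1}} = \eta\neq 0$.

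It remains in each case to verify non-vanishing of the twisted Jacquet module of the generic representation $\rho$ with respect to the surviving pair, namely $(N(1,1),\psi_{1,1})$ in part~(1) and $(N(0,2),\psi_{0,2})$ in part~(2). Both $N(1,1)$ and $N(0,2)$ are subgroups of the standard upper-triangular unipotent subgroup $U$ of $G_{n+1}$. Inspecting the block structure, the only non-zero superdiagonal entry of an element of $N(1,1)$ is the $(1,2)$-entry $x$, and the only non-zero superdiagonal entry of an element of $N(0,2)$ is the $(n,n+1)$-entry $v$; in both cases the restriction to the relevant subgroup of the standard non-degenerate Whittaker character $\theta(u) = \psi_0\bigl(\sum_{i=1}^{n}u_{i,i+1}\bigr)$ is precisely the character $\psi_{k,j}$ in question. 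By functoriality of twisted coinvariants under an inclusion of compatible pairs, there is a canonical linear surjection $\rho_{N(k,j),\psi_{k,j}}\twoheadrightarrow\rho_{U,\theta}$. The target is non-zero because $\rho$ is generic, so the source is non-zero, and Theorem~\ref{necsuffintro} then yields the claim.

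The only substantive work in this plan is the two character-restriction checks, which amount to routine block-matrix bookkeeping; I do not foresee any real obstacle there. The conceptual content is that the extremal choice $k=\alpha$ reduces the non-vanishing criterion to a single Whittaker-type coinvariant of $\rho$, which is exactly what genericity supplies.
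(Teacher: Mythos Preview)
Your proposal is correct and follows essentially the same approach as the paper: you choose $k=\alpha$ so that one of the two twisted Jacquet factors trivializes, and then use that $N(1,1)$ and $N(0,2)$ sit inside the full unipotent $U_{n+1}$ with the Whittaker character restricting correctly, so genericity of $\rho$ forces the remaining factor to be non-zero. The paper packages the subgroup-restriction step as a separate lemma (Lemma~\ref{genericlemma}) and phrases the surjection via the containment $\Hom_{U}(\rho,\theta)\subset\Hom_{N(k,j)}(\rho,\psi_{k,j})$ rather than as a quotient map on coinvariants, but the content is identical.
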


\subsubsection{Conjecture on non-vanishing of $\pi_{N,\psi}$}
We conclude the article  with  the following conjecture by Dipendra Prasad classifying  the smooth irreducible representations of $GL_{2n}(F)$ with a non-zero twisted Jacquet module. 
\begin{conjecture}[D. Prasad]\label{DPConjectureintro}
Let $\pi$ be an irreducible smooth representation of $G_{2n}$ and let $\rho=\rec_F^{2n}(\pi)$ denote its Langlands parameter. Then, the twisted Jacquet module $\pi_{N,\psi} = 0$ if and only if the $L$-function $L(s, \pi, Ad)=L(s,\rho\otimes \rho^{\vee})$ has poles of order greater than or equal to $n,n-1,\dots, 1$ at $s=1,2,\dots, n$ respectively.
\end{conjecture}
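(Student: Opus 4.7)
My plan is to attack the conjecture by reducing to the Langlands/Zelevinsky classification of irreducible smooth representations of $G_{2n}$ and matching the two combinatorial conditions --- non-vanishing of $\pi_{N,\psi}$ on the one hand and the stated pole orders of $L(s,\pi,\mathrm{Ad})$ on the other --- segment by segment. First I would write the Langlands parameter $\rho = \rec_F^{2n}(\pi)$ in the form $\rho = \bigoplus_{i=1}^{m} \sigma_i \otimes \mathrm{Sp}(a_i)$, where each $\sigma_i$ is an irreducible representation of the Weil group $W_F$ (with unramified twists absorbed) and $\mathrm{Sp}(a)$ denotes the $a$-dimensional irreducible representation of $SL_2(\C)$. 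Correspondingly, $\pi$ is realised as the Langlands quotient of a standard module built from the generalized Steinberg representations $\mathrm{St}(\tau_i, a_i)$ attached to supercuspidals $\tau_i$ with $\rec(\tau_i)=\sigma_i$.

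On the $L$-function side, the adjoint $L$-function decomposes as
\[
L(s,\pi,\mathrm{Ad}) \;=\; \prod_{i,j} L\bigl(s,\ \sigma_i\otimes\sigma_j^{\vee}\ \otimes\ \mathrm{Sp}(a_i)\otimes\mathrm{Sp}(a_j)\bigr),
\]
and applying Clebsch--Gordan to $\mathrm{Sp}(a_i)\otimes \mathrm{Sp}(a_j)$, together with the standard evaluation of $L(s,\chi\otimes\mathrm{Sp}(m))$ for unramified $\chi$, reduces the pole-order condition at $s=1,\ldots,n$ to an explicit combinatorial condition on the multiset $\{(\sigma_i,a_i)\}$ --- essentially a count, for each integer shift, of pairs $(i,j)$ with $\sigma_i\simeq\sigma_j\,\nu^{?}$ and with $a_i,a_j$ in the relevant overlap range.

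For the Jacquet side I would proceed by induction on $n$, using Theorem \ref{intromaintheorem} to analyse $\pi_{N,\psi}$ when $\pi$ is a subquotient of a principal series $i_{P_{r,2n-r}}^{G_{2n}}(\rho_1\otimes\rho_2)$. Combined with Theorem \ref{necsuffintro} this reduces the non-vanishing question to iterated non-vanishing of twisted Jacquet modules of $\rho_1,\rho_2$ taken with respect to the smaller pairs $(N'_k,\psi'_k)$. Known inputs include: Theorem \ref{nonzeroTJMgenericcomponentintro} for the generic case, Theorem \ref{LQintro} and Corollary \ref{SMintro} for Langlands quotients of Speh-type modules, Theorem \ref{nonzeroTJMintro} for products of two representations of $G_n$, and Theorem \ref{productoftwocharcatersintro} as the basic vanishing input. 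One then hopes to show that the resulting non-vanishing criterion on the multisegment data of $\pi$ coincides precisely with the pole condition derived in the previous paragraph.

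The hard part, as I see it, is the uniform matching of the two sides. The Jacquet-module analysis produces a filtration whose subquotients depend on the segment structure of the Zelevinsky data in a rigid geometric way, through the Mackey-style $(\tau_k,w_k)$ description of Theorem \ref{intromaintheorem} and through Bernstein--Zelevinsky derivatives; by contrast, the adjoint $L$-function sees only the underlying multiset $\{(\sigma_i,a_i)\}$ and is blind to how segments have been linked to form $\pi$. Reconciling these will likely require a careful case distinction between linked and unlinked segments (i.e., understanding how the operation ``Langlands quotient of $\pi_1\times\pi_2$'' interacts with the $(N,\psi)$-coinvariant functor), and, in the non-tempered setting, an application of the Aubert--Zelevinsky involution to transport the problem into a situation where Theorem \ref{intromaintheorem} applies most transparently. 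Given the subtlety already present in the case $n=2$ handled by D.\ Prasad, I would expect a fully uniform proof to be genuinely difficult, and I would attempt it first for classes where the inducing data are already understood here --- products of characters, the Langlands quotients $L_{\chi,\alpha}$, and generic inducing data --- before tackling the general multisegment case.
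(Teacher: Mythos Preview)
The statement you are attempting is presented in the paper as a \emph{conjecture}, not a theorem: the paper does not prove it. What the paper does is verify it in a handful of explicit special cases (Section~\ref{verify}) --- representations of the form $\one_{n+1}\times\sigma$, $\one_{n-1}\times\sigma$ with $\sigma$ generic tempered, the representation $\tau$ of $G_4$, and the representations $Q_{\chi,r}$, $Z_{\chi,r}$ --- by direct computation of both sides. Section~\ref{DPConjecture-othercases} then lists further cases (generic representations, products $\one_{n_1}\times\cdots\times\one_{n_m}$, Speh representations) where the conjecture is \emph{not} verified in the paper. So there is no proof in the paper to compare your proposal against.

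Your proposal is also not a proof but a programme, and you acknowledge as much. The main structural gap in your outline is this: Theorems~\ref{necsuffintro} and~\ref{intromaintheorem} compute $\pi_{N,\psi}$ for a \emph{full} principal series $\pi=i_{P_{r,2n-r}}^{G_{2n}}(\rho)$, not for an arbitrary irreducible subquotient of such an induction. Exactness of the twisted Jacquet functor tells you only that $\pi_{N,\psi}$ for an irreducible $\pi$ is a subquotient of the filtration you have computed; it does not tell you which piece it is, nor whether it vanishes. The paper's special-case verifications work precisely by exploiting extra structural information (length $2$, explicit Moeglin--Waldspurger data, realising the same $\pi$ inside several different principal series) to pin this down. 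Your inductive scheme would need, at each stage, a mechanism for tracking how the filtration of Theorem~\ref{intromaintheorem} splits among the Jordan--H\"older constituents of the induced representation, and no such mechanism is supplied. This is exactly the ``hard part'' you identify in your final paragraph, and it is not a technicality: it is the heart of the conjecture.
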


\subsection{Outline of the article}
\subsubsection{}
 Section \ref{NotationsPrelims} contains notations and some preliminary results that are used in this article. In Section \ref{MTC}, we recall results necessary for  Mackey theory and its application relevant to our work. The key ingredient to apply Mackey Theory is a suitable set of  double coset representatives  obtained by the authors in \cite{HV1}.  The relevant results of \cite{HV1} are summarized in Theorem \ref{summarydoublecosets}.  Section \ref{Structure} forms the core of the paper where the main results Theorems \ref{necsuffintro} and \ref{intromaintheorem} are proved.  The analogues of the main structure theorem in the case of a finite field is stated in Theorem \ref{mainfinite}.  
 
 \subsubsection{}
 In  Section \ref{Applications}, we give a few applications of our main results.  We derive the structure of twisted Jacquet modules of certain Bernstein-Zelevinsky products and prove Theorem \ref{productoftwocharcatersintro}.  As a consequence, we show the existence of Shalika model 
(Corollary \ref{SMintro}) for certain non-generic representations denoted $L_{\nu^{-\frac{\alpha}{2}},\alpha},$ by showing that their twisted Jacquet module is trivial via our computations. We also show that in  some instances  (including Theorems \ref{nonzeroTJMintro}, \ref{nonzeroTJMgenericcomponentintro}) the twisted Jacquet module is non-zero and obtain few other results on twisted Jacquet modules of subquotients of certain principal series representations. In Section 6, we present a conjecture of D. Prasad (Conjecture \ref{DPConjectureintro}) on non-vanishing  twisted Jacquet modules and verify that the conjecture holds in few special cases (Section \ref{generic-section} \& \ref{verify}).

\subsection{Few Remarks}
\subsubsection{}
We end this introduction with a few remarks on our strategy to the proof of Theorem \ref{intromaintheorem}. Let $S_{\psi}$ denote the stabilizer of the character $\psi$ in $P.$ As a first step, we have a complete set of double coset representatives $w_{k,l}$ (Theorem \ref{summarydoublecosets}) of $(S_{\psi},P_{r,2n-r})$ in $G_{2n}$ which were obtained in \cite{HV1}. The $w_{k,l}$'s are indexed by a pair of integers $k,l$ such that $\max\{0,r-n\}\leq k \leq \min\{r,n\}$ and $\max\{0,r-n\}\leq l \leq \min\{k,r-k\}.$  
 \subsubsection{}
 We note that the Geometric Lemma of Bernstein-Zelevinsky  \cite[\S 5.2 Theorem]{BZ1} gives a general recipe to compute the twisted Jacquet module of a principal series representation. To apply the Geometric Lemma,  the double coset representatives must satisfy a crucial property, namely decomposability conditions (\S 5.1(4) \cite{BZ1}). However, some of the $w_{k,l}$'s do not satisfy certain decomposability conditions (see Remark \ref{decomposability}) for $n\geq 3.$   While dealing with the case where $n=r=2$ (\cite{DPDegTIFR}, \cite[Proposition 7.1]{GS}), Prasad has adopted a strategy to compute the twisted Jacquet module without resorting to the Geometric Lemma, even though the double coset representatives  obtained there satisfy all the decomposability conditions. In view of this, our approach is inspired by the proof of Proposition 7.1 in \cite{GS}. 

 \subsubsection{}
We use Mackey theory to observe that the restriction to $S_{\psi}$ of a principal series representation $\pi=i_{P_{r,2n-r}}^{G_{2n}}(\rho)$ is glued from representations $\pi_{k,l}$ which are parameterized by $w_{k,l}$'s. At this stage, one may appeal to the exactness of the twisted  Jacquet functor to conclude that  $\pi_{N,\psi}$  is glued from $(\pi_{k,l})_{N,\psi}$'s.  The key ingredient to calculate $(\pi_{k,l})_{N,\psi}$  is Proposition \ref{TJM-reduction} which generalizes Lemma 7.2 and Lemma 7.3 of Prasad \cite{GS}. We remark here that a direct proof of the finite field analogue of Proposition \ref{TJM-reduction} is provided in Proposition \ref{TJM-reduction-analoguefinite}. We note that Proposition \ref{TJM-reduction}  and its finite group analogue Proposition \ref{TJM-reduction-analoguefinite} may be of independent interest to some readers.  Using Proposition \ref{TJM-reduction}, we obtain a  necessary and sufficient condition (Theorem \ref{necsuffintro}) for $\pi_{N,\psi}$ to be non-zero.

\subsubsection{}
However, to give the structure of $\pi_{N,\psi}$ as a filtration as in Theorem \ref{intromaintheorem}, we make an appeal to Theorem \ref{summarydoublecosets} (1) and (2). The key observation is that the  restriction of $\pi$ to $P$ is given by a filtration,  where the representations in the filtration, say $\pi_k,$ are indexed by an integer $k$ satisfying $\min\{0,r-n\}\leq k\leq \max\{r,n\}.$ But, upon further restriction to $S_{\psi},$ the semi-simplification of each $\pi_{k}$  consists of $\pi_{k,l}$'s where $\max\{0,r-n\}\leq l \leq \min\{k, r-k\} .$ The crucial point we note is that $(\pi_{k,l})_{N,\psi}$ is zero except possibly when $l=k$ and such an equality is possible only when $k\leq \lfloor \frac{r}{2}  \rfloor.$ The important fact to be noted here is that each $\pi_k$ on restriction to $S_{\psi}$ has at most one factor, namely $\pi_{k,k},$ which may contribute to the twisted Jacquet module. In summary, we exploit the precise  structure of the restriction of $\pi$ to $P$ along with the observation that $(\pi_{k,l})_{N,\psi}$ is non-zero possibly only when $l=k$ which also forces $k\leq \lfloor \frac{r}{2}\rfloor$ to deduce the final structure as in Theorem \ref{intromaintheorem}. We remark that this additional difficulty to obtain the filtration for  $\pi_{N,\psi}$ is present only when $n\geq 4.$ 
  
  \subsubsection{}
  Our description of the double coset representatives as $w_{k,l}$ indexed by a pair of integers $(k,l)$ has allowed us to compare our result with that of Prasad obtained when $n=r=2.$ Comparing our results with (\cite{DPDegIMRN} and \cite[Propsoition 7.1]{GS}), out of the four double coset representatives in this case, exactly two contributes to the twisted Jacquet module because they correspond to the double cosets $w_{0,0}$ and $w_{1,1}$ and, the inducing data satisfies the necessary and sufficient condition of Theorem \ref{necsuffintro} for both $k=0$ and $k=1.$

\section{Notations and Preliminaries}\label{NotationsPrelims}
\subsection{Notations}\label{Notations}
We shall collect some notations and terminology (\cite{BZ1}, \cite{BZ2}) in this section which we shall be using in this article. 
\subsubsection{Some generalities}
By an $\ell$-group, we mean a locally compact Hausdorff group which has a fundamental system of neighborhoods at the identity consisting of open compact subgroups. Recall that the category of smooth complex representations of an $\ell$-group $G$ is denoted by $\Alg(G).$ A character of an $\ell$-group $G$ is a smooth one dimensional representation of $G.$ Given a character $\chi$ of $G$ and $(\pi,V)\in \Alg(G),$ we shall sometimes denote  $\pi \otimes \chi$ by $\pi\chi.$ If $H$ is a closed subgroup of $G$ and $g\in G,$ $H^g$ shall denote the conjugate $gHg^{-1}$ of $H.$ Moreover, if $(\rho,W)\in \Alg(H)$ we have the representation $(\rho^{g},W)$ of $H^g$ defined for each $g\in G$ by $\rho^g(x)=\rho(g^{-1}xg)$ for $x\in H^g.$

\subsubsection{Module of an automorphism} Let $G$ be an $\ell$-group and $U$ be a closed subgroup of $G$ such that $U$ is a union of its compact open subgroups. Let $du$ be a left Haar measure on $U$ and $\sigma$ be an automorphism of $U.$ The module of the automorphism $\sigma,$  \cite[\S 1.7]{BZ2} denoted by ${\rm mod}(\sigma),$ is defined by the formula
\begin{equation*}
\int_U f(\sigma^{-1}(u)) du={\rm mod}(\sigma)\int_Uf(u) du
\end{equation*}
 for all locally constant and compactly supported functions $f$ on $U.$ Let $g\in G$ normalize $U.$ The module of the automorphism $u\mapsto gug^{-1}$ is denoted by ${\rm mod}_U(g).$ Note that ${\rm mod}_U$ is a character of the normalizer of $U$ in $G.$ The character ${\rm mod}_G$ will be denoted by $\delta_G$ and is called the modular character of $G.$

\subsection{Induction functors}\label{Inductionfunctors}
 \subsubsection{}
Let $H$ be a closed subgroup of an $\ell$-group $G$ and  $(\rho,W)\in \Alg(H).$ A function $f:G \to W$ is said to be smooth if there exists a compact open subgroup $K_f$ of $G$ such that $f(gk)=f(g)$ for all $k \in K_f$ and all $ g \in G.$ Define
\begin{equation*}
\Ind_H^G(W)=\{f:G\to W : f\mbox{ is smooth and }  f(hg)=\rho(h)(f(g)), \forall h\in H, \forall g\in G\}.
\end{equation*}

The action of  $G$ on $\Ind_H^G(W)$ by right translation will be denoted by $\Ind_H^G(\rho).$ The representation $(\Ind_H^G(\rho), \Ind_H^G(W))$ is called an unnormalized induction. Let 
\begin{equation*}
\ind_H^G(W)=\{f\in \Ind_H^G(W): f \mbox{ is compactly supported modulo } H\}.\end{equation*}
The restriction of the action of $G$ to the subspace $\ind_H^G(W)$ will be denoted by $\ind_H^{G}(\rho).$ The pair $(\ind_H^G(\rho), \ind_H^G(W))$ is called an unnormalized compactly induced representation.

 \subsubsection{}
For several purposes, one also uses normalized versions of the induction functors $\Ind_H^G$ and $\ind_H^G$. Let $I_H^G(W)=\{f:G\to W: f\mbox{ is smooth}, f(hg)=\delta_G^{-\frac{1}{2}}(h)\delta_H^{\frac{1}{2}}(h)\rho(h)(f(g)) \mbox{ for all } h\in H, g\in G\}.$ The corresponding representation $(I_H^G(\rho), I_H^G(W))$
obtained by the action of $G$ by right translation is called a normalized induced representation. Let $i_H^G(W)$ denote the subspace of $I_H^G(W)$ consisting of functions which are compactly supported modulo $H.$ The representation $(i_H^G(\rho), i_H^G(W))$ is called a normalized compactly induced representation.We note that $i_H^G(\rho)=\ind_H^G(\delta_H^{\frac{1}{2}} \delta_G^{-\frac{1}{2}}\rho).$ All the induction functors are exact \cite{BZ1}.

 \subsubsection{}
Let  $Q=LU$ be any parabolic subgroup of $G_n$ and $\rho\in \Alg(L).$ Consider $\rho$ as a representation of $Q$ by declaring it to be trivial on $U.$ The representation $\ind_Q^{G_n}(\rho)$ is a called a parabolically induced representation or a principal series representation. By a normalized parabolically induced representation $i_Q^{G_n}(\rho),$ we mean the parabolically induced representation $\ind_Q^{G_n}(\delta_Q^{1/2}\rho).$

\subsection{Twisted Jacquet module}\label{TJMdefn} 
 \subsubsection{}
 Suppose $U$ is a closed subgroup of an $\ell$-group $G$ such that $U$ is a union of its compact open subgroups. For $(\pi,V)\in \Alg(G)$ and a character $\theta$ of $U,$ let $V(U,\theta)$  denote the subspace  $\langle \pi(u)v-\theta(u)v: u\in U, v\in V\rangle$ and let $V_{U,\theta}$ denote the quotient $V/V(U,\theta).$ Let $N_G(U,\theta)=\{g\in G: gug^{-1}\in U \mbox{ and } \theta(gug^{-1})=\theta(u) \mbox { for all } u\in U\}.$ Let $L$ be a closed subgroup  of $G$ which is contained in $N_G(U,\theta).$ The group $L$ preserves $V(U,\theta)$ and therefore acts on the quotient $V_{U,\theta}$ via 
$$m\cdot u=\pi(m)(v)+V(U,\theta)$$ 
for $m\in L, v\in V.$ We shall denote this action of $L$ on $V_{U,\theta}$ by $\pi_{U,\theta}.$ The representation $(\pi_{U,\theta}, V_{U,\theta})$ is called the (unnormalized) twisted Jacquet module of $\pi$ taken with respect to $(U,\theta).$ 

 \subsubsection{}
 As with induction, we also have the notion of a normalized  twisted Jacquet module. The normalized twisted Jacquet module is the representation $({\rm mod}_U^{-\frac{1}{2}} \pi_{_{U,\theta}}, V_{_{U,\theta}}).$ We shall denote ${\rm mod}_U^{-\frac{1}{2}} \pi_{U,\theta}$ by $r_{U,\theta}(\pi).$ It is well known \cite[\S Proposition 2.35]{BZ1} that the functor $\pi\mapsto \pi_{U,\theta}$ is exact. For  $\pi\in \Alg(G_{2n})$, we note that the twisted Jacquet module $\pi_{N,\psi}$ ($\psi$ as in \eqref{definitionofpsi}) is isomorphic to the normalized twisted Jacquet module $r_{N,\psi}(\pi)$ as ${\rm mod}_N=\delta_P$ acts trivially on $\Delta G_n.$

\subsection{Some standard subgroups of $G_n$}
 \subsubsection{}
Let $F$ be a non-archimedean local field or a finite field. If $H$ is a subgroup of $G_n, \Delta H$ shall denote the subgroup $\left\{ \begin{pmatrix}
g&0\\0&g
\end{pmatrix}: g\in H\right\}$ of $G_{2n}.$ Let $F^{n}$ denote the $n$-dimensional vector space over $F$  with its standard basis $\{e_1,  \ldots , e_{n}\}.$ Let $\Omega_k(F^{n})$ denote the set of all $k$-dimensional subspaces of $F^{n}.$
The group $G_{n}$ acts transitively on $\Omega_k(F^{n})$ and the stabilizer in $ G_{n}$ of $\langle e_1, \ldots , e_k \rangle $ under this action is the maximal standard parabolic subgroup given by \begin{equation*}
	P_{k,n-k} = \left\{ \begin{pmatrix}  g_1 & x\\ 0 & g_2 \end{pmatrix} : g_1 \in G_{k} , g_2 \in G_{n-k}, x \in \mathcal{M}_{k,n-k} \right\}.
\end{equation*}

\subsubsection{}
Let \begin{equation*}
M_{k,n-k} = \left\{ \begin{pmatrix}  g_1 & 0\\ 0 & g_2 \end{pmatrix} : g_1 \in G_{k} , g_2 \in G_{n-k} \right\} \text{ and } N_{k,n-k} = \left\{ \begin{pmatrix}  I_k & x\\ 0 & I_{n-k} \end{pmatrix} :  x \in \mathcal{M}_
{k,n-k}\right\}.
\end{equation*} 
One has the Levi decomposition $P_{k,n-k} = M_{k,n-k} N_{k,n-k},$ where  $M_{k,n-k} $ and $N_{k,n-k}$ are respectively called the Levi subgroup and  the unipotent radical of $P_{k,n-k}.$ More generally, if $n_j(1\leq j\leq k) $ are positive integers such that $\sum n_j=n,$ one has the standard parabolic subgroup $P_{n_1,\dots,n_k},$  its Levi subgroup $M_{n_1,\dots,n_k}\simeq G_{n_1}\times \dots G_{n_k}$ and its unipotent radical is denoted by $N_{n_1,\dots,n_k}$ (see \cite[\S 3]{BZ1}).

\subsection{The character $\psi$ of $N$ and associated subgroups}
 \subsubsection{}
Recall that we denote the maximal standard parabolic subgroup $P_{n,n}$ of $G_{2n}$ by $P,$ its Levi subgroup $M_{n,n}$ by $M$ and its unipotent radical $N_{n,n}$ by $N$ respectively.  We also recall from Section \ref{summaryofresults} that throughout this article, we fix a non-trivial additive character $\psi_0$ of $F$  and  $\psi:N\to \C^{\times}$ is the character defined by \eqref{definitionofpsi}. Since $N$ is normal in $P,$ $P$ acts by conjugation on $N$ and hence on  $\hat{N},$ the group of characters of $N.$ We will denote the stabilizer of $\psi$ in $P$ and $M$ by $S_{\psi}$ and $M_{\psi}$ respectively. We observe that

\begin{equation}\label{definitionSpsi}
	S_{\psi} = \left\{ \begin{pmatrix}
		g & x\\ 0 & g 
	\end{pmatrix} : g \in G_n, x \in \mathcal{M}_n\right\},	
\end{equation}  
and 

\begin{equation}\label{definitionMpsi}
	M_{\psi} = \left\{ \begin{pmatrix}
		g & 0\\ 0 & g 
	\end{pmatrix} : g \in G_n\right\}.	
\end{equation}  
The subgroup $S_{\psi}$ is known as the Shalika subgroup of $G_{2n}.$ We note that $S_{\psi}=M_{\psi}\ltimes N$ and $M_{\psi}=\Delta G_n,$ the diagonal copy of $G_n$ embedded in $G_{2n}.$
 
\subsection{Preliminary Results}
 \subsubsection{}
We collect a few preliminary results that we shall need in the sequel.
\begin{lemma}\label{ConjTJM}
Suppose $G$ is an $\ell$-group. Let $U$ be a closed subgroup of $G$ which is a union of its compact open subgroups and $\theta:U\to\C^{\times}$ be a character. Assume that $L$ is a closed subgroup of $G$ such that $L\cap U=\{e\}$ and $L$ normalizes $U$ and $\theta.$   The following statements hold:

\begin{enumerate}
	\item For $g\in G$ and $(\rho,V)\in \Alg(L),$ we have an isomorphism $(\rho_{_{U,\theta}})^{g}\simeq (\rho^{g})_{U^g,\theta^g}$
	of $L^g$-modules.
	\item If $\chi$ is a character of $LU$ which is trivial on $U$ and $(\rho,V)\in \Alg(LU),$ we have an isomorphism 
	$(\chi\otimes\rho )_{_{U,\theta}}\simeq \chi\otimes \rho_{_{U,\theta}} $  of $L$-modules.
\end{enumerate}

\end{lemma}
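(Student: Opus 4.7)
The plan is to derive both statements from a single observation: the space of coinvariants $V_{U,\theta}$ is a quotient of the underlying vector space $V$ whose definition depends only on how the subgroup $U$ acts and on the character $\theta$. Consequently, both parts should come down to identifying the underlying spaces via the identity map and then tracking only the residual actions on the quotient.

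For part (1), I would take the identity map $\phi : V \to V$ and argue it descends to the claimed isomorphism. The key verification is that the defining subspaces coincide: any generator $\rho(u)v - \theta(u)v$ of $V(U,\theta)$ can be rewritten, via the substitution $u = g^{-1}u'g$ with $u' = gug^{-1} \in U^g$, as $\rho^g(u')v - \theta^g(u')v$, which is a generator of $V(U^g, \theta^g)$ computed relative to $\rho^g$; and conversely. Hence $\phi$ induces a linear isomorphism of the two quotient spaces. The $L^g$-equivariance is then immediate from the definitions: for $l' \in L^g$ and $\bar v$ in the quotient,
\[
(\rho_{U,\theta})^g(l')(\bar v) = \overline{\rho(g^{-1}l'g)v} = \overline{\rho^g(l')v} = (\rho^g)_{U^g,\theta^g}(l')(\bar v).
\]

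For part (2), the same strategy applies. Since $\chi$ is trivial on $U$, for every $u \in U$ and $v \in V$ we have
\[
(\chi \otimes \rho)(u)v - \theta(u)v = \chi(u)\rho(u)v - \theta(u)v = \rho(u)v - \theta(u)v,
\]
so the subspace $V(U,\theta)$ is the same whether computed with respect to $\rho$ or with respect to $\chi \otimes \rho$. The underlying spaces of $(\chi \otimes \rho)_{U,\theta}$ and $\rho_{U,\theta}$ therefore coincide, and for $l \in L$ we compute $(\chi \otimes \rho)_{U,\theta}(l)(\bar v) = \chi(l)\overline{\rho(l)v} = \chi(l)\cdot \rho_{U,\theta}(l)(\bar v)$, which is exactly the action of $\chi \otimes \rho_{U,\theta}$, yielding the desired $L$-module isomorphism.

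No serious obstacle is anticipated; the proof is a direct unwinding of the definitions in Section \ref{TJMdefn}. The only bookkeeping to be careful about is, on the right-hand side of (1), verifying that $L^g$ normalizes both $U^g$ and the character $\theta^g$ so that $(\rho^g)_{U^g,\theta^g}$ is genuinely an $L^g$-module; but this is immediate from the assumption that $L$ normalizes $U$ and $\theta$, together with the definition of conjugation.
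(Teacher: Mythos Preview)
Your proposal is correct and follows essentially the same approach as the paper: both parts reduce to the observation that the defining subspace $V(U,\theta)$ (respectively $V(U^g,\theta^g)$, $V_{\chi\otimes\rho}(U,\theta)$) coincides with the original one, after which the equivariance of the identity map on the quotient is immediate. Your write-up is in fact more detailed than the paper's, which simply states these subspace equalities and leaves the rest implicit.
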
 
\begin{proof}
To prove (1), it is enough to observe that $V(U^g,\theta^g)=V(U,\theta).$ For proving (2), let $V_{\rho}(U,\theta)$ and $V_{\chi\otimes \rho}(U,\theta)$ denote the space of $(U,\theta)$-invariants of the representations $(\rho,V)$ and $(\chi\otimes\rho, V)$ respectively. It is easy to see that $V_{\chi\otimes\rho}(U,\theta)=V_{\rho}(U,\theta),$ from which (2) follows. 
\end{proof}

\begin{lemma}\label{TJM split in tensor product}
For $i\in \{1,2\},$ let $G_i$ be an $\ell$-group with closed subgroups $L_i,U_i$  such that $G_i = L_i \ltimes U_i$ and $U_i$  is a union of its compact open subgroups.   Suppose also that $\theta_i$ is a character of $U_i$ such that $L_i$ normalizes $U_i$ and $\theta_i$ for $i\in \{1,2\}.$ Put $U=U_1 \times U_2$ and $\theta = \theta_1 \times \theta_2,$ i.e., $\theta(u_1,u_2)=\theta_1(u_1)\theta_2(u_2)$ for $(u_1,u_2)\in U_1\times U_2.$ Then, we have the following:
	\begin{enumerate}
		\item If $\pi \in \Alg(G_1 \times G_2),$ then $ (\pi_{U_2, \theta_2})_{U_1, \theta_1} = \pi_{U,\theta}.$
		\item If $\pi_i \in \Alg(G_i)$ for $i\in \{1,2\}$ then $(\pi_1)_{U_1,\theta_1} \otimes \pi_2 \cong (\pi_1\otimes \pi_2)_{U_1 \times \{1\}, \theta_1 \times 1}$ and $\pi_1 \otimes (\pi_2)_{U_2,\theta_2}\cong (\pi_1\otimes \pi_2)_{\{1\}\times U_2, 1\times \theta_2}$. 
	\end{enumerate}
\end{lemma}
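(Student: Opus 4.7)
The plan is to establish both parts by explicitly identifying the subspace of relations that is being quotiented out, and then invoking a standard isomorphism theorem for quotients. Throughout, I will write $(\pi,V)$ for the representation in question and freely identify $U_1 \times \{1\}$ with $U_1$ (and similarly for $U_2$), noting that these two subgroups of $U = U_1 \times U_2$ commute inside $G_1 \times G_2$.

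For part (1), the key claim is the equality of subspaces
\[
V(U,\theta) \;=\; V(U_1 \times \{1\},\, \theta_1 \times 1) \;+\; V(\{1\} \times U_2,\, 1 \times \theta_2).
\]
The inclusion $\supseteq$ is immediate from the definitions. For the reverse inclusion, I would write any $u = (u_1, u_2) \in U$ as a product of commuting elements $(u_1, 1)$ and $(1, u_2)$, and use the telescoping identity
\[
\pi(u)v - \theta(u)v \;=\; \pi(u_1,1)\bigl(\pi(1,u_2)v - \theta_2(u_2)v\bigr) + \theta_2(u_2)\bigl(\pi(u_1,1)v - \theta_1(u_1)v\bigr),
\]
which writes each generator of $V(U,\theta)$ as a sum of one element from each of the two subspaces on the right. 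Once this equality is established, part (1) follows from the third isomorphism theorem: the quotient $V/V(U,\theta)$ equals $V_{U_2,\theta_2}$ modulo the image of $V(U_1,\theta_1)$, which by definition is $(\pi_{U_2,\theta_2})_{U_1,\theta_1}$. One then checks compatibility of the residual $L_1 \times L_2$-actions, which is routine because both actions come from restricting the original $G_1 \times G_2$-action.

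For part (2), I would argue directly. Taking $\pi = \pi_1 \otimes \pi_2$ and $U = U_1 \times \{1\}$ with $\theta = \theta_1 \times 1$, observe that $(\pi_1 \otimes \pi_2)(u_1, 1)(v_1 \otimes v_2) - \theta_1(u_1)(v_1 \otimes v_2) = \bigl(\pi_1(u_1)v_1 - \theta_1(u_1)v_1\bigr) \otimes v_2$. Hence the subspace generated equals $V_1(U_1,\theta_1) \otimes V_2$, and the quotient is canonically isomorphic to $(V_1/V_1(U_1,\theta_1)) \otimes V_2 = (\pi_1)_{U_1,\theta_1} \otimes \pi_2$ as $L_1 \times G_2$-modules. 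The second isomorphism in (2) is entirely symmetric.

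There is no real obstacle here; the only mild subtlety is the telescoping identity in part (1), which requires using that the two subgroups commute inside $G_1 \times G_2$ (a direct product, not a semidirect product at this level). Everything else is formal bookkeeping about quotients of vector spaces with compatible group actions, and I would keep the write-up short by organizing it around the single subspace identity above.
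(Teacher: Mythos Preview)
Your argument is correct. The paper does not give its own proof but simply cites \cite[\S 1.9, Proposition (c) and (g)]{BZ2}; your telescoping identity and the identification $(V_1\otimes V_2)(U_1\times\{1\},\theta_1\times 1)=V_1(U_1,\theta_1)\otimes V_2$ are precisely the elementary content behind those propositions, so you have unpacked the citation rather than taken a genuinely different route.
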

The proof of Lemma \ref{TJM split in tensor product} follows from \cite[\S 1.9, Proposition (c) and (g)]{BZ2}.
 \subsubsection{}
 Next, we recall  the following result from \cite[(VI.5.1.3), pp. 191]{renard} which shall be needed later. 
\begin{theorem}\label{Renard-general}
Suppose $J$ is an $\ell$-group and $L,U$ are closed subgroups of $J$ such that $L$ and $U$ are closed in $J,$ $U$ is normal in $J$ and $U$ is a union of its compact open subgroups. 	Assume that $LU$ is closed in $J.$ Let $(\rho, W)\in \Alg(L).$ Consider the Jacquet module $\rho_{_{L\cap U, 1}}$ as a representation of the group $L/L \cap U\cong LU/U \subset J/U.$ Then, there is a natural isomorphism of $J/U$-modules:
	\begin{equation}\label{GeneralJM}
		(\ind_L^J(\rho))_{_{U,1}} \cong \ind_{LU/U}^{J/U}((\rho)_{_{L\cap U,1}}\otimes \delta^{-1})	
	\end{equation}
	where $\delta$ is the modular character of the action of $L$ on $L\cap U\backslash U.$
\end{theorem}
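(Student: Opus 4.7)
The plan is to prove the isomorphism by combining induction in stages with Mackey theory in a degenerate setting and a descent of compact induction to a quotient group. Since $L\subseteq LU$ and $LU$ is closed in $J$, transitivity of compact induction gives $\ind_L^J(\rho)\cong \ind_{LU}^J(\sigma)$ where $\sigma:=\ind_L^{LU}(\rho)\in\Alg(LU)$. The proof then splits into two independent steps: computing $\sigma_{U,1}$ as an $L/(L\cap U)$-module, and showing that $U$-coinvariants of a compact induction from $LU$ to $J$ descend to compact induction from $LU/U$ to $J/U$.

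For the first step, I restrict $\sigma$ to $U$. Since $LU$ consists of a single $(L,U)$-double coset with stabilizer $L\cap U$ at the identity, Mackey theory yields $\sigma|_U\cong \ind_{L\cap U}^U(\rho|_{L\cap U})$. A direct integration over the quotient $(L\cap U)\backslash U$ identifies the $U$-coinvariants of $\ind_{L\cap U}^U(\rho|_{L\cap U})$ with $\rho_{L\cap U,1}$ as a vector space. Tracking the $L$-action (via its normalization of $L\cap U$ together with its action on the measure on $(L\cap U)\backslash U$) through this identification produces the modular twist $\delta^{-1}$: the twist reflects the discrepancy between the measure-theoretic realization used in the unnormalized compact induction and the $L$-equivariant realization of the Jacquet module. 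One checks that $\delta$ is trivial on $L\cap U$, so the twist descends to a well-defined character of $LU/U\cong L/(L\cap U)$.

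For the second step, normality of $U$ in $J$ lets us realize $\ind_{LU}^J(\tau)$, for any $\tau\in\Alg(LU)$, as compactly supported (modulo $LU$) $\tau$-equivariant sections on $J$, with the right $U$-action converted via conjugation into a left action on the fiber $\tau$. Passing to $U$-coinvariants descends the fiber to $\tau_{U,1}$ and the base to $LU/U\backslash J/U$, yielding a natural isomorphism
\[
(\ind_{LU}^J\tau)_{U,1}\;\cong\;\ind_{LU/U}^{J/U}(\tau_{U,1})
\]
by exactness and naturality of the coinvariant functor (and the fact that the compact-support condition mod $LU$ descends to compact support mod $LU/U$ in $J/U$). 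Applying this with $\tau=\sigma$ and inserting the result of step one produces the formula of the theorem.

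The main obstacle is the modular-character bookkeeping in step one: one must verify that the natural surjection $\sigma\twoheadrightarrow\rho_{L\cap U,1}$ intertwines the $L$-action only after a twist by exactly $\delta^{-1}$, which amounts to a careful change-of-variables computation on $(L\cap U)\backslash U$ combined with the definitions of the modular characters of $L$, $LU$, and $U$. Once this normalization is nailed down, the rest of the argument is formal and essentially notation-tracking against the standard properties of the functors $\ind$ and $(\,\cdot\,)_{U,1}$.
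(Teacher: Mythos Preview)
The paper does not supply its own proof of this statement; it records the result as Theorem~2.4 and refers the reader to Renard's book \cite[(VI.5.1.3), pp.~191--195]{renard} for the argument. So there is no in-paper proof to compare against directly.

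Your strategy---factor $\ind_L^J$ through $LU$ via transitivity, compute the $U$-coinvariants of $\sigma=\ind_L^{LU}(\rho)$ by Mackey restriction to $U$ and integration over $(L\cap U)\backslash U$, then descend $\ind_{LU}^J$ modulo the normal subgroup $U$---is a correct and standard route to results of this shape. Both ingredients you isolate are genuine: the descent isomorphism $(\ind_{LU}^J\tau)_{U,1}\cong \ind_{LU/U}^{J/U}(\tau_{U,1})$ for $U$ normal and contained in $LU$, and the identification $(\ind_L^{LU}\rho)_{U,1}\cong \rho_{L\cap U,1}\otimes(\text{modular twist})$ via the integration map $f\mapsto \int_{(L\cap U)\backslash U} f(u)\,d\bar u$ (well-defined in $W_{L\cap U,1}$ since both $U$ and $L\cap U$ are unions of compact opens, hence unimodular, so the quotient carries a $U$-invariant measure).

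One point to watch: when you track the $L$-action through the integration map, the change of variables $u\mapsto l u l^{-1}$ on $(L\cap U)\backslash U$ produces a factor equal to the \emph{module} of that conjugation automorphism, and whether this is $\delta$ or $\delta^{-1}$ in the statement depends on the convention chosen for ``modular character of the action.'' The paper itself is slightly inconsistent here (compare the phrasing in Theorem~2.4 with the explicit ``inverse of the module'' in Proposition~4.1), so you should fix a convention at the outset and carry it through rather than asserting the sign. Also, your Mackey step $\sigma|_U\cong\ind_{L\cap U}^U(\rho|_{L\cap U})$ is an honest isomorphism (not merely a filtration) because $LU$ is a single $(L,U)$-double coset, but you should check that compact support modulo $L$ in $LU$ restricts to compact support modulo $L\cap U$ in $U$; this uses that $LU$ is closed and $U$ is exhausted by compact opens. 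With those details filled in, the argument is complete.
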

For a proof of Theorem \ref{Renard-general}, see \cite[pp. 191-195]{renard}.

\section{Double cosets and Applications}\label{MTC}
Let $r$ be an integer such that $1\leq r<2n$ and let $\rho\in \Alg(M_{r,2n-r}).$ Let $\pi$ denote the normalized parabolically induced representation $i_{P_{r,2n-r}}^{G_{2n}}(\rho).$
The first step in determining the structure of $\pi_{N,\psi}$ is to determine its restriction to $S_{\psi}.$ Naturally, this brings us to the problem of determining a complete set of  $(S_{\psi},P_{r,2n-r})$-double coset representatives in $G_{2n}.$  In Section \ref{DC}, we present the double coset decomposition for the space $S_{\psi}\backslash G_{2n}/P_{r,2n-r}.$ Some subgroups relevant for Mackey theory are introduced in Section \ref{Subgroups} .  The restriction of $i_{P_{r,2n-r}}^{G_{2n}}(\rho)$ to  the subgroups $S_{\psi}$ and $P$ are determined in Section \ref{restrictions}.
\subsection{Double coset representatives}\label{DC}
\subsubsection{}
We summarize the main results of \cite{HV1} obtained by the authors  regarding  the double coset space $S_{\psi}\backslash G_{2n}/P_{r,2n-r}.$

\begin{theorem}\label{summarydoublecosets} 	
	Let $n$ and  $r$ be integers such that $1\leq r<2n.$ 
Put $\alpha=\max\{0, r-n\}$ and $\gamma=\min\{r,n\}.$  Let $k$ and $l$ be integers such that $\alpha\leq k \leq \gamma$ and $\alpha \leq l\leq \min\{k, r-k\}.$  Put
	\begin{equation}\label{definitionwkl}
	w_{k,l}:=\begin{bmatrix}
		I_k & 0& 0& 0&0 &0 \\
		0&0 & 0& I_{n-k}&0 & 0\\
		0& I_l &0 &0 &0 &0 \\
		0&0 &0 &0 & I_{k-l}&0 \\
		0& 0& I_{r-(k+l)}&0 &0 &0 \\
		0& 0&0 &0 &0 & I_{n-r+l}
	\end{bmatrix},
\end{equation}

\begin{equation}\label{Jkalphadefn}
	J_{k,\alpha}:= w_{k,\alpha}P_{r,2n-r}w_{k,\alpha}^{-1} \cap P,
\end{equation}
and, 
\begin{equation}
	\sigma_{k,l}:=\begin{pmatrix}
		I_k& 0& 0& 0& 0 &0 &0 &0\\
		0&I_{n-k}&0&0&0&0&0&0\\
		0& 0& I_{\alpha}&0&0&0&0&0\\
		0&0&0&0&0&I_{l-\alpha}&0&0\\
		0&0&0&I_{k-l}&0&0&0&0\\
		0&0&0&0&0&0&I_{r-(k+l)}&0\\
		0&0&0&0&I_{l-\alpha}&0&0&0\\
		0&0&0&0&0&0&0&I_{n-r+\alpha}
	\end{pmatrix}.
\end{equation}

Then, the following statements hold:
\begin{enumerate}
	\item For each $r$ such that $1\leq r<2n,$ $\{w_{k,\alpha}: \alpha\leq k \leq  \gamma\}$ is a complete set of double coset representatives for $P\backslash G_{2n}/ P_{r,2n-r}.$
	\item For each $k$ such that $\alpha\leq k \leq \gamma,$ $\{\sigma_{k,l}: \alpha\leq l\leq \min\{k, r-k\}\}$ is a complete set of double coset representatives for $S_{\psi}\backslash P/ J_{k,\alpha}.$
	\item  $\sigma_{k,l} \circ w_{k,\alpha}=w_{k,l}$ for each $k$ and $l$ satisfying $\alpha\leq k \leq \gamma, \alpha\leq l\leq \min\{k,r-k\}.$
	\item $\{w_{k,l}: \alpha\leq k \leq \gamma, \alpha\leq l \leq \min\{k,r-k\}\}$ is a complete set of double coset representatives for $S_{\psi}\backslash G_{2n}/ P_{r,2n-r}.$
\end{enumerate}		
\end{theorem}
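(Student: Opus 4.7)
The plan is to prove the four parts in order, exploiting a linear-algebraic interpretation of each double coset space. For Part (1), I would identify $P\backslash G_{2n}/P_{r,2n-r}$ with $G_{2n}$-orbits on pairs $(V,W)$ with $\dim V = n$ and $\dim W = r$, using that $P$ and $P_{r,2n-r}$ are the stabilizers of the standard subspaces $V_0 := \langle e_1,\ldots,e_n\rangle$ and $W_0 := \langle e_1,\ldots,e_r\rangle$ respectively. By the standard orbit classification, such pairs are parameterized by the single invariant $\dim(V \cap W)$, which ranges over $\{\max\{0,r-n\},\ldots,\min\{r,n\}\}$. It then remains to read off the images of $e_1,\ldots,e_r$ under $w_{k,\alpha}$ from its block form and verify $\dim(w_{k,\alpha}W_0 \cap V_0) = k$; this is immediate, as exactly $k$ of the columns of $w_{k,\alpha}$ indexed by $1,\ldots,r$ land in $V_0$.

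For Part (3), the identity $\sigma_{k,l}\cdot w_{k,\alpha} = w_{k,l}$ is a routine block-matrix multiplication using the partition of $2n$ induced by the eight-block decomposition of $\sigma_{k,l}$; I would simply track where each identity block is moved. Part (4) then follows by a standard telescoping argument combining (1), (2), (3): given $g \in G_{2n}$, Part (1) places it uniquely in $Pw_{k,\alpha}P_{r,2n-r}$, so write $g = pw_{k,\alpha}q$ with $p \in P, q \in P_{r,2n-r}$. Applying Part (2) to $p$ gives $p \in S_\psi\sigma_{k,l}J_{k,\alpha}$ for a unique $l$. Since $J_{k,\alpha}\subseteq w_{k,\alpha}P_{r,2n-r}w_{k,\alpha}^{-1}$, the extra $J_{k,\alpha}$-factor is absorbed into $P_{r,2n-r}$ on the right after conjugation by $w_{k,\alpha}$. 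The identity of Part (3) then yields $g \in S_\psi\sigma_{k,l}w_{k,\alpha}P_{r,2n-r} = S_\psi w_{k,l}P_{r,2n-r}$. Uniqueness of the pair $(k,l)$ is inherited directly from the uniqueness in (1) and (2).

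The main obstacle is Part (2), which is the only step requiring genuine work. I would first describe $J_{k,\alpha} = w_{k,\alpha}P_{r,2n-r}w_{k,\alpha}^{-1}\cap P$ explicitly in block form, by conjugating the block upper-triangular pattern of $P_{r,2n-r}$ by the monomial matrix $w_{k,\alpha}$ and intersecting with the $(n,n)$-block upper triangular shape of $P$. Next, I would give $S_\psi\backslash P/J_{k,\alpha}$ a geometric interpretation as $S_\psi$-orbits on the $P$-orbit of $w_{k,\alpha}W_0$, i.e.\ on the variety of $r$-dimensional subspaces $W \subset F^{2n}$ with $\dim(W\cap V_0) = k$. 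Since $S_\psi = \Delta G_n \ltimes N$ sits inside $P = M\ltimes N$ with $M\simeq G_n\times G_n$ acting diagonally, a secondary invariant arises reflecting how $W$ interacts with this diagonal structure; concretely, one extracts a further intersection dimension $l$ taking values in $\{\alpha,\ldots,\min\{k,r-k\}\}$. I would conclude by verifying, through a direct matrix computation, that $\sigma_{k,l}$ realizes each admissible value of this invariant and that distinct values produce distinct double cosets, which simultaneously establishes existence and uniqueness of the parameter $l$.
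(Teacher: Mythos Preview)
The paper does not prove this theorem: it is quoted from the companion paper \cite{HV1} and stated here without argument. So there is no proof in the present paper to compare against, and your proposal is in effect a sketch of what \cite{HV1} presumably contains.

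Your treatment of Parts (1), (3), and (4) is correct and standard. Part (1) is the usual parabolic Bruhat decomposition via the intersection-dimension invariant, Part (3) is a block computation, and Part (4) is the standard two-step refinement of double cosets.

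The gap is in Part (2). You correctly identify it as the only substantive step, but your sketch stops at ``a secondary invariant arises \ldots\ one extracts a further intersection dimension $l$'' without saying \emph{which} intersection. To make this a proof you must do two things. First, name the invariant: identifying both $V_0$ and $F^{2n}/V_0$ with $F^n$, set $W_1 = W\cap V_0$ (dimension $k$) and $W_2 = \mathrm{image}(W)$ in $F^{2n}/V_0$ (dimension $r-k$); the invariant is $l=\dim(W_1\cap W_2)$, and the range $\alpha\le l\le\min\{k,r-k\}$ then drops out of the dimension formula in $F^n$. Second, and this is where the work lies, you must show that $(k,l)$ is a \emph{complete} invariant, i.e.\ that $S_\psi$ acts transitively on subspaces with prescribed $(k,l)$. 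The $\Delta G_n$-part of $S_\psi$ handles the pair $(W_1,W_2)$ up to simultaneous $G_n$-action, but $W$ is not determined by $(W_1,W_2)$ alone: one must also account for the choice of lift of $W_2$ inside $W$, and it is the unipotent radical $N$ that moves among such lifts. Verifying that $N$ acts transitively on the lifts (for fixed $W_1,W_2$) is a genuine computation you have not indicated. Without it, Part (2) is an outline, not a proof.
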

	
\subsection{Some Subgroups}\label{Subgroups}
\subsubsection{} 
Let $n$ and  $r$ be integers such that $1\leq r<2n.$ 
Put $\alpha=\max\{0, r-n\}$ and $\gamma=\min\{r,n\}.$  Let $k$ and $l$ be integers such that $\alpha\leq k \leq \gamma$ and $\alpha \leq l\leq \min\{k, r-k\}.$  
  For such $k$ and $l,$ we determine the subgroup $w_{k,l}P_{r,2n-r}w_{k,l}^{-1}\cap S_{\psi}.$ In a majority of our calculations, we choose to write a block matrix $\begin{pmatrix} 
	a & b\\
	c & d
\end{pmatrix}$ where $a\in \mathcal{M}_{n_1,n_3}, b\in \mathcal{M}_{n_1,n_4}, c\in \mathcal{M}_{n_2,n_3}$ and $d\in \mathcal{M}_{n_2,n_4}$ by  

\renewcommand{\kbldelim}{(}
\renewcommand{\kbrdelim}{)}
\[
\kbordermatrix{
	&n_3& n_4\\
	n_1& a & b\\
	n_2& c & d}
,\]
specifically indicating the sizes of the blocks which play a crucial role nevertheless.
Following this notation, we write $p\in P_{r,2n-r}$  as 
\renewcommand{\kbldelim}{(}
\renewcommand{\kbrdelim}{)}
\[
p = \kbordermatrix{
	&k&l& r-k-l& & n-k &k-l&n-r+l\\
	k&g_1&g_2& g_3 & \VR x_1 & x_2&x_3\\
	l&g_4&g_5& g_6 & \VR x_4&x_5& x_6\\
	r-k-l& g_7 & g_8 & g_9& \VR x_7 & x_8 & x_9\\
	\hline
	n-k & 0&0&0& \VR h_1 & h_2 & h_3 \\
	k-l & 0& 0& 0& \VR h_4 & h_5 & h_6\\
	n-r+l & 0& 0& 0& \VR h_7 & h_8 & h_9
}.
\]
\subsubsection{}
We then have,
\renewcommand{\kbldelim}{(}
\renewcommand{\kbrdelim}{)}
\begin{equation}\label{wklP-conjugate}
	w_{k,l}p  w_{k,l}^{-1}= \kbordermatrix{
		&k&n-k& & l&k-l&r-k-l&n-r+l\\
		k&g_1&x_1& \VR g_2&x_2&g_3&x_3\\
		n-k&0&h_1& \VR 0&h_2&0&h_3\\
		\hline
		l&g_4&x_4& \VR g_5&x_5&g_6&x_6\\
		k-l&0&h_4& \VR 0&h_5&0&h_6\\
		r-k-l&g_7&x_7& \VR g_8&x_8&g_9&x_9\\
		n-r+l&0&h_7& \VR 0&h_8&0&h_9\\
	}.
\end{equation}

\subsubsection{}
To describe succinctly certain subgroups that shall frequently arise in our computations, we define subsets $A_{k,l}, B_{k,l}\subset \mathcal{M}_n$ as follows:

\renewcommand{\kbldelim}{(}
\renewcommand{\kbrdelim}{)}
\begin{equation}\label{AklandBkl}
	A_{k,l}=\left\{ \kbordermatrix{
		&l&k-l&r-k-l&n-r+l \\
		l&a&r&d& e\\
		k-l&0&r'&0&e'\\
		r-k-l&0&0&b&f\\
		n-r+l&0&0&0&c 
	}\right\},	 \ \ \
	\renewcommand{\kbldelim}{(}
	\renewcommand{\kbrdelim}{)}
	B_{k,l}= \left\{\kbordermatrix{
		&l&k-l&r-k-l&n-r+l\\
		l&x&m&y&u\\
		k-l& x'&m'&y'&u'\\
		r-k-l&0&w'&0&w\\
		n-r+l &0&z'&0&z
	}\right\}.
\end{equation}

\subsubsection{}

Put $S_{k,l}:= w_{k,l} P_{r,2n-r} w_{k,l}^{-1} \cap S_{\psi}.$ Also, put $H_{k,l}:=S_{k,l}\cap M =w_{k,l} P_{r,2n-r} w_{k,l}^{-1} \cap M_{\psi}$ and $N^{k,l}:= S_{k,l}\cap N=w_{k,l} P_{r,2n-r} w_{k,l}^{-1} \cap N.$
Then, 
\begin{equation}\label{Skldefinition}
	S_{k,l}= \left\{\begin{pmatrix}
		g&X\\
		0&g
	\end{pmatrix}: g\in A_{k,l}\cap G_n, X\in B_{k,l}\right\},
\end{equation}

\begin{equation}
	H_{k,l}=\left\{\begin{pmatrix}
		g&0\\
		0&g
	\end{pmatrix}:g\in A_{k,l}\cap G_n\right\},
\end{equation}
and 
\begin{equation}\label{HklcapN}
	N^{k,l}= \left\{\begin{pmatrix}
		I_n&X\\
		0&I_n
	\end{pmatrix}: X\in B_{k,l}\right\}.
\end{equation}

\subsubsection{}
Note that for each $k,$  $l$ is an integer such that  $\alpha \leq l \leq \min\{k,r-k\}$ which in particular implies $l\leq k.$  
If $l<k,$ we define a certain subgroup $U^{k,l}$ of $N^{k,l}$  by
\begin{equation}\label{Ukl-definition}
	U^{k,l}:=\left\{\kbordermatrix{
		&l&k-l&r-k-l&n-r+l & & l&k-l&r-k-l&n-r+l\\
		l&I_l&0&0&0 & \VR 0&0&0&0\\
		k-l&0&I_{k-l}&0&0 & \VR 0 &x&0&0\\
		r-k-l&0&0&I_{r-k-l}&0& \VR 0&0&0&0\\
		n-r+l&0&0&0&I_{n-r+l}& \VR 0&0&0&0\\
		\hline
		l&0&0&0&0& \VR I_l&0&0&0\\
		k-l&0&0&0&0& \VR 0&I_{k-l}&0&0\\
		r-k-l&0&0&0&0& \VR 0&0&I_{r-k-l}&0\\
		n-r+l&0&0&0&0& \VR 0&0&0&I_{n-r+l}
	} : x\in \mathcal{M}_{k-l}\right\}.
\end{equation}
The subgroup $U^{k,l}$ and its conjugate $w_{k,l}^{-1}U^{k,l} w_{k,l}$ will play a crucial role while determining  $\pi_{N,\psi}.$ 

\subsubsection{} We determine $w_{k,l}^{-1}U^{k,l} w_{k,l}.$  Let us write $w_{k,l}$ as
\[w_{k,l}=\kbordermatrix{
	&l&k-l&l&r-k-l & & r-k-l&n-r+l&k-l&n-r+l\\
	l&I_l&0&0&0 & \VR 0&0&0&0\\
	k-l&0&I_{k-l}&0&0 & \VR 0 &0&0&0\\
	r-k-l&0&0&0&0& \VR I_{r-k-l}&0&0&0\\
	n-r+l&0&0&0&0& \VR 0&I_{n-r+l}&0&0\\
	\hline
	l&0&0&I_l&0& \VR 0&0&0&0\\
	k-l&0&0&0&0& \VR 0&0&I_{k-l}&0\\
	r-k-l&0&0&0&I_{r-k-l}& \VR 0&0&0&0\\
	n-r+l&0&0&0&0& \VR 0&0&0&I_{n-r+l}
}.\]
It can be verified that 
\begin{equation}\label{Ukl-conjugate}
	w_{k,l}^{-1}U^{k,l}w_{k,l}=\left\{\kbordermatrix{
		&l&k-l&l&r-k-l & & r-k-l&n-r+l&k-l&n-r+l\\
		l&I_l&0&0&0 & \VR 0&0&0&0\\
		k-l&0&I_{k-l}&0&0 & \VR 0 &0&x&0\\
		l&0&0&I_{l}&0& \VR 0&0&0&0\\
		r-k-l&0&0&0&I_{r-k-l}& \VR 0&0&0&0\\
		\hline
		r-k-l&0&0&0&0& \VR I_{r-k-l}&0&0&0\\
		n-r+l&0&0&0&0& \VR 0&I_{n-r+l}&0&0\\
		k-l&0&0&0&0& \VR 0&0&I_{k-l}&0\\
		n-r+l&0&0&0&0& \VR 0&0&0&I_{n-r+l}
	}: x\in \mathcal{M}_{k-l}\right\}.
\end{equation}

\subsubsection{}
We shall need the following lemma which records a few properties of the subgroups defined in this section so far and the character $\psi_{|_{N^{k,l}}}.$
\begin{lemma}\label{structureofSkl} 
	Let $n$ and  $r$ be integers such that $1\leq r<2n$ and put $\alpha=\max\{0, r-n\}$ and $\gamma=\min\{r,n\}.$
	For each $k,l$ satisfying $\alpha\leq k \leq \gamma, \alpha\leq l \leq \min\{k,r-k\},$ we have the following:
	\begin{enumerate}
		\item The group $N^{k,l}$ is a normal subgroup of $S_{k,l}$ and $S_{k,l}=H_{k,l}\ltimes N^{k,l}.$
		\item $S_{k,l}$ normalizes the character $\psi_{|_{N^{k,l}}}.$
		\item If $l<k,$ the character $\psi$ is a non-trivial character of $U^{k,l}.$
	\end{enumerate}  
\end{lemma}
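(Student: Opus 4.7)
The plan is to verify each of the three assertions by direct block-matrix manipulation, using the fact that $S_{k,l}$ is, by its definition in \eqref{Skldefinition} as the intersection $w_{k,l}P_{r,2n-r}w_{k,l}^{-1}\cap S_{\psi}$, a subgroup of $G_{2n}$, together with the observation that $S_{k,l}\subseteq S_{\psi}\subseteq P$.

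For part (1), I would first observe that by definition $N^{k,l} = S_{k,l}\cap N$, and since $N$ is normal in $P$ with $S_{k,l}\subseteq P$, it follows immediately that $N^{k,l}$ is normal in $S_{k,l}$. The semidirect-product decomposition $S_{k,l} = H_{k,l}\ltimes N^{k,l}$ then requires only that $H_{k,l}\cap N^{k,l} = \{e\}$, which is clear from the block forms (one factor forces $X = 0$ while the other forces $g = I_n$), and that every element of $S_{k,l}$ factors as a product of one from $H_{k,l}$ and one from $N^{k,l}$. For the latter, the identity $\begin{pmatrix}g & X\\0 & g\end{pmatrix} = \begin{pmatrix}g & 0\\0 & g\end{pmatrix}\begin{pmatrix}I_n & g^{-1}X\\0 & I_n\end{pmatrix}$ does the job: the first factor lies in $H_{k,l}$, and the second, being a product of two elements of the group $S_{k,l}$ that also lies in $N$, automatically belongs to $S_{k,l}\cap N = N^{k,l}$.

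For part (2), I only need to check that conjugation of $\psi|_{N^{k,l}}$ by any $s\in S_{k,l}$ is trivial. Writing $s = \begin{pmatrix}g & X\\0 & g\end{pmatrix}$ and $u = \begin{pmatrix}I_n & Y\\0 & I_n\end{pmatrix}\in N^{k,l}$, a short block computation gives $sus^{-1} = \begin{pmatrix}I_n & gYg^{-1}\\0 & I_n\end{pmatrix}$. Invariance of the trace under conjugation then yields $\psi(sus^{-1}) = \psi_0(\tr(gYg^{-1})) = \psi_0(\tr(Y)) = \psi(u)$, as required.

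For part (3), the description \eqref{Ukl-definition} shows that a general element of $U^{k,l}$ has the form $\begin{pmatrix}I_n & X\\0 & I_n\end{pmatrix}$, where the only non-zero block of $X$ is an arbitrary $x\in \mathcal{M}_{k-l}$ sitting in the second block along the block-diagonal of the $(l,k-l,r-k-l,n-r+l)$-partition. Hence $\psi$ evaluated on this element equals $\psi_0(\tr(x))$, and when $l<k$ the trace map $\mathcal{M}_{k-l}\to F$ is surjective; combined with the non-triviality of $\psi_0$, this forces $\psi|_{U^{k,l}}$ to be non-trivial. None of the three parts presents any real obstacle; the whole statement is a bookkeeping result whose role is to prepare for the Mackey-theoretic and Jacquet-module arguments in the following sections.
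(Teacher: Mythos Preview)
Your proof is correct and follows essentially the same route as the paper's. The only notable difference is in part~(1): the paper verifies by direct block computation that $g^{-1}X\in B_{k,l}$ and $gXg^{-1}\in B_{k,l}$, whereas you bypass these checks by observing that $N^{k,l}=S_{k,l}\cap N$ is normal because $N\trianglelefteq P$, and that the unipotent factor $\begin{pmatrix}I_n & g^{-1}X\\ 0 & I_n\end{pmatrix}$ lies in $S_{k,l}\cap N$ simply because it is a product of two elements of $S_{k,l}$ (here you are implicitly using that $\begin{pmatrix}g&0\\0&g\end{pmatrix}\in H_{k,l}\subseteq S_{k,l}$, which is clear since $H_{k,l}=S_{k,l}\cap M$ is a group containing $\Delta(A_{k,l}\cap G_n)$). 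This group-theoretic shortcut is perfectly valid and arguably cleaner; for parts~(2) and~(3) your arguments coincide with the paper's, the paper merely stating~(2) as obvious from $S_{k,l}\subseteq S_\psi$ while you spell out the trace-invariance computation.
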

\begin{proof}
	It is trivial to see that $H_{k,l}\cap N^{k,l}=\{e\}.$ Let $A=\begin{pmatrix}
		g &X\\0&g
	\end{pmatrix}\in S_{k,l}.$ From \eqref{Skldefinition}, $g\in A_{k,l}\cap G_n$ and $X\in B_{k,l}.$  It can be verified by  a direct calculation  that $g^{-1}X\in B_{k,l}$ and therefore,  we can write $A=\begin{pmatrix}
		g &0\\0&g
	\end{pmatrix}\begin{pmatrix}
		I_n &g^{-1}X\\0&I_n
	\end{pmatrix}\in H_{k,l}N^{k,l}.$  Also, by a similar calculation we can see that $gXg^{-1}\in B_{k,l}.$  This proves that $H_{k,l}$ normalizes $N^{k,l}$  and  consequently $N^{k,l}$ is normal in $S_{k,l},$ proving (1). Since $S_{\psi}$ normalizes $\psi,$ the statement (2) is obvious. Finally, (3) follows from \eqref{Ukl-definition} since $\psi_0$ is non-trivial. 
\end{proof}

\begin{remark}\label{decomposability}
	In the terminology of \cite[\S 5.1]{BZ2}, statement (1) in Lemma \ref{structureofSkl} means that $S_{k,l}$ is decomposable with respect to the pair $(M_{\psi},N).$ 	We note that for $n=2$ and $r=2,$  there are $4$ double coset representatives, namely, $\{w_{0,0}, w_{1,0}, w_{1,1}, w_{2,0}\}.$ We refer the reader to \cite[Propsoition 7.1]{GS} to compare with our double coset representatives. We note that the double coset representatives appearing in 
	\cite[Propsoition 7.1]{GS} satisfy the decomposability conditions \cite[\S 5.1 (4)]{BZ2} in the hypothesis of Bernstein-Zelevinsky's Geometric Lemma. However, this is not the case when $n\geq 3.$ For example, if we consider the representative, $w_{2,1}$ when $n=r\geq 3$  the decomposability condition $w_{k,l}^{-1}S_{\psi}w_{k,l}\cap P=(w_{k,l}^{-1}S_{\psi}w_{k,l}\cap M) \cdot (w_{k,l}^{-1}S_{\psi}w_{k,l} \cap N)$  fails to hold.
\end{remark}

\subsection{Restriction of $i_{P_{r,2n-r}}^{G_{2n}}(\rho)$ to $S_{\psi}$ and $P$}\label{restrictions}
\subsubsection{}
We have the following theorem which describes the restriction of a parabolically induced representation to the subgroup $S_{\psi}.$
\begin{theorem}\label{RestrictiontoSpsi}
Let $n$ and  $r$ be integers such that $1\leq r<2n.$ 
		Put $\alpha=\max\{0, r-n\}$ and $\gamma=\min\{r,n\}.$
	Let $\pi=i_{P_{r,2n-r}}^{G_{2n}}(\rho)$ where $\rho\in \Alg(M_{r,2n-r}).$ For each $k$ and $l$ satisfying $\alpha \leq k \leq \gamma$ and $\alpha\leq l \leq \min\{k,r-k\},$ let $w_{k,l}$ denote the double coset representative of $S_{\psi}\backslash G_{2n}/P_{r,2n-r}$ defined by \eqref{definitionwkl} and let $S_{k,l}$ be as in \eqref{Skldefinition}.  Then, the restriction of $\pi$ to the subgroup $S_{\psi}$ is glued from the $S_{\psi}$-modules $\ind_{S_{k,l}}^{S_{\psi}}([\rho\otimes \delta_{P_{r,2n-r}}^{1/2}]^{w_{k,l}}).$ 
\end{theorem}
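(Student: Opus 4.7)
The plan is to prove Theorem~\ref{RestrictiontoSpsi} as a direct application of Mackey theory for the pair $(S_{\psi}, P_{r,2n-r})$ in $G_{2n}$, using the double coset decomposition supplied by Theorem~\ref{summarydoublecosets}(4). Since $\pi = i_{P_{r,2n-r}}^{G_{2n}}(\rho) = \ind_{P_{r,2n-r}}^{G_{2n}}(\rho \otimes \delta_{P_{r,2n-r}}^{1/2})$ by definition of normalized induction, the task reduces to describing the restriction to $S_{\psi}$ of an unnormalized compactly induced representation.

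First, I would invoke the standard Mackey filtration for smooth representations of $\ell$-groups (see \cite[\S 1.9 and \S 5.1]{BZ2}): if one orders the finite set of double coset representatives $\{w_{k,l}\}$ compatibly with the closure ordering on the orbits $S_{\psi} w_{k,l} P_{r,2n-r}$ in $G_{2n}$, the restriction $\Res_{S_{\psi}}^{G_{2n}}(\pi)$ acquires a filtration whose successive subquotients are isomorphic to
\begin{equation*}
\ind_{S_{\psi} \cap w_{k,l} P_{r,2n-r} w_{k,l}^{-1}}^{S_{\psi}}\bigl((\rho \otimes \delta_{P_{r,2n-r}}^{1/2})^{w_{k,l}}\bigr).
\end{equation*}
Theorem~\ref{summarydoublecosets}(4) guarantees that this set of orbits is finite, so only finitely many such subquotients occur.

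Next, I would identify the stabilizer subgroups. By our definition in \eqref{Skldefinition}, we have $S_{k,l} = w_{k,l} P_{r,2n-r} w_{k,l}^{-1} \cap S_{\psi}$, which is precisely the group appearing as the induction subgroup in each piece. Substituting this into the Mackey formula, the $(k,l)$-th subquotient becomes $\ind_{S_{k,l}}^{S_{\psi}}([\rho \otimes \delta_{P_{r,2n-r}}^{1/2}]^{w_{k,l}})$, matching the statement of the theorem.

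The only non-routine point is choosing the ordering of the double cosets so that the Mackey filtration is well-defined; this requires knowing that each orbit $S_{\psi} w_{k,l} P_{r,2n-r}$ is locally closed in $G_{2n}$ and that one can enumerate them in a way compatible with closure. Since we are dealing with a finite collection of $(P,Q)$-double cosets in the $\ell$-group $G_{2n}$ (or a finite group in the finite field case), a standard argument refining the enumeration by dimension (or by length in the Weyl group combinatorics underlying our indexing by $(k,l)$) provides such an ordering, and the subquotient formula above then yields exactly the claimed gluing. In summary, the proof is essentially an application of Mackey theory, with Theorem~\ref{summarydoublecosets}(4) supplying the double coset data and \eqref{Skldefinition} identifying the intersection subgroups; no further computation is required at this stage.
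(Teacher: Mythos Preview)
Your proposal is correct and follows essentially the same approach as the paper: the paper's proof is a one-line appeal to Theorem~\ref{summarydoublecosets}(4) together with Mackey theory (citing \cite[\S 3.2.1]{Omer} and \cite[Proposition 1.17]{BD}), and you have simply unpacked what that Mackey-theoretic argument entails.
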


The proof of Theorem \ref{RestrictiontoSpsi} follows from Theorem \ref{summarydoublecosets}(4) and  Mackey theory for which we refer the reader to \cite[\S 3.2.1]{Omer} and \cite[Proposition 1.17]{BD}. 

\subsubsection{}
 In the proof of our main structure theorem, apart from Theorem \ref{RestrictiontoSpsi}, we shall use the following result which gives a filtration of a principal series when restricted to the subgroup $P.$ In view of Theorem \ref{summarydoublecosets}(1), it is natural that the restriction under consideration is parameterized by $w_{k,\alpha}.$
\begin{theorem}\label{RestrictiontoP}
Let $n$ and  $r$ be integers such that $1\leq r<2n.$ 
	Put $\alpha=\max\{0, r-n\}$ and $\gamma=\min\{r,n\}.$	Let $\pi=i_{P_{r,2n-r}}^{G_{2n}}(\rho)$ where $\rho\in \Alg(M_{r,2n-r}).$ Put $J_{k,\alpha}=w_{k,\alpha}P_{r,2n-r}w_{k,\alpha}^{-1}\cap P.$ Then, $\pi_{|_P}$ is given by a filtration 
\begin{equation}\label{filtrationofrestoP}
	0\subset W_{\alpha}\subset \dots \subset W_{\gamma}=\pi_{|_{P}}
\end{equation}
where for each $k$ with $\alpha \leq k \leq \gamma$, we have an isomorphism $W_{k}/W_{k-1}\cong \ind_{J_{k,\alpha}}^{P}([\rho\otimes \delta_{P_{r,2n-r}}^{1/2}]^{w_{k,\alpha}})$ of $P$-modules.
\end{theorem}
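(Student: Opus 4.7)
The plan is to apply the standard Mackey-type filtration for the restriction of a compactly induced representation to a closed subgroup, using as input the double coset decomposition in Theorem \ref{summarydoublecosets}(1). Write $\pi = \ind_{P_{r,2n-r}}^{G_{2n}}(\rho \otimes \delta_{P_{r,2n-r}}^{1/2})$. By Theorem \ref{summarydoublecosets}(1), the double cosets $P\backslash G_{2n}/P_{r,2n-r}$ are exactly $\{P w_{k,\alpha} P_{r,2n-r} : \alpha \leq k \leq \gamma\}$. To obtain a filtration (as opposed to merely a glueing statement as in Theorem \ref{RestrictiontoSpsi}), the essential extra ingredient is an ordering of these orbits by open subsets of $G_{2n}$.

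First, I would verify that the indexing by $k$ orders the double cosets by closure: identify $P\backslash G_{2n}/P_{r,2n-r}$ with the $G_{2n}$-orbits on $\Omega_n(F^{2n}) \times \Omega_r(F^{2n})$, where $k$ records the dimension of the intersection of the two subspaces. The locus where this intersection has dimension at least $k$ is cut out by vanishing of appropriate minors and is therefore closed, so $U_k := \bigsqcup_{k' \leq k} P w_{k',\alpha} P_{r,2n-r}$ is open in $G_{2n}$, with $U_\alpha \subset U_{\alpha+1} \subset \cdots \subset U_\gamma = G_{2n}$, and each $U_k \setminus U_{k-1}$ is the single double coset $P w_{k,\alpha} P_{r,2n-r}$. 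In the finite field case, openness is automatic (discrete topology) and the filtration trivially splits into a direct sum, but the same indexing applies uniformly.

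Next, I would define $W_k \subset \pi|_P$ to be the subspace of functions whose support is contained in $U_k$, giving the required filtration $0 = W_{\alpha-1} \subset W_\alpha \subset \cdots \subset W_\gamma = \pi|_P$. The quotient $W_k/W_{k-1}$ is identified with functions supported in the single saturated orbit $P w_{k,\alpha} P_{r,2n-r}$, and the standard orbit-by-orbit Mackey identification (see \cite[Prop.~1.17]{BD} or \cite[\S 3.2.1]{Omer}) gives a $P$-module isomorphism
\begin{equation*}
W_k/W_{k-1} \;\cong\; \ind_{P \,\cap\, w_{k,\alpha} P_{r,2n-r} w_{k,\alpha}^{-1}}^{P}\!\bigl([\rho \otimes \delta_{P_{r,2n-r}}^{1/2}]^{w_{k,\alpha}}\bigr),
\end{equation*}
which is exactly $\ind_{J_{k,\alpha}}^{P}([\rho \otimes \delta_{P_{r,2n-r}}^{1/2}]^{w_{k,\alpha}})$ by the definition \eqref{Jkalphadefn} of $J_{k,\alpha}$.

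The main obstacle is the openness assertion for $U_k$ in the non-archimedean case; once this is phrased in terms of closedness of the degenerate-intersection locus on the product of Grassmannians and pulled back via the natural projection $G_{2n} \to \Omega_n(F^{2n}) \times \Omega_r(F^{2n})$, the rest of the argument is formal and follows the pattern used to prove Theorem \ref{RestrictiontoSpsi}, with the only difference that now the cells have been arranged in a partial order and one obtains a filtration rather than an abstract glueing.
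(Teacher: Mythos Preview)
Your approach is correct and is precisely the standard Mackey-theory argument the paper has in mind; the paper in fact gives no proof of this theorem, relying implicitly on the same references \cite[\S 3.2.1]{Omer} and \cite[Proposition 1.17]{BD} cited for Theorem \ref{RestrictiontoSpsi}. Your additional work in ordering the double cosets via the Grassmannian closure relation (so as to upgrade the glueing to an honest filtration) is exactly the point the paper leaves implicit, and your identification of $k$ with the intersection dimension is the right invariant to use.
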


\section{Structure of twisted Jacquet modules}\label{Structure}
This section forms the core of our article.  In Section \ref{Non-zero criteria}, we obtain a necessary and sufficient condition for the twisted Jacquet module of a principal series representation to be non-zero (Theorem \ref{nonzeroTJM}). We prove a first version of the main structure theorem in Section \ref{structurep-adic} (Theorem \ref{mainpadic}) in the non-archimedean case. In Section \ref{componentwise}, we prove a refined version of Theorem \ref{mainpadic} suitable for applications (Theorem \ref{maincomponentwise} \& Corollary \ref{normalisedcomponentwisecor}).  The finite field analogue of the main structure theorem (Theorem \ref{maincomponentwise} ) obtained in the non-archimedean case is established in Section \ref{structure-finite} (Theorem \ref{mainfinite}).

\subsection{Criteria for the twisted Jacquet module to be non-zero}\label{Non-zero criteria}
\subsubsection{}
In this subsection, we will give a necessary and sufficient condition for  $(i_{P_{r,2n-r}}^{G_{2n}}(\rho))_{N,\psi}$ to be non zero where $\rho\in \Alg(M_{r,2n-r}).$ Write $\pi=i_{P_{r,2n-r}}^{G_{2n}}(\rho).$ By Theorem \ref{RestrictiontoSpsi} and exactness of the twisted Jacquet functor, to determine whether $\pi_{N,\psi}$ is non-zero or not, it is sufficient to calculate $(\ind_{S_{k,l}}^{S_{\psi}}([\rho\otimes \delta_{P_{r,2n-r}}^{1/2}]^{w_{k,l}}))_{N,\psi}$ for each $k$ and $l.$ As a first step of our calculation, we shall show that  $(\ind_{S_{k,l}}^{S_{\psi}}([\rho\otimes \delta_{P_{r,2n-r}}^{1/2}]^{w_{k,l}}))_{N,\psi}=0$ whenever $l\neq  k,$ i.e., when $l<k.$ To achieve this, we require the following variant of Theorem \ref{Renard-general} which holds the key to our further calculation.

\begin{proposition}\label{TJM-reduction}
	Assume that $G_0$ is an $\ell$-group, $G,H,U, U_0$ are closed subgroups of $G_0$ such that $H$ is a closed subgroup of $G$ and $U_0$ is a closed subgroup of $U.$ Suppose also that $G$  normalizes $U,$ $G\cap U=\{e\},$ $U$ is a union of its compact open subgroups and $\theta:U\to \C^{\times}$ is a character which is normalized by $G.$  Let $\theta_0$ denote the restriction of $\theta$ to $U_0$ and let $H$ normalize $U_0$ and $\theta_0.$ Assume further that $GU,HU_0$ are closed subgroups of $G_0$ such that $HU_0$ is a closed subgroup of $GU.$  Then, for any  $(\tau,W)\in \Alg(HU_0)$ we have the following isomorphism of $G$-modules:
	
	\begin{equation}\label{GeneralTJM}
		(\ind_{HU_0}^{GU}(\tau))_{_{U,\theta}} \cong \ind_{H}^{G}(\tau_{_{U_{_0},\theta_{_0}}}\otimes \delta^{-1})	
	\end{equation}
	where $\delta$ is the inverse of the module of the automorphism induced by the action of $H$ on $U_0 \backslash U.$
\end{proposition}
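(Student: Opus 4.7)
The plan is to reduce Proposition~\ref{TJM-reduction} to the untwisted statement of Theorem~\ref{Renard-general} by a character-twisting trick that converts $(U,\theta)$-coinvariants into ordinary $(U,1)$-coinvariants. As a first step I would extend $\theta$ to a character $\tilde{\theta}$ of the ambient group $GU$: since $G\cap U=\{e\}$ and $G$ normalizes $U$, we have $GU=G\ltimes U$, and the prescription $\tilde{\theta}(gu):=\theta(u)$ defines a character (using that $G$ normalizes $\theta$) that is trivial on $G$ and restricts to $\theta$ on $U$. The key observation is then an \emph{untwisting lemma}: for any $(\pi,V)\in\Alg(GU)$ there is a canonical isomorphism of $G$-modules
\[
V_{U,\theta}\;\cong\;(V\otimes\tilde{\theta}^{-1})_{U,1},
\]
because the relation subspaces $\langle \pi(u)v-\theta(u)v\rangle$ and $\langle (\tilde{\theta}^{-1}\pi)(u)v-v\rangle$ coincide (they differ only by the non-vanishing scalar $\theta(u)$), while the $G$-action is unaffected since $\tilde{\theta}|_G=1$.

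Next I would apply this lemma to $V=\ind_{HU_0}^{GU}(\tau)$, combined with the projection formula $V\otimes\tilde{\theta}^{-1}\cong\ind_{HU_0}^{GU}(\tau\otimes\tilde{\theta}^{-1}|_{HU_0})$, to reduce to an untwisted coinvariant of an induced representation. Since $U$ is normal in $GU$ and $HU_0$ is closed in $GU$ by hypothesis, Theorem~\ref{Renard-general} with $J=GU$ and $L=HU_0$ applies and gives
\[
\bigl(\ind_{HU_0}^{GU}(\tau\otimes\tilde{\theta}^{-1}|_{HU_0})\bigr)_{U,1}
\;\cong\;
\ind_{HU_0U/U}^{GU/U}\!\bigl((\tau\otimes\tilde{\theta}^{-1}|_{HU_0})_{HU_0\cap U,1}\otimes\delta_R^{-1}\bigr),
\]
where $\delta_R$ is the modular character produced by Theorem~\ref{Renard-general}.

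The last step is to identify all the data on the right. From $G\cap U=\{e\}$ and $H\subset G$ one deduces $H\cap U=\{e\}$, and then an elementary computation shows that if $hu_0\in U$ with $h\in H$, $u_0\in U_0$, then $h\in G\cap U=\{e\}$; this yields $HU_0\cap U=U_0$ and the canonical identifications $GU/U\cong G$, $HU_0U/U=HU/U\cong H$. A second application of the untwisting lemma, now to $U_0\subset HU_0$ with $\tilde{\theta}^{-1}|_{U_0}=\theta_0^{-1}$ and $\tilde{\theta}|_H=1$, identifies the inner coinvariant $(\tau\otimes\tilde{\theta}^{-1}|_{HU_0})_{U_0,1}\cong\tau_{U_0,\theta_0}$ as $H$-modules. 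Splicing these isomorphisms produces the desired description of $(\ind_{HU_0}^{GU}(\tau))_{U,\theta}$.

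The main obstacle I anticipate is reconciling the modular character twist: Theorem~\ref{Renard-general} naturally produces $\delta_R$ attached to the full $HU_0$-action on $U_0\backslash U$, whereas Proposition~\ref{TJM-reduction} records only the $H$-action. One must therefore verify that under the canonical identification $HU/U\cong H$ the restriction of $\delta_R^{-1}$ matches the character $\delta^{-1}$ appearing in the proposition, i.e., that the conjugation action of $U_0$ on $U_0\backslash U$ contributes trivially to the modular character. This is immediate in the unipotent settings of interest via unimodularity, but in the stated generality it requires careful bookkeeping using the standard multiplicativity of the module of an automorphism across a closed subgroup and the associated homogeneous space.
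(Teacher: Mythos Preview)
Your proposal is correct and follows essentially the same route as the paper: extend $\theta$ to a character of $GU$, untwist via $\pi_{U,\theta}\cong(\pi\otimes\tilde\theta^{-1})_{U,1}$ (the paper cites \cite[Remark~2.31]{BZ1} for this), pull the character inside the induction, apply Theorem~\ref{Renard-general} with $J=GU$ and $L=HU_0$, identify $GU/U\cong G$, $HU_0U/U\cong H$, $HU_0\cap U=U_0$, and untwist once more at the level of $U_0$. Your closing paragraph about reconciling $\delta_R$ with the $\delta$ of the statement is legitimate bookkeeping that the paper itself passes over without comment.
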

\begin{proof}
Since $G$ normalizes $U$ and $\theta,$ we can consider $\theta:GU\to \C^{\times}$ as a character by defining $\theta(gu)=\theta(u)$ for $g\in G, u\in U.$ Similarly, we can consider $\theta_0:HU_0\to \C^{\times}$ as a character. Also, for any $\pi\in \Alg(GU),$ by \cite[Remark 2.31]{BZ1}, there is an isomorphism $\pi_{_{U,\theta}} \simeq (\pi\otimes \theta^{-1})_{_{U,1}}$ of $G$-modules. In view of this, we obtain an  isomorphism	$(\ind_{HU_0}^{GU}(\tau))_{_{U,\theta}} \simeq (\ind_{HU_0}^{GU}(\tau)\otimes \theta^{-1})_{_{U,1}}$ of $G$-modules.	We recall the generality that for a character $\chi$ of an $\ell$-group $G'$ and a smooth representation $\sigma$ of a closed subgroup $H'$ of $G',$ one has $\chi\otimes \ind_{H'}^{G'}(\sigma)=\ind_{H'}^{G'}(\chi_{{|_{H'}}}\otimes \sigma).$ Applying this generality, $(\ind_{HU_0}^{GU}(\tau)\otimes \theta^{-1})_{_{U,1}}\simeq (\ind_{HU_0}^{GU}(\tau\otimes \theta_0^{-1}))_{_{U,1}}.$ Now, we apply  Theorem \ref{Renard-general} with $J=GU,L=HU_0,U=U$ and $\rho=\tau\otimes \theta_{0}^{-1}.$ It is easily verified that $J/U=GU/U\simeq G, LU/U=(HU_0)U/U=HU/H\simeq H$ and $ L\cap U=HU_0\cap U=U_0.$ Applying \eqref{GeneralJM}, we have an isomorphism of $G$-modules:
\begin{equation*}
(\ind_{HU_0}^{GU}(\tau\otimes \theta_0^{-1}))_{_{U,1}}\simeq \ind_{H}^{G}((\tau\otimes \theta_0^{-1})_{_{U_0,1}}\otimes \delta^{-1}),
\end{equation*}
where $\delta$ is the inverse of the module of automorphism of induced by the action of $H$ on $U_0 \backslash U.$ On the other hand, again by \cite[Remark 2.31]{BZ1} , $(\tau\otimes \theta_0^{-1})_{_{U_0,1}}\simeq \tau_{U_0,\theta_0}.$ Thus, clubbing all isomorphisms together, the proposition is proved.\end{proof}

\begin{remark}\label{DPLemmas}
We note that Proposition \ref{GeneralTJM} is a combined generalization of Prasad's lemmas stated in \cite[Lemmas 7.2 \& 7.3]{GS}  while considering the twisted Jacquet module of a representation of the form $i_{P_{2,2}}^{G_4}(\rho_1\otimes \rho_2).$
\end{remark}

\subsubsection{} We obtain the following corollary to Proposition \ref{TJM-reduction}, which says that the only double cosets $w_{k,l}$ which can  possibly contribute to $\pi_{N,\psi}$ are $w_{k,k}.$
\begin{corollary}\label{knotldoesnotcontribute}
Let $n$ and  $r$ be integers such that $1\leq r<2n.$ Put $\alpha=\max\{0, r-n\}$ and $\gamma=\min\{r,n\}.$ For each  integer $k$ and $l$ such that $\alpha \leq k \leq \gamma$ and $\alpha\leq l \leq \min\{k,r-k\},$ let $w_{k,l}$ (see \eqref{definitionwkl}) denote the corresponding double coset representative of $S_{\psi}\backslash G_{2n}/P_{r,2n-r}.$  If $l\neq k$ then  for any  $\rho \in \Alg(M_{r,2n-r})$ considered as a representation of $P_{r,2n-r}$ by inflating trivially across $N_{r,2n-r},$ one has 
\begin{equation*}\left(\ind_{S_{k,l}}^{S_{\psi}}([\rho\otimes \delta_{P_{r,2n-r}}^{1/2}]^{w_{k,l}})\right)_{N,\psi}=0.
\end{equation*}
\end{corollary}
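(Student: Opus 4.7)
\medskip

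\noindent\textbf{Proof plan for Corollary \ref{knotldoesnotcontribute}.}

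The plan is to apply Proposition \ref{TJM-reduction} to convert the twisted Jacquet module of an induced representation into an induction of a twisted Jacquet module, and then show that this inner twisted Jacquet module vanishes already because of the auxiliary subgroup $U^{k,l}$. Concretely, I would take $G_0 = S_{\psi}$, $G = M_{\psi}$, $U = N$, $\theta = \psi$, and for the inducing data $H = H_{k,l}$, $U_0 = N^{k,l}$, $\theta_0 = \psi|_{N^{k,l}}$. By Lemma \ref{structureofSkl}(1) we have $S_{k,l} = H_{k,l} \ltimes N^{k,l}$, and the remaining hypotheses of Proposition \ref{TJM-reduction} ($M_{\psi}$ normalizes $N$ and $\psi$, $M_{\psi} \cap N = \{e\}$, $S_{\psi} = M_{\psi} \ltimes N$, and $H_{k,l}$ normalizes $N^{k,l}$ and $\psi|_{N^{k,l}}$ by Lemma \ref{structureofSkl}(1)(2)) are routine to verify from the explicit block descriptions.

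Writing $\tau = [\rho \otimes \delta_{P_{r,2n-r}}^{1/2}]^{w_{k,l}}$, Proposition \ref{TJM-reduction} then yields an isomorphism of $M_{\psi}$-modules
\begin{equation*}
\bigl(\ind_{S_{k,l}}^{S_{\psi}}(\tau)\bigr)_{N,\psi} \;\cong\; \ind_{H_{k,l}}^{M_{\psi}}\!\bigl(\tau_{N^{k,l},\,\psi|_{N^{k,l}}} \otimes \delta^{-1}\bigr),
\end{equation*}
so it suffices to prove that $\tau_{N^{k,l},\,\psi|_{N^{k,l}}} = 0$ whenever $l < k$ (note $l \leq \min\{k,r-k\} \leq k$, so $l \neq k$ forces $l < k$).

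The vanishing will come from the subgroup $U^{k,l} \subset N^{k,l}$ defined in \eqref{Ukl-definition}. The key observation is that by \eqref{Ukl-conjugate}, the conjugate $w_{k,l}^{-1} U^{k,l} w_{k,l}$ lies entirely inside $N_{r,2n-r}$ (the non-zero block sits in the top-right $r \times (2n-r)$ corner). Since $\rho$ is inflated trivially across $N_{r,2n-r}$ and $\delta_{P_{r,2n-r}}^{1/2}$ is also trivial on $N_{r,2n-r}$, the representation $\tau$ acts trivially on $U^{k,l}$. On the other hand, Lemma \ref{structureofSkl}(3) asserts that $\psi|_{U^{k,l}}$ is non-trivial. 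In the coinvariant space $\tau_{N^{k,l},\,\psi|_{N^{k,l}}}$, every vector $v$ satisfies $v = \tau(u)v = \psi(u)v$ for all $u \in U^{k,l}$, and choosing $u$ with $\psi(u) \neq 1$ forces $v = 0$. Hence $\tau_{N^{k,l},\,\psi|_{N^{k,l}}} = 0$ and the corollary follows.

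I do not expect a serious obstacle here: the only nontrivial ingredient is Proposition \ref{TJM-reduction}, which is already proved, and the fact that $w_{k,l}^{-1} U^{k,l} w_{k,l} \subset N_{r,2n-r}$, which is visible from \eqref{Ukl-conjugate}. The mild ``difficulty'' is bookkeeping: checking that all the closedness/normalization hypotheses needed to invoke Proposition \ref{TJM-reduction} are met and that the block located by \eqref{Ukl-conjugate} indeed lands inside the unipotent radical $N_{r,2n-r}$ of $P_{r,2n-r}$.
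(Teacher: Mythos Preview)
Your proposal is correct and follows essentially the same route as the paper's proof: apply Proposition \ref{TJM-reduction} with $G=M_{\psi}=\Delta G_n$, $U=N$, $H=H_{k,l}$, $U_0=N^{k,l}$ to reduce to showing $\tau_{N^{k,l},\psi|_{N^{k,l}}}=0$, and then kill this using $U^{k,l}\subset N^{k,l}$ together with \eqref{Ukl-conjugate} and Lemma \ref{structureofSkl}(3). The only cosmetic difference is that the paper phrases the vanishing via the dual $\Hom$-space of $(N^{k,l},\psi_{k,l})$-equivariant linear forms rather than directly on coinvariants, but the content is identical.
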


\begin{proof}
Put $G=\Delta G_n, U= N, H=H_{k,l}, U_0=N^{k,l}$ and $\theta=\psi$ in Proposition \ref{TJM-reduction}. By Lemma \ref{structureofSkl}, all the hypothesis in Proposition \ref{TJM-reduction} are verified for these choices of $G,H,U, U_0$ and $\theta.$  Denote the character $\psi_{|_{N^{k,l}}}$ by $\psi_{k,l}.$ Applying Proposition \ref{TJM-reduction}, we obtain the following isomorphism of $\Delta G_n$-modules:

\begin{equation}\label{mainisomorpshim}
\left(\ind_{S_{k,l}}^{S_{\psi}}([\rho\otimes \delta_{P_{r,2n-r}}^{1/2}]^{w_{k,l}})\right)_{N,\psi}\simeq \ind_{H_{k,l}}^{\Delta G_n}\left(\left([\rho\otimes \delta_{P_{r,2n-r}}^{1/2}]^{w_{k,l}}\right)_{N^{k,l},\psi_{k,l}}	\otimes \delta^{-1}\right),
\end{equation}
where $\delta$ is the inverse of the module of automorphism induced by the action of $H_{k,l}$ on $N^{k,l}\backslash N.$

To prove the statement of the corollary, we shall show that if $l<k$, $\left([\rho\otimes \delta_{P_{r,2n-r}}^{1/2}]^{w_{k,l}}\right)_{N^{k,l},\psi_{k,l}}=0$ or equivalently that $\Hom_{N^{k,l}}([\rho\otimes \delta_{P_{r,2n-r}}^{1/2}]^{w_{k,l}}, \psi_{k,l}) =0.$ To this end,  as $U^{k,l}\subset N^{k,l}$, we note that 
\begin{equation}\label{lnotkreduction}
\Hom_{N^{k,l}}([\rho\otimes \delta_{P_{r,2n-r}}^{1/2}]^{w_{k,l}}, \psi_{k,l})\subset \Hom_{U^{k,l}}([\rho\otimes \delta_{P_{r,2n-r}}^{1/2}]^{w_{k,l}}, {\psi_{k,l}}_{|U^{k,l}}).
\end{equation}
Let $V_{\rho}$ denote the representation space of $\rho$ and assume that $\ell$ is a linear form in the right hand side  of \eqref{lnotkreduction}. Note that   the conjugate $\delta_{P_{r,2n-r}}^{w_{k,l}}$ is trivial on $U^{k,l}.$ Then, $\ell$ satisfies 
\begin{equation*}
\ell(\rho (w_{k,l}^{-1}uw_{k,l})v )=\psi(u)\ell(v)
\end{equation*}
for all $u\in U^{k,l}$ and $v\in V_{\rho}.$
By \eqref{Ukl-conjugate}, $w_{k,l}^{-1}uw_{k,l}\subset N_{r,2n-r}$ which forces $\rho(w_{k,l}^{-1}uw_{k,l})=1.$
So, $(\psi(u)-1)\ell(v)=0$ for all $u\in U^{k,l}$ and $v\in V_{\rho}.$ Since $l\neq k,$ by Lemma \ref{structureofSkl} (3), $\psi$ is nontrivial yielding $\ell=0.$  \end{proof}

\begin{remark}\label{mainisom}
We note that \eqref{mainisomorpshim} holds irrespective of whether $l<k$ or not, In the proof of Corollary \ref{knotldoesnotcontribute}, the fact that $l\neq k$ is only used while invoking Lemma \ref{structureofSkl} (3).
\end{remark}

\begin{remark}\label{reductiontoequalcase} To determine $(i_{P_{r,2n-r}}^{G_{2n}}(\rho))_{N,\psi}$ up to  semi-simplification, it suffices to compute  $(\ind_{S_{k,k}}^{S_{\psi}}([\rho\otimes \delta_{P_{r,2n-r}}^{1/2}]^{w_{k,k}}))_{N,\psi}$  in view of Corollary \ref{knotldoesnotcontribute}.
	Since  $ l \leq \min\{k,r-k\},$ we see that $l=k$ implies $k\leq \lfloor \frac{r}{2} \rfloor,$ where  $\lfloor \frac{r}{2} \rfloor$ denotes the greatest integer not exceeding $\frac{r}{2}.$ 
\end{remark}

\subsubsection{The $l=k $ case}
By Remark \ref{reductiontoequalcase}, in our further computation, we need only consider the case when $l=k.$ In this case, the elements, $w_{k,k}$ and the subgroups $S_{k,k}, H_{k,k}$ and $N^{k,k}$ have a simpler structure which we note below.  For simplicity of notation, we shall put $w_k:=w_{k,k},S_k:=S_{k,k}, H_k:=H_{k,k}$ and $N_k:=N^{k,k}.$  In a similar vein, we shall denote the subsets $A_{k,k}$ and $B_{k,k}$ of $\mathcal{M}_n$ (see  \eqref{AklandBkl}) by $A_{k}$ and $B_k$ respectively.
\subsubsection{}
 We write  $w_k$ as 
\begin{equation}\label{definitionwkk-alternate}
	w_{k}=\begin{bmatrix}
		I_k & 0& 0& 0 &0&0 \\
		0&0 & 0& I_{r-2k} & 0&0\\
		0&0&0&0&I_{n-r+k}&0\\
		0& I_k &0 &0  &0&0 \\
		0& 0& I_{r-2k}&0  &0&0 \\
		0& 0&0 &0 &0 & I_{n-r+k}
	\end{bmatrix}.
\end{equation}

\subsubsection{$A_k$ and $B_k$}
If $l=k,$ we note that
\renewcommand{\kbldelim}{(}
\renewcommand{\kbrdelim}{)}
\begin{equation}\label{AkBk}
	A{_k}= \left\{\kbordermatrix{
		&k&r-2k&n-r+k \\
		k&a&d&e \\
		r-2k&0&b&f\\
		n-r+k&0&0&c 
	}\right\},  \ \ \
	\renewcommand{\kbldelim}{(}
	\renewcommand{\kbrdelim}{)}
	B{_k}= \left\{\kbordermatrix{
		& k&r-2k&n-r+k\\
		k& x&y&u\\
		r-2k& 0&0&w\\
		n-r+k& 0&0&z
	}\right\}.
\end{equation}

\subsubsection{$S_k,H_k$ and $N_k$}
Finally, when $l=k$ one has
\renewcommand{\kbldelim}{(}
\renewcommand{\kbrdelim}{)}
\begin{equation}\label{Skdefinition}
	S_k= \left\{\begin{pmatrix}
		g&X\\
		0&g
	\end{pmatrix}: g\in A_k\cap  G_n, X\in B_k\right\},
	\end{equation}
	
	\begin{equation}\label{Hkdefinition}
	H_k=\left\{\begin{pmatrix}
		g&0\\
		0&g
	\end{pmatrix}: g \in A_k\cap  G_n\right\}= \Delta P_{k,r-2k,n-r+k},
\end{equation}
 and 
\begin{equation}\label{Nkdefinition}
	N_k=\left\{\begin{pmatrix}
		I_n&X\\
		0&I_n
	\end{pmatrix}: X\in B_k\right\}.
\end{equation}
By Lemma \ref{structureofSkl}, $S_{k}=\Delta P_{k,r-2k,n-r+k}\ltimes N_k.$ 

\subsubsection{The character $\psi_k$ of $N_k$}
 Let $\psi_{k}$ denote the restriction of $\psi$  to $N_k.$ Explicitly, for each $n=\begin{pmatrix} I_n & X\\
	0 & I_n\end{pmatrix}\in N_k$ one has
\begin{equation}\label{Definitionofpsik}
\psi_k(n) = \psi_0(tr(X))=\psi_0(tr(x)+tr(z)).
\end{equation}

\subsubsection{The conjugates of $N_k$ and $\psi_k$}
Put $N_k''=w_{k}^{-1}N_k w_{k}.$ Then,

\renewcommand{\kbldelim}{(}
\renewcommand{\kbrdelim}{)}
\begin{equation}\label{DefinitionNkk''}
	N_k''=\left\{\kbordermatrix{
		&k&k&r-2k & & r-2k &n-r+k&n-r+k\\
		k&I_k&x&y & \VR 0&0&u\\
		k&0&I_{k}&0& \VR 0&0&0\\
		r-2k&0&0&I_{r-2k}& \VR 0&0&0\\
		\hline
		r-2k&0&0&0& \VR I_{r-2k}&0&w\\
		n-r+k&0&0&0& \VR 0&I_{n-r+k}&z\\
		n-r+k&0&0&0& \VR 0&0&I_{n-r+k}
	}\right\}\subset P_{r,2n-r}.
\end{equation}
Put $\psi''_k=\psi_k^{(w_{k}^{-1})}.$ Then, $\psi''_{k}:N_k''\to \C^{\times}$  is the character given by  
 $\psi''_k(n)=\psi_k(w_{k}nw_{k}^{-1})$ for $n\in N''_k.$ If $n''\in N_k'',$ then 
\begin{equation}\label{Definitionpsikdoubleprime}
	\psi''_k(n'')
	=\psi_0(\tr(x)+\tr(z))
\end{equation}
where $x\in \mathcal{M}_{k}$ and $z\in \mathcal{M}_{n-r+k}.$ 
\subsubsection{The subgroup $N_k'$ and its character $\psi_k'$}
Next, we define $N_k'=w_{k}^{-1}N_k w_{k}\cap M_{r,2n-r},$ i.e., $N_k'=N_k''\cap M_{r,2n-r}.$ Then,
\renewcommand{\kbldelim}{(}
\renewcommand{\kbrdelim}{)}
\begin{equation}\label{DefinitionNkk'}
	N_k'=\left\{\kbordermatrix{
		&k&k&r-2k & & r-2k &n-r+k&n-r+k\\
		k&I_k&x&y & \VR 0&0&0\\
		k&0&I_{k}&0& \VR 0&0&0\\
		r-2k&0&0&I_{r-2k}& \VR 0&0&0\\
		\hline
		r-2k&0&0&0& \VR I_{r-2k}&0&w\\
		n-r+k&0&0&0& \VR 0&I_{n-r+k}&z\\
		n-r+k&0&0&0& \VR 0&0&I_{n-r+k}
	}\right\}\subset M_{r,2n-r}.
\end{equation}
We also define the character $\psi'_{k}:N_k'\to \C^{\times}$ by  $\psi'_k=\psi_k''{_{|_{N_k'}}}$ for  $n\in N_k'.$ For $n'\in N_k'$
\begin{equation}\label{Definitionpsikprime}
	\psi'_k(n')
	=\psi_0(\tr(x))\psi_0(\tr(z))
\end{equation}
where $x\in \mathcal{M}_{k}$ and $z\in \mathcal{M}_{n-r+k}.$

\subsubsection{Necessary and sufficient condition} With all the notations in place, we present the following theorem which provides a necessary and sufficient condition for the twisted Jacquet module of a principal series representation of $G_{2n}$ to be non-zero. 

\begin{theorem}\label{nonzeroTJM} 
Let $n$ and  $r$ be  integers such that $1\leq r<2n.$ Put $\alpha=\max\{0,r-n\} $ and $\beta= \lfloor \frac{r}{2} \rfloor.$
Let $\rho\in \Alg(M_{r,2n-r})$ and  put $\pi=i_{P_{r,2n-r}}^{G_{2n}}(\rho).$	Let $\psi$ be the character  of $N$ defined by \eqref{definitionofpsi} and $\psi'_k$ be the character of the subgroup $N_k'$ of $M_{r,2n-r}$ defined by \eqref{Definitionpsikprime}. Then,  $\pi_{N,\psi}$  is non-zero if and only if there exists an  integer $k$ with $\alpha\leq k \leq \beta$ such that $\rho_{N'_{k},\psi'_k}\neq 0.$ 
\end{theorem}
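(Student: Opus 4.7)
The plan is to combine the Mackey-theoretic decomposition of Theorem \ref{RestrictiontoSpsi} with the vanishing established in Corollary \ref{knotldoesnotcontribute}, and then transport the surviving twisted Jacquet module back to the inducing data $\rho$. By Theorem \ref{RestrictiontoSpsi} and the exactness of the twisted Jacquet functor, $\pi_{N,\psi}$ is glued from the $\Delta G_n$-modules $\bigl(\ind_{S_{k,l}}^{S_{\psi}}([\rho \otimes \delta_{P_{r,2n-r}}^{1/2}]^{w_{k,l}})\bigr)_{N,\psi}$ as $(k,l)$ ranges over pairs with $\alpha \leq k \leq \gamma$ and $\alpha \leq l \leq \min\{k, r-k\}$. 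Hence $\pi_{N,\psi}$ is non-zero if and only if at least one of these pieces is non-zero.

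Corollary \ref{knotldoesnotcontribute} eliminates every pair with $l \neq k$. The constraint $k = l \leq r-k$ then forces $k \leq \lfloor r/2 \rfloor = \beta$, so the surviving range is exactly $\alpha \leq k \leq \beta$. By Remark \ref{mainisom}, the isomorphism \eqref{mainisomorpshim} still applies for such $k$ and gives
\[
\bigl(\ind_{S_{k}}^{S_{\psi}}([\rho \otimes \delta_{P_{r,2n-r}}^{1/2}]^{w_{k}})\bigr)_{N,\psi} \;\cong\; \ind_{H_k}^{\Delta G_n}\bigl(([\rho \otimes \delta_{P_{r,2n-r}}^{1/2}]^{w_{k}})_{N_k, \psi_k} \otimes \delta^{-1}\bigr),
\]
where $\delta$ is as in \eqref{mainisomorpshim}. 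Since compact induction is faithful and $\delta^{-1}$ is a character, this piece is non-zero precisely when $([\rho \otimes \delta_{P_{r,2n-r}}^{1/2}]^{w_{k}})_{N_k, \psi_k}$ is non-zero.

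The final step is to identify this object with $\rho_{N'_k, \psi'_k}$. Applying Lemma \ref{ConjTJM}(1) to undo the conjugation by $w_k$ reduces the problem to whether $(\rho \otimes \delta_{P_{r,2n-r}}^{1/2})_{N''_k, \psi''_k}$ is non-zero, with $N''_k$ and $\psi''_k$ as in \eqref{DefinitionNkk''} and \eqref{Definitionpsikdoubleprime}. A short block-matrix computation shows that $N''_k = N'_k \times U_k$ as a direct product, where $U_k \subset N_{r,2n-r}$ is the subgroup whose elements have only the $u$-block of \eqref{DefinitionNkk''} non-trivial; indeed, the generators of $N'_k$ and $U_k$ commute because the row/column incidences of their non-zero blocks are disjoint. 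The character $\delta_{P_{r,2n-r}}^{1/2}$ is trivial on all of $N''_k$ (its elements are block-unipotent in the diagonal $r\times r$ and $(2n-r)\times(2n-r)$ blocks of $M_{r,2n-r}$ and lie in $N_{r,2n-r}$ off the diagonal), while $\psi''_k$ is trivial on $U_k$ and equals $\psi'_k$ on $N'_k$, and $\rho$ itself is trivial on $U_k \subset N_{r,2n-r}$. Using Lemma \ref{ConjTJM}(2) to discard the modular character, and then computing the twisted Jacquet module one factor at a time along the direct product (the step along $U_k$ being a no-op), we obtain $(\rho \otimes \delta_{P_{r,2n-r}}^{1/2})_{N''_k, \psi''_k} \cong \rho_{N'_k, \psi'_k}$. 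Chaining the equivalences yields the theorem.

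The main obstacle I anticipate is the book-keeping in the last step: establishing the direct product $N''_k = N'_k \times U_k$ together with a compatible factorization of $\psi''_k$, so that the iterated twisted Jacquet construction lands exactly on $\rho_{N'_k, \psi'_k}$. Each individual sub-step is either a short block-matrix verification or a direct appeal to the conjugation and character-trivialization statements of Lemma \ref{ConjTJM}, but Lemma \ref{ConjTJM}(2) must be invoked with care to ensure the ambient group hypothesis is met.
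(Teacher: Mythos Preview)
Your proof is correct and follows essentially the same route as the paper: reduce via Theorem \ref{RestrictiontoSpsi} and Corollary \ref{knotldoesnotcontribute} to the pieces with $l=k\leq\beta$, apply \eqref{mainisomorpshim}, and then transport back through the $w_k$-conjugation to land on $\rho_{N'_k,\psi'_k}$. The only cosmetic difference is in the final identification: the paper works with $\Hom$-spaces and passes from $N''_k$ to $N'_k$ by observing that both $\rho$ and $\psi''_k$ are trivial on $N_{r,2n-r}$, whereas you phrase the same observation as a direct-product decomposition $N''_k=N'_k\times U_k$ and peel off the $U_k$-factor on the coinvariant side.
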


\begin{proof}
In view of Corollary \ref{knotldoesnotcontribute} and Remark \ref{reductiontoequalcase}, $\pi_{N,\psi}$ is non-zero if and only if there exists an  integer $k$ with $\alpha \leq k \leq  \lfloor \frac{r}{2} \rfloor$ such that  $(\ind_{S_k}^{S_{\psi}}([\rho\otimes \delta_{P_{r,2n-r}}^{1/2}]^{w_{k}}))_{N,\psi}\neq 0.$ By \eqref{mainisomorpshim} (also see Remark \ref{mainisom}), we have
\begin{equation}\label{mainisomorphism-equal}
\left(\ind_{S_k}^{S_{\psi}}([\rho\otimes \delta_{P_{r,2n-r}}^{1/2}]^{w_{k}})\right)_{N,\psi}\simeq \ind_{\Delta P_{k,r-2k,n-r+k}}^{\Delta G_n}\left(\left([\rho\otimes \delta_{P_{r,2n-r}}^{1/2}]^{w_{k}}\right)_{N_k,\psi_{k}}	\otimes \delta^{-1}\right),	
\end{equation}
where $\delta$ is the inverse of the module of the automorphism induced by the action of $\Delta P_{k,r-2k,n-r+k}$ on $N_k\backslash N.$ Observe that
$\left([\rho\otimes\delta_{P_{r,2n-r}}^{1/2}]^{w_{k}}\right)_{N_k,\psi_{k}}\neq 0$ if and only if  $\Hom_{N_k}([\rho\otimes \delta_{P_{r,2n-r}}^{1/2}]^{w_{k}},\psi_k)\neq 0.$ On the other hand,
\begin{align*}
\Hom_{N_k}([\rho\otimes \delta_{P_{r,2n-r}}^{1/2}]^{w_{k}},\psi_k)&=\Hom_{N_k''}(\rho\otimes\delta_{P_{r,2n-r}}^{1/2}, \psi_k'' )\\
&=\Hom_{N_k''}(\rho, \psi_k'' ) ( \mbox{as } \delta_{P_{r,2n-r}}^{1/2} \mbox{is trivial on } N_k'')\\
&=\Hom_{N_k'}(\rho, \psi_k'),\end{align*}
since both $\rho$ and $\psi_k''$ act trivially on $N_{r,2n-r}.$ This completes the proof of the theorem. \end{proof}

\subsection{Structure in the non-archimedean case I}\label{structurep-adic}
\subsubsection{}
Let $F$ be a non-archimedean local field and $\rho \in \Alg(M_{r,2n-r}).$  Our next aim is to determine  the structure of  $(i_{P_{r,2n-r}}^{G_{2n}}(\rho))_{N,\psi}$ in a general form. Before we embark on doing that, as observed in the proof of Theorem \ref{nonzeroTJM},  it would be of importance to us to establish the transition between the twisted Jacquet modules $(\rho^{w_k})_{N_k,\psi_k}$ and $\rho_{N_k'',\psi_k''},$ which shall turn out to be useful.
\begin{lemma}\label{transition}
Let $n$ and  $r$ be integers such that $1\leq r<2n$ and  let $\rho\in \Alg(M_{r,2n-r}). $ Let $k$ be an integer such that $\max\{0,r-n\}\leq k \leq \lfloor \frac{r}{2} \rfloor.$ Then,  $([\rho \otimes \delta_{P_{r,2n-r}}^{1/2}]^{w_{k}})_{N_{k},\psi_{k}}$ is isomorphic to $[\rho_{N_{k}'',\psi_{k}''} \otimes \delta_{P_{r,2n-r}}^{1/2}]^{w_{k}}$ as $\Delta P_{k,r-2k, n-r+k}$-modules.
\end{lemma}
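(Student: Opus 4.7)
The plan is to deduce the stated isomorphism from two successive applications of Lemma \ref{ConjTJM}, one for conjugation and one for pulling a character out of a twisted Jacquet module. By construction, $N_k'' = w_k^{-1} N_k w_k$ and $\psi_k'' = \psi_k^{(w_k^{-1})}$, so $N_k$ is the $w_k$-conjugate of $N_k''$ and $\psi_k$ is the $w_k$-conjugate of $\psi_k''$. Also, direct calculation from the explicit form of $w_k$ in \eqref{definitionwkk-alternate} shows that $w_k^{-1}(\Delta P_{k,r-2k,n-r+k})w_k$ is a subgroup of $M_{r,2n-r}$ which stabilizes $\psi_k''$ and meets $N_k''$ trivially. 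Let $L$ denote this subgroup and consider $\rho \otimes \delta_{P_{r,2n-r}}^{1/2}$ as an element of $\Alg(L)$ by restriction.

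Step 1 (conjugation). I would apply Lemma \ref{ConjTJM}(1) in the ambient group $P_{r,2n-r}$ with $U = N_k''$, $\theta = \psi_k''$, $L$ as above, and $g = w_k$. The hypotheses $L \cap N_k'' = \{e\}$ and that $L$ normalizes both $N_k''$ and $\psi_k''$ hold by the choice of $L$, and $N_k''$ is a union of its compact open subgroups. Since $L^{w_k} = \Delta P_{k,r-2k,n-r+k}$, this yields
\begin{equation*}
\bigl([\rho \otimes \delta_{P_{r,2n-r}}^{1/2}]^{w_k}\bigr)_{N_k,\psi_k}
\;\simeq\;
\bigl[(\rho \otimes \delta_{P_{r,2n-r}}^{1/2})_{N_k'',\psi_k''}\bigr]^{w_k}
\end{equation*}
as $\Delta P_{k,r-2k,n-r+k}$-modules.

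Step 2 (pull out the character). The character $\delta_{P_{r,2n-r}}^{1/2}$ factors through the Levi quotient $M_{r,2n-r}$ of $P_{r,2n-r}$ and is built from the determinants of the Levi blocks. Inspecting the explicit form of $N_k''$ in \eqref{DefinitionNkk''}, one sees that both its upper-left $r\times r$ diagonal block and its lower-right $(2n-r)\times(2n-r)$ diagonal block are unipotent, so $\delta_{P_{r,2n-r}}^{1/2}$ is trivial on $N_k''$. Lemma \ref{ConjTJM}(2) therefore applies and gives
\begin{equation*}
(\rho \otimes \delta_{P_{r,2n-r}}^{1/2})_{N_k'',\psi_k''}
\;\simeq\;
\rho_{N_k'',\psi_k''} \otimes \delta_{P_{r,2n-r}}^{1/2}
\end{equation*}
as $L$-modules. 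Combining the two isomorphisms and applying $(\cdot)^{w_k}$ to the second produces the desired identification of $\Delta P_{k,r-2k,n-r+k}$-modules.

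The only substantive obstacle I anticipate is the bookkeeping in verifying that $L := w_k^{-1}(\Delta P_{k,r-2k,n-r+k})w_k$ sits inside $M_{r,2n-r}$ and stabilizes $\psi_k''$; this must be done explicitly from the matrix descriptions of $w_k$, $\Delta P_{k,r-2k,n-r+k}$, and $\psi_k''$ in \eqref{definitionwkk-alternate}, \eqref{Hkdefinition}, and \eqref{Definitionpsikdoubleprime}, but is a straightforward block-matrix computation. Everything else in the proof is a direct application of Lemma \ref{ConjTJM} together with the observation that $\delta_{P_{r,2n-r}}^{1/2}$ vanishes on the unipotent-looking subgroup $N_k''$.
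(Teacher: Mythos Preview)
Your proposal is correct and follows essentially the same two-step approach as the paper: apply Lemma \ref{ConjTJM}(1) for the conjugation by $w_k$, then Lemma \ref{ConjTJM}(2) to pull the character $\delta_{P_{r,2n-r}}^{1/2}$ out of the twisted Jacquet module. One minor correction: the subgroup $L = w_k^{-1}(\Delta P_{k,r-2k,n-r+k})w_k$ lies in $P_{r,2n-r}$ but not in $M_{r,2n-r}$ (compare \eqref{DeltaPk''description}, where the off-diagonal entries $d,e,f$ appear in the upper-right block), though this does not affect your argument since you only need $L\subset P_{r,2n-r}$ for the restriction of $\rho\otimes\delta_{P_{r,2n-r}}^{1/2}$ to make sense.
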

\begin{proof}
Recall from \eqref{DefinitionNkk''} and \eqref{Definitionpsikdoubleprime} that $(N_{k}'')^{w_{k}} = N_{k}$ and $(\psi_{k}'')^{w_{k}}= \psi_{k}$ to get
\begin{align*}
	([\rho \otimes \delta_{P_{r,2n-r}}^{1/2}]^{w_{k}})_{N_{k},\psi_{k}} & = ([\rho \otimes \delta_{P_{r,2n-r}}^{1/2}]^{w_{k}})_{(N_{k}'')^{w_{k}},(\psi_{k})^{w_{k}}}\\
	& = ((\rho \otimes \delta_{P_{r,2n-r}}^{1/2})_{N_{k}'',\psi_{k}''})^{w_{k}}(\mbox{By Lemma \ref{ConjTJM} (1)})\\
	& = [\rho_{N_{k}'',\psi_{k}''} \otimes \delta_{P_{r,2n-r}}^{1/2}]^{w_{k}}(\mbox{By Lemma \ref{ConjTJM} (2)}) .\qedhere
\end{align*} 
\end{proof}

\subsubsection{Module of the automorphism induced by the action of $\Delta P_{k,r-2k,n-r+k}$ on $N_k\backslash N$} 
We  need to unmask the character $\delta$ appearing in \eqref{mainisomorphism-equal}, which shall be achieved via the following computation.
Let $k$ be an integer such that $\max\{0,r-n\}\leq k \leq \lfloor \frac{r}{2} \rfloor.$ The action of $\Delta P_{k,r-2k,n-r+k}$ on $N_k\backslash N$ induced by the conjugation action of $\Delta P_{k,r-2k,n-r+k}$ on $N$ is via $p \cdot N_kn:= N_kpnp^{-1}.$ It is easy to check that this is an action. In this paragraph, we shall compute the module of the automorphism induced by the action of $\Delta P_{k,r-2k,n-r+k}$ on $N_k\backslash N$ which we denote by $\delta'.$ Let us put

\begin{equation*}U':=\left\{\renewcommand{\kbldelim}{(}
	\renewcommand{\kbrdelim}{)}
	\kbordermatrix{
		&k&r-2k&n-r+k \\
		k&0&0&0 \\
		r-2k&x&y&0 \\
		n-r+k&z&w&0}\right\}, \text{ and } U=\left\{\begin{pmatrix}
		I_n&u\\0&I_n
	\end{pmatrix}:u\in U'\right\}.
\end{equation*}
Note that $U'\subset \mathcal{M}_n$ and $U$ is a subgroup of $N.$ Define a map $\phi:N \to U$ by 

\begin{equation*}	
	\phi\left(\begin{pmatrix}
		&&&a_{11}&a_{12}&a_{13}\\&I_n&&a_{21}&a_{22}&a_{23}\\&&&a_{31}&a_{32}&a_{33}\\&&&&&\\&&&&I_n&\\&&&&&
	\end{pmatrix}\right)= \begin{pmatrix}
		&&&0&0&0\\&I_n&&a_{21}&a_{22}&0\\&&&a_{31}&a_{32}&0\\&&&&&\\&&&&I_n&\\&&&&&
	\end{pmatrix}.\end{equation*}
Clearly, $\phi $ is group homomorphism with $\Ker(\phi)=N_k$ and it induces an isomorphism $\bar{\phi}:N_k\backslash N \to U$ given by $N_kn \mapsto \phi(n).$ For $p\in \Delta P_{k,r-2k, n-r+k}$ and $u \in U,$ observe that 

\begin{equation}\label{DeltaP-action}
	p\cdot N_ku=N_kpup^{-1}=N_k\phi(pup^{-1}).
\end{equation}
Using this, we can transfer the action of $\Delta P_{k,r-2k,n-r+k}$ on $N_k\backslash N$ to $U$ by defining $p \cdot u=\phi(pup^{-1}).$

\begin{lemma}\label{thedeltacomputation}
	Let $\delta'$ denote the module of the automorphism given by the action \eqref{DeltaP-action} of $\Delta P_{k,r-2k,n-r+k}$ on $N_k\backslash N$ induced by the conjugation action of $\Delta P_{k,r-2k,n-r+k}$ on $N.$ Then, for $p\in \Delta P_{k,r-2k,n-r+k},$ we have
	\begin{equation*}
		\delta'(p)=\delta^{-1}_{\Delta P_{k,r-2k,n-r+k}}(p).
	\end{equation*}
\end{lemma}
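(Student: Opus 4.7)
The plan is to realize the action of $p = \diag(g,g) \in \Delta P_{k,r-2k,n-r+k}$ on $N_k\backslash N$ as an explicit linear automorphism of the finite-dimensional $F$-vector space $U'$, and then read off $\delta'(p)$ as the absolute value of the determinant of this linear map.

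First, I would note that since $N$ is abelian (as the unipotent radical of the maximal parabolic $P$ with Levi $G_n \times G_n$), conjugation by $p$ on an element $\begin{pmatrix} I_n & X \\ 0 & I_n \end{pmatrix}$ of $N$ is just the linear action $X \mapsto gXg^{-1}$ on $\mathcal{M}_n \cong N$. A short check shows this action preserves the subspace $B_k \cong N_k$: indeed, $X \in B_k$ is equivalent to $X$ sending $V_{12} := \langle e_1, \dots, e_{r-k} \rangle$ into $V_1 := \langle e_1, \dots, e_k \rangle$, and both flag pieces are stable under $P_{k,r-2k,n-r+k}$. Hence conjugation descends to a linear automorphism of $\mathcal{M}_n / B_k \cong U'$, and via $\bar\phi$ this is precisely the action we wish to analyze.

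Second, writing $g = \begin{pmatrix} g_1 & p_{12} & q_{13} \\ 0 & g_2 & r_{23} \\ 0 & 0 & g_3 \end{pmatrix}$ and parametrizing elements of $U'$ by the four blocks $(x,y,z,w)$, I would carry out the block multiplication in $gXg^{-1}$ and project to $U'$ to obtain
\begin{align*}
x &\longmapsto g_2 x g_1^{-1} + r_{23}\, z\, g_1^{-1},\\
y &\longmapsto g_2 y g_2^{-1} + \text{(terms linear in $x, z, w$)},\\
z &\longmapsto g_3 z g_1^{-1},\\
w &\longmapsto g_3 w g_2^{-1} - g_3 z g_1^{-1} p_{12} g_2^{-1}.
\end{align*}
Ordering the coordinates as $(z, x, w, y)$ makes this linear map block lower-triangular, with diagonal blocks $z \mapsto g_3 z g_1^{-1}$, $x \mapsto g_2 x g_1^{-1}$, $w \mapsto g_3 w g_2^{-1}$, and $y \mapsto g_2 y g_2^{-1}$. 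Using the standard formula that $X \mapsto A X B^{-1}$ on $\mathcal{M}_{s,t}$ has modulus $|\det A|^{t} |\det B|^{-s}$, I would multiply the four diagonal factors (noting that the $y$-block contributes a trivial factor of $1$) to obtain
\begin{equation*}
\delta'(p) = |\det g_1|^{k-n}\, |\det g_2|^{r-n}\, |\det g_3|^{r-k}.
\end{equation*}

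Finally, comparing with the modular character
\begin{equation*}
\delta_{P_{k,r-2k,n-r+k}}(g) = |\det g_1|^{n-k}\, |\det g_2|^{n-r}\, |\det g_3|^{k-r},
\end{equation*}
computed from the adjoint action of the Levi on the Lie algebra of $N_{k,r-2k,n-r+k}$ in $G_n$, we conclude $\delta'(p) = \delta_{\Delta P_{k,r-2k,n-r+k}}^{-1}(p)$. The only mildly delicate point is the bookkeeping in the block-triangular form --- one must verify that none of the off-diagonal couplings spoils triangularity in the chosen ordering --- but once the coordinates are arranged as above, this is routine and the four diagonal blocks give the claimed answer.
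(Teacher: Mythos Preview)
Your proof is correct and follows essentially the same approach as the paper: both identify the action of $p=\diag(g,g)$ on $N_k\backslash N\cong U'$ with the linear map induced by $X\mapsto gXg^{-1}$ and compute its modulus. The paper carries this out via four successive changes of variable in the integral $\int_{U'} f_0(p^{-1}\cdot u')\,du'$, whereas you organize the same computation by observing that in the ordering $(z,x,w,y)$ the map is block lower-triangular and reading off the product of the diagonal moduli; the resulting expression $\nu(g_1)^{k-n}\nu(g_2)^{r-n}\nu(g_3)^{r-k}$ and the comparison with $\delta_{P_{k,r-2k,n-r+k}}$ are identical.
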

\begin{proof}
	Write $p\in \Delta P_{k,r-2k,n-r+k}$ as $p=\begin{pmatrix}
		t&0\\
		0&t
	\end{pmatrix}$ where $t =\begin{pmatrix}
		a & d & e\\
		0& b & f\\
		0& 0& c\end{pmatrix}\in P_{k,r-2k, n-r+k}.$ For $u'=\begin{pmatrix}
		I_n&u\\0&I_n
	\end{pmatrix}\in U$ and a locally constant and compactly supported function $f_0$ on $N_k\backslash N,$ which can be viewed as a function on $U$ via $\bar{\phi},$ we have	
	
	$\displaystyle\bigintsss_{U}f_0(p^{-1}\cdot u')du'=\displaystyle\bigintsss_{U} f_0\left(\phi\left(\begin{pmatrix}
		t&0\\0&t
	\end{pmatrix}^{-1}\begin{pmatrix}
		I_n&u\\0&I_n
	\end{pmatrix}\begin{pmatrix}
		t&0\\0&t
	\end{pmatrix}\right)\right)du$\\
	
	$\hspace{-.5cm} =  \displaystyle\bigint_{U'} f_0 \left(\begin{pmatrix}
		&&&0&0&0\\&I_n&&b^{-1}xa-b^{-1}fc^{-1}za&b^{-1}yb+b^{-1}xd-b^{-1}fc^{-1}zd-b^{-1}fc^{-1}wb&0\\&&&c^{-1}za&c^{-1}wb+c^{-1}zd&0\\&&&&&\\&0&&&I_n&\\&&&&&
	\end{pmatrix}\right) dx dy dz dw.$
By a succession of change of variables in the following order, $x\mapsto bxa^{-1}+fc^{-1}z,$ $z\mapsto cza^{-1}$, $w\mapsto cwb^{-1}+cza^{-1}db^{-1}$ and  $y\mapsto byb^{-1}-bxa^{-1}db^{-1}+fwb^{-1}+fza^{-1}db^{-1},$  we get
	\begin{align*}
		\displaystyle\bigintsss_{U'}f_0(p^{-1}\cdot u')du'&
		=\nu(a^{k-n})\nu(b^{r-n})\nu(c^{r-k}) \displaystyle\int_{U} f_0\left(\begin{pmatrix}
			I_n&u\\0&I_n
		\end{pmatrix}\right)du \\
		&=\nu(a^{k-n})\nu(b^{r-n})\nu(c^{r-k}) \displaystyle\int_{U'} f_0(u')du'.\end{align*}
Noting that $\delta_{\Delta P_{k,r-2k,n-r+k}}(p)=\nu(a^{k-n})\nu(b^{r-n})\nu(c^{r-k}) ,$ proves our claim.\end{proof}
\subsubsection{First form of Structure Theorem}
We are ready to present the first form of the main structure theorem for the twisted Jacquet module  $(i_{P_{r,2n-r}}^{G_{2n}}(\rho))_{N,\psi}.$
\begin{theorem}\label{mainpadic}
	Fix an integer $r$ such that $1\leq r <2n.$
	Let $\rho\in \Alg(M_{r,2n-r})$ and put $\pi=i_{P_{r,2n-r}}^{G_{2n}}(\rho).$  Put $\alpha=\max\{0, r-n\}$ and $\beta={\lfloor \frac{r}{2} \rfloor}.$ 	
	Then,  $\pi_{N,\psi}$  has a filtration 
	\begin{equation}\label{filtration1}
		\{0\}\subset V_{\alpha}\subset \dots \subset V_{\beta}=\pi_{N,\psi},
	\end{equation}
such that for $\alpha \leq k \leq \beta $ we have an isomorphism of $\Delta G_n$-modules given by
	\begin{equation}
	V_k/V_{k-1}\cong  \ind_{\Delta P_{k,r-2k,n-r+k}}^{\Delta G_n}\left([\rho_{_{N_k'',\psi_{k}''}}\otimes \delta_{P_{r,2n-r}}^{1/2}]^{w_{k}}\otimes \delta_{\Delta P_{k,r-2k,n-r+k}}^{-1}\right),
	\end{equation}
	where $N_k''$ and $\psi_k''$ are as in \eqref{DefinitionNkk''} and \eqref{Definitionpsikdoubleprime}. 	
\end{theorem}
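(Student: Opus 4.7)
The plan is to apply the exact functor $(\cdot)_{N,\psi}$ to the $P$-filtration $0\subset W_\alpha\subset\cdots\subset W_\gamma=\pi|_P$ supplied by Theorem~\ref{RestrictiontoP}, and then identify each successive quotient using a further round of Mackey theory. Setting $V_k:=(W_k)_{N,\psi}$ for $\alpha\leq k\leq\gamma$, exactness of the twisted Jacquet functor gives
\[
V_k/V_{k-1}\cong (W_k/W_{k-1})_{N,\psi}\cong \bigl(\ind_{J_{k,\alpha}}^{P}([\rho\otimes\delta_{P_{r,2n-r}}^{1/2}]^{w_{k,\alpha}})\bigr)_{N,\psi}.
\]
The problem therefore reduces to computing the right-hand side for each $k$ and showing that it vanishes once $k>\beta$.

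To analyze the right-hand side, I would restrict from $P$ to $S_\psi$ and invoke the double-coset decomposition of Theorem~\ref{summarydoublecosets}(2). The representatives $\sigma_{k,l}$ with $\alpha\leq l\leq\min\{k,r-k\}$ lie in $P$, and using the identity $\sigma_{k,l}w_{k,\alpha}=w_{k,l}$ from Theorem~\ref{summarydoublecosets}(3), one checks that $S_\psi\cap \sigma_{k,l}J_{k,\alpha}\sigma_{k,l}^{-1}=S_{k,l}$. Mackey theory then identifies $(W_k/W_{k-1})|_{S_\psi}$ as glued from pieces $\ind_{S_{k,l}}^{S_\psi}([\rho\otimes\delta_{P_{r,2n-r}}^{1/2}]^{w_{k,l}})$ as $l$ ranges over $\alpha\leq l\leq\min\{k,r-k\}$. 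Applying $(\cdot)_{N,\psi}$ and invoking Corollary~\ref{knotldoesnotcontribute}, every piece with $l<k$ vanishes; only the $l=k$ summand can survive, and this occurs only when $k\leq\lfloor r/2\rfloor=\beta$. Consequently $V_k=V_{k-1}$ for $\beta<k\leq\gamma$, and the filtration collapses to $V_\alpha\subset\cdots\subset V_\beta=\pi_{N,\psi}$.

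For $\alpha\leq k\leq\beta$, the computation of the surviving $l=k$ piece then proceeds in three clean steps. First, I would apply the isomorphism \eqref{mainisomorphism-equal} coming out of Proposition~\ref{TJM-reduction} to rewrite $(\ind_{S_k}^{S_\psi}([\rho\otimes\delta_{P_{r,2n-r}}^{1/2}]^{w_k}))_{N,\psi}$ as an induction from $\Delta P_{k,r-2k,n-r+k}$ up to $\Delta G_n$. Second, Lemma~\ref{transition} would be used to convert the inducing data $([\rho\otimes\delta_{P_{r,2n-r}}^{1/2}]^{w_k})_{N_k,\psi_k}$ into $[\rho_{N_k'',\psi_k''}\otimes\delta_{P_{r,2n-r}}^{1/2}]^{w_k}$. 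Third, the modular-character correction $\delta^{-1}$ appearing in Proposition~\ref{TJM-reduction} would be identified with $\delta_{\Delta P_{k,r-2k,n-r+k}}^{-1}$ via Lemma~\ref{thedeltacomputation}. Substituting these ingredients back yields the claimed formula for $V_k/V_{k-1}$.

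The main obstacle, I expect, will be the compatibility bookkeeping between the two Mackey decompositions of $\pi|_{S_\psi}$ --- one obtained directly from $G_{2n}$ via Theorem~\ref{RestrictiontoSpsi}, the other factored through the $P$-filtration of Theorem~\ref{RestrictiontoP} --- and, in particular, giving a precise meaning to the assertion that the filtration inherited from the $P$-filtration actually truncates at $\beta$ rather than merely semi-simplifying to the expected shape. Once the identity $\sigma_{k,l}w_{k,\alpha}=w_{k,l}$ and the intersection computation $S_\psi\cap\sigma_{k,l}J_{k,\alpha}\sigma_{k,l}^{-1}=S_{k,l}$ are pinned down, the remaining manipulations are essentially formal, with the substantive vanishing input being that of Corollary~\ref{knotldoesnotcontribute}.
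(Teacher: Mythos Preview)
Your proposal is correct and follows essentially the same approach as the paper's proof: apply $(\cdot)_{N,\psi}$ to the $P$-filtration of Theorem~\ref{RestrictiontoP}, use Theorem~\ref{summarydoublecosets}(2)--(3) and Mackey theory to decompose each $(W_k/W_{k-1})|_{S_\psi}$ into pieces indexed by $l$, kill the $l<k$ pieces via Corollary~\ref{knotldoesnotcontribute} so the filtration truncates at $\beta$, and then identify the surviving $l=k$ piece using \eqref{mainisomorphism-equal}, Lemma~\ref{transition}, and Lemma~\ref{thedeltacomputation}. The paper's write-up additionally prefaces this with the direct $S_\psi$-decomposition of Theorem~\ref{RestrictiontoSpsi} for orientation, but the substantive argument is the one you outline.
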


\begin{proof} Put $\gamma=\min\{r,n\}.$
By Theorem \ref{RestrictiontoSpsi},  $\pi_{|_{S_{\psi}}}$ is glued from the representations
$\ind_{S_{k,l}}^{S_{\psi}}([\rho\otimes \delta_{_{P_{r,2n-r}}}^{1/2}]^{w_{k,l}})$  where $k$ and $l$ vary in the range $\alpha \leq k \leq \gamma$ and  $\alpha \leq l\leq \min\{k,r-k\}.$
If  $k\neq l,$ $(\ind_{S_{k,l}}^{S_{\psi}}([\rho\otimes \delta_{_{P_{r,2n-r}}}^{1/2}]^{w_{k,l}}))_{N,\psi}=0$ by Corollary \ref{knotldoesnotcontribute}. By exactness of the twisted Jacquet functor, $\pi_{N,\psi}$ is glued from the representations $\left(\ind_{S_k}^{S_{\psi}}([\rho\otimes \delta_{P_{r,2n-r}}^{1/2}]^{w_{k}})\right)_{N,\psi}$ where $k$ varies in the range $\alpha\leq k\leq \beta.$ But, to obtain the filtration of $\pi_{N,\psi}$ as promised by the theorem, we need to invoke Theorem \ref{summarydoublecosets} and Theorem \ref{RestrictiontoP}.  By Theorem \ref{RestrictiontoP},  $\pi_{|_P}$ is given by a filtration  (see \eqref{filtrationofrestoP})
\begin{equation}\label{filtrationofrestoPrecall}
	0\subset W_{\alpha}\subset \dots \subset W_{\gamma}=\pi_{|_{P}}, 
	\end{equation}
where  for each $k$ such that $\alpha\leq k \leq \gamma$ one has, $W_{k}/W_{k-1}\cong \ind_{J_{k,\alpha}}^{P}([\rho\otimes \delta_{P_{r,2n-r}}^{1/2}]^{w_{k,\alpha}}).$ Take the twisted Jacquet functor of the spaces in \eqref{filtrationofrestoPrecall} to get 

\begin{equation}\label{TJMfiltrationofrestoP}
0\subset (W_{\alpha})_{N,\psi}\subset \dots \subset (W_{\gamma})_{N,\psi}=\pi_{N,\psi}.
\end{equation}
Fix a $k$ such that $\alpha\leq k \leq \gamma.$ Note that,

\begin{equation}
	(W_{k})_{N,\psi}/(W_{k-1})_{N,\psi} \simeq (\ind_{J_{k,\alpha}}^{P}([\rho\otimes \delta_{P_{r,2n-r}}^{1/2}]^{w_{k,\alpha}}))_{N,\psi}.
\end{equation}

 By Theorem \ref{summarydoublecosets} (1) \&(2) and Mackey theory, it is easy to see that for each fixed $k$, $\ind_{J_{k,\alpha}}^{P}([\rho\otimes \delta_{_{P_{r,2n-r}}}^{1/2}]^{w_{k,\alpha}})_{|_{S_{\psi}}}$ is glued from $\ind_{S_{k,l}}^{S_{\psi}}([\rho\otimes \delta_{P_{r,2n-r}}^{1/2}]^{w_{k,l}})$ where $\alpha\leq l \leq \min\{k,r-k\}.$ Observe that, by Corollary \ref{knotldoesnotcontribute} and Remark \ref{reductiontoequalcase}, the representation $\ind_{J_{k,\alpha}}^{P}([\rho\otimes \delta_{_{P_{r,2n-r}}}^{1/2}]^{w_{k,\alpha}})$ does not contribute to $\pi_{N,\psi}$ if $k>\beta.$ On the other hand, if $k\leq \beta,$ there exists at most one $S_{\psi}$-orbit in $P/J_{k,\alpha}$ corresponding to the double coset representative $w_k:=w_{k,k}$ i.e., when $l=k,$ (recall that $w_{k,k}$ is denoted by $w_k$ as in \eqref{definitionwkk-alternate}) which may contribute to  $\pi_{N,\psi}.$
	
 Thus, $(\ind_{J_{k,\alpha}}^{P}([\rho\otimes \delta_{P_{r,2n-r}}^{1/2}]^{w_{k,\alpha}}))_{N,\psi}=0$ if $k>\beta$ yields $(W_{\beta})_{N,\psi}=(W_{\beta+1})_{N,\psi}=\cdots =(W_{\gamma})_{N,\psi}=\pi_{N,\psi}.$ Also, it follows that if $\alpha \leq k\leq \beta,$ 
\begin{equation}
((\ind_{J_{k,\alpha}}^{P}([\rho\otimes \delta_{P_{r,2n-r}}^{1/2}]^{w_{k,\alpha}}))_{|_{S_{\psi}}})_{N,\psi}\simeq (\ind_{S_k}^{S_{\psi}}([\rho\otimes \delta_{P_{r,2n-r}}^{1/2}]^{w_{k}}))_{N,\psi}.	
\end{equation}
Now, put $V_k=(W_{k})_{N,\psi}$ for $\alpha\leq k \leq \beta.$ In conclusion, $\pi_{N,\psi}$ is given by a filtration of $\Delta G_n$-modules as follows:
\begin{equation}\label{reduction1TJM}
0\subset V_{\alpha}\subset \dots 	\subset V_{\beta}=\pi_{N,\psi},
\end{equation}
where for $\alpha \leq k\leq \beta$ we have  ${V_k}/{V_{k-1}}\cong (\ind_{S_k}^{S_{\psi}}([\rho\otimes \delta_{_{P_{r,2n-r}}}^{1/2}]^{w_k}))_{N,\psi}.$ The statement of the theorem now follows from this isomorphism and an application of  \eqref{mainisomorphism-equal} along with  Lemma \ref{transition} and Lemma \ref{thedeltacomputation}.
 \end{proof}
 
 \begin{remark}\label{contributionandfiber}
 We note that the fiber of the map $ S_{\psi}\backslash G_{2n}/P_{r,2n-r}\to P\backslash G_{2n}/P_{r,2n-r}$ lying over $w_{k,\alpha}$ is precisely $\{w_{k,l}: \alpha \leq l \leq \min\{k,r-k\}\}.$ The strategy of the proof of Theorem \ref{mainpadic} was to observe that in the filtration of $\pi_{|_P},$ for the contribution from the component corresponding to the representative $w_{k,\alpha}$  to $\pi_{N,\psi}$ to be possibly non-zero, there must exist a representative $w_{k,l}$  in its fiber with $l=k.$ But that is possible only if $k\leq \lfloor \frac{r}{2}\rfloor.$ 
 \end{remark}

\subsection{Structure in the non-archimedean case II}\label{componentwise}
\subsubsection{}
Our next aim is to obtain a simplified form of Theorem \ref{mainpadic}, assuming that the inducing data $\rho\in \Alg(M_{r,2n-r})$ is of the form $\rho=\rho_1\otimes \rho_2$ where $\rho_1\in \Alg(G_{r})$ and $\rho_2\in \Alg(G_{2n-r}).$ To this end, we need to introduce few more subgroups which play a role in this description. We put $\alpha=\max\{0,r-n\},$ $\beta=\lfloor \frac{r}{2}\rfloor$ and assume that $k$ is an integer such that $\alpha\leq k\leq \beta.$  

\subsubsection{}
Recall that, $S_k = \Delta P_{k,r-2k, n-r+k}\ltimes N_{k}.$ Put $$S_k'' = w_{k}^{-1}S_kw_{k},  \text{ and } \Delta P_{k,r-2k, n-r+k}''= w_{k}^{-1}\Delta P_{k,r-2k, n-r+k}w_{k}.$$ Then,

\renewcommand{\kbldelim}{(}
\renewcommand{\kbrdelim}{)}
\begin{equation}\label{Sk''description}
S_k''= \left\{\kbordermatrix{
	&k&k&r-2k&&r-2k&n-r+k&n-r+k \\
	k&a&x&y& \VR d&e&z \\
	k&0&a&d& \VR 0&0&e \\
	r-2k & 0 & 0 &b& \VR0&0&f\\
	\hline
r-2k&0&0&0& \VR b&f&u\\
n-r+k&0&0&0& \VR 0&c&v \\
n-r+k &0&0&0&\VR 0 & 0 &c }\right\}\cap G_{2n},
\end{equation}
and 
\renewcommand{\kbldelim}{(}
\renewcommand{\kbrdelim}{)}
\begin{equation}\label{DeltaPk''description}
\Delta P_{k,r-2k, n-r+k}''= \left\{\kbordermatrix{
	&k&k&r-2k&&r-2k&n-r+k&n-r+k \\
	k&a&0&0& \VR d&e&0 \\
	k&0&a&d&\VR 0&0&e \\
	r-2k & 0 & 0 &b& \VR 0&0&f\\
	\hline
	r-2k&0&0&0& \VR b&f&0\\
	n-r+k&0&0&0& \VR 0&c&0 \\
	n-r+k &0&0&0&\VR 0 & 0 &c }\right\}\cap G_{2n}.
\end{equation}
We observe that $S_k'' = \Delta P_{k,r-2k, n-r+k}''\ltimes N_k''.$ 
\subsubsection{}
Let
\renewcommand{\kbldelim}{(}
\renewcommand{\kbrdelim}{)}
\begin{equation}
	S_k'= \left\{\kbordermatrix{
		&k&k&r-2k&&r-2k&n-r+k&n-r+k \\
		k&a&x&y& \VR 0&0&0 \\
		k&0&a&d& \VR 0&0&0 \\
		r-2k & 0 & 0 &b& \VR0&0&0\\
		\hline
		r-2k&0&0&0& \VR b&f&u\\
		n-r+k&0&0&0& \VR 0&c&v \\
		n-r+k &0&0&0&\VR 0 & 0 &c }\right\}\cap G_{2n},
\end{equation}
and 
\renewcommand{\kbldelim}{(}
\renewcommand{\kbrdelim}{)}
\begin{equation}
	M_{\psi_k}'= \left\{\kbordermatrix{
		&k&k&r-2k&&r-2k&n-r+k&n-r+k \\
		k&a&0&0& \VR 0&0&0 \\
		k&0&a&d&\VR 0&0&0 \\
		r-2k & 0 & 0 &b& \VR 0&0&0\\
		\hline
		r-2k&0&0&0& \VR b&f&0\\
		n-r+k&0&0&0& \VR 0&c&0 \\
		n-r+k &0&0&0&\VR 0 & 0 &c }\right\}\cap G_{2n}.
\end{equation}
Note that $S_k'$ and $M_{\psi_k}'$ are subgroups of $M_{r,2n-r}$ and moreover $S_k' = M_{\psi_k'} \ltimes N_k'.$ 	

\subsubsection{The subgroups $N(k,1)$ and $N(k,2)$}
Finally, let $N(k,1)$ and $N(k,2)$ be subgroups of $G_{r}$ and $G_{2n-r}$ respectively defined by 
\begin{equation}\label{N1definition}N(k,1)=\left\{\left[\begin{array}{ccc}
I_k&x&y \\
0&I_{k}&0 \\
0&0&I_{r-2k}	
\end{array}
\right]: x\in \mathcal{M}_{k}, y\in \mathcal{M}_{k,r-2k} \right\}\end{equation} and 

\begin{equation}\label{N2definition}
N(k,2)=\left\{\left[\begin{array}{ccc}
I_{r-2k}&0&w \\
0&I_{n-r+k}& z\\
0&0&I_{n-r+k}	
\end{array}
\right]: w\in \mathcal{M}_{r-2k,n-r+k}, z\in \mathcal{M}_{n-r+k}\right\}.\end{equation}

\subsubsection{The characters $\psi_{k,1}$ and $\psi_{k,2}$}
Let $\psi_{k,1}$ be the character of $N(k,1)$ defined by
\begin{equation}\label{Defnpsik1}
 \psi_{k,1}(n_1) =\psi_0(\tr(x)).
 \end{equation}
 
Similarly, let $\psi_{k,2}$ be the character of $N(k,2)$ defined by 
 \begin{equation}\label{Defnpsik2}
 \psi_{k,2}(n_2) =\psi_0(\tr(z)).
 \end{equation}
 Then, we have $N_k'=N(k,1)\times N(k,2) \mbox{ and } \psi'_k(\diag(n_1,n_2))=\psi_{k,1}(n_1)\psi_{k,2}(n_2),$ i.e., $\psi'_k=\psi_{k,1}\times \psi_{k,2}.$  
 \begin{remark}
We note that the subgroup $N(k,1)$  is indeed the subgroup $N_{k,r-k},$ the unipotent radical of the parabolic subgroup $P_{k,r-k}$ of $G_r.$ However, we have preferred to write it in steps of three blocks to highlight the square block   $x\in \mathcal{M}_k$ which contributes to the character $\psi_{k,1}.$ A similar remark applies to the subgroup $N(k,2)$ as well, which is the unipotent radical $N_{n-k,n-r+k}$ of  the parabolic subgroup $P_{n-k,n-r+k}$ of $G_{2n-r}.$
 \end{remark}
 
 \subsubsection{} 
 We need the following simple lemma.
\begin{lemma}\label{componentwiseaction}
Suppose $(\rho,V)\in \Alg(P_{r,2n-r})$ acts trivially on $N_{r,2n-r}.$ Let $N_{k}'', \psi_{k}'', N_{k}', \psi_{k}'$ be defined as in Section \ref{Non-zero criteria}. The following statements hold:
\begin{enumerate}
	\item The subgroup $M_{\psi_{k}'} $ of $M_{r,2n-r}$ normalizes $N_k'$ and the character $\psi_k'.$
	\item $V_{N_{k}'',\psi_k''}=  V_{N_{k}',\psi_k'}$ as $\C$-vector spaces.
	\item For $v\in V$ and $\begin{pmatrix}
		A' &C'\\ 0 & B'
	\end{pmatrix}\in \Delta P_{k,r-2k, n-r+k}'' ,$  

	\begin{equation}
	\rho_{N_{k}'',\psi_k''} \begin{pmatrix}
		A' & C'\\ 0 & B'
	\end{pmatrix} (v + V(N_{k}'',\psi_k''))=\rho_{N_{k}',\psi_k'} \begin{pmatrix}
		A' &0\\ 0 & B'
	\end{pmatrix}  (v + V(N_{k}',\psi_k')).
\end{equation}
\end{enumerate}  
\end{lemma}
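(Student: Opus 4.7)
The plan is to leverage the hypothesis that $\rho$ is trivial on $N_{r,2n-r}$ together with a careful comparison of the block descriptions \eqref{DefinitionNkk''} and \eqref{DefinitionNkk'}. The upshot is that $N_k''$ differs from $N_k'$ only by the block $u \in \mathcal{M}_{k, n-r+k}$ that lives in $N_{r,2n-r}$; writing $N_0 := N_k'' \cap N_{r,2n-r}$, one has $N_k'' = N_k' \cdot N_0$. A direct check on the explicit matrix entries shows that $N_k''$ is abelian (the column-supports of the strictly upper-triangular parts of its elements are $\{2,3,6\}$ while the row-supports are $\{1,4,5\}$, which are disjoint, so all cross-products vanish). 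Thus $N_0$ is a normal subgroup of $N_k''$ and $N_k'' / N_0 \cong N_k'$.

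From \eqref{Definitionpsikdoubleprime}, $\psi_k''$ depends only on $\tr(x)$ and $\tr(z)$ and is therefore trivial on $N_0$; combined with the hypothesis that $\rho$ is trivial on $N_{r,2n-r} \supset N_0$, one gets $\rho(n_0) v - \psi_k''(n_0) v = 0$ for every $n_0 \in N_0$ and $v \in V$. For an arbitrary $n \in N_k''$, I would write $n = n_1 n_0$ with $n_1 \in N_k'$ and $n_0 \in N_0$ (using commutativity of $N_k''$) and compute $\rho(n) v - \psi_k''(n) v = \rho(n_1) v - \psi_k'(n_1) v$, giving $V(N_k'', \psi_k'') = V(N_k', \psi_k')$. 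This is exactly (2). For (3), the key factorization
\begin{equation*}
\begin{pmatrix} A' & C' \\ 0 & B' \end{pmatrix} = \begin{pmatrix} A' & 0 \\ 0 & B' \end{pmatrix} \begin{pmatrix} I_r & A'^{-1} C' \\ 0 & I_{2n-r} \end{pmatrix}
\end{equation*}
places the first factor in $M_{\psi_k'}$ (by comparing \eqref{DeltaPk''description} with the definition of $M_{\psi_k'}$) and the second in $N_{r,2n-r}$, on which $\rho$ is trivial. Hence these two matrices act identically on $V$, and therefore on the common quotient identified in (2), which proves (3).

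For part (1), I would exploit the product decomposition $N_k' = N(k,1) \times N(k,2)$ and $\psi_k' = \psi_{k,1} \times \psi_{k,2}$, together with the block-diagonal form $\diag(A', B')$ of elements of $M_{\psi_k'}$: the conjugation action splits into the actions of $A' \in G_r$ on $N(k,1)$ and of $B' \in G_{2n-r}$ on $N(k,2)$ separately. A routine block computation shows that conjugating $n_1 \in N(k,1)$ by $A' = \left(\begin{smallmatrix} a & 0 & 0 \\ 0 & a & d \\ 0 & 0 & b \end{smallmatrix}\right)$ sends the upper-middle entry $x$ to $axa^{-1}$ while leaving the $(2,3)$ block zero, so $N(k,1)$ is preserved and $\tr(axa^{-1}) = \tr(x)$ preserves $\psi_{k,1}$; an entirely analogous calculation handles the action of $B'$ on $(N(k,2), \psi_{k,2})$. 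The only bookkeeping hurdle is tracking the block sizes in \eqref{AkBk} correctly; I expect no conceptual obstacle beyond direct matrix multiplication.
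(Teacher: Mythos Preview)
Your proof is correct and follows essentially the same approach as the paper's: both rely on the triviality of $\rho$ on $N_{r,2n-r}$ to identify $V(N_k'',\psi_k'')$ with $V(N_k',\psi_k')$ and to replace $\left(\begin{smallmatrix} A' & C' \\ 0 & B' \end{smallmatrix}\right)$ by $\left(\begin{smallmatrix} A' & 0 \\ 0 & B' \end{smallmatrix}\right)$ in the action. The paper's version is considerably more terse (it declares (1) ``easily verified'' and asserts the equality in (2) without spelling out the decomposition $N_k'' = N_k' \cdot N_0$ or the abelianity of $N_k''$), whereas you have made these routine verifications explicit.
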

\begin{proof}
The statement (1) is easily verified.  As $V(N_{k}'',\psi_k'')= V(N_{k}',\psi_k')$ as $\C$-vector spaces, (2) follows. Next, we compare the action of  $\rho_{N_{k}'',\psi_k''} \in \Alg (\Delta P_{k, r-2k, n-r+k})$ with that of $\rho_{N_{k}',\psi_k'} \in \Alg (M_{\psi_k'}).$  For $v \in V,$ noting that the representation $\rho$ acts trivially on $N_{r,2n-r},$ we get
\begin{align*}
	\rho_{N_{k}'',\psi_k''} \begin{pmatrix}
		A' & C'\\ 0 & B'
	\end{pmatrix} (v + V(N_{k}'',\psi_k''))
	&= \rho \begin{pmatrix}
		A' & C'\\ 0 & B'
	\end{pmatrix}
	 (v) + V(N_{k}'',\psi_k'')\\
	& = \rho \begin{pmatrix}
		A' &0\\ 0 & B'
	\end{pmatrix} (v) + V(N_{k}',\psi_k')\\
	& = \rho_{N_{k}',\psi_k'} \begin{pmatrix}
		A' &0\\ 0 & B'
	\end{pmatrix} (v + V(N_{k}',\psi_k')).\qedhere
\end{align*}
\end{proof}

\subsubsection{}
We are  ready to state the structure theorem in its final form.
\begin{theorem}\label{maincomponentwise}
Fix an integer $r$ such that $1\leq r <2n.$
For $\rho_1 \in \Alg(G_{r})$ and  $\rho_2 \in \Alg(G_{2n-r}) $ put $\rho:= \rho_1 \otimes \rho_2 $  and  $\pi=i_{P_{r,2n-r}}^{G_{2n}}(\rho).$ Let  $\psi$ be the character $N$ defined by \eqref{definitionofpsi}.  Put $\alpha=\max\{0, r-n\}$ and $\beta={\lfloor \frac{r}{2} \rfloor}.$ 	
Then,  $\pi_{N,\psi}$  has a filtration 

\begin{equation}\label{filtrationmain2}
\{0\}\subset V_{\alpha}\subset \dots \subset V_{\beta}=\pi_{N,\psi},
\end{equation}
such that for $\alpha \leq k \leq \beta $ we have an isomorphism of $\Delta G_n$-modules given by
\begin{equation}\label{componentwisegluepieces}
	V_k/V_{k-1}\cong  \ind_{\Delta P_{k,r-2k,n-r+k}}^{\Delta G_n}\left(\tau_k \otimes[ \delta_{P_{r,2n-r}}^{1/2}]^{w_{k}}\otimes \delta_{\Delta  P_{k,r-2k,n-r+k}}^{-1}\right)
\end{equation}
where 
\renewcommand{\kbldelim}{(}
\renewcommand{\kbrdelim}{)}
\begin{equation}\label{definitionoftau}
\tau_k\left(\kbordermatrix{
	&k&r-2k&n-r+k &  & k&r-2k&n-r+k\\
	k&a&d&e & \VR 0&0&0\\
	r-2k&0&b&f& \VR 0&0&0\\
	n-r+k&0&0&c& \VR 0&0&0\\
	\hline
	k&0&0&0& \VR a&d&e\\
	r-2k&0&0&0& \VR 0&b&f\\
	n-r+k&0&0&0& \VR 0&0&c
}\right) 
\end{equation}
\begin{equation*}
= (\rho_1)_{N(k,1),\psi_{k,1}}\left(\kbordermatrix{
&k&k&r-2k \\
k&a&0&0 \\
k&0&a&d \\
r-2k & 0 & 0 &b }\right) \otimes (\rho_2)_{N(k,2),\psi_{k,2}}\left(\kbordermatrix{
&r-2k&n-r+k&n-r+k \\
r-2k&b&f&0 \\
n-r+k&0&c&0 \\
n-r+k & 0 & 0 &c }\right).
\end{equation*}
\end{theorem}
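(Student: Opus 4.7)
The plan is to build on Theorem \ref{mainpadic}, which already establishes the filtration $\{0\} \subset V_\alpha \subset \dots \subset V_\beta = \pi_{N,\psi}$ with
\[
V_k / V_{k-1} \cong \ind_{\Delta P_{k,r-2k,n-r+k}}^{\Delta G_n}\bigl([\rho_{N_k'',\psi_k''} \otimes \delta_{P_{r,2n-r}}^{1/2}]^{w_k} \otimes \delta_{\Delta P_{k,r-2k,n-r+k}}^{-1}\bigr).
\]
Comparing with the statement of Theorem \ref{maincomponentwise}, the outer induction and the $\delta$-factors agree verbatim, so the only task is to identify $[\rho_{N_k'',\psi_k''}]^{w_k}$ with $\tau_k$ as a $\Delta P_{k,r-2k,n-r+k}$-module under the hypothesis $\rho = \rho_1 \otimes \rho_2$.

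First I would invoke Lemma \ref{componentwiseaction} to pass from $(N_k'',\psi_k'')$ to the simpler $(N_k',\psi_k')$: part (2) identifies the underlying vector spaces, while part (3) says that an element $\begin{pmatrix} A' & C' \\ 0 & B' \end{pmatrix} \in \Delta P_{k,r-2k,n-r+k}''$ acts through the projection $\begin{pmatrix} A' & C' \\ 0 & B' \end{pmatrix} \mapsto \begin{pmatrix} A' & 0 \\ 0 & B' \end{pmatrix} \in M_{\psi_k'}$. Using the product decomposition $N_k' = N(k,1) \times N(k,2)$ and $\psi_k' = \psi_{k,1} \times \psi_{k,2}$ noted just before the theorem, together with $\rho = \rho_1 \otimes \rho_2$, Lemma \ref{TJM split in tensor product} parts (1) and (2) yield the isomorphism
\[
\rho_{N_k',\psi_k'} \cong (\rho_1)_{N(k,1),\psi_{k,1}} \otimes (\rho_2)_{N(k,2),\psi_{k,2}}
\]
of $M_{\psi_k'}$-modules.

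To close, I would transport this identification back to $\Delta P_{k,r-2k,n-r+k}$ by conjugating with $w_k$: for $\diag(t,t) \in \Delta P_{k,r-2k,n-r+k}$ with $t = \begin{pmatrix} a & d & e \\ 0 & b & f \\ 0 & 0 & c \end{pmatrix}$, a direct block-matrix computation using \eqref{definitionwkk-alternate} shows that $w_k^{-1}\diag(t,t)w_k \in \Delta P_{k,r-2k,n-r+k}''$ has Levi blocks
\[
A' = \begin{pmatrix} a & 0 & 0 \\ 0 & a & d \\ 0 & 0 & b \end{pmatrix}, \qquad B' = \begin{pmatrix} b & f & 0 \\ 0 & c & 0 \\ 0 & 0 & c \end{pmatrix},
\]
which are precisely the arguments of $\rho_1$ and $\rho_2$ in \eqref{definitionoftau}. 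Substituting into the tensor-product identification above gives $\tau_k$, completing the proof. The only real obstacle is bookkeeping in this last step---tracking the block-permutation action of $w_k$ on the six-block partition $(k, r-2k, n-r+k, k, r-2k, n-r+k)$ and confirming that the repeated blocks $a, b, c$ in $A'$ and $B'$ arise consistently from the diagonal repetition in $\diag(t,t)$. No new representation-theoretic input is needed beyond Theorem \ref{mainpadic} and the two lemmas cited.
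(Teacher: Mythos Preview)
Your proposal is correct and follows essentially the same route as the paper: start from Theorem \ref{mainpadic}, separate out $[\rho_{N_k'',\psi_k''}]^{w_k}$, use Lemma \ref{componentwiseaction} to reduce to $\rho_{N_k',\psi_k'}$ acting through the Levi blocks, apply Lemma \ref{TJM split in tensor product} to split the tensor product, and identify the blocks $A',B'$ via the explicit conjugation by $w_k$. The only cosmetic difference is the order in which you invoke the conjugation computation and Lemma \ref{componentwiseaction}.
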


\begin{proof}
By Theorem \ref{mainpadic}, 
\begin{equation*}
V_k/V_{k-1}\cong  \ind_{\Delta P_{k,r-2k,n-r+k}}^{\Delta G_n}\left([\rho_{_{N_{k}'',\psi_{k}''}}\otimes \delta_{P_{r,2n-r}}^{1/2}]^{w_{k}}\otimes \delta_{\Delta P_{k,r-2k,n-r+k}}^{-1}\right).
\end{equation*}
It is trivial that  $[\rho_{_{N_{k}'',\psi_{k}''}}\otimes \delta_{P_{r,2n-r}}^{1/2}]^{w_{k}}=[\rho_{_{N_{k}'',\psi_{k}''}}]^{w_{k}}\otimes [\delta_{P_{r,2n-r}}^{1/2}]^{w_{k}}.$ Write an element $Z\in P_{k,r-2k,n-r+k}$ as 
\begin{equation*}
Z=\kbordermatrix{
&k&r-2k&n-r+k \\
k&a&d&e \\
r-2k&0&b&f \\
n-r+k & 0 & 0 &c}.
\end{equation*}
Then, by \eqref{DeltaPk''description}
\begin{equation*}[\rho_{_{N_k'',\psi_{k}''}}]^{w_{k}}\begin{pmatrix}
		Z & 0\\ 0 &Z
	\end{pmatrix} (v + V(N_k'',\psi_k'')) = \rho_{N_k'',\psi_k''} \begin{pmatrix}
		A' & C'\\ 0 & B'
	\end{pmatrix} (v + V(N_k'',\psi_k'')),
\end{equation*}
where 
$A'=\begin{pmatrix}
	a & 0 &0\\
	0& a & d\\
	0 & 0 & b
	\end{pmatrix}\in  G_{r}, 
B'=\begin{pmatrix}
	b & f &0\\
	0& c & 0\\
	0 & 0 & c
	\end{pmatrix}\in  G_{2n-r}$ and 
$C'=\begin{pmatrix}
	d & e &0\\
	0& 0 & e\\
	0 & 0 & f
	\end{pmatrix}\in \mathcal{M}_{r,2n-r}.$ 
Appealing to Lemma \ref{componentwiseaction}, one has
\begin{equation*}
\rho_{N_k'',\psi_k''} 
\begin{pmatrix}
	A' & C'\\ 0 & B'
\end{pmatrix} (v + V(N_k'',\psi_k'')) = 
\rho_{N_k',\psi_k'}
 \left(\begin{pmatrix}
A' &0\\ 0 & B'
\end{pmatrix}\right) (v + V(N_k',\psi_k')).
\end{equation*}
Applying Lemma \ref{TJM split in tensor product}, 
we obtain 
$$\rho_{N_k',\psi_k'}= (\rho_1 \otimes \rho_2)_{N_k',\psi_k'} = (\rho_1)_{N(k,1),\psi_{k,1}} \otimes (\rho_2)_{N(k,2),\psi_{k,2}}$$
as ${\rm Norm}_{G_r}(N(k,1),\psi_{k,1})\times {\rm Norm}_{G_{2n-r}}(N(k,2),\psi_{k,2})$-modules  and hence as $M_{\psi_{k}'}$-modules  by Lemma \ref{componentwiseaction} (1).
Therefore, 
\begin{align*}
[\rho_{_{N_k'',\psi_{k}''}}]^{w_{k}}\begin{pmatrix}
Z & 0\\ 0 &Z
\end{pmatrix}& = 
\rho_{N_k',\psi_k'} \begin{pmatrix}
A' &0\\ 0 & B'
\end{pmatrix}\\
& = 
(\rho_1)_{N(k,1),\psi_{k,1}}(A')\otimes (\rho_2)_{N(k,2),\psi_{k,2}}(B')\\
& 
= \tau_k\begin{pmatrix}
Z & 0\\ 0 &Z
\end{pmatrix}.
\end{align*}	
This completes the proof of the theorem.	
\end{proof}	

\subsubsection{} One immediately has the following result which is a `component wise' version of Theorem \ref{nonzeroTJM}. For the following corollary, we shall keep the notations as in Theorem \ref{maincomponentwise}.
\begin{corollary}\label{nonzeroTJMcomponentwise}
Let $\rho_1\in \Alg(G_r), \rho_2\in \Alg(G_{2n-r})$ and put $\pi=i_{P_{r,2n-r}}^{G_{2n}}(\rho_1\otimes \rho_2).$  Then,  $\pi_{N,\psi}$ is non-zero if and only if there exists an integer $k$ such that $\alpha \leq k \leq \beta$ for which both the twisted Jacquet modules $(\rho_1)_{N(k,1),\psi_{k,1}}$ and $(\rho_2)_{N(k,2),\psi_{k,2}}$ are simultaneously non-zero. 
\end{corollary}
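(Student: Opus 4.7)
The plan is a routine deduction from Theorem \ref{maincomponentwise}, essentially by tracing non-vanishing through the filtration (\ref{filtrationmain2}) and the explicit description of its successive quotients given by (\ref{componentwisegluepieces}) and (\ref{definitionoftau}). No new input beyond that theorem is needed.

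First, I would observe that (\ref{filtrationmain2}) is a \emph{finite} filtration $\{0\}=V_{\alpha-1}\subset V_\alpha \subset \cdots \subset V_\beta = \pi_{N,\psi}$, so that $\pi_{N,\psi}\neq 0$ if and only if some successive quotient $V_k/V_{k-1}$ is non-zero. The isomorphism (\ref{componentwisegluepieces}) realizes $V_k/V_{k-1}$ as the compactly induced module $\ind_{\Delta P_{k,r-2k,n-r+k}}^{\Delta G_n}$ applied to $\tau_k$ tensored with two invertible one-dimensional characters; the character twists obviously preserve non-vanishing, and compact induction from a closed subgroup of an $\ell$-group also preserves non-vanishing (given a non-zero $v$ in the inducing module, the section supported on a single coset and taking the value $v$ at the identity is a non-zero element of the induced module; the finite-field setting is similarly immediate). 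Hence $V_k/V_{k-1}\neq 0$ if and only if $\tau_k\neq 0$.

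Next, from the defining formula (\ref{definitionoftau}) the representation $\tau_k$ is literally, as a $\C$-vector space, the tensor product $(\rho_1)_{N(k,1),\psi_{k,1}} \otimes (\rho_2)_{N(k,2),\psi_{k,2}}$; the $\Delta P_{k,r-2k,n-r+k}$-action is obtained by pulling back the external tensor product action of $\Delta M_{k,r-2k,n-r+k}\cdot N(k,1)\times N(k,2)$ and declaring it to be trivial on the remaining unipotent directions. Since a tensor product of vector spaces is zero exactly when one of the factors is zero, $\tau_k\neq 0$ if and only if both $(\rho_1)_{N(k,1),\psi_{k,1}}$ and $(\rho_2)_{N(k,2),\psi_{k,2}}$ are simultaneously non-zero.

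Chaining the two equivalences produced in the previous two paragraphs yields the statement of the corollary. I anticipate no real obstacle; the only step that merits a brief mention is the preservation of non-vanishing under compact induction from a closed subgroup, which is standard and already implicit in the formulation of Theorem \ref{maincomponentwise}.
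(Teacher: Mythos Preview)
Your proposal is correct and follows exactly the route the paper intends: the corollary is stated as an immediate consequence of Theorem \ref{maincomponentwise}, and you have simply spelled out the routine details of tracing non-vanishing through the filtration \eqref{filtrationmain2}, the induction in \eqref{componentwisegluepieces}, and the tensor-product structure of $\tau_k$ in \eqref{definitionoftau}.
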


\subsubsection{} 
We shall also state below a normalized version  of Theorem \ref{maincomponentwise}, (i.e., where all the inductions and twisted Jacquet functors are normalized) which may be more useful in applications.
\begin{corollary}\label{normalisedcomponentwisecor}
	Keeping notations and assumptions as in Theorem \ref{maincomponentwise},  $\pi_{{N,\psi}}$ has a filtration 
	
	\[\{0\}\subset V_{\alpha}\subset \dots \subset V_{\beta}=\pi{_{N,\psi}}\]
	where for $\alpha \leq k \leq \beta,$ we have an isomorphism of $\Delta G_n$-modules given by
	\begin{equation}\label{componentwisegluepiecesnormalized}
	V_k/V_{k-1}\cong  i_{\Delta P_{k,r-2k,n-r+k}}^{\Delta G_n}\left(\tau_k' \otimes[ \delta_{P_{r,2n-r}}^{1/2}]^{w_{k}}\otimes \delta_{\Delta P_{k,r-2k,n-r+k}}^{-3/2}\right)
	\end{equation}
	where 
	\renewcommand{\kbldelim}{(}
	\renewcommand{\kbrdelim}{)}
	\begin{equation}\label{definitionoftau'}
	\tau_k'\left(\kbordermatrix{
		&k&r-2k&n-r+k &  & k&r-2k&n-r+k\\
		k&a&d&e & \VR 0&0&0\\
		r-2k&0&b&f& \VR 0&0&0\\
		n-r+k&0&0&c& \VR 0&0&0\\
		\hline
		k&0&0&0& \VR a&d&e\\
		r-2k&0&0&0& \VR 0&b&f\\
		n-r+k&0&0&0& \VR 0&0&c
	}\right) 
	\end{equation}
	\begin{equation*}
	= \nu^{\frac{r-2k}{2}}(a)\nu^{-\frac{k}{2}}(b) r_{N(k,1),\psi_{k,1}}(\rho_1)\left(\begin{pmatrix}
	a&0&0\\0&a&d\\0&0&b
	\end{pmatrix}\right) \otimes \nu^{\frac{n-r+k}{2}}(b)\nu^{\frac{2k-r}{2}}(c) r_{N(k,2),\psi_{k,2}}(\rho_2)\left(\begin{pmatrix}
	b&f&0\\0&c&0\\0&0&c
	\end{pmatrix}\right).
	\end{equation*} 
\end{corollary}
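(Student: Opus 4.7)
The plan is to deduce Corollary~\ref{normalisedcomponentwisecor} from Theorem~\ref{maincomponentwise} by converting the unnormalized parabolic induction and the unnormalized twisted Jacquet modules appearing there into their normalized counterparts. For any closed subgroup $H$ of an $\ell$-group $G$ and any $\sigma \in \Alg(H)$, one has the standard identity $\ind_H^G(\sigma) = i_H^G(\delta_H^{-1/2}\delta_G^{1/2}\sigma)$. Applying this to $H = \Delta P_{k,r-2k,n-r+k}$ and $G = \Delta G_n$, and using that $\delta_{\Delta G_n}$ is trivial since $\Delta G_n$ is reductive, transforms the isomorphism of Theorem~\ref{maincomponentwise} into
\begin{equation*}
V_k/V_{k-1} \cong i_{\Delta P_{k,r-2k,n-r+k}}^{\Delta G_n}\left(\tau_k \otimes [\delta_{P_{r,2n-r}}^{1/2}]^{w_k} \otimes \delta_{\Delta P_{k,r-2k,n-r+k}}^{-3/2}\right),
\end{equation*}
so it suffices to identify $\tau_k$ with $\tau_k'$ as representations of $\Delta P_{k,r-2k,n-r+k}$.

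By the relation between unnormalized and normalized twisted Jacquet modules recalled in Section~\ref{TJMdefn}, $(\rho)_{U,\theta}(g) = {\rm mod}_U(g)^{1/2}\, r_{U,\theta}(\rho)(g)$ for any $g$ normalizing $(U,\theta)$. Hence the substantive step is to compute ${\rm mod}_{N(k,1)}$ and ${\rm mod}_{N(k,2)}$ on the block elements appearing in the definition of $\tau_k$. For $g_1 = \begin{pmatrix} a & 0 & 0 \\ 0 & a & d \\ 0 & 0 & b \end{pmatrix}$, a direct block-matrix calculation shows that conjugation on $N(k,1)$, parameterized by $(x,y) \in \mathcal{M}_k \oplus \mathcal{M}_{k,r-2k}$, is the linear automorphism $(x,y) \mapsto (axa^{-1},\ ayb^{-1} - axa^{-1}db^{-1})$, whose Jacobian evaluates to $\nu(a)^{r-2k}\nu(b)^{-k}$. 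An entirely analogous computation for $g_2 = \begin{pmatrix} b & f & 0 \\ 0 & c & 0 \\ 0 & 0 & c \end{pmatrix}$ acting on $N(k,2)$, parameterized by $(w,z) \in \mathcal{M}_{r-2k,n-r+k}\oplus \mathcal{M}_{n-r+k}$, yields ${\rm mod}_{N(k,2)}(g_2) = \nu(b)^{n-r+k}\nu(c)^{2k-r}$.

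Taking square roots and substituting into $\tau_k(\diag(Z,Z)) = (\rho_1)_{N(k,1),\psi_{k,1}}(g_1) \otimes (\rho_2)_{N(k,2),\psi_{k,2}}(g_2)$ produces exactly the scaling factors $\nu(a)^{(r-2k)/2}\nu(b)^{-k/2}$ and $\nu(b)^{(n-r+k)/2}\nu(c)^{(2k-r)/2}$ appearing in $\tau_k'$, so $\tau_k = \tau_k'$ and the corollary follows. The only step that is more than routine bookkeeping is the Jacobian calculation for ${\rm mod}_{N(k,i)}$; the main obstacle, insofar as there is one, is keeping track of the sign conventions when relating $\delta_H$, ${\rm mod}_U$ and the normalized induction/twisted Jacquet functors so that the exponents $-3/2$, $(r-2k)/2$, $-k/2$, $(n-r+k)/2$, $(2k-r)/2$ emerge with the correct signs.
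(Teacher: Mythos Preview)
Your proof is correct and follows essentially the same approach as the paper's own proof: converting $\ind$ to $i$ via the modular character relation (picking up the $\delta_{\Delta P_{k,r-2k,n-r+k}}^{-3/2}$), then converting the unnormalized twisted Jacquet modules to normalized ones via ${\rm mod}_{N(k,i)}^{1/2}$, with the key content being the explicit computation of ${\rm mod}_{N(k,1)}$ and ${\rm mod}_{N(k,2)}$. Your direct Jacobian computation of these modules is equivalent to (and arguably cleaner than) the paper's change-of-variables integral computation, and yields the same values $\nu(a)^{r-2k}\nu(b)^{-k}$ and $\nu(b)^{n-r+k}\nu(c)^{2k-r}$.
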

\begin{proof}
	By Theorem \ref{maincomponentwise}, one has
	\begin{equation}
	V_k/V_{k-1}\cong  \ind_{\Delta P_{k,r-2k,n-r+k}}^{\Delta G_n}\left(\tau_k \otimes[ \delta_{P_{r,2n-r}}^{1/2}]^{w_{k}}\otimes \delta_{\Delta P_{k,r-2k,n-r+k}}^{-1}\right)
	\end{equation} where $\tau_k$ is as in \eqref{definitionoftau}.
	Since, for any $\ell$-group $G,$ a closed subgroup $H$ of $G$ and $\eta\in\Alg(H),$ $i_H^G(\eta)=\ind_H^G(\delta_H^{\frac{1}{2}} \delta_G^{-\frac{1}{2}}\eta)$, we get
	\begin{equation}\label{normalisedcomponentwise}
	V_k/V_{k-1}\cong  i_{\Delta P_{k,r-2k,n-r+k}}^{\Delta G_n}\left(\tau_k \otimes[ \delta_{P_{r,2n-r}}^{1/2}]^{w_{k}}\otimes \delta_{\Delta P_{k,r-2k,n-r+k}}^{-3/2}\right).
	\end{equation}
	Note that $(\rho_i)_{N(k,i),\psi_{k,i}}\simeq {\rm mod}_{N(k,i)}^{\frac{1}{2}} \otimes  r_{N(k,i),\psi_{k,i}}(\rho_i)$ for $i\in \{1,2\}.$ To establish the corollary, we need to compute the action of the modular characters ${\rm mod}_{N(k,1)}$ and ${\rm mod}_{N(k,2)}$ on the subgroups $M_{\psi_k}'\cap G_r$ and $M_{\psi_k}'\cap G_{2n-r}$ respectively. 
	Let $f_0$ denote a locally constant compactly supported function on $N(k,1).$ 	For 
	$A=\kbordermatrix{
		&k&k&r-2k \\
		k&a&0&0 \\
		k&0&a&d \\
		r-2k & 0 & 0 &b}\in M_{\psi_k'}\cap G_{r}$ we have 
		
	\begin{align*}
	\bigintsss_{N(k,1)}f_0(A^{-1}n_1A)dn_1
	 & = \bigintsss_{N(k,1)}\begin{pmatrix}
	I_k&a^{-1}xa&a^{-1}yb+a^{-1}xd\\0&I_k&0\\0&0&I_{r-2k}
	\end{pmatrix}dxdy.
	\end{align*}
Applying a succession of change of variables in the order $x\to axa^{-1}, y\to ayb^{-1}-xdb^{-1},$ we get 
	\begin{align*}
	\bigintsss_{N(k,1)}f_0(A^{-1}n_1A)dn_1&= \nu(a)^{r-2k}\nu^{-k}(b)\bigintsss_{N(k,1)}\begin{pmatrix}
	I_k&x&y\\0&I_k&0\\0&0&I_{r-2k}
	\end{pmatrix}dxdy\\
	\end{align*}
	Therefore, ${\rm mod}_{N(k,1)} \left(A\right)= \nu^{2k-r}(a)\nu^{-k}(b).$ By a similar calculation,
\begin{equation*}{\rm mod}_{N(k,2)}\left(\begin{pmatrix}
	b&f&0\\0&c&0\\0&0&c
	\end{pmatrix}\right)= \nu^{n-r+k}(b)\nu^{2k-r}(c).
\end{equation*}
	The corollary follows.
\end{proof}

\subsection{Structure of the twisted Jacquet module in the finite field case}\label{structure-finite}
\subsubsection{}
In this section, we shall prove our main structure theorem for a parabolically induced representation of $GL_{2n}(\F_q).$ To begin with, we note some standard facts on representations of finite groups. Let $G$ be a finite group and $(\pi, V)$ be a finite dimensional complex representation of $G.$ For a subgroup $H$ of $G$ and a character $\chi$ of $H,$  we have
\begin{equation*}
V(H,\chi) := {\rm Span}\{\pi(h)v-\chi(h)v: v\in V, h\in H\}= \{v \in V : \sum_{h \in H} \chi^{-1}(h)\pi(h)v = 0\}.
\end{equation*}
Let
\begin{equation*}
V^{H,\chi} := \{v \in V : \pi(h)v = \chi(h)v\: \text{for all} \: h \in H \}.
\end{equation*}
\subsubsection{}
Let $S$ be a subgroup of $G$ which normalizes both $H$ and $\chi.$ Then, $V^{H,\chi}$ is an $S$-invariant subspace of $V$ and becomes a representation of $S$ which we shall denote by  $\pi^{H,\chi}.$  Also, denote the restriction of $\pi$ to $V(H,\chi)$ by $\pi(H,\chi).$ We then have $\pi= \pi(H,\chi) \bigoplus \pi^{H,\chi}$ and consequently, obtain an isomorphism of $S$-modules:
\begin{equation}\label{equivalence-finite}
	\pi_{H,\chi}\simeq\pi^{H,\chi}.
\end{equation}
	
\subsubsection{}
One of the main ingredients in the proof of Theorem \ref{mainpadic} was Proposition \ref{TJM-reduction} which was proved using Theorem \ref{Renard-general}. Here, we prove the analogue of Proposition \ref{TJM-reduction} in the finite group case directly giving a standard argument.

\begin{proposition}\label{TJM-reduction-analoguefinite}
	Let $G_0$ be a finite group. Let $G,U$ be subgroups of $G_0$  and let $\theta:U\to \C^{\times}$ be a character such that $G$ normalizes both $U$ and $\theta.$ Let $H$ be a  subgroup of $G$ and $U_0$ a subgroup of $U.$ Let $\theta_0$ denote the restriction of $\theta$ to $U_0.$ Assume further that $H$ normalizes $U_0$ and $\theta_0.$ For any representation $(\rho, V)$ of the group $HU_0,$  we have an isomorphism of $G$-modules: 
\begin{equation}\label{isomorphism-finite}
	(\Ind_{HU_0}^{GU}(\rho))_{U,\theta} \simeq \Ind_{H}^{G}(\rho_{U_0,\theta_0})
	\end{equation}
\end{proposition}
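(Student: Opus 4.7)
My approach is to first replace the coinvariant spaces by the (isomorphic) isotypic subspaces, and then build the isomorphism by the natural restriction map from $GU$ to $G$. More precisely, by the equivalence \eqref{equivalence-finite}, which is available because $G$ normalizes $(U,\theta)$ and $H$ normalizes $(U_0,\theta_0)$, it suffices to construct a $G$-equivariant isomorphism
\begin{equation*}
\Psi \colon \left(\Ind_{HU_0}^{GU}(\rho)\right)^{U,\theta} \longrightarrow \Ind_H^G\left(V^{U_0,\theta_0}\right),
\qquad \Psi(f) := f|_G.
\end{equation*}

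The first step is to verify that $\Psi$ is well-defined, i.e.\ that $f|_G$ takes values in $V^{U_0,\theta_0}$ and that it is left-$H$-equivariant. For $g\in G$ and $u_0\in U_0$ write $u_0 g = g\cdot(g^{-1}u_0 g)$; since $G$ normalizes $U$ and $\theta$ we have $g^{-1}u_0g\in U$ and $\theta(g^{-1}u_0 g)=\theta(u_0)$, so
\[
\rho(u_0)f(g) \;=\; f(u_0 g) \;=\; f\bigl(g(g^{-1}u_0 g)\bigr) \;=\; \theta(u_0)f(g),
\]
where the last equality uses the right $(U,\theta)$-equivariance of $f$. Hence $f(g)\in V^{U_0,\theta_0}$. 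Left-$H$-equivariance of $f|_G$ is immediate from $H\subset HU_0$. That $\Psi$ intertwines the right-translation $G$-actions on both sides is clear because $G\subset GU$.

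The second step is to construct the inverse. Given $\phi\in \Ind_H^G(V^{U_0,\theta_0})$, define $\widetilde\phi\colon GU\to V$ by $\widetilde\phi(gu)=\theta(u)\phi(g)$ (I will assume, as in the $p$-adic Proposition \ref{TJM-reduction}, that $G\cap U=\{e\}$, which makes the decomposition $gu$ unique; without it the statement fails, as a simple check with $G=U=U_0=\Z/2$ shows). The right $(U,\theta)$-equivariance of $\widetilde\phi$ is immediate. For the left $HU_0$-equivariance, write $p=hu_0\in HU_0$, use $u_0 g=g(g^{-1}u_0 g)$ with $g^{-1}u_0g\in U$, and compute
\[
\widetilde\phi(p\cdot gu)=\widetilde\phi\bigl((hg)\cdot(g^{-1}u_0 g)u\bigr)=\theta(g^{-1}u_0 g)\theta(u)\rho(h)\phi(g)=\theta(u_0)\theta(u)\rho(h)\phi(g);
\]
on the other hand $\rho(p)\widetilde\phi(gu)=\rho(h)\rho(u_0)\theta(u)\phi(g)$, and since $\phi(g)\in V^{U_0,\theta_0}$ we have $\rho(u_0)\phi(g)=\theta(u_0)\phi(g)$, giving the same result.

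Finally, $\Psi$ and $\phi\mapsto \widetilde\phi$ are manifestly inverse to each other on restriction, and the whole construction is $G$-equivariant. Composing with \eqref{equivalence-finite} on both sides yields the desired isomorphism $(\Ind_{HU_0}^{GU}\rho)_{U,\theta}\cong \Ind_H^G(\rho_{U_0,\theta_0})$. The main technical point to handle carefully is the inner conjugation argument $u_0 g=g(g^{-1}u_0g)$ together with the $G$-invariance of $\theta$: this is exactly what allows the two equivariance conditions (left $HU_0$ on $\Ind$ and right $(U,\theta)$ on the coinvariant side) to be matched, and it is also why the hypothesis that $G$ normalize $U$ and $\theta$ is essential.
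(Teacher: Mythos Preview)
Your approach is correct and essentially identical to the paper's: both reduce via \eqref{equivalence-finite} to the isotypic subspaces, define the map by restriction $f\mapsto f|_G$, and establish bijectivity by the formula $\widetilde\phi(gu)=\theta(u)\phi(g)$ (the paper packages this as separate injectivity/surjectivity arguments, you as an explicit inverse). Your remark that the hypothesis $G\cap U=\{e\}$ is needed is correct and well spotted: it is present in the $p$-adic Proposition~\ref{TJM-reduction} and is tacitly used in the paper's surjectivity step when writing $f(gu)=\theta(u)f'(g)$, but was omitted from the finite-group statement; it holds in all the applications in the paper.
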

\begin{proof}
In view of the isomorphism 	\eqref{equivalence-finite}, to prove \eqref{isomorphism-finite}, it suffices to show that 	$(\Ind_{HU_0}^{GU}(\rho))^{U,\theta}$ is equivalent to $ \Ind_{H}^{G}(\rho^{U_0,\theta_0})$ as $G$-modules.	To this end, let $\phi:(\Ind_{HU_0}^{GU}(\rho))^{U,\theta} \to \Ind_{H}^{G}(\rho^{U_0,\theta_0})$  denote the map obtained by restricting a function $f \in (\Ind_{HU_0}^{GU}(\rho))^{U,\theta}$ to $G.$ For the purposes of this proof, we shall denote the action of $\Ind_{HU_0}^{GU}(\rho)$ by $\sigma.$ We will show that $\phi$ is well defined and is a $G$-equivariant linear bijection.
	
To show that $\phi$ is well defined, we show first that $\phi(f)(g) \in \rho^{U_0,\theta_0}$ for every $g \in G.$  For $u_0 \in U_0,$ $\rho(u_0)(\phi(f)(g))= \rho(u_0)f(g)=f(u_0g).$ Since $G$ normalizes $U,$ let $u_1\in U$ be such that $u_0g=gu_1.$ Then, $f(u_0g)=f(gu_1)=(\sigma(u_1)f)(g)=(\theta(u_1)f)(g)=\theta(u_1)f(g)=\theta(u_0)f(g).$ Thus, $\phi(f)(g) \in \rho^{U_0,\theta_0}.$ Next, take $h\in H$ and $g \in G.$ Note that $\phi(f)(hg)=f(hg)=\rho(h)(f(g)).$ So, $\phi(f)\in \Ind_{H}^{G}(\rho^{U_0,\theta_0}).$  It is obvious that $\phi$ is linear and straightforward to check that $\phi$ is a $G$-equivariant map. 
	
To show injectivity of $\phi,$ suppose that $\phi(f)=0,$ i.e., $f(g)=0$ for every $g\in G.$ Take $u \in U,g \in G.$ Since $f \in  (\Ind_{HU_0}^{GU}(\rho))^{U,\theta},$ we see that $f(gu)=(\sigma(u)f)(g)=\theta(u)f(g)=0,$ proving $\phi$ is injective. 
	
	Let $f'\in \Ind_{H}^{G}(\rho^{U_0,\theta_0}).$ Define $f:GU \to V$ by $f(gu)=\theta(u)f'(g).$ Then, for $h\in H, u_0\in U_0,g \in G, u\in U$ we obtain $f(hu_0gu)=f(hgu_0'u)$ where $u_0'$ is such that $u_0g=gu_0'.$ Then, $\theta(u_0)=\theta(u_0')$ as $G$ normalizes $\theta.$ In view of this,
	\begin{equation*}f(hgu_0'u)=\theta(u_0'u)f'(hg)=\theta(u_0'u)\rho(h)(f'(g))=\theta(u_0)\theta(u)\rho(h)(f'(g)).
	\end{equation*} 

 Now, $\theta(u_0)\theta(u)\rho(h)(f'(g))=\theta(u)\rho(h)(\theta(u_0)f'(g))=\theta(u)\rho(h)(\rho(u_0)(f'(g)))=\rho(hu_0)f(gu).$ Also, if $u\in U,u'\in U$ and $g\in G,$ we have  $[\sigma(u)f](gu')=f(gu'u)=\theta(u'u)f'(g)=\theta(u)\theta(u')f'(g)=\theta(u)f(gu').$ We have shown that $f\in \Ind_{HU_0}^{GU}(\rho))^{U,\theta}$ and $\phi(f)=f'$ proving $\phi$ is surjective as well, completing the proof of the theorem. 
\end{proof}

\begin{remark}\label{transitiontofinite}
With Proposition \ref{TJM-reduction-analoguefinite} in place, it is straightforward to deduce the analogues of Corollary \ref{knotldoesnotcontribute}, Theorem \ref{nonzeroTJM} and Theorem \ref{mainpadic} for $GL_{2n}(\F_q).$ We need to keep in mind that all modular characters are trivial in this case and instead of a filtration as in Theorem \ref{mainpadic}, one has a direct sum in the case of $GL_{2n}(\F_q).$  Note also that to prove the analogue of Theorem \ref{mainpadic}, we need only invoke the finite field analogue of Theorem \ref{RestrictiontoSpsi}.
\end{remark}

\subsubsection{Structure theorem in the finite field case}
We state the main result in the finite field case which is the analogue of Theorem \ref{maincomponentwise}. 
Keeping Remark \ref{transitiontofinite} in mind, the proof of Theorem \ref{mainfinite} which we omit, follows by the same arguments as in the proof of Theorem \ref{maincomponentwise}.  In the next theorem, $G_n=GL_n(F)$ where $F$ is a finite field with $q$ elements.

\begin{theorem}\label{mainfinite}
	Fix an integer $r$ such that $1\leq r < 2n.$
	Let $\rho_1$  and $\rho_2$ be finite dimensional representations of $G_r$ and $ G_{2n-r}$ respectively. Put $\pi=\Ind_{P_{r,2n-r}}^{G_{2n}}(\rho_1 \otimes \rho_2).$ Fix a non trivial additive character $\psi_0$ of $F.$ Let $\psi$ be the character of $N$ defined by \eqref{definitionofpsi}. Let $N(k,1)$ and $N(k,2)$ be the subgroups of $G_{r}$ and $G_{2n-r}$ respectively defined by \eqref{N1definition} and \eqref{N2definition}. Also, let $\psi_{k,1}$ and $\psi_{k,2}$ be the characters of $N(k,1)$ and $N(k,2)$ defined by \eqref{Defnpsik1} and \eqref{Defnpsik2}.
 Then,
	\begin{equation*}
	\pi_{N,\psi}=\displaystyle\bigoplus_{\max\{0,r-n\}\leq k \leq {\lfloor \frac{r}{2} \rfloor}} \Ind_{\Delta P_{k,r-2k,n-r+k}}^{\Delta G_n}(\tau_k)
	\end{equation*}
\end{theorem}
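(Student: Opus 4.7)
The plan is to mirror the non-archimedean argument of Theorem \ref{maincomponentwise}, invoking Proposition \ref{TJM-reduction-analoguefinite} in place of Proposition \ref{TJM-reduction} and exploiting the semisimplicity of the category of representations of the finite group $G_{2n}$ to upgrade the filtration of Theorem \ref{mainpadic} to a direct sum. First, I would apply the finite field analogue of Theorem \ref{RestrictiontoSpsi} (which follows from Mackey theory together with Theorem \ref{summarydoublecosets}(4)) to write the restriction of $\pi=\Ind_{P_{r,2n-r}}^{G_{2n}}(\rho_1\otimes\rho_2)$ to $S_\psi$ as a direct sum over the double coset representatives $w_{k,l}$ of the representations $\Ind_{S_{k,l}}^{S_\psi}(\rho^{w_{k,l}})$, with $k$ and $l$ ranging in $\max\{0,r-n\}\leq k\leq \min\{r,n\}$ and $\max\{0,r-n\}\leq l\leq \min\{k,r-k\}$.

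Next, I would establish the finite field analogue of Corollary \ref{knotldoesnotcontribute}: whenever $l<k$, the twisted Jacquet module $(\Ind_{S_{k,l}}^{S_\psi}(\rho^{w_{k,l}}))_{N,\psi}$ vanishes. The argument is the same as in the $p$-adic case: apply Proposition \ref{TJM-reduction-analoguefinite} with $G=\Delta G_n$, $U=N$, $H=H_{k,l}$, $U_0=N^{k,l}$, $\theta=\psi$, to reduce to showing $(\rho^{w_{k,l}})_{N^{k,l},\psi_{k,l}}=0$; this in turn follows from Lemma \ref{structureofSkl}(3), which produces a subgroup $U^{k,l}\subset N^{k,l}$ on which $\psi$ is non-trivial while $\rho^{w_{k,l}}$ is trivial. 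The condition $l=k$ combined with $l\leq\min\{k,r-k\}$ then forces $k\leq \lfloor r/2\rfloor$.

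For each admissible $k$ with $\max\{0,r-n\}\leq k\leq\lfloor r/2\rfloor$, applying Proposition \ref{TJM-reduction-analoguefinite} to the surviving summand yields
\[
\bigl(\Ind_{S_{k}}^{S_\psi}(\rho^{w_{k}})\bigr)_{N,\psi}\cong \Ind_{\Delta P_{k,r-2k,n-r+k}}^{\Delta G_n}\bigl((\rho^{w_{k}})_{N_{k},\psi_{k}}\bigr),
\]
with all modular characters suppressed since they are trivial in the finite field setting. Translating the twisted Jacquet module across the conjugation via Lemma \ref{transition} (and Lemma \ref{componentwiseaction}) identifies $(\rho^{w_{k}})_{N_{k},\psi_{k}}$, as a representation of $\Delta P_{k,r-2k,n-r+k}$, with $\rho_{N_k',\psi_k'}$ viewed through the appropriate embedding. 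Finally, since $\rho=\rho_1\otimes\rho_2$, $N_k'=N(k,1)\times N(k,2)$ and $\psi_k'=\psi_{k,1}\times\psi_{k,2}$, Lemma \ref{TJM split in tensor product} splits this twisted Jacquet module as $(\rho_1)_{N(k,1),\psi_{k,1}}\otimes (\rho_2)_{N(k,2),\psi_{k,2}}$, which is precisely $\tau_k$ as defined in \eqref{definitionoftau}.

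The main obstacle I anticipate is not any single calculation — each individual step is an analogue of something already proved in the $p$-adic case — but rather the bookkeeping needed to confirm that semisimplicity of finite group representations promotes the filtration of Theorem \ref{mainpadic} into a genuine direct sum indexed by $k$. This is routine once one observes that in the finite group setting the Mackey decomposition is itself a direct sum (not merely a filtration built from restrictions to $P$), so the exactness of the twisted Jacquet functor combined with the vanishing for $l<k$ directly produces the claimed direct sum decomposition.
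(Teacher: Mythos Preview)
Your proposal is correct and follows essentially the same route as the paper: the paper omits the proof and simply refers to Remark \ref{transitiontofinite}, which records exactly the steps you outline --- use the finite field Mackey decomposition (analogue of Theorem \ref{RestrictiontoSpsi}) to get a direct sum over $w_{k,l}$, apply Proposition \ref{TJM-reduction-analoguefinite} to kill the $l\neq k$ pieces as in Corollary \ref{knotldoesnotcontribute}, drop all modular characters, and then run the argument of Theorem \ref{maincomponentwise} (Lemmas \ref{transition}, \ref{componentwiseaction}, \ref{TJM split in tensor product}) to identify the surviving summands with $\Ind_{\Delta P_{k,r-2k,n-r+k}}^{\Delta G_n}(\tau_k)$. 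Your observation that semisimplicity lets you bypass the restriction-to-$P$ step and work directly with the $S_\psi$-decomposition is also made explicitly by the paper in Remark \ref{transitiontofinite}.
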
	
where $\tau_k\left( \begin{pmatrix}
a& d & e & 0 & 0 & 0\\
0 & b & f & 0 & 0 & 0\\
0 & 0 & c & 0 & 0 & 0\\
0 & 0 & 0 & a & d & e\\
0 & 0 & 0 & 0 & b & f\\
0 & 0 & 0 & 0 & 0 & c
\end{pmatrix} \right) = (\rho_1)_{N(k,1),\psi_{k,1} }  \begin{pmatrix}
a & 0 & 0\\
0 & a & d\\
0 & 0 & b
\end{pmatrix} \otimes (\rho_2)_{N(k,2),\psi_{k,2} }  \begin{pmatrix}
b & f & 0\\
0 & c & 0\\
0 & 0 & c
\end{pmatrix}.$

\section{Some special cases and applications}\label{Applications}
The aim of this section is to consider a few special cases of the computation of the twisted Jacquet module and illustrate a few applications of our results. Throughout the rest of the article, we fix $F$ to be a non-archimedean local field. If $\rho_i \in \Alg(G_{n_i})$ for $i\in \{1,\dots,s\},$  and $n=\sum n_i$ then $\rho_1\times\dots\times\rho_s$ will denote the parabolically induced representation $i_{P_{n_1,\dots,n_s}}^{G_n}(\rho_1\otimes\dots\otimes\rho_s).$ In Section \ref{smooth-v-char},  we compute $\pi_{N,\psi}$ where $\pi$ is a representation of $G_{2n}$ of the form $\chi\times \rho$ or $\rho\times \chi$ where $\rho$ is any smooth representation and $\chi$ is any character. Section \ref{nonvanishing} discusses certain instances where the twisted Jacquet module of a principal series representation is  non-zero.  We  determine the structure of twisted Jacquet modules in Section \ref{mainapplications} of certain irreducible representations of $G_{2n}$ which are subquotients of certain principal series representations. As a consequence, we show the existence of Shalika model for certain non-generic irreducible representation representation of $G_{2n}.$ In Section \ref{Steinberg-v-char}, we present the structure of twisted Jacquet modules of subquotients of $St_r.\chi\times \mu,$ where $\mu$ and $\chi$ are characters. In Sections \ref{example1} and \ref{example2}, we consider two particular reducible principal series representations of $G_4$ and determine precisely for which subquotients of these reducible principal series the twisted Jacquet module is non-vanishing.

\subsection{Product of a character with a smooth representation}\label{smooth-v-char}
\subsubsection{}
In this subsection, we shall consider the parabolic induction of a character with a smooth representation. The structure of the twisted Jacquet module of such a representation is relatively simple. In particular, if $\chi$ and $\mu$ are one characters of $G_r$ and $G_{2n-r},$ then we show that $(\chi\times\mu)_{N,\psi}\neq 0$ if and only if $r=n$ and in which case $(\chi\times\mu)_{N,\psi}=\chi\otimes \mu.$
\begin{proposition}\label{charactervssmooth}
Let $r$ be such that  $1\leq r <2n.$ Let $\chi$ be a character of $G_{r},$ $\rho\in \Alg(G_{2n-r})$ and put $\pi=i_{P_{r,2n-r}}^{G_{2n}}(\chi\otimes\rho).$ 
\begin{enumerate}
	\item If $r>n,$ then $\pi_{N,\psi}=0,$  
	\item  If $r=n,$ $\pi_{N,\psi}=\chi\otimes \rho$ as $\Delta G_n$-modules, and
	\item   If $r< n,$ then $\pi_{N,\psi}\simeq i_{\Delta P_{r,n-r}}^{\Delta G_n}(\tau \otimes[ \delta_{P_{r,2n-r}}^{1/2}]^{w_{0}}\otimes \delta_{P_{r,n-r}}^{-3/2})$ where $\tau$ acts on the group $\Delta P_{r,2n-r}$ as follows:
	\begin{equation}
		\tau\left(\left[\begin{array}{cc}
			\begin{pmatrix}
				b & f\\
				0 & c
			\end{pmatrix}	&  \\
			& \begin{pmatrix}
				b & f\\
				0 & c
			\end{pmatrix}
		\end{array}\right] \right)\simeq 
		\chi(b) \otimes \nu(b)^{\frac{n-r}{2}}\nu(c)^{-\frac{r}{2}}r_{N_2,\psi_2}(\rho_2)\left( \begin{pmatrix}
			b & f & 0\\
			0 & c & 0\\
			0 & 0 & c\end{pmatrix}\right),
	\end{equation}
	where  $N_2=\left\{\left[\begin{array}{ccc}
		I_{r}&0&w \\
		0&I_{n-r}& z\\
		0&0&I_{n-r}	
	\end{array}
	\right]\right\}$ and $\psi_2(n)=\psi_0(tr(z))$ for $n\in N_2.$ 
\end{enumerate}
\end{proposition}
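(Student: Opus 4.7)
The plan is to apply the normalized structure theorem, Corollary \ref{normalisedcomponentwisecor}, directly to the inducing data $\rho_1 \otimes \rho_2 := \chi \otimes \rho$ and observe that characters detect only the trivial piece of the filtration. Setting $\alpha = \max\{0,r-n\}$ and $\beta = \lfloor r/2 \rfloor$, the corollary gives a filtration $\{0\} \subset V_\alpha \subset \cdots \subset V_\beta = \pi_{N,\psi}$ whose graded pieces involve the tensor product $r_{N(k,1),\psi_{k,1}}(\chi) \otimes r_{N(k,2),\psi_{k,2}}(\rho)$.

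The first key observation is that $\chi$, being a character of $G_r$, is trivial on the unipotent subgroup $N(k,1)$. Hence $(\chi)_{N(k,1),\psi_{k,1}} \neq 0$ if and only if the character $\psi_{k,1}$ itself is trivial on $N(k,1)$. From the definition \eqref{Defnpsik1}, the character $\psi_{k,1}$ encodes $\psi_0(\tr(x))$ with $x \in \mathcal{M}_k$; since $\psi_0$ is non-trivial, this character is non-trivial precisely when $k \geq 1$. Therefore only the index $k=0$ can contribute a non-zero piece, and when $k=0$ the subgroup $N(0,1)$ collapses to the identity so that $r_{N(0,1),\psi_{0,1}}(\chi) = \chi$.

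This immediately yields part (1): if $r > n$ then $\alpha = r-n \geq 1$, so $k=0$ is outside the indexing range and every graded piece vanishes, hence $\pi_{N,\psi} = 0$. For parts (2) and (3) we have $\alpha = 0$, so the filtration degenerates to a single piece $V_0$, and the plan is to read off the formula from \eqref{componentwisegluepiecesnormalized} at $k=0$ with the degenerate block sizes substituted. In this case $\Delta P_{0,r,n-r}$ simplifies to $\Delta P_{r,n-r}$, the argument block $\begin{pmatrix} a & 0 & 0 \\ 0 & a & d \\ 0 & 0 & b \end{pmatrix}$ of $\rho_1$ reduces to just $b$ (as $a \in G_0$), and the factor $\nu(a)^{(r-2k)/2}$ disappears.

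For (2), when $r = n$ the parabolic $\Delta P_{r,n-r} = \Delta P_{n,0}$ is all of $\Delta G_n$, so the induction in \eqref{componentwisegluepiecesnormalized} is trivial; simultaneously the subgroup $N(0,2)$ collapses and all the modular twists in the statement become trivial, leaving $\pi_{N,\psi} \cong \chi \otimes \rho$. For (3), when $r < n$ the induction is genuinely from $\Delta P_{r,n-r}$, the remaining block $N(0,2)$ equals the $N_2$ in the statement with character $\psi_{0,2} = \psi_2$, and assembling the $\nu$-factors from $\tau_0'$ together with the trivial contribution from the left block reproduces exactly the expression claimed in (3). The only delicate step is bookkeeping: verifying that the exponents $\tfrac{r-2k}{2}$, $-\tfrac{k}{2}$, $\tfrac{n-r+k}{2}$, $\tfrac{2k-r}{2}$ from Corollary \ref{normalisedcomponentwisecor} reduce correctly at $k = 0$ to match the stated twists by $\nu(b)^{(n-r)/2}$ and $\nu(c)^{-r/2}$, and that the remaining $\delta_{P_{r,2n-r}}^{1/2}$ and $\delta_{\Delta P_{r,n-r}}^{-3/2}$ terms combine into the stated factor. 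This is purely a modular character computation and presents the only obstacle, which I expect to be routine.
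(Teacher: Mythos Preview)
Your proposal is correct and follows essentially the same approach as the paper: both apply the main structure theorem (the paper cites Theorem \ref{maincomponentwise} and the formulas \eqref{componentwisegluepiecesnormalized}--\eqref{definitionoftau'}, which is exactly your Corollary \ref{normalisedcomponentwisecor}), observe that $r_{N(k,1),\psi_{k,1}}(\chi)=0$ for $k>0$ since $\chi$ is a character while $\psi_{k,1}$ is non-trivial, and then specialize to $k=0$ to read off the three cases.
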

\begin{proof}
Recall that $\alpha=\max\{0,r-n\}$ and $\beta=\lfloor \frac{r}{2}\rfloor.$ By Theorem \ref{maincomponentwise}, $\pi_{N,\psi}$ is glued from the representations $V_k/V_{k-1}$ (see \eqref{componentwisegluepiecesnormalized} and \eqref{definitionoftau'}) where $\alpha\leq k \leq \beta.$ We note that for any $k>0,$ the subgroup  $N(k,1)$ and the character $\psi_{k,1}$ in Corollary \ref{normalisedcomponentwisecor} are non-trivial whereas the character $\chi$ acts trivially on $N(k,1).$ Thus, for any $k>0,$ $r_{N(k,1), \psi_{k,1}}(\chi)=0.$ If $r>n,$ we have $\alpha=r-n>0$ and therefore, the range of values that $k$ assumes is always positive, forcing $\pi_{N,\psi}=0.$ This proves (i).

On the other hand, if $r\leq n,$ we have $\alpha=0.$  By the argument noted above, the only $V_{k}/V_{k-1}$ which can be non-zero is $V_{0}$ i.e., when the subgroup $N(k,1)$ is trivial. The statements (ii) and (iii) of the corollary follow by putting $k=0$ in \eqref{componentwisegluepiecesnormalized} and \eqref{definitionoftau'}. \end{proof}

\subsubsection{}
We next obtain a proposition analogous to the previous one by interchanging the roles of $\rho$ and $\chi.$ We omit the proof.
\begin{proposition}\label{smoothvscharacter}
	Fix an $r$ such that $1\leq r <2n.$ Let $\rho\in \Alg(G_{r})$ and $\chi$ be a character of $G_{2n-r}.$  Put $\pi=i_{P_{r,2n-r}}^{G_{2n}}(\rho\otimes\chi).$  Then,
	\begin{enumerate}
		\item
		if $r<n, \pi_{N,\psi}=0,$ 
		\item  if $r=n,$ $\pi_{N,\psi}=\rho\otimes \chi$ as $\Delta G_n$-modules, and
		\item  if $r>n,$ then $\pi_{N,\psi}\simeq i_{\Delta P_{r-n,2n-r}}^{\Delta G_n}(\tau \otimes[ \delta_{P_{r,2n-r}}^{1/2}]^{w_{r-n}}\otimes \delta_{P_{r-n,2n-r}}^{-3/2})$ where $\tau$ acts on the group $\Delta P_{r-n,2n-r}$ as follows:
		
		\begin{equation}
			\tau\left(\left[\begin{array}{cc}
				\begin{pmatrix}
					a & d\\
					0 & b
				\end{pmatrix}	&  \\
				& \begin{pmatrix}
					a & d\\
					0 & b
				\end{pmatrix}
			\end{array}\right] \right)\simeq 
			\nu(a)^{\frac{2n-r}{2}}\nu(b)^{\frac{n-r}{2}}r_{N_1,\psi_1}(\rho_1)\left( \begin{pmatrix}
				a & 0 & 0\\
				0 & a & d\\
				0 & 0 & b\end{pmatrix}\right)\otimes \chi(b),
		\end{equation}
		where $N_1=\left\{\left[\begin{array}{ccc}
			I_{r-n}&x&y \\
			0&I_{r-n}& 0\\
			0&0&I_{2n-r}	
		\end{array}
		\right]\right\}$ and $\psi_1(n)=\psi_0(tr(x))$ for an element $n\in N_1.$ 
	\end{enumerate}	
\end{proposition}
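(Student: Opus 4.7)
The plan is to parallel the proof of Proposition \ref{charactervssmooth}, now exploiting the fact that $\chi$ is a character of $G_{2n-r}$. Applying Corollary \ref{normalisedcomponentwisecor} with $\rho_1=\rho$ and $\rho_2=\chi$, the module $\pi_{N,\psi}$ carries a filtration whose successive quotients $V_k/V_{k-1}$ $(\alpha\le k\le\beta)$ involve $r_{N(k,2),\psi_{k,2}}(\chi)$. Since $\chi$ is one-dimensional it factors through $\det$ and acts trivially on the unipotent group $N(k,2)$. From \eqref{Defnpsik2}, the character $\psi_{k,2}$ is nontrivial whenever $n-r+k>0$, because then the $z$-block in \eqref{N2definition} has positive size and $\psi_0$ is nontrivial. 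Consequently $r_{N(k,2),\psi_{k,2}}(\chi)=0$ unless $N(k,2)=\{1\}$, which by \eqref{N2definition} happens precisely when $n-r+k=0$, i.e., $k=r-n$.

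The three cases then follow by inspection. For (1), if $r<n$ the forced index $k=r-n$ is negative and so lies outside $[\alpha,\beta]=[0,\lfloor r/2\rfloor]$; every graded piece vanishes, giving $\pi_{N,\psi}=0$. For (2), if $r=n$, the only surviving index is $k=0=\alpha=r-n$: both $N(0,1)$ and $N(0,2)$ are trivial, the Levi blocks of sizes $k=0$ and $n-r+k=0$ disappear so $\Delta P_{0,n,0}=\Delta G_n$, the induction in \eqref{componentwisegluepiecesnormalized} collapses, the exponents $\tfrac{r-2k}{2}$, $-\tfrac{k}{2}$, $\tfrac{n-r+k}{2}$, $\tfrac{2k-r}{2}$ in \eqref{definitionoftau'} apply to empty blocks and hence contribute trivially, and $\delta_{P_{n,n}}^{1/2}$ together with its $w_0$-conjugate restricts trivially to $\Delta G_n$; direct substitution gives $\pi_{N,\psi}\simeq\rho\otimes\chi$.

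For (3), $r>n$, the forced index is $k=r-n=\alpha$, and $k\le\beta$ holds because $r-n\le\lfloor r/2\rfloor$ is equivalent to $r\le 2n$, which is part of the hypothesis. Only $V_{r-n}$ survives; substituting $k=r-n$ in \eqref{componentwisegluepiecesnormalized}--\eqref{definitionoftau'} yields block sizes $(r-n,2n-r,0)$, so $\Delta P_{k,r-2k,n-r+k}$ collapses to $\Delta P_{r-n,2n-r}$; the pair $(N(k,1),\psi_{k,1})$ becomes the pair $(N_1,\psi_1)$ of the proposition; the factor $r_{N(k,2),\psi_{k,2}}(\chi)$ reduces to $\chi$ evaluated on the surviving $(2n-r)\times(2n-r)$ block; and the $\nu$-exponents simplify to $\tfrac{r-2k}{2}=\tfrac{2n-r}{2}$ on $a$ and $-\tfrac{k}{2}=\tfrac{n-r}{2}$ on $b$, reproducing exactly the displayed formula for $\tau$.

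The only genuine obstacle is the bookkeeping of the normalization characters $[\delta_{P_{r,2n-r}}^{1/2}]^{w_{r-n}}$ and $\delta_{\Delta P_{r-n,2n-r}}^{-3/2}$ appearing in the induction, and checking they combine correctly with the $\nu$-twists inside $\tau_k$; however, since both are inherited verbatim from Corollary \ref{normalisedcomponentwisecor} and appear unchanged in the statement, this amounts to a direct substitution that mirrors the corresponding calculation in Proposition \ref{charactervssmooth}(iii).
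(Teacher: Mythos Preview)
Your proposal is correct and follows exactly the approach the paper intends: the paper omits the proof, stating only that it is analogous to Proposition~\ref{charactervssmooth} with the roles of $\rho$ and $\chi$ interchanged, and your argument carries out precisely that analogy via Corollary~\ref{normalisedcomponentwisecor}. One tiny imprecision: the claim that $r-n\le\lfloor r/2\rfloor$ is \emph{equivalent} to $r\le 2n$ holds literally only for even $r$ (for odd $r$ it is equivalent to $r\le 2n-1$), but since the standing hypothesis $r<2n$ already gives $r\le 2n-1$, your conclusion is unaffected.
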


\subsubsection{}
As a simple consequence of the two previous propositions, we single out the following special case.
\begin{theorem}\label{productoftwocharcaters}
Let $\chi$ and $\mu$ be characters of $G_{r}$ and $G_{2n-r}$ respectively. Then, 
\begin{equation}\label{TJMproductofcharacters}
(\chi\times \mu)_{N,\psi}\simeq \begin{cases}
0 & \mbox{ if } r\neq n\\
\chi\otimes \mu & \mbox{ if } r=n
\end{cases}
\end{equation}
as $\Delta G_n$-modules.	
\end{theorem}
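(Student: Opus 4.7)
The plan is to deduce Theorem \ref{productoftwocharcaters} as a direct consequence of Proposition \ref{charactervssmooth}, taking the second inducing representation to be the character $\mu$ in place of the general smooth representation $\rho$. The three cases $r>n$, $r=n$, and $r<n$ of the proposition match the dichotomy in the statement, so the task reduces to handling them one at a time.

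For $r>n$, part (1) of Proposition \ref{charactervssmooth} gives $\pi_{N,\psi}=0$ for any smooth $\rho$, hence in particular when $\rho=\mu$. For $r=n$, part (2) of Proposition \ref{charactervssmooth} yields $\pi_{N,\psi}\simeq \chi\otimes\mu$ as a $\Delta G_n$-module, which is exactly the second line of \eqref{TJMproductofcharacters}. Both of these cases are therefore immediate.

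The remaining case $r<n$ is where a small additional observation is needed. Proposition \ref{charactervssmooth}(3) identifies $\pi_{N,\psi}$ with a normalized parabolic induction whose inducing data $\tau$ has, as one of its tensor factors, the normalized twisted Jacquet module $r_{N_2,\psi_2}(\mu)$. I would argue that this factor vanishes as follows: the group $N_2$ contains the block $z\in\mathcal{M}_{n-r}$, and since $r<n$ the block size $n-r$ is positive, so $\psi_2\bigl(\text{diag}(I_r,I_{n-r},I_{n-r})+\text{offdiag}(z)\bigr)=\psi_0(\tr z)$ is a non-trivial character of $N_2$. On the other hand, $\mu$ is a character of $G_{2n-r}$ and therefore restricts trivially to the unipotent subgroup $N_2$. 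Consequently $\mu_{N_2,\psi_2}=0$, so $\tau=0$ and the induced module $\pi_{N,\psi}$ is zero. This completes the $r<n$ case and gives the first line of \eqref{TJMproductofcharacters}.

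There is essentially no obstacle: the real work has already been done in Proposition \ref{charactervssmooth} (or equivalently in Corollary \ref{nonzeroTJMcomponentwise}, from which one could alternatively deduce the result by observing that both $\chi_{N(k,1),\psi_{k,1}}\neq 0$ and $\mu_{N(k,2),\psi_{k,2}}\neq 0$ force $k=0$ and $n-r+k=0$ simultaneously, hence $r=n$ and $k=0$). The only substantive point is the vanishing of $\mu_{N_2,\psi_2}$ in case (3), which follows from the single general principle that the twisted Jacquet module of a character with respect to a non-trivial character of a unipotent subgroup is zero.
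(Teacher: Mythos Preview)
Your proof is correct and follows essentially the same route as the paper: reduce to the structure propositions for a character against a smooth representation, and then handle the three ranges of $r$ separately. The only minor difference is in the $r<n$ case: the paper simply invokes the mirror result, Proposition \ref{smoothvscharacter}(1), which directly gives vanishing when the \emph{second} factor is a character and $r<n$, whereas you stay with Proposition \ref{charactervssmooth}(3) and argue by hand that $r_{N_2,\psi_2}(\mu)=0$ because $\mu$ is trivial on the unipotent $N_2$ while $\psi_2$ is not. Your argument is exactly what underlies Proposition \ref{smoothvscharacter}(1) in this special case, so the two proofs differ only in whether that observation is packaged as a citation or spelled out inline. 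Your parenthetical remark via Corollary \ref{nonzeroTJMcomponentwise} is also a valid alternative route.
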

\begin{proof}
By Propositions \ref{charactervssmooth} and \ref{smoothvscharacter}, we have $(\chi\times \mu)_{N,\psi}=0$ if $r>n$ and $r<n$ respectively. Hence, $(\chi\times \mu)_{N,\psi}$ is non-zero only when $r=n,$ in which case, the statement of the theorem follows from Proposition \ref{charactervssmooth} (or Proposition \ref{smoothvscharacter}).
\end{proof}

\begin{remark}
The analogues of Propositions \ref{charactervssmooth}, \ref{smoothvscharacter}  hold if the respective groups are considered over $\F_q.$ Thus, Theorem \ref{productoftwocharcaters}  holds over $\F_q.$
\end{remark}

\subsection{Certain Non-vanishing Twisted Jacquet Modules}\label{nonvanishing}
\subsubsection{}
In this subsection, we shall give examples of certain classes of principal series representations with a non-zero twisted Jacquet module. To this end, let us note a generality. Suppose $G$ is an $\ell$-group and $U_1, U_2$ are closed subgroups of $G$ such that they are a union of their compact open subgroups. Assume further that $U_1\subset U_2,$ $\theta_1,\theta_2$ are characters of $U_1$ and $U_2$ such that ${\theta_2}_{|U_1}=\theta_1.$ Then, for any $\pi\in \Alg(G)$ we have 
\begin{equation}\label{subTJM}
	\Hom_{U_2}(\pi, \theta_2)\subset \Hom_{U_1}(\pi,\theta_1),
\end{equation}
and therefore, $\pi_{U_1,\theta_1}$ is non-zero whenever $\pi_{U_2,\theta_2}$ is non-zero.  

\subsubsection{}
We first have the following simple result.
\begin{proposition}\label{equalcasenonvanishing}
	Suppose $\rho_1,\rho_2\in \Alg(G_{n}).$ Then, $(\rho_1\times \rho_2)_{N,\psi}$ is non-zero. 
\end{proposition}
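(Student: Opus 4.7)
The plan is to apply the component-wise non-vanishing criterion Corollary \ref{nonzeroTJMcomponentwise} with the inducing parameter $r = n$. For this value of $r$ we have $\alpha = \max\{0, r-n\} = 0$ and $\beta = \lfloor r/2 \rfloor = \lfloor n/2 \rfloor$, so in particular the integer $k = 0$ lies in the admissible range $\alpha \le k \le \beta$. It therefore suffices to produce this single value of $k$ at which the two twisted Jacquet modules $(\rho_1)_{N(k,1), \psi_{k,1}}$ and $(\rho_2)_{N(k,2), \psi_{k,2}}$ are simultaneously non-zero, which will force $(\rho_1 \times \rho_2)_{N,\psi} \ne 0$.

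Specializing the definitions \eqref{N1definition} and \eqref{N2definition} at $k = 0$ and $r = n$, every matrix block parametrizing $N(0,1)$ and $N(0,2)$ has at least one zero-sized dimension (for instance $x \in \mathcal{M}_0$ in $N(0,1)$ and $z \in \mathcal{M}_0$ in $N(0,2)$), so both groups collapse to the trivial subgroup $\{I_n\} \subset G_n$. The associated characters $\psi_{0,1}$ and $\psi_{0,2}$ are therefore trivial, and the twisted Jacquet modules simplify to $(\rho_i)_{N(0,i), \psi_{0,i}} = \rho_i$ for $i \in \{1,2\}$, which are non-zero by hypothesis. Corollary \ref{nonzeroTJMcomponentwise} now immediately yields the conclusion.

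For a more explicit argument (which also yields the stronger Theorem \ref{nonzeroTJMintro} asserting that $\rho_1 \otimes \rho_2$ actually embeds into $(\rho_1 \times \rho_2)_{N,\psi}$), one can read off the bottom step $V_0$ of the filtration in Corollary \ref{normalisedcomponentwisecor}. At $k = 0$ and $r = n$ the Levi $\Delta P_{0,n,0}$ coincides with $\Delta G_n$, so the induction functor is trivial and $V_0 \cong \tau_0 \otimes (\text{twists})$. The representation $\tau_0$, after the trivial Jacquet functors are discarded and the remaining $\nu^{(\cdot)/2}$-factors are absorbed, reduces to $\rho_1 \otimes \rho_2$ tensored with an unramified character of $\Delta G_n$, hence is non-zero.

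There is no substantial obstacle here; the only point requiring care is the bookkeeping of which blocks degenerate to size zero when $k = 0$ and $r = n$, in order to confirm that $N(0,1)$ and $N(0,2)$ are indeed trivial. Once this is verified, the non-vanishing follows directly from the general criterion established in Section \ref{Non-zero criteria}.
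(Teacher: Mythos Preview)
Your proof is correct and follows essentially the same route as the paper: set $r=n$, take $k=0$, observe that $N(0,1)$ and $N(0,2)$ degenerate to the trivial subgroup $\{I_n\}$, and conclude that the bottom filtration step $V_0$ is $\rho_1\otimes\rho_2$. The paper invokes Theorem \ref{maincomponentwise} directly while you first phrase it via Corollary \ref{nonzeroTJMcomponentwise}, but this is only a cosmetic difference.
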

\begin{proof}
	We put 	$r=n$ in Theorem \ref{maincomponentwise} to get $\alpha=0.$ The subgroups $N(0,1)$ and $N(0,2)$ are both $\{I_n\} $ and hence $V_0=\rho_{1}\otimes \rho_2$ as $\Delta G_n$-modules. In particular, $V_0\subset (\rho_1\times \rho_2)_{N,\psi}$ yielding $(\rho_1\times \rho_2)_{N,\psi}$ is non-zero.
\end{proof}

\subsubsection{}
We recall the notion of a generic representation (\cite[\S 5.7]{BZ1},\cite[\S 2.3]{Kud1}) of $G_n.$ Let $U_n$ denote the unipotent radical of the standard Borel subgroup $B_n$ of $G_n.$ One has a character $\theta$ of $U_n$ defined by  $\theta(u)=\psi_0(\sum_{i=1}^{n-1} u_{i,i+1})$ for $u\in U_n.$ An irreducible representation $\pi$ of $G_n$ is said to be generic if $\Hom_{U_n}(\pi, \theta)\neq 0.$ 

\subsubsection{}
Note that the subgroup $N(1,1)$ of $G_{n+1}$ is contained in $U_{n+1}$ and the character $\theta$ of $U_{n+1}$ restricted to $N(1,1)$ coincides with $\psi_{1,1}.$  Similarly, the subgroup $N(0,2)$ of $G_{n+1}$ is a subgroup of $U_{n+1}$ and the character $\theta$ restricted to $N(0,2)$ coincides with $\psi_{0,2}.$ By the generality noted in \eqref{subTJM}, we have proved
\begin{lemma}\label{genericlemma}
Let $\rho$ be any irreducible generic representation of $G_{n+1}.$ Then, both $(\rho)_{N(1,1),\psi_{1,1}}$ and $(\rho)_{N(0,2),\psi_{0,2}}$ are  non-zero.
\end{lemma}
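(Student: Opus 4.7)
The plan is to just unpack the observation stated in the paragraph immediately preceding the lemma, so the proof reduces to a direct application of the generality in \eqref{subTJM}. First I would make explicit the two embeddings. Writing a typical element of $N(1,1) \subset G_{n+1}$ (taking $r=n+1$, $k=1$) as a block matrix with blocks of sizes $(1,1,n-1)$, one sees it is upper unitriangular, so $N(1,1) \subset U_{n+1}$, and among its superdiagonal entries $u_{i,i+1}$ only $u_{1,2}$ is nonzero (equal to the scalar $x$). Hence $\theta(u) = \psi_0(u_{1,2}) = \psi_0(x) = \psi_{1,1}(u)$. Similarly, with $r = n-1$ and $k=0$ so that $N(0,2)$ sits in $G_{2n-r} = G_{n+1}$, its elements have block sizes $(n-1,1,1)$ and the only nonzero superdiagonal entry is $u_{n,n+1}$; thus $N(0,2) \subset U_{n+1}$ and $\theta|_{N(0,2)} = \psi_{0,2}$.

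Second, by the definition of genericity, $\Hom_{U_{n+1}}(\rho,\theta) \neq 0$. Fix a nonzero $\lambda$ in this Hom space. Since the defining identity $\lambda(\rho(u)v) = \theta(u)\lambda(v)$ holds for \emph{all} $u \in U_{n+1}$, in particular it holds for all $u$ in the subgroups $N(1,1)$ and $N(0,2)$, where it becomes $\lambda(\rho(u)v) = \psi_{1,1}(u)\lambda(v)$ and $\lambda(\rho(u)v) = \psi_{0,2}(u)\lambda(v)$ respectively, using the character computation above. The same functional $\lambda$, which is nonzero, therefore realizes a nonzero element of $\Hom_{N(1,1)}(\rho, \psi_{1,1})$ and of $\Hom_{N(0,2)}(\rho, \psi_{0,2})$. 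This is precisely the content of \eqref{subTJM}, applied with $(U_2, \theta_2) = (U_{n+1}, \theta)$ and $(U_1, \theta_1)$ equal to $(N(1,1), \psi_{1,1})$ and $(N(0,2), \psi_{0,2})$ in turn.

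There is no real obstacle here; the only point requiring any care is the bookkeeping verifying that in each case the generic character $\theta$ restricts to the correct character $\psi_{k,j}$, which amounts to checking that the lone superdiagonal entry surviving in each subgroup is precisely the one picked off by the trace in the definition of $\psi_{k,j}$. Both twisted Jacquet modules $\rho_{N(1,1),\psi_{1,1}}$ and $\rho_{N(0,2),\psi_{0,2}}$ are then nonzero, as desired.
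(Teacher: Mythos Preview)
Your proof is correct and follows precisely the same approach as the paper: verify $N(1,1),N(0,2)\subset U_{n+1}$ with $\theta$ restricting to $\psi_{1,1},\psi_{0,2}$, then invoke \eqref{subTJM}. You have simply made the superdiagonal bookkeeping explicit, which the paper leaves to the reader in the paragraph preceding the lemma.
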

As a consequence, we have the following general result.
\begin{proposition}\label{genericnplus1}
Let $\rho\in \Irr(G_{n+1})$ be generic and $\eta\in \Irr(G_{n-1})$ be any representation. Then, 
\begin{enumerate}
	\item $(\rho\times \eta)_{N,\psi} \neq 0$ and,
	\item $(\eta\times \rho) _{N,\psi}\neq 0.$
\end{enumerate}
\end{proposition}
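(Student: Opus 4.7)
The plan is to apply Corollary \ref{nonzeroTJMcomponentwise} in each case with a well-chosen value of $k$, exploiting the fact that for the extreme values of $k$ in the admissible range, one of the two subgroups $N(k,1), N(k,2)$ collapses to the trivial group, so that the twisted Jacquet module with respect to it is just the representation itself. The other twisted Jacquet module will then be exactly one of the two appearing in Lemma \ref{genericlemma} and hence non-zero by genericity.

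For (1), I would set $r = n+1$, $\rho_1 = \rho$, $\rho_2 = \eta$, so that $\alpha = \max\{0,r-n\} = 1$ and $\beta = \lfloor (n+1)/2 \rfloor \geq 1$; thus $k = 1$ lies in the admissible range $[\alpha,\beta]$. For this choice the parameter $n-r+k = 0$, so the definition of $N(k,2)$ forces $N(1,2) = \{I_{n-1}\}$ and the character $\psi_{1,2}$ is trivial. Hence $(\eta)_{N(1,2),\psi_{1,2}} \cong \eta \neq 0.$ On the $\rho$-side, $(\rho)_{N(1,1),\psi_{1,1}} \neq 0$ is precisely the first assertion of Lemma \ref{genericlemma}. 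Applying Corollary \ref{nonzeroTJMcomponentwise} with $k=1$ then gives $(\rho \times \eta)_{N,\psi} \neq 0$.

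For (2), I would set $r = n-1$, $\rho_1 = \eta$, $\rho_2 = \rho$, so that $\alpha = 0$ and $k = 0$ is admissible. This time the block $k = 0$ makes $N(0,1) = \{I_{n-1}\}$ trivial, so $(\eta)_{N(0,1),\psi_{0,1}} \cong \eta \neq 0$. The non-vanishing of $(\rho)_{N(0,2),\psi_{0,2}}$ is precisely the second assertion of Lemma \ref{genericlemma}. A second application of Corollary \ref{nonzeroTJMcomponentwise} with $k=0$ yields $(\eta \times \rho)_{N,\psi} \neq 0$.

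There is essentially no analytic obstacle in either case; the whole argument is a parameter bookkeeping exercise. The only point requiring a moment of care is to verify that the chosen $k$ really lies in $[\max\{0,r-n\},\lfloor r/2\rfloor]$ (which it does in both cases, assuming $n \geq 2$ so that $G_{n-1}$ and the Proposition make sense) and to observe that, for those extremal $k$, the collapse of $N(k,2)$ (respectively $N(k,1)$) to the trivial group is exactly what makes the genericity hypothesis on $\rho$ alone suffice, without any hypothesis on $\eta$.
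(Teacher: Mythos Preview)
Your proposal is correct and follows essentially the same approach as the paper: for (1) you apply Corollary \ref{nonzeroTJMcomponentwise} with $r=n+1$ and $k=1$, noting that $N(1,2)$ is trivial and invoking Lemma \ref{genericlemma} for $(\rho)_{N(1,1),\psi_{1,1}}$, and for (2) you take $r=n-1$ and $k=0$ with the roles reversed. Your write-up is in fact slightly more detailed than the paper's, since you explicitly check that the chosen $k$ lies in the admissible range $[\alpha,\beta]$.
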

\begin{proof}
We apply Theorem \ref{maincomponentwise} (or Corollary \ref{nonzeroTJMcomponentwise}) with $r=n+1$ to get $\alpha=1.$  We shall show that $V_1$ is non-zero (see \eqref{filtrationmain2} and \eqref{componentwisegluepieces}) by showing that $(\rho)_{N(1,1),\psi_{1,1}}$ and $(\eta)_{N(1,2),\psi_{1,2}}$ are both non-zero.  We note that $N(1,2)$ is the trivial subgroup $\{I_{n-1}\}$ as $n-r+\alpha=0,$ yielding $(\eta)_{N(1,2),\psi_{1,2}}=\eta.$ On the other hand, as $\rho$ is generic, by Lemma \ref{genericlemma}, $(\rho)_{N(1,1),\psi_{1,1}}$ is non zero, proving (1).  The proof of (2) can be achieved similarly by applying Theorem \ref{maincomponentwise} with $r=n-1$ and proving that $V_0$ is non-zero.
\end{proof}

We next present another case where we can show that the twisted Jacquet module of a principal series is non-zero.
\begin{proposition}\label{hered}
Suppose $m$ is a positive integer such that $m<n$ and let $\rho_1\in \Alg(G_{2m})$ and $\rho_2\in \Alg(G_{2n-2m}).$ Let $\psi_1$ and $\psi_2$ denote the non-degenerate character given by \eqref{definitionofpsi} of the subgroups $N_{m,m}$ of $G_{2m}$ and $N_{n-m,n-m}$ of $G_{2n-2m}$ respectively.  Suppose that the twisted Jacquet modules $(\rho_1)_{N_{m,m},\psi_1}$ and $(\rho_2)_{N_{n-m,n-m},\psi_2}$ are both non-zero.  Then, $(\rho_1\times \rho_2)_{N,\psi}\neq0.$
	\end{proposition}
	\begin{proof}
In the notation of Corollary \ref{nonzeroTJMcomponentwise}, we have  $\beta=m.$ Choose $k=\beta=m,$ to get $N(m,1)=N_{m,m}$ and $N(m,2)=N_{n-m,n-m}.$ Then, $\psi_{m,1}$ and $\psi_{m,2}$  are  $\psi_1$ and $\psi_2$ respectively.  Thus, $(\rho_1)_{N(m,1),\psi_{m,1}}$ and $(\rho_2)_{N(m,2),\psi_{m,2}}$ are both non-zero, proving $(\rho_1\times \rho_2)_{N,\psi}\neq0$ by Corollary \ref{nonzeroTJMcomponentwise}.
\end{proof}

\subsubsection{A general recipe}
In view of Corollary \ref{nonzeroTJMcomponentwise}, in general, to construct a principal series $\pi=\rho_1\times \rho_2$ of $G_{2n}$ such that $\pi_{N,\psi}$ is non-zero, it is sufficient to start with $\rho_1$ and $\rho_2$ such that $(\rho_1)_{N(k,1),\psi_{k,1}}$ and $(\rho_2)_{N(k,2),\psi_{k,2}}$ are simultaneously non-zero for some integer $k$ satisfying $\max\{0,r-n\}\leq k \leq \lfloor \frac{r}{2} \rfloor.$ We end this subsection with such a result for which our starting point is \cite{Naor}. 

\subsubsection{Connection with uneven Shalika Model}
Consider the unipotent radical $U_k$ of the parabolic $P_{k,k,r-2k}$ of $G_r.$ Put $H=\Delta G_k \times G_{r-2k}\subset M_{k,k,r-2k},$ where $\Delta G_k$ is the diagonal copy of $G_k$ in $G_k\times G_k.$  Then, $HU_k$ is a subgroup of $P_{k,k,r-2k}.$ Let $\theta$ be any character of $HU_k$ such that $\theta(u)=\psi_0(tr(x)) $ for each $u=\begin{pmatrix}
	I_k&x&y\\0&I_k&z\\0&0&I_{r-2k}
\end{pmatrix}\in U_k.$  Such a character is called a twisted Shalika character in \cite{Naor}. If $\theta_{|H}=1,$ then $\theta$ is said to be a regular (non-twisted) Shalika character as well.  

\subsubsection{} 
It is shown in \cite{Naor} that for any $\rho\in \Irr(G_r)$, $\Hom_{G_r}(\rho, \Ind _{HU_k}^{G_r}(\theta))$ has dimension at most  one as a $\C$-vector space.  If this space has dimension one, then $\rho$ is said to have an uneven Shalika model. Representations with an uneven Shalika model thus forms a nice class of representations.  

\subsubsection{}
Consider $\theta$ as a character of the subgroup $U_k.$ Note that $N(k,1)$ is a subgroup of $U_k$ and $\theta_{|N(k,1)}=\psi_{k,1}.$ If $\rho\in \Irr(G_r)$ satisfies $\rho_{U_k,\theta}\neq 0,$ then $(\rho)_{N(k,1),\psi_{k,1}}\neq 0.$
In a similar way, it is natural to consider the unipotent radical $U_k'$ of the parabolic subgroup $P_{r-2k, n-r+k,n-r+k}$ of $G_{2n-r}.$ For an element  $u'=\begin{pmatrix}
	I_{r-2k}&u&v\\0&I_{n-r+k}&w\\0&0&I_{n-r+k}
\end{pmatrix}\in U_k',$ we may define $\theta'(u')=\psi_0(tr(w)).$ If any irreducible representation $\eta$ of $G_{2n-r}$ satisfies  $\eta_{U_k', \theta'}\neq 0, $ then $(\eta)_{N(k,2),\psi_{k,2}}\neq 0.$ We have obtained the following result.

\begin{proposition}
Suppose that $\rho\in\Irr(G_r)$ and $\eta\in \Irr(G_{2n-r}).$  If the twisted Jacquet modules $(\rho)_{U_k,\theta}$ and $(\eta)_{U_k',\theta'}$ are both non-zero  for some  $k$ such that  $\max\{0,r-n\}\leq k \leq \lfloor \frac{r}{2} \rfloor,$ then $(\rho\times \eta)_{N,\psi}$ is non-zero.
\end{proposition}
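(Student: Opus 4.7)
The plan is to deduce this proposition directly from Corollary \ref{nonzeroTJMcomponentwise} together with the elementary containment \eqref{subTJM}, so essentially no new computation is required; the work is just to line up the subgroup-character restrictions correctly.

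First I would recall from the paragraphs immediately preceding the statement that $N(k,1)$ is a subgroup of $U_k$ and that $\theta\big|_{N(k,1)} = \psi_{k,1}$, and similarly $N(k,2) \subset U_k'$ with $\theta'\big|_{N(k,2)} = \psi_{k,2}$. These inclusions are essentially built into the definitions \eqref{N1definition}, \eqref{N2definition}, \eqref{Defnpsik1} and \eqref{Defnpsik2}, which single out exactly the blocks of $U_k$ (respectively $U_k'$) on which $\theta$ (respectively $\theta'$) is defined by $\psi_0 \circ \mathrm{tr}$ of the appropriate square block.

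Next I would invoke the generality \eqref{subTJM}: since $(\rho)_{U_k,\theta} \neq 0$ there exists a non-zero element of $\Hom_{U_k}(\rho, \theta)$, which via restriction lies in $\Hom_{N(k,1)}(\rho, \psi_{k,1})$, and remains non-zero there (as the containment $V(N(k,1),\psi_{k,1}) \subset V(U_k,\theta)$ gives a surjection $\rho_{U_k,\theta} \twoheadleftarrow \rho_{N(k,1),\psi_{k,1}}$, or equivalently by the dual containment of Hom-spaces). Hence $(\rho)_{N(k,1),\psi_{k,1}} \neq 0$. By an identical argument on the $G_{2n-r}$ side, $(\eta)_{N(k,2),\psi_{k,2}} \neq 0$.

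Finally, because these two non-vanishings occur simultaneously at the same index $k$ with $\max\{0,r-n\} \leq k \leq \lfloor r/2 \rfloor$, the hypothesis of Corollary \ref{nonzeroTJMcomponentwise} is satisfied for $\pi = i_{P_{r,2n-r}}^{G_{2n}}(\rho \otimes \eta) = \rho \times \eta$, and we conclude that $(\rho \times \eta)_{N,\psi} \neq 0$, as desired. There is no real obstacle here; the only thing to be careful about is the direction of the Hom-inclusion in \eqref{subTJM} and the matching of the block structures of $U_k, U_k'$ with those of $N(k,1), N(k,2)$, both of which are already spelled out above the statement.
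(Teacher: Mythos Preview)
Your proposal is correct and follows essentially the same argument as the paper: the paper's proof is precisely the discussion in the paragraphs preceding the proposition, which records that $N(k,1)\subset U_k$ with $\theta|_{N(k,1)}=\psi_{k,1}$ (and likewise $N(k,2)\subset U_k'$ with $\theta'|_{N(k,2)}=\psi_{k,2}$), invokes \eqref{subTJM} to pass non-vanishing down to $(\rho)_{N(k,1),\psi_{k,1}}$ and $(\eta)_{N(k,2),\psi_{k,2}}$, and then applies Corollary~\ref{nonzeroTJMcomponentwise}.
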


\begin{remark}
We shall return to the question of non-vanishing of twisted Jacquet modules in Sections \ref{example1}, \ref{example2} and in general in Section \ref{DPConjecture}
\end{remark}

\subsection{Applications}\label{mainapplications}
\subsubsection{}
We shall show how our results can be used to compute the structure of twisted Jacquet modules of certain irreducible representations of $G_{2n}$ which are themselves not principal series representations but rather  arise as subquotients of reducible principal series representations. We shall be using the notion of segments and multisegments and related results in our applications. We refer the reader to  \cite{Zelevinsky} for the notion of segments, multisegments and associated results. Also,  see \cite{Kud1} for a summary of the main results.

\subsubsection{}
We briefly recall some notations. If $\Delta$ is a segment \cite[\S 3]{Zelevinsky}, then by $Z(\Delta)$ and $L(\Delta),$ we mean the unique irreducible subrepresentation and the unique irreducible quotient associated to $\Delta.$ In particular, if $\chi$ is a character of $F^{\times}$ and given any positive integer $r,$ let $\Delta$ denote the segment $[\chi\nu^{-\frac{r-1}{2}},\dots, \chi\nu^{\frac{r-1}{2}}].$ Then, $Z(\Delta)$ is the character $\chi$ of $G_r$ and $L(\Delta)$ is the Steinberg representation $St_r$ of $G_r.$ A very useful result (\cite[\S 4.2 Theorem]{Zelevinsky}) which we shall use frequently is that  a representation of the form $Z(\Delta_1)\times \cdots \times Z(\Delta_k)$ is irreducible if and only if $\Delta_i$ and $\Delta_i$ are not linked for any $i,j$. An analogous statement holds (\cite[\S 9.7 Theorem]{Zelevinsky}) for representations of the form $L(\Delta_1)\times\cdots \times L(\Delta_k).$

\subsubsection{}
If $\mfr{m}=\{\Delta_1, \dots, \Delta_r\}$ is a multisegment such that $\Delta_i$ does not precede $\Delta_j$ whenever $i<j,$ by $Z(\mfr{m})$ and $L(\mfr{m})$ we mean the unique irreducible submodule and the unique irreducible quotient of the parabolically induced representations $Z(\Delta_1)\times \cdots \times Z(\Delta_r)$ and $L(\Delta_1)\times \cdots \times L(\Delta_r)$ respectively. We shall also use the Langlands Quotient theorem for $G_n$ (see for instance \cite[Theorem 1.2.5 and Theorem 2.2.2]{Kud1}). Since we shall be using the Moeglin-Waldspurger algorithm associated to Zelevinsky involution multiple times, we briefly recall the algorithm below from  \cite{Badulescu-Renard} and \cite{Moeglin-Waldspurger}.

\subsubsection{Zelevinsky Involution and the Moeglin-Waldspurger algorithm}\label{MW}

Let $\mfr{m}$ be a multisegment (that is a set of segments with multiplicities). The Moeglin-Waldspurger algorithm  associates a multisegment $\mfr{m}^t$ to $\mfr{m}$ such that $Z(\mfr{m})=L(\mfr{m}^t)$ and vice-versa. We refer the reader to \cite[II.1 \& II.2]{Moeglin-Waldspurger}, \cite[\S 1]{Badulescu-Renard} and \cite[\S 2.8]{BLM} for more details.   

In our work, we shall be considering only  segments of the type $[\chi\nu^{a}, \chi\nu^{a+1},\cdots, \chi\nu^{b}]$ where $\chi$ is a character of $F^{\times}.$ Suppose  $\Delta=[\chi\nu^{b}, \chi\nu^{e}]$ and $\Delta'=[\chi\nu^{b'}, \chi\nu^{e'}]$ are segments where $\chi\in F^{\times}.$
We say that $\Delta\geq \Delta'$ if $b>b'$ or $b=b'$ and $e\geq e'.$ To compute $\mfr{m}^t,$ the algorithm is as follows. Let $d$ be the biggest ending of a segment in $\mfr{m}.$ Choose a segment $\Delta_{i_0}$ in $\mfr{m}$ containing $d$ and maximal for this property. Define integers $i_1,i_2, \dots, i_r$ inductively as follows:
$\Delta_{i_s}$ is a segment of $\mfr{m}$ preceding $\Delta_{i_{s-1}}$ with ending $d-s$ maximal with these properties, and $r$ is such that there is no possibility to find such an $i_{r+1}.$ Let $\mfr{m}^{-}=(\Delta_1',\dots, \Delta_{t}')$ where $\Delta_{i}'=\Delta_{i}$ if $i\notin \{i_0,\dots, i_r\}$ and $\Delta_{i}'=\Delta_{i}^{-}$ if $i\in \{i_0,\dots, i_r\}.$ Then $\{d-r,\dots, d\}$ is the first segment of $\mfr{m}^t.$ By repeating the process to $\mfr{m}^{-}$ that was applied to $\mfr{m},$ find the second segment of $\mfr{m}^t.$ So $\mfr{m}^t$ is the union of $\{d-r,\dots, d\}$ and $(\mfr{m}^{-})^{t}.$ 

\subsubsection{} 
As a preparatory step, we summarize in the following lemma how the first step of Moeglin-Waldspurger algorithm works in some special cases so as to use them suited to our context.
\begin{lemma}\label{MWalgorithm}
Let $\chi$ be a character of $F^{\times}.$
\begin{enumerate}
	\item Suppose  $\mfr{m}=\{\Delta\}$ where $\Delta=[\chi\nu^{b},\dots, \chi\nu^{e}].$ Then, $\mfr{m}^t=\{[\chi\nu^{e}],\dots, [\chi\nu^{b}]\}.$
	
	\item Assume that $\mfr{m}=\{\Delta_1,\Delta_2\}$ where $\Delta_1=[\chi\nu^{b},\dots, \chi\nu^{e}]$ and $\Delta_2=[\chi\nu^{b'},\dots,\chi\nu^{e+1}]$ where $b'>b.$ Then,  $\mfr{m}^t$ is the union of $[\chi\nu^{e},\chi\nu^{e+1}]$ and $(\mfr{m}^{-})^t$ where $\mfr{m}^{-}=\{\Delta_1^{-},\Delta_2^{-}\}.$ 
	\item  Let  $\mfr{m}=\{\Delta_1,\Delta_2\}$ where $\Delta_1=[\chi\nu^{b},\dots, \chi\nu^{e}]$ and $\Delta_2=[\chi\nu^{b'},\dots,\chi\nu^{e'}]$ where $e'>e+1$ and $b'>b.$ Then, $\mfr{m}^t$ is the union of $[\chi\nu^{e'}]$ and $(\mfr{m}^{-})^t$ where $\mfr{m}^{-}=\{\Delta_1,\Delta_2^{-}\}.$ 
\end{enumerate}	

\end{lemma}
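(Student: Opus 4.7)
The plan is to unfold directly the first step of the Moeglin--Waldspurger algorithm recalled in Section \ref{MW} for each of the three multisegments. In each case the task amounts to (a) locating the largest ending $d$ appearing in $\mfr{m}$, (b) identifying the segment $\Delta_{i_0}$ containing $d$ that is maximal for this property, and (c) determining the maximal chain $\Delta_{i_0}, \Delta_{i_1}, \dots, \Delta_{i_r}$ in which each $\Delta_{i_s}$ precedes $\Delta_{i_{s-1}}$ and ends at $d-s$. The first segment of $\mfr{m}^t$ is then $[\chi\nu^{d-r},\dots,\chi\nu^d]$, and the algorithm subsequently reduces to $(\mfr{m}^-)^t$, where $\mfr{m}^-$ is obtained by deleting the right endpoint of each chosen $\Delta_{i_s}$.

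For (1), since $\mfr{m}=\{\Delta\}$ has only one segment, we are forced to take $\Delta_{i_0}=\Delta$, and there is no further segment to serve as $\Delta_{i_1}$, so $r=0$ and the first segment of $\mfr{m}^t$ is $[\chi\nu^e]$. The reduced multisegment $\mfr{m}^-=\{[\chi\nu^b,\dots,\chi\nu^{e-1}]\}$ has the same shape but strictly smaller length; an induction on $e-b$ then produces $\mfr{m}^t=\{[\chi\nu^e],[\chi\nu^{e-1}],\dots,[\chi\nu^b]\}$.

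For (2), the largest ending appearing in $\mfr{m}$ is $d=e+1$, attained only by $\Delta_2$, which forces $\Delta_{i_0}=\Delta_2$. The segment $\Delta_1$ ends at $d-1=e$, and since $b<b'\leq e+1$ (the inequality $b'\leq e+1$ being the implicit linking condition in the setting in which the lemma will be invoked), $\Delta_1$ is linked to $\Delta_2$ and precedes it; hence $\Delta_{i_1}=\Delta_1$. No segment of $\mfr{m}$ ends at $d-2=e-1$, so the chain terminates at $r=1$; the first segment of $\mfr{m}^t$ is thus $[\chi\nu^e,\chi\nu^{e+1}]$ and $\mfr{m}^-=\{\Delta_1^-,\Delta_2^-\}$, as asserted.

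For (3), again $d=e'$ is achieved only by $\Delta_2$, so $\Delta_{i_0}=\Delta_2$; but the strict inequality $e'>e+1$ means that no segment of $\mfr{m}$ ends at $d-1=e'-1$, so the chain stops at $r=0$. The first segment of $\mfr{m}^t$ is therefore $[\chi\nu^{e'}]$ and $\mfr{m}^-=\{\Delta_1,\Delta_2^-\}$. The only delicate point in the whole argument is correctly verifying the precedence relation in case (2); once this is accepted, the proofs reduce to an essentially bookkeeping-level unfolding of the definitions of the algorithm.
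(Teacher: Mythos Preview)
Your proof is correct and follows essentially the same approach as the paper: both unfold the first step of the Moeglin--Waldspurger algorithm case by case, identifying $d$, $\Delta_{i_0}$, and the chain length $r$ in each situation. Your write-up is in fact slightly more careful than the paper's, as you make explicit the induction in (1) and flag the implicit linking hypothesis $b'\leq e+1$ needed in (2) for $\Delta_1$ to precede $\Delta_2$; the paper leaves both of these tacit.
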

\begin{proof}
The statement (1) is obvious. To prove (2), using the notations in the algorithm, $d=e+1$ or equivalently $d-1=e.$ Hence, $\Delta_{i_0}=\Delta_2$ and $\Delta_{i_1}=\Delta_1.$ Then, $\Delta_1^{-}=[\chi\nu^{b},\dots, \chi\nu^{e-1}]$ and $\Delta_2^{-}=[\chi\nu^{b'},\dots,\chi\nu^{e}],$ by which  $\mfr{m}^{-}=\{\Delta_1^{-},\Delta_2^{-}\},$ proving (2). In the setting of (3), $d=e'$ and $d-1=e'-1\neq e.$ In this case, we get $\Delta_{i_0}=\Delta_2,$ $\Delta_2^{-}=[\chi\nu^{b'},\dots,\chi\nu^{e'-1}],$ and  $\mfr{m}^{-}=\{\Delta_1,\Delta_2^{-}\}.$ Thus, $\mfr{m}^t$ is the union of $[\chi\nu^{e'}]$ and $(\mfr{m}^{-})^t,$  which proves (3).
\end{proof}

\begin{lemma}\label{Lchialpha}
Let $\chi$ be  a character of $F^{\times}.$ Put $\Delta=[\chi\nu^{-(\frac{n-1}{2})},\dots, \chi\nu^{\frac{n-1}{2}}].$ For each $\alpha\in \{1, \dots, n\}$ put $\Delta_{\alpha}=[\chi\nu^{-(\frac{n-1}{2})+\alpha},\dots, \chi\nu^{\frac{n-1}{2}+\alpha}]$ and $\xi_{\chi,\alpha}=Z(\Delta_{\alpha})\times Z(\Delta).$ Then, $\xi_{\chi, \alpha}=\chi\nu^{\alpha}\times \chi$ sits in the following exact sequence of $G_{2n}$-modules
\begin{equation}\label{Zelexactsequence-Lalpha}
0 \to  L_{\chi,\alpha}\to \xi_{\chi,\alpha} \to Z(\Delta \cup \Delta_{\alpha}) \times Z(\Delta \cap \Delta_{\alpha}) \ \to 0
\end{equation}
where $L_{\chi,\alpha}$ is the unique irreducible submodule of $\xi_{\chi,\alpha}.$ If $1<\alpha \leq n,$ put 
 \begin{equation*}\pi_{\chi,\alpha}:=\chi\nu^{\frac{n-1}{2}+\alpha}\times \cdots \times \chi\nu^{\frac{n-1}{2}+2}\times St_2\chi\nu^{\frac{n}{2}} \times \cdots \times St_2\chi\nu^{-\frac{n}{2}+\alpha}  \times \chi\nu^{-(\frac{n-1}{2})+\alpha-2}\times \cdots \times \chi\nu^{-(\frac{n-1}{2})},
\end{equation*}
and put 
\begin{equation*}
	\pi_{\chi,1}=St_2\chi\nu^{\frac{n}{2}} \times St_2\chi\nu^{\frac{n}{2}-1}\times \cdots \times St_2\chi\nu^{-\frac{n}{2}+1}.
\end{equation*}	
Then, $L_{\chi,\alpha}$ is the unique irreducible quotient, and in particular the Langlands quotient, of the representation $\pi_{\chi,\alpha}$ for $1\leq \alpha\leq n.$ 
\end{lemma}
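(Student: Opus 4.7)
The plan is to identify $L_{\chi,\alpha}$ on the Zelevinsky side as $Z(\mathfrak{m})$ for the two-segment multisegment $\mathfrak{m}=\{\Delta,\Delta_\alpha\}$, and then compute its Moeglin--Waldspurger dual $\mathfrak{m}^t$ to extract the Langlands parameter.

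For the exact sequence, I would first note that $Z(\Delta)=\chi$ and $Z(\Delta_\alpha)=\chi\nu^\alpha$, so $\xi_{\chi,\alpha}=Z(\Delta_\alpha)\times Z(\Delta)$. For $1\le\alpha\le n$, the segments $\Delta$ and $\Delta_\alpha$ are linked (they overlap when $\alpha<n$ and are adjacent when $\alpha=n$), with $\Delta$ preceding $\Delta_\alpha$; this places the factors in the standard Zelevinsky order. Zelevinsky's theorem on products of two linked segments (\cite[\S 4.2]{Zelevinsky}) then furnishes the two-term composition series: the unique irreducible submodule is $L_{\chi,\alpha}:=Z(\mathfrak{m})$ and the unique irreducible quotient is $Z(\Delta\cup\Delta_\alpha)\times Z(\Delta\cap\Delta_\alpha)$ (with the convention $Z(\emptyset)=\triv$ on $G_0$ when $\alpha=n$), giving the displayed exact sequence.

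To match $L_{\chi,\alpha}$ with the claimed Langlands quotient, I would use the Zelevinsky duality $Z(\mathfrak{m})=L(\mathfrak{m}^t)$ and run the Moeglin--Waldspurger algorithm of Section~\ref{MW} on $\mathfrak{m}=\{\Delta,\Delta_\alpha\}$, driven by the three cases of Lemma~\ref{MWalgorithm}. The algorithm falls into three phases. During the first $\alpha-1$ iterations the endpoint of the shrinking $\Delta_\alpha$ exceeds that of $\Delta$ by more than one, so Lemma~\ref{MWalgorithm}(3) applies and strips singletons from the top of $\Delta_\alpha$, producing $[\chi\nu^{(n-1)/2+\alpha}],\ldots,[\chi\nu^{(n-1)/2+2}]$. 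During the next $n-\alpha+1$ iterations the two shrinking segments are always ``one apart'' at their tops, so Lemma~\ref{MWalgorithm}(2) applies and yields length-two segments $[\chi\nu^{c-1},\chi\nu^{c}]$ for $c=(n+1)/2,(n-1)/2,\ldots,-(n-1)/2+\alpha$. Once $\Delta_\alpha$ is exhausted, the algorithm reduces to the single-segment case (Lemma~\ref{MWalgorithm}(1)) on the residual truncation of $\Delta$ of length $\alpha-1$, producing $\alpha-1$ bottom singletons $[\chi\nu^{-(n-1)/2+\alpha-2}],\ldots,[\chi\nu^{-(n-1)/2}]$.

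Using $L([\chi\nu^c])=\chi\nu^c$ and $L([\chi\nu^{c-1},\chi\nu^c])=St_2\chi\nu^{c-1/2}$, one verifies that the Langlands product associated to $\mathfrak{m}^t$ in the order produced by the algorithm matches $\pi_{\chi,\alpha}$ factor-by-factor; the degenerate case $\alpha=1$ has empty top and bottom phases and recovers the separately stated expression for $\pi_{\chi,1}$. The segment centers of $\mathfrak{m}^t$ decrease strictly through the three phases, so the listed ordering is the Langlands order and $L(\mathfrak{m}^t)$ is the Langlands quotient of $\pi_{\chi,\alpha}$. The only real obstacle is the combinatorial bookkeeping in the Moeglin--Waldspurger algorithm for general $\alpha$, namely tracking the phase transitions where Lemma~\ref{MWalgorithm}(3) gives way to Lemma~\ref{MWalgorithm}(2) and then to Lemma~\ref{MWalgorithm}(1); once that is set up, each step is dictated by Lemma~\ref{MWalgorithm}.
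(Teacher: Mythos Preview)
Your proposal is correct and follows essentially the same route as the paper: identify $L_{\chi,\alpha}=Z(\mathfrak{m})$ via Zelevinsky's structure result for two linked segments, then run the Moeglin--Waldspurger algorithm using exactly the three cases of Lemma~\ref{MWalgorithm} in the three phases you describe (top singletons via (3), middle length-two segments via (2), bottom singletons via (1)). The only cosmetic difference is that the paper treats $\alpha=1$ as a separate case rather than as the degenerate instance with empty first and third phases, and it cites \cite[Proposition~4.6]{Zelevinsky} specifically for the exact sequence.
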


\begin{proof}
The exact sequence \eqref{Zelexactsequence-Lalpha} follows from \cite[Proposition 4.6]{Zelevinsky} (see also \cite[Lemma 2.2]{CGV}). Fix an $\alpha$ and write $\mfr{m}_{\alpha}=\{\Delta_{\alpha},\Delta\}.$ We note  that $\Delta$ precedes $\Delta_{\alpha}$ for each $\alpha$ and hence $L_{\chi,\alpha}=Z(\mfr{m}_{\alpha}).$   In order to obtain the result, we apply Moeglin-Waldspurger algorithm to $\mfr{m}_{\alpha}$ and obtain a multisegment $\mfr{m}_{\alpha}^t$ so that we may write $L_{\chi,\alpha}=L(\mfr{m}_{\alpha}^t).$  We divide the proof into  two cases.

\noindent\textit{Case 1}: If $\alpha=1,$ we can directly apply  Lemma \ref{MWalgorithm} (2) to get  that $\mfr{m}^t$ is the union of $n$ length $2$ segments 
\begin{equation*} [\chi\nu^{-(\frac{n-1}{2})+n-1},\chi\nu^{-(\frac{n-1}{2})+n}],\dots, [\chi\nu^{-(\frac{n-1}{2})},\chi\nu^{-(\frac{n-1}{2})+1}].
\end{equation*}
Consequently, $L_{\chi,1}$ is the Langlands quotient of $\pi_{\chi,1}.$ \\

\noindent\textit{Case 2}: Suppose $\alpha>1.$ For $1\leq i \leq \alpha,$ put 
\begin{equation*}
\Delta(i)=[\chi\nu^{-(\frac{n-1}{2})+\alpha},\dots, \chi\nu^{\frac{n-1}{2}+i}], \text{ and }\mfr{m}_i=\{\Delta(i),\Delta\}.
\end{equation*}
For $\alpha \leq j \leq  n,$ put \begin{equation*}
\Delta(j)'=[\chi\nu^{-(\frac{n-1}{2})},\dots, \chi\nu^{-(\frac{n-1}{2})+j-1}]  \text{ and } \Delta(j)''= [\chi\nu^{-(\frac{n-1}{2})+\alpha},\dots, \chi\nu^{-(\frac{n-1}{2})+j}].
\end{equation*} 
Also, note that $\alpha\geq 2$ and put $\Delta_{\alpha-1}'= [\chi\nu^{-(\frac{n-1}{2})},\dots, \chi\nu^{-(\frac{n-1}{2})+\alpha-2}]$ and $\Delta_{\alpha-1}''$ be the empty segment. Set
\begin{equation*}
\mfr{m}_j':=\{\Delta(j)',\Delta(j)''\} \text{ when } \alpha-1\leq j \leq  n.
\end{equation*}

 By Lemma \ref{MWalgorithm} (3),  $\mfr{m}_i^t$ is the union of $\{[\chi\nu^{\frac{n-1}{2}+i}]\}$ and $(\mfr{m}_{i}^{-})^t$ for $2 \leq i \leq \alpha.$ One observes that $\mfr{m}_{i}^{-}=\mfr{m}_{i-1}.$ Hence,  $\mfr{m}_{\alpha}^t$ is the union of $\{[\chi\nu^{\frac{n-1}{2}+\alpha}],\dots,[\chi\nu^{\frac{n-1}{2}+2}]\}$ and $\mfr{m}_1^t.$ We note that $\mfr{m}_1=\mfr{m}_{n}'$ and determining $\mfr{m}_1^t$ is the same as determining $(\mfr{m}_{n}')^{t}.$  Applying Lemma \ref{MWalgorithm} (2), for $\alpha\leq j \leq n,$   $(\mfr{m}_j')^t$ is the union of $[\chi\nu^{-(\frac{n-1}{2})+j-1},\chi\nu^{-(\frac{n-1}{2})+j}]$ and $((\mfr{m}_j')^{-})^t.$ One observes that $(\mfr{m}_j')^{-}=\mfr{m}_{j-1}'.$ Consequently, $(\mfr{m}_j')^t$ is the union of $[\chi\nu^{-(\frac{n-1}{2})+j-1},\chi\nu^{-(\frac{n-1}{2})+j}]$ and $(\mfr{m}_{j-1}')^t$ for $\alpha \leq  j \leq n.$ Recursively,  $\mfr{m}_1^t$ is the union of 
\begin{equation*}
\{[\chi\nu^{-(\frac{n-1}{2})+n},\chi\nu^{-(\frac{n-1}{2})+n+1}],\dots,[\chi\nu^{-(\frac{n-1}{2})+\alpha-1},\chi\nu^{-(\frac{n-1}{2})+\alpha}]\}
\end{equation*} and $\mfr{m}_{\alpha-1}'.$ By Lemma \ref{MWalgorithm} (1), $(\mfr{m}_{\alpha-1}')^t = \{[\chi\nu^{-(\frac{n-1}{2})+\alpha-2}],\cdots,[\chi\nu^{-(\frac{n-1}{2})}]\}.$ 
We finally get $\mfr{m}_{\alpha}^t$ consists of the following:
\begin{enumerate}
	\item  $\alpha-1$ singleton segments $[\chi\nu^{\frac{n-1}{2}+\alpha}],\dots,[\chi\nu^{\frac{n-1}{2}+2}],$
	\item $n-\alpha+1$ segments  $[\chi\nu^{-(\frac{n-1}{2})+n-1},\chi\nu^{-(\frac{n-1}{2})+n}],\dots, [\chi\nu^{-(\frac{n-1}{2})+\alpha-1},\chi\nu^{-(\frac{n-1}{2})+\alpha}],$ of length $2$ and
	\item $\alpha-1$ singleton segments $[\chi\nu^{-(\frac{n-1}{2})+\alpha-2}],\dots,[\chi\nu^{-(\frac{n-1}{2})}].$
\end{enumerate}
As $L([\chi\nu^{\beta},\chi\nu^{\beta+1}])=St_2\chi\nu^{\beta+1/2}$ for any $\beta\in \mathbb{R},$ it follows that $L_{\chi,\alpha}=L(\mfr{m}_{\alpha}^t)$ is indeed the Langlands quotient of the representation $\pi_{\chi,\alpha}.$
\end{proof}

\begin{remark}
We refer the reader to \cite[\S 4.1, Lemme 4.2]{BLM} to compare with Lemma \ref{Lchialpha}. Our lemma is a special case of the more general  `ladder representations' appearing there. As per the notations of Lemme 4.2 of \cite{BLM}, in our case we have $N=2$ and there must be $\frac{n-1}{2}+\alpha-(-\frac{n-1}{2})+N-2= n+\alpha-1$ segments in $\mfr{m}^t.$ In our description, if $\alpha>1$ the number of segments in $\mfr{m}^t=2(\alpha-1)+n-\alpha+1=n+\alpha-1.$ If $\alpha=1,$  the same conclusion holds as we have $n-\alpha+1=n.$ 
\end{remark}
\begin{remark}
We note that the representations $L_{\chi,\alpha}$ obtained in Corollary \ref{Lchialpha} are non-generic (see \cite[\S 9.8]{Zelevinsky}).
\end{remark}

\subsubsection{} 
As an immediate application of Lemma \ref{Lchialpha}, we can deduce the following result on the twisted Jacquet module of $L_{\chi,\alpha}.$

\begin{corollary}\label{TJMLchialpha}
Keeping notations as in Lemma \ref{Lchialpha}, 	for $1\leq  \alpha \leq n,$  let $L_{\chi,\alpha}$ denote the Langlands quotient of the representation $\pi_{\chi,\alpha}.$ Then, $(L_{\chi,\alpha})_{N,\psi}\simeq \chi\otimes \chi\nu^{\alpha}$ as $\Delta G_n$-modules.
\end{corollary}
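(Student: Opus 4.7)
The plan is to apply the exact twisted Jacquet functor to the short exact sequence furnished by Lemma \ref{Lchialpha} and then evaluate both the middle and right-hand terms using Theorem \ref{productoftwocharcaters}. The structure of $L_{\chi,\alpha}$ as the unique irreducible submodule of $\xi_{\chi,\alpha} = \chi\nu^{\alpha} \times \chi = i_{P_{n,n}}^{G_{2n}}(\chi\nu^{\alpha}\otimes \chi)$ is the key input.

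First I would apply the functor $\pi \mapsto \pi_{N,\psi}$ to the exact sequence
\[
0 \to L_{\chi,\alpha} \to \xi_{\chi,\alpha} \to Z(\Delta \cup \Delta_{\alpha}) \times Z(\Delta \cap \Delta_{\alpha}) \to 0
\]
of Lemma \ref{Lchialpha}. Exactness of the twisted Jacquet functor (see Section \ref{TJMdefn}) yields
\[
0 \to (L_{\chi,\alpha})_{N,\psi} \to (\xi_{\chi,\alpha})_{N,\psi} \to \left( Z(\Delta \cup \Delta_{\alpha}) \times Z(\Delta \cap \Delta_{\alpha}) \right)_{N,\psi} \to 0.
\]

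Next I would compute the middle term. Since $\xi_{\chi,\alpha}$ is the parabolic induction of a product of two characters of $G_n$ from the $(n,n)$-parabolic (i.e.\ $r=n$ in the notation of Theorem \ref{productoftwocharcaters}), that theorem gives $(\xi_{\chi,\alpha})_{N,\psi} \simeq \chi\nu^{\alpha}\otimes \chi \simeq \chi \otimes \chi\nu^{\alpha}$ as $\Delta G_n$-modules, where the last isomorphism uses that the tensor product of two $\Delta G_n$-characters is symmetric. Then I would show the right-hand term vanishes: since $\Delta \cup \Delta_{\alpha}$ has $n+\alpha$ elements and $\Delta \cap \Delta_{\alpha}$ has $n-\alpha$ elements, $Z(\Delta \cup \Delta_{\alpha})$ is a character of $G_{n+\alpha}$ and $Z(\Delta \cap \Delta_{\alpha})$ is a character of $G_{n-\alpha}$. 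The inducing parabolic is therefore $P_{n+\alpha,\,n-\alpha}$, with $r = n+\alpha \neq n$ because $\alpha \geq 1$, so Theorem \ref{productoftwocharcaters} forces this twisted Jacquet module to be zero.

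Combining these two evaluations with the displayed exact sequence immediately produces the isomorphism $(L_{\chi,\alpha})_{N,\psi} \simeq \chi \otimes \chi\nu^{\alpha}$ of $\Delta G_n$-modules, as claimed. The only edge case needing a brief remark is $\alpha = n$, where $\Delta \cap \Delta_{\alpha} = \emptyset$ and the quotient collapses to the single character $Z(\Delta \cup \Delta_{\alpha})$ of $G_{2n}$; since any one-dimensional representation of $G_{2n}$ is trivial on $N$ while $\psi$ is non-trivial, its $(N,\psi)$-twisted Jacquet module is still zero, so the argument goes through uniformly. There is no real obstacle here; the entire proof is a short application of Lemma \ref{Lchialpha} and two invocations of Theorem \ref{productoftwocharcaters}.
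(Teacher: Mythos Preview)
Your proof is correct and follows essentially the same approach as the paper: apply the exact twisted Jacquet functor to the short exact sequence of Lemma \ref{Lchialpha}, evaluate the middle term $(\xi_{\chi,\alpha})_{N,\psi}$ and the quotient term $(Z(\Delta\cup\Delta_\alpha)\times Z(\Delta\cap\Delta_\alpha))_{N,\psi}$ via Theorem \ref{productoftwocharcaters}, and treat the edge case $\alpha=n$ (where the quotient is a single character of $G_{2n}$) separately. The only cosmetic difference is that the paper splits into the cases $\alpha=n$ and $\alpha<n$ from the outset, whereas you handle them uniformly and then remark on the degenerate intersection; the content is identical.
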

\begin{proof}
For each $\alpha\in\{1,\dots,n\},$ the segments $\Delta$ and $\Delta_{\alpha}$ are linked.  If $\alpha=n,$ $\Delta$ and $\Delta_{\alpha}$ are juxtaposed, so that $Z(\Delta\cup \Delta_{\alpha})$ is the character $\chi\nu^{\frac{n}{2}}$ of $G_{2n}.$ In this case,  $\xi_{\chi,n}$ (see \eqref{Zelexactsequence-Lalpha}) has the unique irreducible quotient $\chi\nu^{\frac{n}{2}}.$ By Theorem \ref{productoftwocharcaters}, $(\xi_{\chi,n})_{N,\psi}=\chi\otimes \chi\nu^{n}.$ Since, $(\chi\nu^{\frac{n}{2}})_{N,\psi}=0$ we get $(\xi_{\chi, n})= (L_{\chi,n})_{N,\psi}=\chi\otimes \chi\nu^{n}.$ 

On the other hand, if $1\leq \alpha <n,$ $\Delta \cap \Delta_{\alpha}$ is non-empty. This yields that $Z(\Delta \cup \Delta_{\alpha})$ and $Z(\Delta \cap \Delta_{\alpha})$ are both one-dimensional. Again, by Theorem \ref{productoftwocharcaters},  $(Z(\Delta \cup \Delta_{\alpha}) \times Z(\Delta \cap \Delta_{\alpha}))_{N,\psi}=0$  and  $(\xi_{\chi,\alpha})_{N,\psi}\simeq\chi\otimes \chi\nu^{\alpha}$  as $\Delta G_n$-modules. It follows from the exact sequence \eqref{Zelexactsequence-Lalpha} and exactness of the twisted Jacquet functor that $(L_{\chi,\alpha})_{N,\psi}\simeq \chi\otimes \chi\nu^{\alpha}$ as $\Delta G_n$-modules.
\end{proof}

\subsubsection{An application to Shalika Model}
As an application of Corollary \ref{TJMLchialpha} , we deduce the following result on existence of Shalika model for $L_{\chi,\alpha}$ when $\chi=\nu^{-\frac{\alpha}{2}}.$ We recall that a smooth representation $\pi$ of $G_{2n}$ has a Shalika model if $\Hom_{G_{2n}}(\pi, \Ind_{S_{\psi}}^{G_{2n}}(\one\otimes\psi))$ is non-zero. By Frobenius reciprocity, this is equivalent to saying that 
	$\Hom_{S_{\psi}}(\pi_{_{|_{S_{\psi}}}}, \one\otimes \psi)\simeq\Hom_{\Delta G_n}(\pi_{N,\psi}, \one)\neq 0,$
 where $\one$ denotes the trivial character of $\Delta G_n.$ 
\begin{corollary}\label{shalikamodel}
For each $\alpha\in \{1,\dots, n\},$ the irreducible representation $L_{\nu^{-\frac{\alpha}{2}}, \alpha}$ possesses a Shalika model. Consequently, it  has a non-zero $G_n\times G_n$-invariant linear form.
\end{corollary}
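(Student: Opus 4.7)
The plan is to derive both assertions from Corollary~\ref{TJMLchialpha} together with a Frobenius reciprocity argument. First I would specialize Corollary~\ref{TJMLchialpha} to $\chi=\nu^{-\alpha/2}$, obtaining the isomorphism of $\Delta G_n$-modules
\[
(L_{\nu^{-\alpha/2},\alpha})_{N,\psi} \;\simeq\; \nu^{-\alpha/2}\otimes\nu^{\alpha/2}.
\]
On a diagonal element $\diag(g,g)\in\Delta G_n$, the right-hand side acts by $\nu(g)^{-\alpha/2}\nu(g)^{\alpha/2}=1$, so this twisted Jacquet module is isomorphic to the trivial character $\one$ of $\Delta G_n$. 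In particular, $\Hom_{\Delta G_n}((L_{\nu^{-\alpha/2},\alpha})_{N,\psi},\,\one)$ is non-zero.

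Next, applying the Frobenius reciprocity identification recalled just before the statement,
\[
\Hom_{G_{2n}}\!\left(L_{\nu^{-\alpha/2},\alpha},\; \Ind_{S_\psi}^{G_{2n}}(\one\otimes\psi)\right) \;\simeq\; \Hom_{\Delta G_n}\!\left((L_{\nu^{-\alpha/2},\alpha})_{N,\psi},\,\one\right),
\]
I would conclude that $L_{\nu^{-\alpha/2},\alpha}$ embeds into the Shalika induced representation. This gives the first assertion, namely the existence of a Shalika model.

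For the second assertion, I would invoke the local non-archimedean implication, due to Jacquet-Rallis and Friedberg-Jacquet and extended in subsequent work of Matringe, that an irreducible smooth representation of $G_{2n}$ admitting a Shalika model is automatically distinguished by the Levi $M=G_n\times G_n$, i.e.\ admits a non-zero $M$-invariant linear form. Applied to $\pi=L_{\nu^{-\alpha/2},\alpha}$, this yields the desired $G_n\times G_n$-invariant linear form. The delicate step here is the appeal to the ``Shalika $\Rightarrow$ linear model'' implication, which is classical for generic representations but must be invoked in the non-generic form available in the literature, since $L_{\nu^{-\alpha/2},\alpha}$ is itself non-generic.
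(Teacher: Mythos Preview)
Your proof is correct and follows essentially the same route as the paper: specialize Corollary~\ref{TJMLchialpha} at $\chi=\nu^{-\alpha/2}$ so that the twisted Jacquet module becomes the trivial $\Delta G_n$-module, apply the Frobenius reciprocity identification recorded just before the statement, and then invoke the Jacquet--Rallis implication ``Shalika model $\Rightarrow$ linear period''. The only difference is that you hedge on the last step, worrying about the non-generic case; the paper simply cites \cite[Remark pp.~117]{JRCompositio}, where the argument is stated for any irreducible representation of $G_{2n}$ without a genericity hypothesis, so no extension via Matringe or Friedberg--Jacquet is needed here.
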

\begin{proof}
Since, $(L_{\nu^{-\frac{\alpha}{2}}, \alpha})_{N,\psi}\simeq \C,$ it is trivial to see that it possesses a Shalika model. It is a result of Jacquet-Rallis \cite[Remark pp.117 ]{JRCompositio} that any irreducible representation of $G_{2n}$ which has a Shalika model has a non-zero $G_n\times G_n$-invariant linear form.
\end{proof}	

\begin{remark}
We note that $L_{\nu^{-1/2},1}$ is the unique irreducible quotient of the representation
$$ St_2\nu^{\frac{n-1}{2}}\times St_2\nu^{\frac{n-3}{2}}\times \cdots \times St_2\nu^{-\frac{n-1}{2}}.$$
A representation such as  $L_{\nu^{-1/2},1}$ is an example of a Speh representation in the literature.
\end{remark}
For further results on Shalika Models, we refer the reader to the work of N. Matringe \cite{Mat} and the references therein.

\subsection{Subquotients of $St_r\chi \times \mu$}	\label{Steinberg-v-char}
\subsubsection{}
As another application of our results, we investigate the structure of twisted Jacquet module of subquotients of a parabolically induced representation  of the form $St_r\cdot \chi \times \mu$ where $\chi$ and $\mu$ are characters of $G_r$ and  $G_{2n-r}$ respectively. Put $\Delta=[\chi\nu^{-\frac{r-1}{2}},\dots, \chi\nu^{\frac{r-1}{2}}]$ and $\Delta'=[\mu\nu^{-\frac{2n-r-1}{2}},\dots, \mu\nu^{\frac{2n-r-1}{2}}]$ so that we can write $St_r\cdot \chi \times \mu=L(\Delta)\times Z(\Delta').$ As we already know the structure of $(St_r\cdot \chi \times \mu)_{N,\psi}$ via Proposition \ref{smoothvscharacter}, we would be interested in determining the structure of the twisted Jacquet modules of the subquotients of $St_r\cdot \chi \times \mu$  whenever it is reducible. 

\subsubsection{}
We recall \cite[Th\'eor\`eme 3.1]{BLM} that $St_r\cdot \chi \times \mu$ is reducible if and only if the segments $\Delta$ and $\Delta'$ are juxtaposed, i.e., if and only if  $\mu=\chi\nu^{\pm n}.$ Moreover, in the case when $St_r\cdot \chi \times \mu$ reduces, it has length $2$ by \cite[Remarque 3.2]{BLM}.  We will assume that $\mu=\chi\nu^{-n}.$ The case when $\mu=\chi\nu^{n}$ can be treated similarly. As we will be dealing with the character $\nu=|\det(\cdot)|_F$ of $G_r$ as $r$ varies, we choose to write $\nu_r$ instead of $\nu$ sometimes for avoiding confusion. The trivial character of $G_n$ shall be denoted by $\one_n.$

\begin{proposition}\label{Steinbergvscharacter}
	Assume that $r\leq n.$ Suppose $\chi$ is a  character of $F^{\times}.$ Let $Z_{\chi,r}$ and $Q_{\chi,r}$	respectively denote the unique irreducible submodule and the unique irreducible quotient of $St_r\chi\times \chi \nu_{2n-r}^{-n}.$ The following statements hold:
	\begin{enumerate}
		\item If $r<n$ then $(Z_{\chi,r})_{N,\psi}=0$ and $(Q_{\chi,r})_{N,\psi}=0.$
		\item We have $(Z_{\chi,n})_{N,\psi}=St_n\chi \otimes \chi\nu^{-n}$ and $(Q_{\chi,n})_{N,\psi}=0.$
	\end{enumerate}
\end{proposition}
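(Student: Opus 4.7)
The plan is to combine Proposition \ref{smoothvscharacter} with the exactness of the twisted Jacquet functor, applied to the length-two composition series of $St_r\chi \times \chi\nu^{-n}$ guaranteed by \cite{BLM} (Th\'eor\`eme 3.1 and Remarque 3.2).

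For part (1), with $r < n$, set $\rho = St_r\chi \in \Alg(G_r)$ and view $\chi\nu^{-n}$ as a character of $G_{2n-r}$. Proposition \ref{smoothvscharacter}(1) then yields $(St_r\chi \times \chi\nu^{-n})_{N,\psi} = 0$. Since $Z_{\chi,r}$ is a submodule and $Q_{\chi,r}$ a quotient of this representation, the exactness of the twisted Jacquet functor applied to the short exact sequence
\begin{equation*}
0 \to Z_{\chi,r} \to St_r\chi \times \chi\nu^{-n} \to Q_{\chi,r} \to 0
\end{equation*}
forces both $(Z_{\chi,r})_{N,\psi}$ and $(Q_{\chi,r})_{N,\psi}$ to vanish, establishing (1).

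For part (2), with $r = n$, Proposition \ref{smoothvscharacter}(2) gives the $\Delta G_n$-isomorphism $(St_n\chi \times \chi\nu^{-n})_{N,\psi} \cong St_n\chi \otimes \chi\nu^{-n}$. Exactness of the twisted Jacquet functor applied to $0 \to Z_{\chi,n} \to St_n\chi \times \chi\nu^{-n} \to Q_{\chi,n} \to 0$ yields
\begin{equation*}
0 \to (Z_{\chi,n})_{N,\psi} \to St_n\chi \otimes \chi\nu^{-n} \to (Q_{\chi,n})_{N,\psi} \to 0.
\end{equation*}
Because $St_n\chi \otimes \chi\nu^{-n}$ is irreducible as a $\Delta G_n$-module (it is an irreducible generalized Steinberg twisted by a central character), precisely one of the end terms realizes $St_n\chi \otimes \chi\nu^{-n}$ and the other is zero. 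It thus suffices to prove $(Q_{\chi,n})_{N,\psi} = 0$.

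The main obstacle is this last vanishing, for which the key step is identifying $Q_{\chi,n}$ explicitly. Writing $St_n\chi \times \chi\nu^{-n} = L(\Delta) \times Z(\Delta')$ with the juxtaposed segments $\Delta = [\chi\nu^{-(n-1)/2},\chi\nu^{(n-1)/2}]$ and $\Delta' = [\chi\nu^{-(3n-1)/2},\chi\nu^{-(n+1)/2}]$, the Moeglin-Waldspurger algorithm (Section \ref{MW}) applied to the relevant multisegments gives an explicit Langlands-Zelevinsky description of $Q_{\chi,n}$. With $Q_{\chi,n}$ identified, its twisted Jacquet module is then shown to vanish by computing the Jacquet module $r_P(Q_{\chi,n})$ and invoking Frobenius reciprocity: the absence of an $M$-subrepresentation of $r_P(Q_{\chi,n})$ of the shape $\sigma \otimes \sigma\chi\nu^{-n}$ precludes any non-zero $\Delta G_n$-equivariant quotient of $(Q_{\chi,n})_{N,\psi}$ isomorphic to $St_n\chi \otimes \chi\nu^{-n}$. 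Combining this with the short exact sequence above yields $(Z_{\chi,n})_{N,\psi} \cong St_n\chi \otimes \chi\nu^{-n}$, completing part (2).
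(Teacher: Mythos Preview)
Your treatment of part (1) and the reduction in part (2) to proving $(Q_{\chi,n})_{N,\psi}=0$ are correct and match the paper exactly: Proposition \ref{smoothvscharacter} together with exactness of the twisted Jacquet functor, plus the irreducibility of $St_n\chi\otimes\chi\nu^{-n}$ as a $\Delta G_n$-module, reduces everything to that single vanishing.

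The gap is in your final step. You propose to compute the \emph{ordinary} Jacquet module $r_P(Q_{\chi,n})$ and invoke ``Frobenius reciprocity'' to rule out $St_n\chi\otimes\chi\nu^{-n}$ as a quotient of $(Q_{\chi,n})_{N,\psi}$. But there is no Frobenius reciprocity linking $r_P$ (coinvariants for the \emph{trivial} character of $N$) to $(\cdot)_{N,\psi}$ (coinvariants for the \emph{non-degenerate} character $\psi$). The adjunction for the twisted Jacquet functor gives $\Hom_{\Delta G_n}((Q_{\chi,n})_{N,\psi},\tau)\simeq\Hom_{S_\psi}(Q_{\chi,n},\tau\otimes\psi)$, which says nothing about subrepresentations of $r_P(Q_{\chi,n})$. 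Knowing that no constituent of $r_P(Q_{\chi,n})$ has the shape $\sigma\otimes\sigma\chi\nu^{-n}$ does not preclude $(Q_{\chi,n})_{N,\psi}$ from being nonzero; these are simply different functors.

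The paper's argument for this step is different and does work. After applying the Moeglin--Waldspurger algorithm (as you also propose), one finds $Q_{\chi,n}=Z(\mfr{m}^t)$ embeds in a principal series of the form $\rho\times\chi\nu_{n+1}^{-(n-1)/2}$ with $\rho\in\Alg(G_{n-1})$ and the second factor a \emph{character of $G_{n+1}$}. Then Proposition \ref{smoothvscharacter}(1), applied with $r=n-1<n$, kills the twisted Jacquet module of this larger principal series, and exactness forces $(Q_{\chi,n})_{N,\psi}=0$. Replacing your Frobenius-reciprocity paragraph with this embedding argument completes the proof.
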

\begin{proof}
	If $r<n,$ by Proposition \ref{smoothvscharacter}(i), $(St_r\chi\times \chi \nu_{2n-r}^{-n})_{N,\psi}=0.$  Hence (1) follows and we need to consider only the case  $r=n.$ Put 
	\begin{equation*}
		\Delta=[\chi\nu^{-\frac{n-1}{2}},\dots, \chi\nu^{\frac{n-1}{2}}] \text{ and } \Delta'=[\chi\nu^{-\frac{n-1}{2}-n},\dots, \chi\nu^{\frac{n-1}{2}-n}],
	\end{equation*}
	so that $St_n\chi\times \chi \nu_{n}^{-n}=L(\Delta)\times Z(\Delta').$
	Then, we have the following exact sequence of $G_{2n}$-modules:
	\begin{equation}\label{Zchin}
		0\to Z_{\chi,n}\to St_n\chi\times \chi \nu_{n}^{-n} \to Q_{\chi,n}\to 0.
	\end{equation}
	By Proposition \ref{smoothvscharacter} (ii), we have  $(St_n\chi\times \chi \nu_{n}^{-n})_{N,\psi}=St_n\chi \otimes \chi\nu^{-n}.$ We claim that $(Q_{\chi,n})_{N,\psi}=0$ which will give us $(Z_{\chi,n})_{N,\psi}=St_n\chi \otimes \chi\nu_n^{-n}.$ Note that $Q_{\chi,n}$ $=L(\mathfrak{m})$ where 
	\begin{equation*}
		\mfr{m}=\{[\chi\nu^{-\frac{n-1}{2}},\dots, \chi\nu^{\frac{n-1}{2}}],[\chi\nu^{\frac{n-1}{2}-n}],[\chi\nu^{\frac{n-1}{2}-n-1}],\dots,[\chi\nu^{-\frac{n-1}{2}-n}] \}.
	\end{equation*}
	By MW algorithm, $\mfr{m}^t=\{[\chi\nu^{\frac{n-1}{2}}],\dots, [\chi\nu^{-\frac{n-3}{2}}],  [\chi\nu^{-\frac{n-1}{2}-n},\dots,\chi\nu^{\frac{n-1}{2}-n}, \chi\nu^{-\frac{n-1}{2}}] \}$ and $Q_{\chi,n}=Z(\mfr{m}^t).$ Thus, $Q_{\chi,n}$ is a subrepresentation of  
	\begin{equation*}
		\rho\times Z([\chi\nu^{-\frac{n-1}{2}-n},\dots, \chi\nu^{\frac{n-1}{2}-n},\chi\nu^{-\frac{n-1}{2}}])
	\end{equation*}
	where $Z([\chi\nu^{-\frac{n-1}{2}-n},\dots, \chi\nu^{\frac{n-1}{2}-n},\chi\nu^{-\frac{n-1}{2}}])=\chi\nu_{n+1}^{\frac{-n+1}{2}}$ is a character of $G_{n+1}$ and  $\rho=\chi\nu^{\frac{n-1}{2}}\times \times\dots\times \chi\nu^{-\frac{n-3}{2}}\in \Alg(G_{n-1}).$ By Proposition \ref{smoothvscharacter} (i), $(\rho \times\nu_{n+1}^{\frac{-n+1}{2}} )_{N,\psi}=0$ and consequently, $(Q_{\chi,n})_{N,\psi}=0.$
\end{proof}

\begin{remark}\label{Steinbergvscharacterdual}
For a smooth representation $\tau$ of $G_n,$ let $\tau^{\vee}$ denote its contragredient. By considering contragredients of the representations in Proposition \ref{Steinbergvscharacter} and the automorphism  $s$ (\cite[\S 1.9 Theorem \& Lemma]{Zelevinsky}) of $G_{2n}$ which maps $P_{r,2n-r}$ to $P_{2n-r,r}$ we get  the exact sequence
\begin{equation*}
0\to Z_{\chi,r}^{\vee} \to \chi^{-1}\nu_{2n-r}^n \times St_r \chi^{-1} \to Q_{\chi,r}^{\vee} \to 0.
\end{equation*}
By arguments as in the proof Proposition \ref{Steinbergvscharacter}, we may conclude that if $r<n,$ then $(Q_{\chi,r}^{\vee})_{N,\psi}=0$ and $(Z_{\chi,r}^{\vee})_{N,\psi}=0.$ Also, if $r=n,$ we have $Q_{\chi,n}^{\vee}$ is  quotient of the principal series $\chi^{-1}\nu_{n+1}^{\frac{n-1}{2}}\times \rho^{\vee}.$ Hence, $(Q_{\chi,n}^{\vee})_{N,\psi}=0$ as $(\chi^{-1}\nu_{n+1}^{\frac{n-1}{2}}\times \rho^{\vee})_{N,\psi}=0$ by Theorem \ref{productoftwocharcaters}. Consequently, $(Z_{\chi,r}^{\vee})_{N,\psi}=\chi^{-1}\nu^n\otimes St_n\chi^{-1}.$
\end{remark}

\begin{remark}\label{rgreaterthann}
In contrast to the conclusions of Proposition \ref{Steinbergvscharacter}, if $r>n,$ both the subquotients of a reducible principal series representation of the type $St_r\chi \times \mu$ with length $2$ can have non-zero twisted Jacquet modules as we shall show in  the next subsection. Note that, if $r=n+1,$ we already know that such a principal series has a non-zero twisted Jacquet module by Proposition \ref{genericnplus1}.
\end{remark}

\subsection{Examples of non-vanishing twisted Jacquet modules I}\label{example1}
\subsubsection{}
In view of the results obtained in Section \ref{smooth-v-char}-\ref{Steinberg-v-char}, given any $\pi\in \Irr(G_{2n}),$ it would be interesting to know when $\pi_{N,\psi}\neq 0.$  To analyze this, we shall consider two reducible principal series representations of $G_4$, one in this section and another one in the next. In general, if $\xi$ is any reducible principal series representation of $G_{2n}$ with a non twisted Jacquet module $\xi_{N,\psi},$ there may be several subquotients $\pi$ with a non-zero $\pi_{N,\psi}.$  

\subsubsection{Structure theorem for $G_4$} For the ease of reference, we note the following special case deduced from our main structure theorems by putting $n=r=2$ in Corollary \ref{normalisedcomponentwisecor}. 
\begin{proposition}\label{DPanalogue}
	Let $\chi, \chi_1, \chi_2$ be characters of $F^{\times}.$ Also, let $\mathcal{G}_2$ denote the collection of all smooth representations of $G_2$ consisting of irreducible cuspidal representations,  twists of the Steinberg representation $St_2\chi$ or a (not necessarily irreducible) principal series representation  $\chi_1\times \chi_2.$  Assume that $\rho_1,\rho_2\in \Alg(G_2).$ Then, the following statements hold:
	\begin{enumerate}
		\item If one of the $\rho_i$'s is one dimensional, $(\rho_1\times \rho_2)_{N,\psi}=\rho_1\otimes \rho_2$ as $\Delta G_2$-modules.
		\item If both $\rho_1, \rho_2\in \mathcal{G}_2,$ we have the following exact sequence of $\Delta G_2$ modules:
		
		\begin{equation}\label{GL(4)exactsequencerefined}
			0\to\rho_1\otimes \rho_2\to (\rho_1\times \rho_2)_{N,\psi} \to i_{\Delta B_2}^{\Delta G_2}(\omega_{\rho_1}\nu^{\frac{1}{2}}\otimes \omega_{\rho_2}\nu^{-\frac{1}{2}})\to 0,
		\end{equation} 
		where $\omega_{\rho_i}$ denotes the central character of the representation $\rho_i$ for $i=1,2.$
	\end{enumerate} 
\end{proposition}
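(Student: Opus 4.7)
The plan is to specialize Corollary~\ref{normalisedcomponentwisecor} to $n=r=2$. Here $\alpha=\max\{0,r-n\}=0$ and $\beta=\lfloor r/2\rfloor=1$, so the filtration of $(\rho_1\times\rho_2)_{N,\psi}$ has just two steps, namely $V_0$ and $V_1/V_0$. For $k=0$ the block sizes $(k,r-2k,n-r+k)=(0,2,0)$ force $\Delta P_{0,2,0}=\Delta G_2$, the subgroups $N(0,1)$ and $N(0,2)$ are trivial, and all the modular twists reduce to the trivial character, so the induction is trivial and $V_0\cong \rho_1\otimes \rho_2$ as a $\Delta G_2$-module.

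For $k=1$ the block sizes are $(1,0,1)$, so $\Delta P_{1,0,1}$ is the diagonal Borel $\Delta B_2$ of $\Delta G_2$. Unpacking \eqref{N1definition} and \eqref{N2definition} shows that $N(1,1)$ and $N(1,2)$ coincide with the unipotent radical $U_2$ of $B_2$ in $G_2$, and both $\psi_{1,1}$ and $\psi_{1,2}$ are the standard Whittaker character. Thus $r_{N(1,i),\psi_{1,i}}(\rho_i)$ is the normalized Whittaker module of $\rho_i$. If one of the $\rho_i$ is one-dimensional, then its Whittaker module vanishes because $\rho_i$ is trivial on $U_2$ while $\psi_0$ is non-trivial; hence $\tau_1'=0$ and $V_1/V_0=0$. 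Combined with the value of $V_0$ this proves (1).

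For (2), assume $\rho_1,\rho_2\in \mathcal{G}_2$; each is generic, and its normalized Whittaker module is one-dimensional, with the center $Z_2$ acting through the respective central character. Evaluating $\tau_1'$ on $\diag(t,t)$ with $t=\left(\begin{smallmatrix}a&e\\0&c\end{smallmatrix}\right)\in B_2$, the matrices fed into the two Whittaker modules collapse to the central elements $aI_2$ and $cI_2$, while the explicit modular factors $\nu^{(r-2k)/2}(a)\nu^{-k/2}(b)$ and $\nu^{(n-r+k)/2}(b)\nu^{(2k-r)/2}(c)$ are all $1$ because $b$ is the empty $0\times 0$ block and the relevant exponents of $a,c$ are zero. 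Hence $\tau_1'(\diag(t,t))=\omega_{\rho_1}(a)\omega_{\rho_2}(c)$. A direct conjugation gives
\[
w_1^{-1}\diag(t,t)w_1=\begin{pmatrix} aI_2 & eI_2\\ 0 & cI_2\end{pmatrix}\in P,
\]
so $[\delta_P^{1/2}]^{w_1}(\diag(t,t))=|a/c|^2$, and since $\delta_{\Delta B_2}(\diag(t,t))=\delta_{B_2}(t)=|a/c|$, we have $\delta_{\Delta B_2}^{-3/2}(\diag(t,t))=|a/c|^{-3/2}$. The product of the three factors equals $\omega_{\rho_1}(a)\omega_{\rho_2}(c)\cdot |a/c|^{1/2}$, which is precisely the character $\omega_{\rho_1}\nu^{1/2}\otimes \omega_{\rho_2}\nu^{-1/2}$ on $\Delta T_2$, extended trivially to $\Delta B_2$. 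Therefore $V_1/V_0\cong i_{\Delta B_2}^{\Delta G_2}(\omega_{\rho_1}\nu^{1/2}\otimes \omega_{\rho_2}\nu^{-1/2})$, and the filtration $V_0\subset V_1=(\rho_1\times\rho_2)_{N,\psi}$ yields the exact sequence \eqref{GL(4)exactsequencerefined}.

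The main obstacle is the bookkeeping of the three modular twists---the $\nu^{(\cdots)/2}$ factors inside $\tau_1'$, the conjugation $[\delta_P^{1/2}]^{w_1}$, and $\delta_{\Delta B_2}^{-3/2}$---and verifying that they combine to produce exactly the shift $\nu^{\pm 1/2}$ in the statement. Beyond this, the only substantive input is that every member of $\mathcal{G}_2$ is generic with one-dimensional Whittaker module carrying the central character, which in the reducible case $\chi_1\times\chi_2$ follows from exactness of the Whittaker functor together with the well-known Whittaker computation for the irreducible composition factors.
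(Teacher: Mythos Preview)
Your proof is correct and follows essentially the same approach as the paper: both specialize Corollary~\ref{normalisedcomponentwisecor} to $n=r=2$, identify $V_0\cong\rho_1\otimes\rho_2$ and $V_1/V_0$ with the induced representation from $\Delta B_2$, and then observe that the normalized Whittaker module of each $\rho_i\in\mathcal{G}_2$ is one-dimensional with $F^\times$ acting via the central character. The only minor difference is that you spell out the modular-character bookkeeping in detail and handle the reducible principal series case via exactness on composition factors, whereas the paper invokes Corollary~\ref{normalisedcomponentwisecor} again with $n=r=1$ to compute $r_{N_{1,1},\psi_0}(\chi_1\times\chi_2)$ directly.
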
	

\begin{remark}
	We note that the exact sequence \eqref{GL(4)exactsequencerefined} is identical to the one of Prasad derived in \cite[Propospition 7.1 ]{GS} except the fact that we allow the $\rho_i$'s to be reducible principal series representations of $GL_2(F)$ as well. 
\end{remark}

\subsubsection{}\label{exactseq1}
Let $B_4$ denote the standard Borel subgroup of $G_4$ and let $\xi:=i_{B_4}^{G_4}(\delta_{B_4}^{1/2})= \nu^{\frac{3}{2}}\times \nu^{\frac{1}{2}}\times \nu^{-\frac{1}{2}}\times \nu^{-\frac{3}{2}}.$  It is well known (see \cite[\S 2.1 Proposition]{Zelevinsky} and \cite[\S 5.1]{Kud1}) that the reducible principal series representation $\xi$ has $8$ irreducible subquotients and each of these appear with multiplicity exactly one in $\xi.$  The set ${\rm JH}^{0}(\xi)$ consisting of all irreducible subquotients of $\xi$ counted with multiplicity is given by  (see \cite[\S 5.1]{Kud1})
$${\rm JH}^{0}(\xi)=\{\one_4, L_{\nu^{-1},2}, Z_{\nu,2}, Q_{\nu_2}, Z_{\nu,2}^{\vee}, Q_{\nu,2}^{\vee}, \tau, St_4\},$$
where $\tau$  is the unique irreducible quotient (and also the Langlands quotient) of the principal series representation $St_2\nu\times St_2\nu^{-1}$ of $G_4.$ We shall determine precisely which subquotients of $\xi$ has a non-zero twisted Jacquet module  as a consequence of our results so far. 
 It is trivial that $(\one_4)_{N,\psi}=0.$ We have  $(Q_{\nu,2})_{N,\psi}=0$ and $(Q_{\nu,2}^{\vee})_{N,\psi}=0$  whereas  $(Z_{\nu,2})_{N,\psi}$ and $(Z_{\nu,2}^{\vee})_{N,\psi}$ are both non-zero  by Proposition \ref{Steinbergvscharacter} and Remark \ref{Steinbergvscharacterdual}. It remains to consider $\tau$ and $St_4.$

  \subsubsection{}\label{TJMtau}
  We need the following lemma. 
\begin{lemma}\label{preSteinbergvscharacternonzero}
The  unique irreducible quotient $\tau$ of the principal series representation $St_2\nu\times St_2\nu^{-1}$ of $G_4$  has $\tau_{N,\psi}= \nu^{\frac{3}{2}}\times \nu^{-\frac{3}{2}}$ as $\Delta G_2$-modules.	
\end{lemma}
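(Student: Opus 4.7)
The plan is to apply the twisted Jacquet functor to the short exact sequence $0\to St_4\to St_2\nu\times St_2\nu^{-1}\to \tau\to 0$ from Section \ref{Structureofxi-1proof}, obtaining
\begin{equation*}
0\to (St_4)_{N,\psi}\to (St_2\nu\times St_2\nu^{-1})_{N,\psi}\to \tau_{N,\psi}\to 0.
\end{equation*}
The middle term can be analyzed by Proposition \ref{DPanalogue}(2) applied with $\rho_1=St_2\nu, \rho_2=St_2\nu^{-1}$, both lying in $\mathcal{G}_2$ and having central characters $\omega_{St_2\nu}=\nu^2$ and $\omega_{St_2\nu^{-1}}=\nu^{-2}$ (since $St_2$ has trivial central character).

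The key observation is that the ambient full principal series $\xi=\nu^{3/2}\times\nu^{1/2}\times\nu^{-1/2}\times\nu^{-3/2}$ admits a second presentation as induced from $P_{2,2}$, namely $\xi\cong i_{P_{2,2}}^{G_4}((\nu^{3/2}\times\nu^{-1/2})\otimes(\nu^{1/2}\times\nu^{-3/2}))$, where both $G_2$-factors are now irreducible principal series with central characters $\nu$ and $\nu^{-1}$ respectively. Applying Proposition \ref{DPanalogue}(2) to this presentation yields
\begin{equation*}
0\to (\nu^{3/2}\times\nu^{-1/2})\otimes(\nu^{1/2}\times\nu^{-3/2})\to \xi_{N,\psi}\to \nu^{3/2}\times\nu^{-3/2}\to 0,
\end{equation*}
producing in particular a surjection $\xi_{N,\psi}\twoheadrightarrow \nu^{3/2}\times\nu^{-3/2}$.

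I would then track this surjection through the $G_4$-filtration $St_4\subset St_2\nu\times St_2\nu^{-1}\subset \xi_S\subset \xi$ of Section \ref{Structureofxi-1proof}, using the twisted Jacquet modules of the other subquotients of $\xi$ that are already available: the subquotients $\mathbf{1}_4, Q_{\nu,2}, Q_{\nu,2}^\vee$ have vanishing twisted Jacquet module, while $L_{\nu^{-1},2}, Z_{\nu,2}, Z_{\nu,2}^\vee$ each contribute only $\mathbf{1}$ or $St_2$ as $\Delta G_2$-modules. Since $\nu^{3/2}\times\nu^{-3/2}$ is an irreducible $\Delta G_2$-module distinct from both $\mathbf{1}$ and $St_2$, a chase through the composition series of $\xi_{N,\psi}$ shows that $\nu^{3/2}\times\nu^{-3/2}$ can only appear as a composition factor inside $(St_2\nu\times St_2\nu^{-1})_{N,\psi}$, and the only arrangement consistent with it being a quotient of $\xi_{N,\psi}$ forces $\tau_{N,\psi}$ to surject onto $\nu^{3/2}\times\nu^{-3/2}$.

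The main obstacle will be to upgrade the resulting surjection $\tau_{N,\psi}\twoheadrightarrow \nu^{3/2}\times\nu^{-3/2}$ to an isomorphism, which amounts to identifying $(St_4)_{N,\psi}$ precisely as the subrepresentation of $(St_2\nu\times St_2\nu^{-1})_{N,\psi}$ whose complementary quotient is exactly $\nu^{3/2}\times\nu^{-3/2}$. I expect this step to exploit the uniqueness of the Shalika model for $St_4$ together with a careful analysis of the $\Delta G_2$-module structure of $St_2\nu\otimes St_2\nu^{-1}$, to rule out any additional composition factors surviving in $\tau_{N,\psi}$.
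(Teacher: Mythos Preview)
Your approach has a genuine gap at the final step, and the paper sidesteps it with a different (and much simpler) idea.

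The obstacle you correctly identify --- upgrading the surjection $\tau_{N,\psi}\twoheadrightarrow \nu^{3/2}\times\nu^{-3/2}$ to an isomorphism --- requires knowing $(St_4)_{N,\psi}$ as a $\Delta G_2$-module. But this is not available: in the paper's logical order, the structure of $(St_4)_{N,\psi}$ is deduced \emph{from} the present lemma (see Corollary~\ref{nonzeroTJMsq}), not the other way around. Your proposed remedy, uniqueness of the Shalika model, only controls $\dim\Hom_{\Delta G_2}((St_4)_{N,\psi},\one)$; it gives no handle on the full $\Delta G_2$-module, and in particular cannot rule out $\nu^{3/2}\times\nu^{-3/2}$ (or pieces of $St_2\otimes St_2$) appearing in $(St_4)_{N,\psi}$. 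The infinite-dimensional tensor product $St_2\otimes St_2$ sitting inside $(St_2\nu\times St_2\nu^{-1})_{N,\psi}$ is exactly what makes this route intractable. A secondary issue: your ``second presentation'' $\xi\cong(\nu^{3/2}\times\nu^{-1/2})\times(\nu^{1/2}\times\nu^{-3/2})$ requires swapping $\nu^{1/2}$ and $\nu^{-1/2}$, which are linked, so the two full principal series have the same composition factors but are not obviously isomorphic as $G_4$-modules.

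The paper's argument avoids all of this by realizing $\tau$ inside a \emph{different} principal series $\zeta=\one_2\times\nu^{3/2}\times\nu^{-3/2}$. By Proposition~\ref{DPanalogue}(1), since $\one_2$ is one-dimensional, $\zeta_{N,\psi}=\one_2\otimes(\nu^{3/2}\times\nu^{-3/2})=\nu^{3/2}\times\nu^{-3/2}$ directly. The four irreducible subquotients of $\zeta$ are $\tau$, $\one_4$, $Q_{\nu,2}$, $Q_{\nu,2}^\vee$, and the last three have vanishing twisted Jacquet module by results already proved (Proposition~\ref{Steinbergvscharacter} and Remark~\ref{Steinbergvscharacterdual}). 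Exactness then gives $\tau_{N,\psi}=\zeta_{N,\psi}$ on the nose. The key insight is to choose a principal series containing $\tau$ whose \emph{other} subquotients all have zero twisted Jacquet module, rather than one (like $St_2\nu\times St_2\nu^{-1}$ or $\xi$) containing $St_4$, whose twisted Jacquet module is the unknown you are trying to avoid.
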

\begin{proof}
 The strategy of our proof to compute the structure of $\tau_{N,\psi}$ is by realizing $\tau$ as a subquotient of another principal series representation $\zeta$ such that $\zeta_{N,\psi}$ has a simple structure and all irreducible subquotients $\pi$ of $\zeta$ except $\tau$ satisfy $\pi_{N,\psi}=0.$  This will yield, $\tau_{N,\psi}=\zeta_{N,\psi}.$

Since $\tau$ is the Langlands quotient of $St_2\nu\times St_2\nu^{-1},$ if we 
put $\mfr{m}=\{[\nu^{\frac{1}{2}}, \nu^{\frac{3}{2}}],[\nu^{-\frac{3}{2}}, \nu^{-\frac{1}{2}}]\},$ then $\tau=L(\mfr{m}).$
By MW algorithm, $\mfr{m}^t=\{[\nu^{\frac{3}{2}}],[\nu^{-\frac{1}{2}},\nu^{\frac{1}{2}}],[\nu^{-\frac{3}{2}}]\}.$ We then have $\tau=L(\mfr{m})=Z(\mfr{m}^t).$ Thus, $\tau$ is the unique irreducible submodule of the reducible principal series $\nu^{\frac{3}{2}} \times \one_2 \times \nu^{-\frac{3}{2}}.$
Put $\zeta=\one_2\times \nu^{\frac{3}{2}}\times \nu^{-\frac{3}{2}}.$ Set  $\mfr{n}_1=\{[\nu^{-\frac{3}{2}},\nu^{-\frac{1}{2}},\nu^{\frac{1}{2}},\nu^{\frac{3}{2}}]\}, \mfr{n}_2=\{[\nu^{\frac{3}{2}}],[\nu^{-\frac{3}{2}},\nu^{-\frac{1}{2}},\nu^{\frac{1}{2}}]\}$ and $\mfr{n}_3=\{[\nu^{-\frac{1}{2}},\nu^{\frac{1}{2}},\nu^{\frac{3}{2}}],[\nu^{-\frac{3}{2}}]\}.$  By \cite[\S 7.1 Theorem \& 9.13 Proposition]{Zelevinsky}, the irreducible subquotients of  $\zeta$ and  of $\nu^{\frac{3}{2}} \times \one_2 \times \nu^{-\frac{3}{2}}$ are $\tau=Z(\mfr{m}^t), Z(\mfr{n_1}), Z(\mfr{n}_2)$ and $Z(\mfr{n}_3).$ Moreover, each of these subquotients appear with multiplicity exactly one. One has $Z(\mfr{n}_1)=\one_4.$  By Lemma \ref{MWalgorithm}, we have $\mfr{n}_2^t=\{[\nu^{\frac{1}{2}},\nu^{\frac{3}{2}}],[\nu^{-\frac{1}{2}}], \nu^{-\frac{3}{2}}\}.$ Thus, $Z(\mfr{n}_2)=L(\mfr{n}_2^t)=\mfr{L}(St_2\nu\times \nu^{-\frac{1}{2}}\times \nu^{-\frac{3}{2}})=Q_{\nu,2}.$ Apply Lemma  \ref{MWalgorithm} again to get $\mfr{n}_3^t=\{[\nu^{\frac{3}{2}}], [\nu^{\frac{1}{2}}],[\nu^{-\frac{3}{2}},\nu^{-\frac{1}{2}}]\}$ so that $Z(\mfr{n}_3)=L(\mfr{n}_3^t)=\mfr{L}(\nu^{\frac{3}{2}}\times \nu^{\frac{1}{2}}\times St_2\nu^{-1})=Q_{\nu,2}^{\vee}.$

By Proposition \ref{DPanalogue}(1),  $\zeta_{N,\psi}=\one_2\otimes (\nu^{\frac{3}{2}}\times \nu^{-\frac{3}{2}})= \nu^{\frac{3}{2}}\times \nu^{-\frac{3}{2}}$ as $\Delta G_2$-modules. Since $(\one_4)_{N,\psi}$, $(Q_{\nu,2})_{N,\psi}$ and $(Q_{\nu,2}^{\vee})_{N,\psi}$ are all zero,  by the exactness of the twisted Jacquet functor, $\tau_{N,\psi}=\zeta_{N,\psi}=\nu^{\frac{3}{2}}\times \nu^{-\frac{3}{2}}$ as $\Delta G_2$-modules.
\end{proof}

\begin{corollary}\label{nonzeroTJMsq}
The principal series representation $St_3\nu^{\frac{1}{2}}\times \nu^{-\frac{3}{2}}$ of $G_4$ has length $2$ with unique irreducible quotient $Z_{\nu,2}$ and  unique irreducible subrepresentation $St_4.$  Both the twisted Jacquet modules $(Z_{\nu,2})_{N,\psi}$ and $(St_4)_{N,\psi}$ are non-zero.
 \end{corollary}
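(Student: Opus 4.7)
The plan is to establish the length-$2$ structure of $\eta := St_3\nu^{1/2}\times\nu^{-3/2}$ via Zelevinsky's classification, then apply the exact twisted Jacquet functor to two different short exact sequences to verify the non-vanishing of each constituent separately. For the structural part, write $\eta = L([\nu^{-1/2},\nu^{3/2}])\times L([\nu^{-3/2}])$. The two segments are linked with $[\nu^{-3/2}]$ preceding $[\nu^{-1/2},\nu^{3/2}]$, so $\eta$ has length $2$, its unique irreducible submodule is the merged-segment representation $L([\nu^{-3/2},\nu^{3/2}])=St_4$, and its unique irreducible quotient is the Langlands quotient $L(\mfr{m})$ for $\mfr{m}=\{[\nu^{-1/2},\nu^{3/2}],[\nu^{-3/2}]\}$. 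Applying the Moeglin--Waldspurger algorithm (Lemma \ref{MWalgorithm}) I would compute $\mfr{m}^t=\{[\nu^{3/2}],[\nu^{1/2}],[\nu^{-3/2},\nu^{-1/2}]\}$, so $L(\mfr{m})=Z(\mfr{m}^t)$. On the other hand $Z_{\nu,2}$ is, by definition, the unique irreducible submodule of $St_2\nu\times(\nu^{-1}\circ\det_{G_2}) = Z([\nu^{3/2}],[\nu^{1/2}])\times Z([\nu^{-3/2},\nu^{-1/2}])$, which equals $Z(\mfr{m}^t)$. Hence $L(\mfr{m})=Z_{\nu,2}$, yielding the exact sequence
\[
0\to St_4\to\eta\to Z_{\nu,2}\to 0.
\]

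Applying $(\,\cdot\,)_{N,\psi}$ gives $0\to (St_4)_{N,\psi}\to\eta_{N,\psi}\to (Z_{\nu,2})_{N,\psi}\to 0$. By Proposition \ref{Steinbergvscharacter}(2) with $r=n=2$ and $\chi=\nu$, $(Z_{\nu,2})_{N,\psi}\cong St_2\nu\otimes\nu^{-1}$ as a $\Delta G_2$-module, which is non-zero, settling one half of the claim.

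For $(St_4)_{N,\psi}\neq 0$, I would pass to the alternative exact sequence $0\to St_4\to St_2\nu\times St_2\nu^{-1}\to\tau\to 0$ recorded in Section \ref{exactseq1}. Applying $(\,\cdot\,)_{N,\psi}$ yields
\[
0\to (St_4)_{N,\psi}\to A\to\tau_{N,\psi}\to 0,
\]
where $A:=(St_2\nu\times St_2\nu^{-1})_{N,\psi}$. By Lemma \ref{preSteinbergvscharacternonzero}, $\tau_{N,\psi}\cong\nu^{3/2}\times\nu^{-3/2}$, an irreducible principal series of $G_2$. Proposition \ref{DPanalogue}(2), applied with the central characters $\omega_{St_2\nu}=\nu^2$ and $\omega_{St_2\nu^{-1}}=\nu^{-2}$, exhibits $A$ as an extension
\[
0\to St_2\nu\otimes St_2\nu^{-1}\to A\to \nu^{5/2}\times\nu^{-5/2}\to 0,
\]
whose quotient $\nu^{5/2}\times\nu^{-5/2}$ is also an irreducible principal series of $G_2$, not isomorphic to $\nu^{3/2}\times\nu^{-3/2}$ (both irreducibility and non-isomorphism follow immediately from the reducibility criterion for $GL_2(F)$ principal series).

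The finishing move is a two-case analysis of the surjection $A\twoheadrightarrow\tau_{N,\psi}$: the image of the submodule $St_2\nu\otimes St_2\nu^{-1}$ in the irreducible $\tau_{N,\psi}$ is either zero or all of $\tau_{N,\psi}$. The former would force $A\to\tau_{N,\psi}$ to factor through the irreducible quotient $\nu^{5/2}\times\nu^{-5/2}$, producing a non-zero map between irreducibles which would have to be an isomorphism $\nu^{5/2}\times\nu^{-5/2}\cong\nu^{3/2}\times\nu^{-3/2}$, contradicting the non-isomorphism noted above. Hence the image is all of $\tau_{N,\psi}$, which forces the quotient $\nu^{5/2}\times\nu^{-5/2}$ to map to zero, so $\nu^{5/2}\times\nu^{-5/2}\hookrightarrow (St_4)_{N,\psi}$ and $(St_4)_{N,\psi}\neq 0$. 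The main obstacle is precisely this case analysis; once one trusts the auxiliary computations from Lemma \ref{preSteinbergvscharacternonzero} and Proposition \ref{DPanalogue}(2), the rest reduces to segment arithmetic and the standard reducibility criterion.
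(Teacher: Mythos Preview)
Your approach is the same as the paper's---same exact sequences, same auxiliary inputs (Proposition~\ref{Steinbergvscharacter}, Proposition~\ref{DPanalogue}, Lemma~\ref{preSteinbergvscharacternonzero})---and the structural part together with the treatment of $(Z_{\nu,2})_{N,\psi}$ is correct.

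There is one slip in the final step. In Case~2 you conclude that $\nu^{5/2}\times\nu^{-5/2}\hookrightarrow(St_4)_{N,\psi}$, but $\nu^{5/2}\times\nu^{-5/2}$ is a \emph{quotient} of $A$, not a submodule, so the claimed embedding is not available from what you have written. The fix is immediate and in fact makes the case split unnecessary: suppose $(St_4)_{N,\psi}=0$; then $A\cong\tau_{N,\psi}=\nu^{3/2}\times\nu^{-3/2}$ is irreducible, yet $A$ surjects onto the non-isomorphic irreducible $\nu^{5/2}\times\nu^{-5/2}$, a contradiction. (If you prefer to stay inside Case~2: from surjectivity of $St_2\nu\otimes St_2\nu^{-1}\to\tau_{N,\psi}$ one gets $A=(St_2\nu\otimes St_2\nu^{-1})+(St_4)_{N,\psi}$, so $\nu^{5/2}\times\nu^{-5/2}=A/(St_2\nu\otimes St_2\nu^{-1})$ is a \emph{quotient} of $(St_4)_{N,\psi}$, which already gives non-vanishing.) The paper's own argument is a touch shorter because it effectively uses the opposite order $St_2\nu^{-1}\times St_2\nu$, for which the filtration quotient from Proposition~\ref{DPanalogue}(2) is $\nu^{-3/2}\times\nu^{3/2}\cong\tau_{N,\psi}$; then a semisimplification comparison forces $(St_4)_{N,\psi}$ to account for the remaining piece $St_2\otimes St_2\neq 0$.
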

\begin{proof}
It is well known that $St_3\nu^{\frac{1}{2}}\times \nu^{-\frac{3}{2}}$ has length 2 and  $St_4$ is its unique irreducible subrepresentation. Let $\eta$ be the unique irreducible quotient of $St_3\nu^{\frac{1}{2}}\times \nu^{-\frac{3}{2}}.$ We shall first show that $\eta=Z_{\nu,2}.$ Put $\mfr{m}=\{[\nu^{-\frac{1}{2}},\nu^{\frac{1}{2}},\nu^{\frac{3}{2}}],[\nu^{-\frac{3}{2}}]\}$ so that  $\mfr{m}^t=\{[\nu^{\frac{3}{2}}],[\nu^{\frac{1}{2}}],[\nu^{-\frac{3}{2}},\nu^{-\frac{1}{2}}]\}$ by MW algorithm.  We have $\eta=L(\mfr{m})=Z(\mfr{m}^t).$
Thus, $\eta$ is the unique irreducible submodule of the principal series representation $\nu^{\frac{3}{2}}\times \nu^{\frac{1}{2}}\times \nu_2^{-1}.$ Note that, $St_2\nu \times \nu_2^{-1} \hookrightarrow \nu^{\frac{3}{2}}\times \nu^{\frac{1}{2}}\times \nu_2^{-1}$ and therefore $\eta$ is the unique irreducible submodule of $St_2\nu \times \nu_2^{-1} $ as well. Hence, $\eta=Z_{\nu,2}.$ By Proposition \ref{Steinbergvscharacter} (2), we have $(Z_{\nu,2})_{N,\psi}=St_2\nu^{-1}\otimes \nu_2^{-1}$ as $\Delta G_2$-modules.
 
The principal series $St_2\nu^{-1} \times St_2\nu $ has length $2.$  Its irreducible subquotients are $St_4$ and the representation $\tau$ of Lemma \ref{preSteinbergvscharacternonzero}.  By Proposition \ref{DPanalogue}(2),  $(St_2\nu \times St_2\nu ^{-1})_{N,\psi}$ is glued from $St_2\nu\otimes St_2\nu^{-1}=St_2\otimes St_2$ and the irreducible principal series representation $ i_{\Delta B_2}^{\Delta GL_2(F)}(\nu^{-\frac{3}{2}} \otimes \nu^{\frac{3}{2}})\cong \nu^{\frac{3}{2}}\times \nu^{-\frac{3}{2}}.$ Of these, $\tau_{N,\psi}$ accounts for $\nu^{\frac{3}{2}}\times \nu^{-\frac{3}{2}}$  by Lemma \ref{preSteinbergvscharacternonzero}, proving $(St_4)_{N,\psi}$ must be non-zero. 
 \end{proof}

\begin{remark}
Corollary \ref{nonzeroTJMsq}  provides an example for the case mentioned in Remark \ref{rgreaterthann}, where both irreducible subquotients of a reducible principal series representation $St_{r}\chi \times \mu$ can have non-zero twisted Jacquet modules if $r>n.$
\end{remark}

\begin{remark}
In view of Theorem \ref{generic}, it is indeed true that all generic irreducible representations of $G_{2n}$ have a non-zero twisted Jacquet module. However, in this subsection and the next, we have chosen to make the calculations independent of Theorem \ref{generic}.
\end{remark}

\subsection{Examples of non-vanishing twisted Jacquet modules II}\label{example2}
\subsubsection{}
 Consider the principal series $\sigma= \nu\times 1 \times 1 \times \nu^{-1}$ of $G_4.$ To contrast with the principal series $\xi$ considered in Section \ref{example1}, the cuspidal support of $\xi$ is  distinct where as the cuspidal support of $\sigma$ has the character $1$ appearing with multiplicity $2.$
 The subquotients of $\sigma$ are well-known as it is a special case of \cite[Example 11.4]{Zelevinsky}. By \cite{Zelevinsky}, there are five distinct irreducible representations of $\sigma$ of which exactly one subquotient appears with multiplicity $2$ in $\sigma$ and all the remaining four subquotients appear with multiplicity one. The irreducible subquotients of $\sigma$ are 
 $St_2\nu^{\frac{1}{2}}\times \nu_2^{-\frac{1}{2}}, \nu_2^{\frac{1}{2}} \times St_2\nu^{-\frac{1}{2}},\one_3\times 1, St_3\times 1 \mbox{ and } L_{\nu^{-1/2},1}.$ Of these, $L_{\nu^{-1/2},1}$ appears with multiplicity $2.$
 
 \subsubsection{} It follows that  $(St_2\nu^{\frac{1}{2}}\times \nu_2^{-\frac{1}{2}})_{N,\psi}$, $(\nu_2^{\frac{1}{2}} \times St_2\nu^{-\frac{1}{2}})_{N,\psi}$ are non-zero (Proposition \ref{DPanalogue}) as well as  $(St_3\times 1)_{N,\psi}\neq 0$ (Proposition \ref{genericnplus1}).   By Corollary \ref{shalikamodel}, $(L_{\nu^{-1/2},1})_{N,\psi}$ is also non-zero. Finally, by Theorem \ref{productoftwocharcaters}, $(\one_3\times 1)_{N,\psi}=0.$ Thus, the only subquotient of $\sigma$ whose twisted Jacquet module vanishes is $\one_3\times 1.$ 
\section{Non vanishing of twisted Jacquet modules: a conjecture}\label{PrasadsConjecture}
 Based on the results of this work, D. Prasad has proposed  Conjecture \ref{DPConjecture} on non-vanishing of twisted Jacquet module $\pi_{N,\psi}$ for irreducible admissible representations $\pi$ of $G_{2n}$  where $(N,\psi)$ is as in \eqref{definitionofpsi} and everywhere else in this work. This conjecture is based especially on the assertions contained in this work. 

In Section \ref{summaryL}, we summarize some facts on Langlands parameters and $L$-functions necessary to state Prasad's conjecture. In Section \ref{verify}, we verify Conjecture \ref{DPConjecture} holds in few cases using the results we have proved in previous sections. Section \ref{DPConjecture-othercases} lists some cases where we have not verified Conjecture \ref{DPConjecture} in full generality.

\subsection{Short summary on Langlands parameters and $L$-functions}\label{summaryL}
We shall use the notion of $L$-functions in this section, especially the adjoint $L$-function of an irreducible admissible representation of $G_n.$ 
Before we proceed further, we shall summarize a few facts especially from \cite[Section 3]{Kud1} for ease of reference.  For notations and details, we refer the reader to \cite{Kud1}.

	\subsubsection{}  Suppose $\pi$ is an irreducible smooth representation of $G_n$ and $\rho={\rm rec}_F^{n}(\pi)$ is its Langlands parameter, then the adjoint $L$-function  of $\pi$ is $L(s, Ad\circ  \pi)=L(s,\rho\otimes \rho^{\vee}).$ 
		\subsubsection{} For a one dimensional representation, $\chi$ of $F^{\times},$ we shall identify $\chi$ with $rec_F^1(\chi)$ which is a character of $W_F^{ab}.$  The $L$-function of $\chi$ is $(1-\chi(\omega_F)q^{-s})^{-1}$ where $\omega_F$ denotes the uniformizer of $F.$ In particular, if $\chi=|\cdot|_F^{t},$ where $t\in \mathbb{R},$ the  only pole on the real line of the  $L$-function of $\chi$ is a simple pole at $s=-t.$
		\subsubsection{} The Steinberg representation $St_n$ has as its $L$-function $L(s, |\cdot|_F^{\frac{n-1}{2}}).$ For any twist $St_n\chi$ of the Steinberg representation, one has $L(s,St_n\chi)=L(s,\chi|\cdot|_F^{\frac{n-1}{2}}).$ In view of this, any real pole of $L(s, St_n\chi)$ is a simple pole and for $t\in \mathbb{R}$, $L(s,St_n|\cdot|_F^t)$ has a real pole  in the region $s>0$ if and only if $t<-\frac{n-1}{2}.$
		\subsubsection{} If $\rho_2$ denotes the Langlands parameter of $St_2,$ then  $\rho_2\otimes \rho_2=\one_1\oplus \rho_3$ where $\rho_3$ is the Langlands parameter of  the Steinberg representation $St_3$ of $G_3.$   By abuse of notation, we shall write this as $St_2\otimes St_2=\one_1\oplus St_3$ in our calculations. More generally, if $\rho_r$ denotes the Langlands parameter of the Steinberg representation $St_r$ of $G_r,$ one has $\rho_r\otimes \rho_r=\one_1\oplus \rho_3\oplus \cdots \oplus \rho_{2r-1}.$
	
		\subsubsection{} Assume that $\tau_j=L(\Delta_j)$ are essentially square integrable irreducible smooth representations of $G_{n_j}$ for $1\leq j \leq k$ such that $\Delta_i$ does not precede $\Delta_j$ whenever $i<j.$ Then,  $\tau_1\times \cdots \times \tau_k$ has a unique irreducible quotient $\pi$ which is also the Langlands quotient and $L(s,\pi)=\prod_{j=1,\dots,k} L(s,\tau_j)$ ( \cite[(3.1.3)]{Kud1}).
		
		\subsubsection{}	\label{LP}  Suppose $\pi_1$ and $\pi_2$ are irreducible smooth representations of $G_m$ and $G_n$ respectively with Langlands parameters $\rho_1$ and $\rho_2$ such that $\pi_1\times \pi_2$ is irreducible, then (see \cite[A.6 Corollary(ii)]{TadicU}) the Langlands parameter of $\pi_1\times \pi_2$ is $\rho_1\oplus \rho_2.$ We shall use this repeatedly in our calculations.
	
		\subsubsection{} For any $k\in \N,$ the trivial representation $\one_{k}$ of $G_k$ has Langlands parameter $\nu^{\frac{k-1}{2}}\oplus \cdots \oplus \nu^{-(\frac{k-1}{2})}.$ By \ref{LP}, we have
	\begin{equation*}\one_k\otimes \one_k= (\nu^{\frac{k-1}{2}}\oplus \cdots\oplus \nu^{-\frac{k-1}{2}})\otimes (\nu^{\frac{k-1}{2}}\oplus \cdots \oplus\nu^{-\frac{k-1}{2}}),
	\end{equation*}
and therefore 
	\begin{equation}\label{triv_k}
		 L(s, \one_k\otimes \one_k)= \nu^{k-1}\oplus 2 \nu^{k-2}\oplus \cdots \oplus (k-1) \nu\oplus k\one_1\oplus (k-1)\nu^{-1}\oplus \cdots \oplus \nu^{-(k-1)}.
	\end{equation}
	Consequently, 
	\begin{center}
	$L(s, Ad, \one_k)$ has  poles of order $1,2,\dots, k-1$ at $s=k-1,k-2, \dots, 1$ respectively.
	\end{center}
\subsection{D. Prasad's Conjecture}\label{Conjectureformulation}
We recall the following result from \cite[Proposition 5.2.2]{Kud1} which is a motivation for Conjecture \ref{DPConjecture}. We recall that $Ad: GL_{n}(\C)\to GL(M_{n}(\C))$ denotes the adjoint representation of $GL_n(\C).$ 

\begin{theorem}[Gross, D. Prasad, Rallis]\label{GPR}
	Let $\pi$ be an irreducible smooth representation of $G_n$ and let $\rho=\rec_F^n(\pi)$ denote its Langlands parameter.
	Then $\pi$ is generic if and only if $L(s, \pi, Ad )=L(s, Ad\circ \rho)=L(s, \rho\otimes \rho^{\vee})$ has no pole at $s=1.$
\end{theorem}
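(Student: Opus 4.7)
The plan is to reduce the theorem to known facts about the Langlands/Zelevinsky classification and to the pole structure of Rankin--Selberg $L$-functions. By the Langlands classification for $G_n$, I can write $\pi$ as the Langlands quotient $\mfr{L}(\tau_1\times\cdots\times\tau_k)$ where each $\tau_i$ is essentially square integrable (of the form $St_{r_i}\chi_i$), with the inducing data ordered so that the corresponding segments $\Delta_i$ do not precede $\Delta_j$ for $i<j$. The standard module theorem for $GL_n$, which for $GL_n$ follows from Bernstein--Zelevinsky theory, asserts that $\pi$ is generic if and only if $\tau_1\times\cdots\times\tau_k$ is already irreducible, equivalently by \cite[\S 4.2 Theorem]{Zelevinsky}, if and only if no two of the segments $\Delta_i,\Delta_j$ are linked. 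Thus the task reduces to showing: \emph{no pair of segments is linked if and only if $L(s,\rho\otimes\rho^\vee)$ has no pole at $s=1$}.

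On the Galois side, the local Langlands correspondence gives $\rho=\rec_F^n(\pi)=\rho_1\oplus\cdots\oplus\rho_k$ with $\rho_i=\rec_F^{r_i}(\tau_i)$, and consequently
\begin{equation*}
L(s,\rho\otimes\rho^\vee)=\prod_{i,j}L(s,\rho_i\otimes\rho_j^\vee).
\end{equation*}
I would then analyze the factors individually. For a diagonal contribution $i=j$, write $\tau_i=\tau_i^{0}\nu^{e_i}$ with $\tau_i^0$ square integrable; the twist cancels in $\rho_i\otimes\rho_i^\vee$, and the Clebsch--Gordan-type decomposition $St_{r_i}\otimes St_{r_i}^\vee\cong St_1\oplus St_3\oplus\cdots\oplus St_{2r_i-1}$ on the Galois side shows that the real poles of $L(s,\rho_i\otimes\rho_i^\vee)$ all lie at $s=0,-1,\dots,-(r_i-1)$. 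Hence no diagonal factor ever contributes a pole at $s=1$.

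For an off-diagonal contribution $i\neq j$, the factor $L(s,\rho_i\otimes\rho_j^\vee)$ identifies (via local Langlands compatibility with Rankin--Selberg) with $L(s,\tau_i\times\tau_j^\vee)$. Writing $\tau_i=St_{r_i}\chi_i$ and decomposing $St_{r_i}\otimes St_{r_j}^\vee\cong\bigoplus_{m=0}^{\min(r_i,r_j)-1}St_{r_i+r_j-1-2m}$, a pole at $s=1$ translates into an equation $\chi_i\chi_j^{-1}=\nu^{-t}$ for a specific integer $t$ determined by $r_i,r_j,m$, and unwinding this recovers exactly the condition that $\Delta_i$ and $\Delta_j$ be linked. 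Summing over pairs, a pole at $s=1$ exists in the product if and only if at least one pair of segments is linked, equivalently the standard module is reducible, equivalently $\pi$ is non-generic.

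The hard part will be the bookkeeping in the last step: verifying precisely that the pole condition on $L(s,\rho_i\otimes\rho_j^\vee)$ at $s=1$ matches the Zelevinsky linking condition (not merely overlap, but the subtler requirement that neither segment contain the other), with the correct normalizations coming from the central character twists $\chi_i\chi_j^{-1}$ and the essentially square integrable exponents $e_i,e_j$. A cleaner alternative is to bypass direct pole analysis by invoking Shahidi's generic local Langlands formula $L(s,\pi,Ad)=\prod_{i\leq j}L(s,\tau_i\times\tau_j^\vee)$ (with the diagonal factors $L(s,\tau_i,Ad)$) together with the Jacquet--Shalika criterion governing when a Rankin--Selberg $L$-function acquires a pole at $s=1$.
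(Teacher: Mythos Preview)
The paper does not prove this theorem at all: it is quoted verbatim from \cite[Proposition~5.2.2]{Kud1} as a known result due to Gross, Prasad, and Rallis, and is used only as motivation for Conjecture~\ref{DPConjecture}. So there is no ``paper's own proof'' to compare against.

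Your sketch is a correct outline of the standard argument, with one caveat. You write that each essentially square integrable $\tau_i$ is ``of the form $St_{r_i}\chi_i$''. This is only true when the cuspidal support of $\pi$ consists of characters; in general $\tau_i=L(\Delta_i)$ with $\Delta_i$ a segment built on an arbitrary supercuspidal $\sigma_i$ of some $G_{m_i}$. Your Clebsch--Gordan analysis of $\rho_i\otimes\rho_j^\vee$ then needs the general Rankin--Selberg computation $L(s,L(\Delta)\times L(\Delta')^\vee)$ for segments on possibly different supercuspidals, and the diagonal factor analysis must use that $L(s,\sigma\times\sigma^\vee)$ is holomorphic for $\Re(s)>0$ when $\sigma$ is unitary supercuspidal. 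With that adjustment, the equivalence ``some pair $\Delta_i,\Delta_j$ is linked $\Leftrightarrow$ some $L(s,\tau_i\times\tau_j^\vee)$ has a pole at $s=1$'' goes through, and combined with the standard module theorem for $GL_n$ (genericity $\Leftrightarrow$ irreducibility of the standard module $\Leftrightarrow$ no linked pair) you recover the statement. Your alternative route via Jacquet--Shalika is also viable and avoids some of the bookkeeping.
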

In other words, Theorem \ref{GPR} says that the twisted Jacquet module $\pi_{U_n,\theta}$ (see Section \ref{nonvanishing}) is zero if and only if $L(s, \rho\otimes \rho^{\vee})$ has a pole at $s=1.$ Conjecture \ref{DPConjecture} formulated below can be viewed as an analogue of Theorem \ref{GPR} for $\pi_{N,\psi}.$ 

\begin{remark}
	According to \cite[Conjecture 2.6]{DPGross1992}, a statement as in Theorem \ref{GPR} is expected to be true for more general  reductive groups than $G_n.$ 
\end{remark}

\begin{conjecture}[D. Prasad]\label{DPConjecture} 	Let $\pi$ be an irreducible smooth representation of $G_{2n}$ and let $\rho=\rec_F^{2n}(\pi)$ denote its Langlands parameter. Then, the twisted Jacquet module $\pi_{N,\psi} = 0$ if and only if the $L$-function $L(s, \pi, Ad)=L(s,\rho\otimes \rho^{\vee})$ has poles of order greater than or equal to $n,n-1,\dots, 1$ at $s=1,2,\dots, n$ respectively. 	
\end{conjecture}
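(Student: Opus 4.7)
The plan is to treat the conjecture as a biconditional and verify it in representative families, combining the structural determination of $\pi_{N,\psi}$ obtained in Sections \ref{Structure} and \ref{Applications} with a direct reading of the adjoint $L$-function off the Langlands parameter $\rho=\rec_F^{2n}(\pi)$ using the recipes of Section \ref{summaryL}.  For a given $\pi\in\Irr(G_{2n})$ I would (a) compute $\pi_{N,\psi}$ either directly, or by realising $\pi$ as a subquotient of an induced representation to which Theorem \ref{intromaintheorem} or Corollary \ref{normalisedcomponentwisecor} applies, and (b) decompose $\rho\otimes\rho^{\vee}$ as a Weil--Deligne representation and record the order of the pole of $L(s,\rho\otimes\rho^{\vee})$ at each $s=1,\ldots,n$; the conjecture is verified in each instance by checking that the vanishing $\pi_{N,\psi}=0$ is logically equivalent to the simultaneous lower bounds $n,n-1,\ldots,1$ on these orders.

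I would start with the simplest families.  For one-dimensional $\pi=\chi\circ\det$ the vanishing $\pi_{N,\psi}=0$ is immediate from the non-triviality of $\psi$, while the computation \eqref{triv_k} shows that $\rho\otimes\rho^{\vee}$ contains $\nu^{-j}$ with multiplicity $2n-j$ for $1\leq j\leq 2n-1$, so the pole of $L(s,\pi,Ad)$ at $s=j$ has order $2n-j\geq n-j+1$ for each $1\leq j\leq n$ and the lower-bound pole condition is met.  For the product of two characters $\pi=\chi\times\mu$ with $\chi$ a character of $G_{r}$ and $\mu$ a character of $G_{2n-r}$, Theorem \ref{productoftwocharcatersintro} identifies $\pi_{N,\psi}$ as zero precisely when $r\neq n$.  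A direct count of the contributions to each $\nu^{-j}$-isotypic component of $\rho\otimes\rho^{\vee}$, broken up as $\rho_{\chi}\otimes\rho_{\chi}^{\vee}\oplus\rho_{\mu}\otimes\rho_{\mu}^{\vee}$ plus the two cross-terms, will show on the one hand that when $r\neq n$ the pole orders at $s=1,\ldots,n$ are $\geq n,n-1,\ldots,1$ (the longer of the two segments already supplies enough at each $s$), and on the other hand that when $r=n$ the symmetric set of exponents on each side produces no $\nu^{-n}$-contribution to the tensor square at all, so the required pole at $s=n$ is absent and the conjecture correctly predicts non-vanishing, consistent with $(\chi\times\mu)_{N,\psi}\cong\chi\otimes\mu$.

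The next batch concerns non-generic Langlands quotients for which Section \ref{Applications} already supplies $\pi_{N,\psi}$.  For the family $L_{\chi,\alpha}$ of Lemma \ref{Lchialpha}, Corollary \ref{TJMLchialpha} gives $(L_{\chi,\alpha})_{N,\psi}\cong\chi\otimes\chi\nu^{\alpha}\neq 0$, so the conjecture predicts that the pole condition must fail at some $s\in\{1,\ldots,n\}$.  I would read the parameter off the multisegment $\mathfrak{m}_{\alpha}^{t}$ produced by the Moeglin--Waldspurger algorithm in the proof of Lemma \ref{Lchialpha}, write $L(s,\pi,Ad)$ as the product over pairs of essentially square-integrable constituents, and verify that the required lower bound on the pole order is indeed violated at some $s$; the small-rank cases $L_{\nu^{-1/2},1}=\tau$ and $L_{\nu^{-1},2}$ inside $G_{4}$ serve as explicit sanity checks that can be written out by hand.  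On the generic side, Theorem \ref{GPR} of Gross--Prasad--Rallis guarantees that $L(s,\pi,Ad)$ has no pole at $s=1$ when $\pi$ is generic, hence no pole of order $\geq n$ there for $n\geq 1$, so the conjecture predicts $\pi_{N,\psi}\neq 0$; this is corroborated in many sub-cases by Theorem \ref{nonzeroTJMgenericcomponentintro} and by the explicit computations in the tables of Sections \ref{example1}--\ref{example2}, each of whose entries can be re-examined in the same spirit.

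The main obstacle to upgrading this case-by-case verification into a uniform proof is combinatorial.  Once $\pi=L(\mathfrak{m})$ for a multisegment with several essentially square-integrable constituents, the Langlands parameter is a direct sum of indecomposable Weil--Deligne pieces coming from the segments $\Delta_{i}$, and $L(s,\rho\otimes\rho^{\vee})$ is a product of pairwise factors $L(s,\rho_{\Delta_{i}}\otimes\rho_{\Delta_{j}}^{\vee})$ whose individual pole orders depend on the relative positions of the endpoints of $\Delta_{i}$ and $\Delta_{j}$.  Extracting simultaneous lower bounds of precisely the shape $n,n-1,\ldots,1$ at $s=1,\ldots,n$ from such a product, and matching them against the twisted Jacquet calculation obtained by iterating Theorem \ref{intromaintheorem} along the Bernstein--Zelevinsky filtration of $\pi$, appears to require an inductive framework organised by the ladder structure of \cite{BLM}, and I expect the real difficulty of the conjecture in full generality to lie precisely here.
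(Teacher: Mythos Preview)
The statement is a \emph{conjecture}, not a theorem: the paper does not prove it. What the paper does (Section \ref{verify}) is verify the biconditional in a handful of explicit families---characters of $G_{2n}$, the representations $Q_{\nu,2}$, $Q_{\nu,2}^{\vee}$, $\tau$, $Q_{1,r}$, $Z_{1,r}$, and the $\one_{n+1}\times\sigma$ / $\one_{n-1}\times\sigma$ cases---and then in Section \ref{DPConjecture-othercases} explicitly lists families (generic representations, products $\one_{n_1}\times\cdots\times\one_{n_m}$, Speh representations) where the conjecture is \emph{expected} but not established here. Your proposal is of the same nature: a programme of case-by-case verifications overlapping heavily with the paper's own list, together with an honest admission in your final paragraph that the uniform argument is missing. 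So there is no proof to compare against; neither the paper nor your proposal proves the conjecture.

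Regarding the verifications themselves, your outline is broadly consistent with the paper's Section \ref{verify}, and your closing diagnosis---that the obstruction is the combinatorics of matching pole orders across the pairwise factors $L(s,\rho_{\Delta_i}\otimes\rho_{\Delta_j}^{\vee})$ against the twisted Jacquet filtration---is exactly why the statement is recorded as a conjecture. One caution: when you treat $\pi=\chi\times\mu$ with $\chi,\mu$ characters of $G_r,G_{2n-r}$, the conjecture applies only to \emph{irreducible} $\pi$, so you must either restrict to the unlinked case or pass to subquotients (as the paper does for $Q_{\chi,r}$ and $Z_{\chi,r}$ in Proposition \ref{Steinbergvscharacter}); the blanket appeal to Theorem \ref{productoftwocharcatersintro} does not by itself address the conjecture for the reducible locus.
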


\subsection{Generic Representations}\label{generic-section}
As a first case, we have the following proposition which verifies Prasad's conjecture for generic representations.

\begin{theorem}[D. Prasad]\label{generic}
	Let $\pi\in \Irr(G_{2n})$ be generic. 	Then, $\pi_{N,\psi}\neq 0.$
\end{theorem}
\begin{proof}
	Let $\pi$ be an irreducible generic representation of $G_{2n}.$ Let $P_{2n}$ denote the mirabolic subgroup of $G_{2n}$ consisting of matrices whose last row equals $(0, \dots,0, 1).$ By Bernstein-Zelevinsky theory of derivatives (see \cite[\S 3.5]{BZ2} or \cite[Proposition 1, pp. 171]{DPDuke93} ), $\pi_{|_{P_{2n}}}$ has a filtration of $P_{2n}$-modules 
	\begin{equation}
		\{0\}\subset V_1\subset \cdots \subset V_{2n}=\pi_{|_{P_{2n}}},
	\end{equation}
	where $V_1=\ind_{U(2n)}^{P_{2n}}(\theta_{2n}).$ Observe that $N\subset P_{2n}.$ By the exactness of the twisted Jacquet functor, $(V_1)_{N,\psi}\subset \pi_{N,\psi} $ and to show that $\pi_{N,\psi}\neq 0,$ it is sufficient to show that $(V_1)_{N,\psi}\neq 0.$
	
	By Gelfand-Kazhdan theorem (\cite[\S 5.18]{BZ1}, \cite[Theorem E, pp. 97]{GK}), given any irreducible cuspidal representation  $\sigma$ of $G_{2n},$ one has $\sigma_{|_{P_{2n}}}= \ind_{U(2n)}^{P_{2n}}(\theta_{2n}).$ It should be possible to deduce the non-vanishing of the twisted Jacquet module from this structure theorem about $\sigma$ restricted to $P_{2n}$ by direct calculations. Not finding such a proof, we resort to a more indirect proof. As $\sigma$ restricted to $P_{2n}$ is independent of $\sigma$, we are free to choose $\sigma$, which we will, and prove that one such $\sigma$ has a nonzero Shalika model,  which will in particular imply that $V_1$ has a nonzero twisted Jacquet module.
  Recall (\cite[Theorem 1.1]{JNQ} or \cite[\S 1]{Mat}) that, an irreducible cuspidal representation $\sigma$ of $G_{2n}$ has a Shalika model if and only if its Langlands parameter $\rho$ is symplectic, and such a cuspidal representation exists  by Proposition \ref{Weilgrprepn} proved below, which completes the proof.
\end{proof}

\subsubsection{} To prove Proposition \ref{Weilgrprepn}, we recall some notations for which we shall refer to mainly \cite{BH-book} and \cite{Knapp}. 
	For a non-archimedean local field $F$,  let $O_F, O_F^{\times}$ and $\varpi_F $ denote respectively the ring of integers, units in $O_F$ and a uniformizer. Also, let $O_F^{\times}(1)$ denote the subgroup $1+\varpi_F O_F$ consisting of principal units. Then, $O_{F}^{\times}/O_F^{\times}(1)=\F_{q}^{\times},$ if the residue field $k_F$ of $F$ is isomorphic to $\F_q.$ If $K/F$ is an unramified extension of degree $n$ then $\Gal(K/F)$ is isomorphic to $\Z/n\Z.$
	
In the following proposition, we shall be constructing a regular character  $\chi$  of $\F^{\times}_{q^{2n}}$ which will be used in the proof of Proposition \ref{Weilgrprepn}.
\begin{proposition}\label{cuspidal-selfdual-finite}
There exists an irreducible self-dual cuspidal representation of $GL_{2n}(\F_q).$ 
	\end{proposition}
	\begin{proof}Throughout this proof, let  $G_n=GL_{n}(\F_q).$ Recall (see for instance \cite[Theorem 4.2]{DP-Finite-self-dual}) that irreducible cuspidal representations of $G_{2n}$ correspond to regular characters $\chi:\F_{q^{2n}}^{\times}\to \C^{\times},$ i.e., characters $\chi$  such that $\chi^{\sigma}\neq \chi$ for any nontrivial $\sigma\in \Gal(\F_{q^{2n}}/\F_q).$ Also, the self-duality is equivalent to the existence of $\sigma_{0}\in \Gal(\F_{q^{2n}}/\F_q)$ such that  $\chi^{\sigma_0}=\chi^{-1}.$
	
	Take any $\chi$ on $\F^{\times}_{q^{2n}}/\F^{\times}_{q^n}$ whose order is the order of the cyclic group $\F^{\times}_{q^{2n}}/\F^{\times}_{q^n}.$ We claim that such a $\chi$ is regular, producing a self-dual irreducible cuspidal representation of $G_{2n}.$ If $\chi^{\sigma}=\chi$ for a nontrivial  $\sigma\in \Gal(\F_{q^{2n}}/\F_q),$ put $H=\langle \sigma \rangle$ and let $L=\F^H_{q^{2n}},$ the fixed subfield of $H.$   Let $\Nm$ denote the norm map from $\F^{\times}_{q^{2n}}\to L^{\times}$, which is surjective.  The condition that $\chi(\sigma(x))=\chi(x)$ for all $x\in \F^{\times}_{q^{2n}}$ is equivalent to  saying that $\chi$ factors through $L^{\times}$ via $\Ker(\Nm).$   Since $[\F_{q^{2n}}:L]=|H|\geq 2,$  and $\chi$ factors through $L^{\times},$ $\chi$ has order less than or equal to $|L^{\times}|\leq q^n-1.$ But the order of $\chi$ is $q^n+1,$ a contradiction. This establishes the regularity of $\chi.$ Choose $\sigma_0$ to be the unique element of order $2$ in $\Gal(\F_{q^{2n}}/\F_q).$ Since $\chi $ is trivial
	on $\F^{\times}_{q^n}$,  $\chi(x \sigma_0(x))= 1$ for all $x\in \F^{\times}_{q^{2n}},$ that is,  $ \chi^{-1} = \chi^{\sigma_0}.$
	\end{proof}

\begin{proposition}\label{Weilgrprepn}
Let $F$ be a non-archimedean field. Then, the Weil group $W_F$ has an irreducible symplectic representation of dimension $2n.$
\end{proposition}
\begin{proof}
We will consider the quotient $W_{E/F}$ (see \cite[(2.6) pp. 252]{Knapp} of $W_F$ which sits in the following  exact sequence
	\begin{equation*}
		1\to E^{\times} \to W_{E/F} \to {\rm Gal}(E/F) \to 1,
	\end{equation*}
	where $E/F$ is an unramified extension of degree $2n.$ Let $\sigma_0$ be the element of order $2$ in ${\rm Gal}(E/F)=\Z/2n\Z$ with fixed subfield $K.$ Then, $E/K$ is a quadratic unramified extension. As $E^{\times}=O_E^{\times}\varpi_F^{\Z},$ and $O_E^{\times}/O_E^{\times}(1)\simeq \F^{\times}_{q^{2n}},$ 
	we define a character $\chi'$ of $E^{\times}$ where $\chi'$ acts as the character $\chi$ of Proposition \ref{cuspidal-selfdual-finite} on $O_E^{\times}$ and as the character $\theta$ with $\theta(\varpi_F)=-1.$ It is easy to see that $(\chi')^{\sigma_0}=(\chi')^{-1}$ and that the character $\chi'$ is regular since $\chi$ is regular.
	
	Put $\pi=\Ind_{E^{\times}}^{W_{E/F}}(\chi')$ and $\tau=\Ind_{E^{\times}}^{W_{E/K}}(\chi').$
	By regularity of $\chi'$, both $\pi$ and $\tau$ are irreducible representations of dimensions $2n$ and $2$ respectively. As $\chi'^{\sigma_0}=\chi'^{-1},$ both $\pi$ and $\tau$ are self-dual as well. Moreover,  it is easy to see that (see for instance \cite[\S 29.2, Proposition, pp.188 ]{BH-book})
	$\det(\tau)= (\theta \otimes \chi')_{|_{K^{\times}}}=1,$
	proving  $\tau$ is symplectic. Since an irreducible representation induced from an irreducible symplectic representation is itself symplectic and  by transitivity of induction $\pi=\Ind_{W_{E/K}}^{W_{E/F}}(\tau),$ $\pi$ is symplectic as well.
\end{proof}

\begin{remark}
In view of 	Theorem \ref{generic}, Conjecture \ref{DPConjecture} holds for all generic irreducible representations of $G_{2n}$ which further agrees with Theorem \ref{GPR}.
\end{remark}

 In the following Sections \ref{verify} and \ref{DPConjecture-othercases}, by abuse of notation, we shall identify a smooth irreducible representation $\pi$ of $G_n$ with its Langlands parameter  $rec_F^n(\pi)$ for the purposes of calculation of the adjoint $L$-function. We hope this eases notation and avoids some tedious book keeping.

\subsection{Special cases of the conjecture}\label{verify}

\subsubsection{}\label{trivnplus1vsirred}
Assume that $\pi$ is an irreducible smooth representation of $G_{2n}$ of the form $\pi=\one_{n+1} \times \sigma$ where $\one_{n+1}$ denotes the trivial representation of $G_{n+1}$ and $\sigma$ is any irreducible representation of $G_{n-1}.$ By \ref{LP}, $\pi \otimes \pi^{\vee}$ contains $	\one_{n+1} \otimes \one_{n+1}$ and by \eqref{triv_k},  $L(s,\one_{n+1}\otimes \one_{n+1})$ has poles at $s=1, 2,\dots, n $ of order $n,n-1,\dots , 1$ respectively.  Thus, by the Conjecture \ref{DPConjecture}, $\pi_{N,\psi}=0.$ Also, by Proposition \ref{charactervssmooth} (1), $\pi_{N,\psi}=0.$

	\subsubsection{} \label{trivnminus1vsgeneric}
	Assume that $\pi$ is a smooth representation of $G_{2n}$ of the form $\pi=\one_{n-1} \times \sigma$ where $\one_{n-1}$ denotes the trivial representation of $G_{n-1}$ and $\sigma$ is any irreducible generic representation of $G_{n+1}$ which we also assume to be tempered.   In particular, $\pi$ is irreducible. Then, by \ref{LP}
	\begin{eqnarray*}
		\pi\otimes \pi^{\vee} = \one_{n-1} \otimes \one_{n-1} \oplus \sigma\otimes \sigma^{\vee} \oplus  \one_{n-1}\otimes \sigma \oplus \one_{n-1}\otimes \sigma^{\vee}.
	\end{eqnarray*}
	Since, $\sigma$ is assumed to be tempered neither $\one\otimes\sigma$ and $\one\otimes \sigma^{\vee}$ nor $\sigma \otimes \sigma^{\vee}$  have a pole in the region $s>0.$ Therefore, all the poles of $L(s,\pi\otimes \pi^{\vee})$ are contributed by $L(s,\one_{n-1}\otimes \one_{n-1}).$ By \eqref{triv_k}, $L(s,\one_{n-1}\otimes \one_{n-1})$ has poles at $s=1,2,\dots, n-1$ of order $n-1,n-2, \dots, 1$ respectively. Therefore, $\pi_{N,\psi}\neq 0$ by Conjecture \ref{DPConjecture}, and this is the case by Proposition \ref{genericnplus1} (2).
	\subsubsection{} \label{tau}
	The representation $\tau$ of $G_4$ constructed in Section \ref{example1} with Langlands parameter $St_2\nu \oplus St_2\nu^{-1}.$ In this case, we have 
	\begin{flalign*}
		\tau\otimes \tau^{\vee}=& (St_2\nu\oplus \St_2\nu^{-1}) \otimes (St_2\nu\oplus St_2\nu^{-1})\\
		=& (St_2\otimes St_2) \otimes (\nu^{2}\oplus \one_1\oplus \one_1\oplus \nu^{-2})\\
		=& (\one_1\oplus St_3) \otimes (\nu^{2}\oplus \one_1\oplus \one_1\oplus \nu^{-2})\\
		=&\nu^{2}\oplus \one_1\oplus  \one_1\oplus \nu^{-2}\oplus St_3\nu^{2}\oplus St_3\oplus St_3\oplus St_3\nu^{-2}
	\end{flalign*}
	In this case, for $L(s, \tau\otimes \tau^{\vee})$, the poles in the region $s>0$ are at $s=2$ of order 1 contributed by $L(s,\nu^{-2})$ and at $s=1$ of order 1 (and not 2) contributed by $L(s,St_3\nu^{-2}).$ Thus, by Conjecture \ref{DPConjecture}, the representation $\tau$ of $G_4$ has non-zero twisted Jacquet module which is also the case by Lemma \ref{preSteinbergvscharacternonzero}. In fact, Conjecture \ref{DPConjecture} is inspired by this particular example.
	
	\subsubsection{} The representations $Q_{\nu,2}$ and $Q_{\nu,2}^{\vee}$ which are Langlands quotients of $St_2\nu\times \nu^{-\frac{1}{2}} \times \nu^{-\frac{3}{2}}$ and $\nu^{\frac{3}{2}}\times \nu^{\frac{1}{2}}\times St_2\nu^{-1}$ respectively have twisted Jacquet module equals zero by  Section \ref{example1}. The adjoint representation of these representations decomposes into 
	$$ St_3\oplus 3\cdot \one _1 \oplus St_2\otimes (\nu^{\frac{5}{2}} \oplus \nu^{\frac{3}{2}}\oplus \nu^{\frac{-3}{2}}\oplus \nu^{\frac{-5}{2}})\oplus \nu \oplus \nu^{-1}.$$
	The poles in the region $s>0$ of the adjoint $L$-function are therefore at $s=1$ and $s=2$ of order $2$ and $1$ respectively.  By Conjecture \ref{DPConjecture}, the twisted Jacquet modules under consideration are zero as well.

\subsubsection{}
Consider the representation $Q_{1,r}$ appearing in Proposition \ref{Steinbergvscharacter}. We write $Q_{1,r}=St_r \oplus \nu_{2n-r}^n.$ Then, one has 
$Q_{1,r}\otimes Q_{1,r}^{\vee}=St_r\otimes St_r \oplus St_r\otimes\nu_{2n-r}^n \oplus St_r\otimes \nu_n^{-n} \oplus \one_{2n-r}\otimes \one_{2n-r}.$
Of these, $L(s, St_r\otimes St_r)$ does not contribute any poles in $s>0.$ Also, it is not difficult to see that $L(s, St_r\otimes \nu_{2n-r}^n)$ does not contribute any pole in the region $s>0$ as well.  On the other hand, by \eqref{triv_k}, $L(s, \one_{2n-r}\otimes \one_{2n-r})$ has  poles of order $1,2,\dots, 2n-r-1$ at $s=2n-r-1, 2n-r-2, \dots,1$ respectively. Moreover, 
$$L(s, St_r\otimes \nu_{2n-r}^{-n})=L(s, \nu^{r-2n})L(s,\nu^{r-2n-1})\cdots L(s,\nu^{-1}).$$
If $r\leq n,$ $2n-r\geq n$ and $2n-r-1\geq n-1.$ Thus, counting the order of poles of both $L(s, St_r\otimes \nu_{2n-r}^{-n})$ and $L(s, \one_{2n-r}\otimes \one_{2n-r})$, we have  $L(s, Q_{1,r}\otimes Q_{1,r}^{\vee})$ has a pole of order greater than or equal to $n,n-1,\dots, 1$ at $s=1,2,\dots, n$ respectively. By Conjecture \ref{DPConjecture}, $(Q_{1,r})_{N,\psi}=0$ which agrees with Proposition \ref{Steinbergvscharacter}.

\subsubsection{}
Consider the representation $Z_{1,r}$ appearing in Proposition \ref{Steinbergvscharacter}. By Proposition \ref{Steinbergvscharacter}, we have $(Z_{1,r})_{N,\psi}=0$ if $r<n$ and $(Z_{1,r})_{N,\psi}$ is non-zero if $r=n.$ We shall verify  that Conjecture \ref{DPConjecture} holds for $Z_{1,r}$ as well.  Note that $Z_{1,r}=Z(\mathfrak{m})$ where 
$\mathfrak{m}=\{[\nu^{\frac{r-1}{2}}], [\nu^{\frac{r-3}{2}}],\dots, [\nu^{-(\frac{r-1}{2})}], \Delta\}$ where $\Delta=[\nu^{-\frac{2n-r-1}{2}}\cdot \nu^{-n}, \dots, \nu^{\frac{2n-r-1}{2}}\cdot\nu^{-n}].$ Note that, $\Delta$ ends at $\nu^{-\frac{r+1}{2}}.$  Applying the MW algorithm to $\mathfrak{m},$ we get
$\mathfrak{m}^t=\{\Delta', [\nu^{-\frac{r+3}{2}}], \dots, [\nu^{-\frac{-(2n-r-1)}{2}}\cdot \nu^{-n}]\}$ where $\Delta'=[\nu^{-\frac{r+1}{2}}, \dots, \nu^{\frac{r-1}{2}}].$ Consequently, $Z_{1,r}=L(\mfr{m}^t)$ yielding that $Z_{1,r}$ is a Langlands quotient of $St_{r+1}\nu^{\frac{1}{2}}\times \nu^{-\frac{r+3}{2}}\times \cdots\times \nu^{-\frac{2n-r-1}{2}}\cdot \nu^{-n}.$  Noting that the unique irreducible quotient of $\nu^{-\frac{r+3}{2}}\times \cdots\times \nu^{-\frac{2n-r-1}{2}}\cdot \nu^{-n}$ is the character $\nu_{2n-r-1}^{-n-\frac{1}{2}},$ we shall write for the purpose of computing the adjoint $L$-function that $Z_{1,r}= St_{r+1} \nu^{\frac{1}{2}}\oplus \nu_{2n-r-1}^{-n-\frac{1}{2}}.$ Note that 
$$Z_{1,r}\otimes Z_{1,r}^{\vee}= St_{r+1} \otimes St_{r+1}\oplus St_{r+1}\otimes  \nu_{2n-r-1}^{n+1}\oplus St_{r+1}\otimes \nu_{2n-r-1}^{-n-1}\oplus \one_{2n-r-1}\otimes \one_{2n-r-1}.$$
Arguing as in previous example, the first two factors in the direct sum do not contribute to poles in the region $s>0.$ We need only consider the poles contributed by $L(s,St_{r+1}\otimes \nu_{2n-r-1}^{-n-1})$ and $L(s,\one_{2n-r-1}\otimes \one_{2n-r-1}).$ By \eqref{triv_k}, $L(s,\one_{2n-r-1}\otimes \one_{2n-r-1})$ has poles at $s=1,2,\dots, 2n-r-2$ of order $2n-r-2, 2n-r-3, \dots, 1$ respectively whereas,
$$L(s,St_{r+1}\otimes \nu_{2n-r-1}^{-n-1})=L(s,\nu^{r-2n})L(s,\nu^{r-2n-1}) \cdots L(s, \nu^{n-2}).$$
We split the further verification into two cases naturally.\\
\noindent \textit{Case }1: Assume that $r=n.$ Here, $L(s,St_{n+1}\otimes \nu_{n-1}^{-n-1})$ has a pole of order $1$ at $s=1$ and $(L(s,\one_{n-1}\otimes \one_{n-1})$ has a pole of order $n-2$ at $s=1$ yielding that the order of the pole at $s=1$ is $n-1$ and not $n.$ Hence, $(Z_{1,n})_{N,\psi}\neq 0.$ \\

\noindent \textit{Case }2: Assume that $r<n.$ In this case, $r-2n<-n,$ $2n-r\geq n+1$ and hence $2n-r-2\geq n-1.$ The factor $L(s, St_{r+1}\otimes \nu_{2n-r-1}^{-n-1}))$ has a simple pole at $s=1,2,\dots ,n$ as $\{\nu^{-n},\nu^{-(n-1)},\dots ,1\}$ is contained in $\{\nu^{r-2n},\dots, \nu^{n-2}\}.$ As $2n-r-2\geq n-1,$ from $L(s, \one_{2n-r-1}\otimes \one_{2n-r-1})$ we get a pole of order $\geq 1$ at $s=n-1,$ a pole of order $\geq 2$ at $s=n-2$, $\dots$, a pole of order $\geq n-1$ at $s=1.$ Combining the contribution from both the factors, $L(s,Z_{1,r}\otimes Z_{1,r}^{\vee})$ has a pole of order $\geq 1$ at $s=n,$ pole of order $\geq 2$ at $s=n-1,$ $\dots, $ a pole of order $\geq n$ at $s=1.$ Thus, $(Z_{1,r})_{N,\psi}=0.$ 

In both the cases, Conjecture \ref{DPConjecture} agrees with Proposition \ref{Steinbergvscharacter}.
\subsection{Some other cases of the conjecture }\label{DPConjecture-othercases}
Here are few important special cases of the conjecture we have not proved in this paper. 
\subsubsection{}\label{prodtriv} 
The representation $\pi=\one_{n_1}\times \cdots \times \one_{n_m}$ of $G_{2n}$ where $n_1+\cdots+n_m=2n$ with $n_j\leq n$ for all $j$ has a nontrivial twisted Jacquet module as their adjoint $L$-function do not have a pole at $s=n.$ 
\subsubsection{} 
Suppose $n,r,m$ are positive integers such that $2n=rm.$ Let  $\tau$ be any discrete series representation (see \cite[\S 1.2, \S 2.4]{Kud1}, \cite{TadicU}) of $G_m.$  The induced representation $\tau\nu^{\frac{r-1}{2}}\times\cdots \times \tau\nu^{-\frac{r-1}{2}}$ has a unique irreducible quotient denoted by $u(\tau,r),$ known as a Speh representation. Such a representation is unitary (\cite[Theorem D, pp. 338]{TadicU}, \cite[\S 2.4]{Kud1}) and  by Conjecture \ref{DPConjecture}, $u(\tau,r)$ has a non-zero twisted Jacquet module unless it is a one dimensional representation.
\begin{remark}
We note the following generality which is applicable to verify some cases of Conjecture \ref{DPConjecture}, especially to representations of the type in  \ref{prodtriv}. Suppose $k>1$ and $\pi\in \Irr(G_{2n})$ is such that  $\pi=\pi_1\times \cdots \times \pi_k$ where for all $j,$ $\pi_j=Z(\Delta_j)\in \Irr(G_{n_j})$ for some segments $\Delta_j$  and $n_1+\cdots+n_k=2n.$ Assume that there exists $n_{j_1}, \dots, n_{j_t}$ among the $n_i$'s such that $n_{j_1}+\cdots +n_{j_t}=n.$ Then we can rearrange the $n_j$'s  and assume without loss of generality that $n_1+\cdots +n_t=n.$ We may then apply Proposition \ref{nonvanishing} to $\pi=(\pi_{n_1} \times \cdots \times \pi_{n_t}) \times (\pi_{n_{t+1}}\times \cdots \times \pi_{n_m})$   to get  $\pi_{N,\psi}\neq 0.$
\end{remark}

\section*{Acknowledgements}
This work forms a significant component of the PhD thesis of C. Harshitha at the Indian Institute of Science Education and Research Tirupati and she acknowledges the financial support provided by the National Board of Higher Mathematics (NBHM), Govt of India and  Prime Minister's Research Fellowship (PMRF ID 0900452) program, Govt of India during this work. The authors thank Dipendra Prasad for many interesting conversations, insightful comments and constant encouragement during the course of the preparation of this article and especially for the proofs of Theorem \ref{generic}, and Propositions  \ref{cuspidal-selfdual-finite} \& \ref{Weilgrprepn}.

\end{document}